\titleformat{\section}[block]{\filcenter\normalfont\bfseries\large}{\thesection.}{.5em}{}\titlespacing*{\section}{0pt}{2\baselineskip}{1\baselineskip}
\titleformat{\subsection}[runin]{\normalfont\bfseries}{\thesubsection.}{.4em}{}[.]\titlespacing{\subsection}{0pt}{2ex plus .1ex minus .2ex}{.8em}
\titleformat{\subsubsection}[runin]{\normalfont\itshape}{\thesubsubsection.}{.3em}{}[.]\titlespacing{\subsubsection}{0pt}{1ex plus .1ex minus .2ex}{.5em}
\titleformat{\paragraph}[runin]{\normalfont\itshape}{\theparagraph.}{.3em}{}[.]\titlespacing{\paragraph}{0pt}{1ex plus .1ex minus .2ex}{.5em}
\definecolor{darkred}{rgb}{0.9,0,0.3}
\definecolor{darkblue}{rgb}{0,0.3,0.9}
\definecolor{vdarkred}{rgb}{0.6,0,0.2}
\definecolor{vdarkblue}{rgb}{0,0.2,0.6}
\numberwithin{equation}{section}
\numberwithin{figure}{section}
\theoremstyle{plain} 
\newtheorem{theorem}{Theorem}[section]
\newtheorem*{theorem*}{Theorem}
\newtheorem{lemma}[theorem]{Lemma}
\newtheorem*{lemma*}{Lemma}
\newtheorem{corollary}[theorem]{Corollary}
\newtheorem*{corollary*}{Corollary}
\newtheorem{proposition}[theorem]{Proposition}
\newtheorem*{proposition*}{Proposition}
\newtheorem*{conjecture*}{Conjecture}
\theoremstyle{definition} 
\newtheorem{definition}[theorem]{Definition}
\newtheorem{convention}[theorem]{Convention}
\newtheorem*{definition*}{Definition}
\newtheorem*{example*}{Example}
\newtheorem{remark}[theorem]{Remark}
\newtheorem*{remark*}{Remark}
\newtheorem*{assumption*}{Assumption}
\newcommand{\f}[1]{\boldsymbol{\mathrm{#1}}} 
\renewcommand{\r}{\mathrm}  
\newcommand{\bb}{\mathbb} 
\renewcommand{\cal}{\mathcal}
\newcommand{\ul}[1]{\underline{#1} \!\,} 
\newcommand{\wh}{\widehat}
\newcommand{\wt}{\widetilde}
\newcommand{\op}{\operatorname}
\renewcommand{\P}{\mathbb{P}}
\newcommand{\E}{\mathbb{E}}
\newcommand{\R}{\mathbb{R}}
\newcommand{\C}{\mathbb{C}}
\newcommand{\N}{\mathbb{N}}
\newcommand{\Z}{\mathbb{Z}}
\newcommand{\ee}{\mathrm{e}}
\newcommand{\dd}{\mathrm{d}}
\newcommand{\col}{\mathrel{\vcenter{\baselineskip0.75ex \lineskiplimit0pt \hbox{.}\hbox{.}}}}
\newcommand*{\deq}{\mathrel{\vcenter{\baselineskip0.5ex \lineskiplimit0pt\hbox{\scriptsize.}\hbox{\scriptsize.}}}=}
\newcommand{\eqdist}{\overset{\r d}{=}}
\renewcommand{\leq}{\leqslant}
\renewcommand{\geq}{\geqslant}
\renewcommand{\epsilon}{\varepsilon}
\newcommand{\ceil}[1]  {\lceil  #1 \rceil}
\newcommand{\qq}[1]{[\![#1]\!]}
\newcommand{\ind}[1]{\mathbbm 1_{#1}}
\newcommand{\pb}[1]{\bigl(#1\bigr)}
\newcommand{\pB}[1]{\Bigl(#1\Bigr)}
\newcommand{\pbb}[1]{\biggl(#1\biggr)}
\newcommand{\qb}[1]{\bigl[#1\bigr]}
\newcommand{\qbb}[1]{\biggl[#1\biggr]}
\newcommand{\h}[1]{\{#1\}}
\newcommand{\hb}[1]{\bigl\{#1\bigr\}}
\newcommand{\hbb}[1]{\biggl\{#1\biggr\}}
\newcommand{\abs}[1]{\lvert #1 \rvert}
\newcommand{\absb}[1]{\bigl\lvert #1 \bigr\rvert}
\newcommand{\absbb}[1]{\biggl\lvert #1 \biggr\rvert}
\newcommand{\absa}[1]{\left\lvert #1 \right\rvert}
\newcommand{\norm}[1]{\lVert #1 \rVert}
\newcommand{\normbb}[1]{\biggl\lVert #1 \biggr\rVert}
\newcommand{\scalar}[2]{\langle#1 \mspace{2mu}, #2\rangle}
\newcommand{\scalarb}[2]{\bigl\langle#1 \mspace{2mu}, #2\bigr\rangle}
\newcommand{\scalarbb}[2]{\biggl\langle#1 \,\mspace{2mu},\, #2\biggr\rangle}
\newcommand{\cond}{\,\vert\,}
\newcommand{\condb}{\,\big\vert\,}
\newcommand{\condB}{\,\Big\vert\,}
\DeclareMathOperator{\diag}{diag}
\DeclareMathOperator{\var}{Var}
\DeclareMathOperator{\supp}{supp}
\DeclareMathOperator{\sign}{sign}
\DeclareMathOperator{\spec}{spec}
\newcommand{\eps}{\varepsilon}
\newcommand*{\defeq}{\mathrel{\vcenter{\baselineskip0.5ex \lineskiplimit0pt\hbox{\scriptsize.}\hbox{\scriptsize.}}}=}
\newcommand{{\small \input{.pdf_tex}}}[1]{{\small \input{#1.pdf_tex}}}
\newcommand{\id}{I_N} 					
\newcommand{\ord}{\mathcal{O}} 
\newcommand{\Cnu}{\mathcal{C}}
\title{Extremal eigenvalues of critical Erd\H{o}s-R\'enyi graphs} 
\author{}
\author{Johannes Alt \and Rapha\"el Ducatez \and Antti Knowles}
\date{\today}
\begin{document}
\maketitle

\begin{abstract}
We complete the analysis of the extremal eigenvalues of the adjacency matrix $A$ of the Erd\H{o}s-R\'enyi graph $G(N,d/N)$ in the critical regime $d \asymp \log N$ of the transition uncovered in \cite{BBK1, BBK2}, where the regimes $d \gg \log N$ and $d \ll \log N$ were studied. We establish a one-to-one correspondence between vertices of degree at least $2d$ and nontrivial (excluding the trivial top eigenvalue) eigenvalues of $A / \sqrt{d}$ outside of the asymptotic bulk $[-2,2]$. This correspondence implies that the transition characterized by the appearance of the eigenvalues outside of the asymptotic bulk takes place at the critical value $d = d_* = \frac{1}{\log 4 - 1} \log N$. For $d < d_*$ we obtain rigidity bounds on the locations of all eigenvalues outside the interval $[-2,2]$, and for $d > d_*$ we show that no such eigenvalues exist. All of our estimates are quantitative with polynomial error probabilities.

Our proof is based on a tridiagonal representation of the adjacency matrix and on a detailed analysis of the geometry of the neighbourhood of the large degree vertices. An important ingredient in our estimates is a matrix inequality obtained via the associated nonbacktracking matrix and an Ihara-Bass formula \cite{BBK2}. Our argument also applies to sparse Wigner matrices, defined as the Hadamard product of $A$ and a Wigner matrix, in which case the role of the degrees is replaced by the squares of the $\ell^2$-norms of the rows.
\end{abstract}

\section{Introduction}

This paper is about the extremal eigenvalues of sparse random matrices, such as the adjacency matrix of the Erd\H{o}s-R\'enyi graph. In spectral graph theory, obtaining precise bounds on the locations of the extreme eigenvalues, in particular on the spectral gap, is of fundamental importance and has attracted much attention in the past thirty years. See for instance \cite{Chu, HLW06, Alo98} for reviews.

The Erd\H{o}s-R\'enyi graph $G = G(N,d/N)$ is the simplest model of a random graph, where each edge of the complete graph on $N$ vertices is kept independently with probability $d/N$, with $0 < d < N$. Its adjacency matrix $A$ is the canonical example of a sparse random matrix, and its spectrum has been extensively studied in the random matrix theory literature. In the regime $d \equiv d_N \to \infty$ as $N \to \infty$, the empirical eigenvalue measure of $A / \sqrt{d}$ converges to the semicircle law supported on $[-2,2]$ \cite{Wig2, TVW}.

The behaviour of the extremal eigenvalues is more subtle, and has been investigated in several recent works \cite{BBK1, BBK2, EKYY1, EKYY2, FKoml, KS03, FO05, LHY17, LS1, Vu07, HLY17}. In particular, in \cite{KS03} it is shown that the largest eigenvalue $\lambda_1(A)$ of $A$ is asymptotically equivalent to the maximum of $d$ and the square root of the largest degree of $G$. A more difficult question is that of the other eigenvalues, $\lambda_2(A), \dots, \lambda_N(A)$, which determine in particular the gap $\lambda_1(A) - \lambda_2(A)$ between the largest and second-largest eigenvalues. By a standard eigenvalue interlacing argument, the analysis of the extremal eigenvalues $\lambda_2(A), \dots, \lambda_N(A)$ of $A$ is equivalent to the analysis of the eigenvalues $\lambda_1(\ul A), \dots, \lambda_N(\ul A)$ of the centred adjacency matrix $\ul A \deq A -  \E A$.

An important motivation for the present work is a transition in the behaviour of the extremal eigenvalues of $\ul A$ observed in \cite{BBK1, BBK2}. In \cite{BBK1} it is shown that in the regime $d \gg \log N$ the extremal eigenvalues $\lambda_1(\ul A/\sqrt{d})$ and $\lambda_N(\ul A/\sqrt{d})$ converge with high probability to the edges $+2$ and $-2$ of the semicircle law's support. Conversely, in \cite{BBK2} it is shown that in the regime $d \ll \log N$, the extremal eigenvalues $\lambda_1(\ul A/\sqrt{d})$ and $\lambda_N(\ul A/\sqrt{d})$ are asymptotically of order $\pm\sqrt{\eta / \log \eta}$ with $\eta \deq \frac{\log N}{d}$, placing them far outside of the interval $[-2,2]$.

Based on the two different behaviours observed in \cite{BBK1, BBK2}, we therefore expect a transition in the behaviour of the extremal eigenvalues on the \emph{critical} scale $d \asymp \log N$, where the extremal eigenvalues leave the support of the semicircle law.

In this paper we give a detailed analysis of this transition around the critical scale $d \asymp \log N$, which was left open by the works \cite{BBK1,BBK2}, by deriving quantitative high-probability bounds on the locations of all eigenvalues of $A/\sqrt{d}$ and $\ul A  /\sqrt{d}$ that lie outside the interval $[-2,2]$. Our analysis covers also the neighbouring sub- and supercritical regimes, $d \ll \log N$ and $d \gg \log N$, and in particular provides a complete picture of the transition between these two regimes.  Our approach also works for sparse Wigner matrices of the form $X = (X_{xy})_{x,y = 1}^N$, where $X_{xy} = W_{xy} A_{xy}$ and $(W_{xy} \col x \leq y)$ are uniformly bounded independent random variables with zero expectation and unit variance.

We remark that the critical scale $d \asymp \log N$ is the same as the well-known connectivity threshold for the Erd\H{o}s-R\'enyi, which happens precisely at the value $d = \log N$. In contrast, although the transition in the locations of the extremal eigenvalues of $\ul A$ happens on the same scale $d \asymp \log N$, it happens at a different numerical value, $d = b_* \log N$, where $b_* \deq \frac{1}{\log 4 - 1} \approx 2.59$.
The underlying cause is the same for both transitions: the lack of concentration of the degree sequence, which yields isolated vertices on the one hand and vertices of large degree on the other hand.

Indeed, the mechanism underlying the emergence of eigenvalues outside the support of the semicircle distribution for sufficiently sparse matrices is the appearance of vertices of large degree. This was already observed and exploited in \cite{BBK2} in the subcritical regime $d \ll \log N$. The intuition is that for sufficiently small $d$, the concentration of the degrees of the vertices around their mean $d$ fails, and we observe a number of vertices whose degree is much larger than $d$. This mechanism is also at the heart of our analysis. In fact, our main result is a high-probability correspondence between vertices of large degree and extremal eigenvalues. Roughly, we show that the following holds with probability at least $1 - N^{-\nu}$ for any fixed $\nu > 0$.
\begin{enumerate}
\item
Every vertex $x$ with degree $D_x$ larger than $(2 + o(1))d$ gives rise to exactly one eigenvalue of $\ul A / \sqrt{d}$ in $[2 + o(1), \infty)$ and one in $(-\infty, -2 - o(1)]$. These eigenvalues are located near $\pm \Lambda(D_x / d)$ respectively, where $\Lambda(t) \deq \frac{t}{\sqrt{t - 1}}$. The error is bounded by an inverse power of $d$.
\item
There are no other eigenvalues in $(-\infty, - 2 - o(1)] \cup [2 + o(1), \infty)$.
\end{enumerate}
Using standard results on the degree distribution of the Erd\H{o}s-R\'enyi graph (for the reader's convenience we review the necessary results in Appendix \ref{sec:degrees}), we can then easily conclude rigidity estimates for all eigenvalues of $\ul A /\sqrt{d}$ and $A / \sqrt{d}$ in the region $\R \setminus [-2-o(1), 2 + o(1)]$. Setting $d = b \log N$ for a fixed $b < b_*$, one can check (see Remark \ref{rem:rigidity} below) that with high probability there are $N^{1 - b / b_* + o(1)}$ such eigenvalues.

Our proof is based on the tridiagonal representation \cite{Tro84} of the matrix $\ul A$ around some vertex $x$. Thus, for any vertex $x \in [N]$ we consider the unit vector $\f 1_x$ supported at $x$ and rewrite $\ul A$ in the basis obtained by orthonormalizing the vectors $\f 1_x, \ul A \f 1_x, \ul A^2 \f 1_x, \dots$. The resulting matrix $M$ is tridiagonal and its spectrum coincides with that of $\ul A$. Denoting by $S_i(x)$ the sphere of radius $i$ around $x$, the key intuition behind our proof is that even though $D_x = \abs{S_1(x)}$ does not concentrate in the critical and subcritical regimes, the \emph{quotients} $\abs{S_{i+1}(x)} / \abs{S_{i}(x)}$, $i \geq 1$, do. Moreover, we note that balls of sufficiently small radius have only a bounded number of cycles with high probability, and can therefore be approximated by trees after a removal of a bounded number of edges. Thus, we expect the tridiagonal matrix $M$ to be close to that of a tree whose root $x$ has $D_x$ children and all other vertices $d$ children (see \eqref{eq:M_T_I_x_tridiagonal} and Figure \ref{fig:regular_tree} below). The spectrum of this latter matrix may be analysed using transfer matrix methods. We remark that this approximation requires precise information about the geometry of the neighbourhoods of vertices, and is only correct for vertices of large enough degree. 

In practice, we proceed as follows. For clarity, let us focus only on the positive eigenvalues. Our proof then consists of two major steps: deriving lower and upper bounds on the extremal eigenvalues of $\ul A$. For the lower bounds, we construct approximate eigenvectors $\f v^{(x)}$ of $\ul A$ around vertices $x$ of high degree, whose definition is motivated by the fact that $\f v^{(x)}$ would be an exact eigenvector if the approximation by a regular tree sketched above were exact. In addition to showing that these $\f v^{(x)}$ are indeed approximate eigenvectors with a quantitatively controlled error bound, we need to show that all of the associated eigenvalues in $[2 + o(1), \infty)$ are distinct. We do this by a careful pruning of the graph, with the property that all balls (in the pruned graph) of suitable radii around the vertices $\cal V_2 \deq \{x \in [N] \col D_x \geq 2 d\}$ are disjoint, and that the degrees of the difference between the original and pruned graphs are not too large. Since $\f v^{(x)}$ is supported in a sufficiently small ball around $x$, this will imply that the family $(\f v^{(x)})_{x \in \cal V_2}$ is orthogonal, and hence the associated eigenvalues of $\ul A / \sqrt{d}$ are distinct.

For the matching upper bounds on the extremal eigenvalues, a fundamental input is an Ihara-Bass type formula and a bound on the spectral radius of the \emph{nonbacktracking matrix} associated with $\ul A$ derived in \cite{BBK1}. This argument allows us to completely bypass typically very complicated combinatorial arguments needed in the moment method for estimating matrix norms. Thanks to the Ihara-Bass formula, the moment method is performed only on the level of the nonbacktracking matrix; this was already performed in \cite{BBK1} using a moment method that was very simple thanks to the nonbacktracking property. In particular, the lack of concentration of the degrees, which has a crucial impact on the extremal eigenvalues of $\ul A$, has no impact on the extremal eigenvalues (in absolute value) of the nonbacktracking matrix of $\ul A$. The outcome of this observation is the matrix inequality $\ul A / \sqrt{d} \leq I_N + D + o(1)$, where $D$ is the diagonal matrix with entries $D_x / d$. We apply this inequality to estimate the norm of the matrix $\ul A / \sqrt{d}$ restricted to vertices with degrees at most $2d$, and show that it is bounded by $2 + o(1)$. To that end we need to derive, for the maximal eigenvector of the restricted matrix, a delocalization bound at vertices with degree at least $(1 + o(1)) d$. This delocalization bound is derived using a careful analysis of the tridiagonal matrix associated with the restricted adjacency matrix. In fact, all of this analysis has to be done with the pruned adjacency matrix described above in order to obtain simultaneous upper bounds on all eigenvalues down to $2 + o(1)$. We refer to Section \ref{sec:ideas_proof} below for a more detailed summary of the proof.

The argument sketched above can also be easily applied to the sparse Wigner matrices $X$ described above, essentially by replacing the degree $D_x$ of a vertex by the square $\ell^2$-norm of the $x$-th row of $X$. The details are explained in Section \ref{sec:general_sparse_random_matrices} below.

Our method is rather general and in particular it is not tied to the homogeneity of the Erd\H{o}s-R\'enyi graph. We therefore expect it to be applicable to many other sparse random matrix models, such as inhomogeneous Erd\H{o}s-R\'enyi graphs and stochastic block models.

We remark that a related result for the eigenvalues induced by the top degree $D_{\rm max}$ appeared in the independent work \cite{TY1} while we were finalizing the current manuscript. In \cite{TY1}, the authors show that, for any fixed $l \in \N$, the largest / smallest $l$ eigenvalues of $\ul A$ are with high probability equal to $\pm (1 + o(1)) \Lambda(D_{\rm max} / d \vee 2) \sqrt{d}$. This corresponds to a qualitative version of our main result restricted to the top $O(1)$ eigenvalues, which all have the same asymptotic value. In this paper, we obtain quantitative rigidity bounds for all eigenvalues in $\R \setminus [-2-o(1), 2 + o(1)]$. For $d = b \log N$ with some fixed $b < b_*$, 
there are {with high probability} $N^{1 - b/b_* + o(1)}$ such eigenvalues. 
The precise location $d = {b_*} \log N$ for the transition in the behaviour of the top eigenvalues of the Erd\H{o}s-R\'enyi graph was also established in \cite{TY1}. 
Their argument also works for sparse Wigner matrices $X$ described above.  
The proof of \cite{TY1} differs substantially from ours; it relies on suitably chosen trial vectors and an intricate moment method argument controlled using cleverly constructed data structures.

We conclude the introduction with a brief outline of the paper. In Section \ref{sec:results} we state our results. The rest of the paper is devoted to the proofs. In Section \ref{sec:notations}, we introduce notations used throughout the paper and in Section \ref{sec:ideas_proof} give a more detailed summary of the proof. In Section~\ref{sec:large_degree_induces_eigenvalues}, we show that a vertex of degree greater than $2 d$ induces two approximate eigenvectors for the adjacency matrix.  The subsequent Section \ref{sec:proof_upper_bound_adjacency} is devoted to a quadratic form bound on the adjacency matrix in terms of the degree matrix. Lower and upper bounds on large eigenvalues of the adjacency matrix are established in 
 Section~\ref{sec:lower_bound_eigenvalues} and Section~\ref{sec:upper_bound_eigenvalues}, respectively. In the short Section \ref{sec:proofs_finish} we put everything together and conclude our main results for the Erd\H{o}s-R\'enyi graph. In Section \ref{sec:general_sparse_random_matrices}, we explain the minor changes required to handle sparse Wigner matrices. In the appendices, we collect some basic results on tridiagonal matrices and the degree distribution of Erd\H{o}s-R\'enyi graphs.

\paragraph{Convention} We regard $N$ as the fundamental large parameter. All quantities that are not explicitly \emph{fixed} may depend on $N$; we almost always omit the argument $N$ from our notation.

\section{Results}  \label{sec:results}
Let $A = (A_{xy})_{x,y \in [N]} \in \{0,1\}^{N\times N}$ be the adjacency matrix of the homogeneous Erd{\H o}s-R\'enyi graph with vertex set $[N] \defeq \{1, \ldots, N\}$ and edge probability $d/N$. That is, $A =A^*$, $A_{xx}=0$ for all $x \in [N]$, and  $( A_{xy} \col x < y)$
 are independent $\op{Bernoulli}(d/N)$ random variables.
Throughout this paper, $N$ is a large parameter and $d \equiv d_N$ depends on $N$. 
For each $x \in [N]$, we define the \emph{normalized degree} $\alpha_x$ of $x$ through
\begin{equation} \label{eq:def_alpha_x} 
 \alpha_x \defeq \frac{1}{d} \sum_{y \in [N]} A_{xy}. 
\end{equation}
We also consider the centred adjacency matrix $\underline{A} \defeq A - \E A$.  For any Hermitian matrix $M = M^* \in \R^{N\times N}$, we denote by $\lambda_1(M) \geq \lambda_2(M) \geq \ldots \geq \lambda_N(M)$ its eigenvalues. 

For $t \geq 2$ we define
\begin{equation} \label{eq:def_Lambda}
\Lambda(t) \deq \frac{t}{\sqrt{t-1}}.
\end{equation}
We denote by $\sigma \col [N] \to [N]$ a random permutation such that 
\begin{equation} \label{eq:def_sigma_permutation} 
\alpha_{\sigma(1)} \geq \alpha_{\sigma(2)} \geq \cdots \geq \alpha_{\sigma(N)}. 
\end{equation}
We can now state our main result.

\begin{theorem}\label{thm:correspondence_eigenvalue_large_degree}
Fix $0 < \kappa < 1/2$. 
Suppose that $0 < \theta \leq 1/2$ and $(\log N)^{4 / (5 - 2 \theta)} \leq d \leq N/2$. Define the random index
\begin{equation*}
L \deq \max \h{l \geq 1 \col \alpha_{\sigma(l)} \geq 2 + (\log d)^{-\kappa}}
\end{equation*}
with the convention that $L = 0$ if $\alpha_{\sigma(1)} < 2 + (\log d)^{-\kappa}$. Then there is a universal constant $c > 0$ such that for any $\nu > 0$ there is a constant $\cal C \equiv \cal C_{\nu,\kappa}$ such that the following holds with probability at least $1 - \cal C N^{-\nu}$.
\begin{enumerate}
\item
For $1 \leq l \leq L$ we have
\begin{equation*}
\absb{\lambda_l(\ul A) - \sqrt{d} \Lambda(\alpha_{\sigma(l)})} + \absb{\lambda_{N + 1 - l}(\ul A) + \sqrt{d} \Lambda(\alpha_{\sigma(l)})} \leq \cal C \pB{d^{-c (\Lambda(\alpha_{\sigma(l)}) - 2)}+ d^{-\theta/3}} \sqrt{d}.
\end{equation*}
\item
For $l = L + 1$ we have
\begin{equation*}
\max\{ \lambda_l(\underline{A}), -\lambda_{N + 1 - l}(\underline{A}) \} \leq \pB{2 + \Cnu  (\log d)^{-2\kappa }} \sqrt{d}.
\end{equation*}
\end{enumerate}
\end{theorem}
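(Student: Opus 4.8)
The plan is to study, for each vertex $x\in[N]$, the tridiagonal representation \cite{Tro84} $M^{(x)}$ of $\ul A$ obtained by orthonormalizing the Krylov sequence $\f 1_x,\ul A\f 1_x,\ul A^2\f 1_x,\dots$, and to compare it with the explicit Jacobi matrix $J^{(x)}$ of the rooted tree in which the root $x$ has $D_x=\alpha_x d$ children and every other vertex has $d$ children --- that is, the tridiagonal matrix with first off-diagonal entry $\sqrt{D_x}$, all further off-diagonal entries of order $\sqrt d$, and vanishing diagonal. The comparison rests on two geometric facts, valid with probability $1-\cal C N^{-\nu}$ uniformly in $x$ for radii $r\asymp\log d$: (a) the ball of radius $r$ about $x$ contains only $O(1)$ cycles, so after deleting $O(1)$ edges it becomes a tree; and (b) the sphere quotients $\abs{S_{i+1}(x)}/\abs{S_i(x)}$ concentrate around $d-1$ for $1\le i\le r$. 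These are the degree-distribution estimates reviewed in Appendix~\ref{sec:degrees}, and the hypothesis $d\ge(\log N)^{4/(5-2\theta)}$ --- responsible for the loss $d^{-\theta/3}$ in the statement --- is what makes them hold in the required quantitative, uniform form. Granting (a)--(b), $M^{(x)}$ agrees with $J^{(x)}$ up to an operator-norm error that is an inverse power of $d$. A transfer-matrix computation for $J^{(x)}$ shows that, precisely when $\alpha_x>2$, it has exactly one eigenvalue in $[2+o(1),\infty)$, equal to $\sqrt d\,\Lambda(\alpha_x)$, whose eigenvector decays away from the root at the geometric rate $1/\sqrt{\alpha_x-1}\in(0,1)$; since conjugating $J^{(x)}$ by $\diag(1,-1,1,\dots)$ negates it up to the negligible diagonal, the mirror statement holds on $(-\infty,-2-o(1)]$.

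For the lower bounds in part~(i), I would turn the decaying eigenvector of $J^{(x)}$ into a trial vector $\f v^{(x)}$ supported in the ball of radius $r$ about $x$, with $\f v^{(x)}_y$ a function of $\dist(x,y)$ following the profile $(\alpha_x-1)^{-\dist(x,y)/2}$, normalized using the sphere sizes. Inputs (a)--(b) then yield $\norm{(\ul A-\sqrt d\,\Lambda(\alpha_x))\f v^{(x)}}\le\cal C\bigl(d^{-c(\Lambda(\alpha_x)-2)}+d^{-\theta/3}\bigr)\sqrt d\,\norm{\f v^{(x)}}$, the first term coming from truncating the profile at radius $r$ (the decay rate approaches $1$ as $\Lambda(\alpha_x)\downarrow2$, which is the source of the exponent $\Lambda(\alpha_x)-2$) and the second from the geometric fluctuations and the cycle removals. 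To convert this into a bound on the $l$-th eigenvalue, I would first pass to a pruned graph: delete a carefully chosen set of edges so that the balls of radius $r$ about the vertices of $\{x\col\alpha_x\ge2+(\log d)^{-\kappa}\}$ become pairwise disjoint while every normalized degree and every sphere quotient changes by at most a harmless amount. On the pruned graph the trial vectors have pairwise disjoint supports, hence are orthogonal and (after checking that the cross terms $\langle\f v^{(x)},\ul A\f v^{(y)}\rangle$ are negligible) the min--max principle gives $\lambda_l(\ul A)\ge\sqrt d\,\Lambda(\alpha_{\sigma(l)})-\cal C\bigl(d^{-c(\Lambda(\alpha_{\sigma(l)})-2)}+d^{-\theta/3}\bigr)\sqrt d$ for every $1\le l\le L$, using that $\Lambda$ is increasing on $(2,\infty)$; the mirror argument gives the bound for $\lambda_{N+1-l}(\ul A)$.

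For the matching upper bounds --- the upper half of part~(i) and all of part~(ii) --- the key external input is the matrix inequality $\ul A/\sqrt d\le I_N+D+o(1)$ with $D=\diag(\alpha_x)$, which follows from an Ihara--Bass formula together with the bound on the spectral radius of the nonbacktracking matrix of $\ul A$ from \cite{BBK1} (the crucial point being that, unlike $\ul A$ itself, the nonbacktracking matrix is insensitive to the lack of concentration of the degrees), and which applies equally to $-\ul A$. Restricted to $\cal V_<\deq\{x\col\alpha_x<2+(\log d)^{-\kappa}\}$ this alone only gives $\le3+o(1)$, so it must be supplemented by a delocalization estimate. Writing $\mu\sqrt d=\lambda_1\bigl(\ul A|_{\cal V_<}\bigr)$ and letting $\f u$ be the corresponding unit eigenvector, the inequality gives $\mu\le1+\sum_{x\in\cal V_<}\alpha_x u_x^2+o(1)=2+\sum_{x\in\cal V_<}(\alpha_x-1)u_x^2+o(1)$; I would then show, via a quantitative analysis of the tridiagonal matrix associated with $\ul A|_{\cal V_<}$, that $\f u$ is delocalized at every vertex with $\alpha_x\ge1+o(1)$. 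Combined with the fact that the total excess degree $\sum_x(\alpha_x-1)_+$ is only of order $N/\sqrt d$ by concentration of the degree sequence, this forces $\sum_{x\in\cal V_<}(\alpha_x-1)u_x^2=o(1)$, hence $\mu\le2+\cal C(\log d)^{-2\kappa}$; since $\abs{[N]\setminus\cal V_<}=L$, Cauchy interlacing yields $\lambda_{L+1}(\ul A)\le\lambda_1\bigl(\ul A|_{\cal V_<}\bigr)$, which is part~(ii) (and symmetrically for $-\lambda_{N-L}(\ul A)$). For the upper bounds in part~(i) I would instead work on the pruned graph and restrict $\ul A$ to $[N]\setminus\{\sigma(1),\dots,\sigma(l-1)\}$, whose heaviest vertex is $\sigma(l)$; combining the same quadratic-form-plus-delocalization bound for the bulk and the moderately heavy vertices with a single-vertex transfer-matrix analysis showing that the disjointified neighbourhood of $\sigma(l)$ contributes at most $\sqrt d\,\Lambda(\alpha_{\sigma(l)})+\cal C\bigl(d^{-c(\Lambda(\alpha_{\sigma(l)})-2)}+d^{-\theta/3}\bigr)\sqrt d$, and applying interlacing, gives the desired upper bound on $\lambda_l(\ul A)$ for all $1\le l\le L$.

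I expect the main obstacle to be the delocalization estimate for the restricted matrix: showing, quantitatively and uniformly over all vertices with $\alpha_x$ between $1+o(1)$ and $2+(\log d)^{-\kappa}$ at once, that the top eigenvector of $\ul A|_{\cal V_<}$ cannot concentrate on these moderately heavy vertices. This is delicate because near the spectral edge $2$ the transfer-matrix decay rates degenerate to $1$, so the tridiagonal analysis has to be pushed to an accuracy that survives the union over all such vertices and over all thresholds $l\le L$, and it must be carried out on the pruned graph in order for the interlacing step to control $\lambda_2(\ul A),\dots,\lambda_{L+1}(\ul A)$ simultaneously. A secondary, more combinatorial difficulty is the construction of the pruning, which has to disjointify the radius-$r$ neighbourhoods of all vertices of degree at least $(2+o(1))d$ while leaving the normalized degrees and sphere quotients essentially unchanged.
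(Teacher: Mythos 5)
Your proposal follows essentially the same approach as the paper: tridiagonal comparison with the regular-tree Jacobi matrix, geometrically decaying trial vectors for the lower bound, pruning to disjointify the neighbourhoods of heavy vertices, and the Ihara--Bass quadratic-form inequality combined with a transfer-matrix delocalization estimate for the upper bound. The only minor slips are that the sphere quotients $\abs{S_{i+1}(x)}/\abs{S_i(x)}$ concentrate around $d$ (not $d-1$) in the paper's normalization, since interior vertices of the idealized tree have $d$ children, and the appeal to the total excess degree $\sum_x(\alpha_x-1)_+$ being $O(N/\sqrt d)$ is superfluous once the delocalization bound $w_x^2\leq\eps\norm{\f w\vert_{B_r(x)}}^2$ is summed over the disjoint balls.
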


\begin{remark}
\begin{enumerate}[label=(\roman*)] 
\item In the supercritical regime $d \gg \log N$, Theorem \ref{thm:correspondence_eigenvalue_large_degree} is established in \cite{BBK2}, and in the subcritical regime $d \ll \log N$ it is established in \cite{BBK1}, in both cases with quantitative error bounds. Hence, in Theorem \ref{thm:correspondence_eigenvalue_large_degree} it would be sufficient to assume that $d \asymp \log N$. We allow a larger range $d \geq (\log N)^{4 / (5 - 2 \theta)}$ so as to obtain a simple statement that extends to all three regimes, showcasing the full behaviour through the transition at criticality. 
\item 
A simple analysis of the degrees shows that $L = 0$ in the supercritical regime, while 
$L$ is a fractional power of $N$ in the subcritical regime (see Appendix~\ref{sec:degrees}). 
\end{enumerate} 
\end{remark}

As a consequence of Theorem \ref{thm:correspondence_eigenvalue_large_degree}, for any $\nu > 0$ there is a constant $\cal C  \equiv \cal C_\nu> 0$ such that, with probability at least $1 - \cal C N^{-\nu}$,
\begin{equation} \label{norm_estimate}
\norm{\ul A} = \Lambda\pb{\alpha_{\sigma(1)} \vee 2} (1 + o(1)) \sqrt{d}.
\end{equation}

Another easy consequence is the corresponding statement for the non-centred adjacency matrix $A$, which follows by eigenvalue interlacing.

\begin{corollary} \label{cor:non_centred}
Under the same conditions and notations as in Theorem \ref{thm:correspondence_eigenvalue_large_degree}, the following holds with probability at least $1 - \cal C N^{-\nu}$.
\begin{enumerate}
\item
For $1 \leq l \leq L$ we have
\begin{multline*}
\absb{\lambda_{l + 1}(A) - \sqrt{d} \Lambda(\alpha_{\sigma(l)})} + \absb{\lambda_{N + 1 - l}(A) + \sqrt{d} \Lambda(\alpha_{\sigma(l)})}
\\
\leq \cal C  \pB{d^{-c (\Lambda(\alpha_{\sigma(l)}) - 2)}+ d^{-\theta/3} + (\alpha_{\sigma(l)} - \alpha_{\sigma(l+1)})} \sqrt{d}.
\end{multline*}
\item
For $l = L + 1$ we have
\begin{equation*}
\max\{ \lambda_{l+1}(A), -\lambda_{N + 1 - l}(A) \} \leq \pb{2 + \Cnu (\log d)^{-2\kappa}} \sqrt{d}.
\end{equation*}
\end{enumerate}
\end{corollary}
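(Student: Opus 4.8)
The plan is to deduce Corollary~\ref{cor:non_centred} from Theorem~\ref{thm:correspondence_eigenvalue_large_degree} by eigenvalue interlacing; the one genuinely new ingredient is the control of the single large eigenvalue created by passing from $\ul A$ to $A$. Write $A = B + \tfrac dN\,\f 1\f 1^{*}$ with $B \deq \ul A - \tfrac dN I_N$. Then $\tfrac dN\,\f 1\f 1^{*}$ is positive semidefinite of rank one, $\lambda_k(B) = \lambda_k(\ul A) - \tfrac dN$, and Weyl's inequality together with Cauchy interlacing for rank-one perturbations give $\lambda_k(\ul A) - \tfrac dN \leq \lambda_k(A) \leq \lambda_{k-1}(\ul A) - \tfrac dN$ for all $k$, the right-hand bound being vacuous for $k=1$. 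Since $d \leq N/2$ while $d^{-\theta/3}\sqrt d = d^{1/2-\theta/3} \to \infty$ on the range considered, the shift $d/N \leq \tfrac12$ is negligible and I absorb it into error terms below. Taking $k = l+1$ and $k = N+1-l$ and inserting Theorem~\ref{thm:correspondence_eigenvalue_large_degree}(i) immediately gives, for $1 \leq l \leq L$, the bounds $\lambda_{l+1}(A) \leq \sqrt d\,\Lambda(\alpha_{\sigma(l)}) + \cal C\bigl(d^{-c(\Lambda(\alpha_{\sigma(l)})-2)} + d^{-\theta/3}\bigr)\sqrt d$ and $\lambda_{N+1-l}(A) \geq -\sqrt d\,\Lambda(\alpha_{\sigma(l)}) - \cal C\bigl(d^{-c(\Lambda(\alpha_{\sigma(l)})-2)} + d^{-\theta/3}\bigr)\sqrt d$; taking $k = L+2$ and $k = N-L$ and using part~(ii) of the theorem gives part~(ii) of the corollary. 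This settles (ii) and one half of (i).

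For the other half of (i) — a lower bound on $\lambda_{l+1}(A)$ and an upper bound on $\lambda_{N+1-l}(A)$ — the same interlacing reduces matters to $\lambda_{l+1}(\ul A)$ and $\lambda_{N+1-(l+1)}(\ul A)$. When $1 \leq l \leq L-1$ the index $l+1$ is still $\leq L$, so Theorem~\ref{thm:correspondence_eigenvalue_large_degree}(i) applies to it, and the discrepancy with $\pm\sqrt d\,\Lambda(\alpha_{\sigma(l)})$ is at most $\sqrt d\bigl(\Lambda(\alpha_{\sigma(l)}) - \Lambda(\alpha_{\sigma(l+1)})\bigr) \leq \tfrac15\sqrt d\,(\alpha_{\sigma(l)} - \alpha_{\sigma(l+1)})$, because $\Lambda$ is Lipschitz on $[2,\infty)$ with $\sup_{t \geq 2}\Lambda'(t) = \sup_{t \geq 2}\tfrac{t-2}{2(t-1)^{3/2}} < \tfrac15$; this is precisely where the $\alpha_{\sigma(l)} - \alpha_{\sigma(l+1)}$ term comes from. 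The error $d^{-c(\Lambda(\alpha_{\sigma(l+1)})-2)}\sqrt d$ inherited from index $l+1$ is in turn folded into the bound claimed for index $l$ by an elementary case distinction: either $\Lambda(\alpha_{\sigma(l+1)})-2 \geq \tfrac12(\Lambda(\alpha_{\sigma(l)})-2)$, in which case it is $\leq d^{-\frac c2(\Lambda(\alpha_{\sigma(l)})-2)}\sqrt d$; or else $\Lambda(\alpha_{\sigma(l)}) - \Lambda(\alpha_{\sigma(l+1)}) > \Lambda(\alpha_{\sigma(l+1)})-2 \gtrsim_\kappa (\log d)^{-2\kappa}$ (using $\alpha_{\sigma(l+1)} \geq 2 + (\log d)^{-\kappa}$), so $\alpha_{\sigma(l)} - \alpha_{\sigma(l+1)} \gtrsim_\kappa (\log d)^{-2\kappa}$ while $d^{-c(\Lambda(\alpha_{\sigma(l+1)})-2)} \leq e^{-c'(\log d)^{1-2\kappa}} \leq (\log d)^{-2\kappa}$ for large $N$ (here $1-2\kappa > 0$), so the inherited error is $\lesssim_\kappa \alpha_{\sigma(l)} - \alpha_{\sigma(l+1)}$.

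The boundary case $l = L$ is where interlacing alone no longer closes — the main obstacle — since Theorem~\ref{thm:correspondence_eigenvalue_large_degree}(ii) controls $\lambda_{L+1}(\ul A)$ and $\lambda_{N-L}(\ul A)$ on only one side. Here I would exhibit the missing eigenvalue directly. For the positive side, take the $(L+1)$-dimensional space spanned by the trivial vector $\f 1/\sqrt N$ and the approximate eigenvectors $\f v^{(\sigma(l))}_{+}$ ($l = 1,\dots,L$) constructed in Sections~\ref{sec:large_degree_induces_eigenvalues} and~\ref{sec:lower_bound_eigenvalues}: on the events of Appendix~\ref{sec:degrees} these $\f v^{(\sigma(l))}_{+}$ are supported in pairwise disjoint balls of size $N^{o(1)}$ (hence orthonormal, and each $N^{-1/2+o(1)}$-close to being orthogonal to $\f 1/\sqrt N$), one has $N^{-1}\langle\f 1, A\f 1\rangle = N^{-1}\sum_{x,y}A_{xy} = (1+o(1))d$, and $L \geq 1$ forces $d = O(\log N)$, whence $\|A\| = N^{o(1)}$ and $d \gg \sqrt d\,\Lambda(\alpha_{\sigma(1)})$. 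On this space the $A$-quadratic form is at least $\min\{(1+o(1))d,\ \sqrt d\,\Lambda(\alpha_{\sigma(L)})\} - \max_{l \leq L}\normb{A\f v^{(\sigma(l))}_{+} - \sqrt d\,\Lambda(\alpha_{\sigma(l)})\,\f v^{(\sigma(l))}_{+}} - N^{-1/2+o(1)}$, and since $\Lambda$ is increasing the maximum is $\leq \cal C\bigl(d^{-c(\Lambda(\alpha_{\sigma(L)})-2)} + d^{-\theta/3}\bigr)\sqrt d$; by the min--max principle $\lambda_{L+1}(A) \geq \sqrt d\,\Lambda(\alpha_{\sigma(L)}) - \cal C\bigl(d^{-c(\Lambda(\alpha_{\sigma(L)})-2)} + d^{-\theta/3}\bigr)\sqrt d$. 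The same argument applied to $-A$ with the family $\{\f v^{(\sigma(l))}_{-}\}_{l=1}^{L}$ — no trivial vector being needed, as $\f 1/\sqrt N$ now carries the eigenvalue $\approx -d$ — yields $-\lambda_{N+1-L}(A) = \lambda_L(-A) \geq \sqrt d\,\Lambda(\alpha_{\sigma(L)}) - \cal C(\dots)\sqrt d$. (This trial-space argument in fact covers every $1 \leq l \leq L$, and even without the $\alpha_{\sigma(l)} - \alpha_{\sigma(l+1)}$ term; I keep the interlacing route above only where it is shorter.)

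To conclude, in each of the four estimates I combine the two matching one-sided bounds, intersect the relevant high-probability events (the one in Theorem~\ref{thm:correspondence_eigenvalue_large_degree}, the degree estimates of Appendix~\ref{sec:degrees} controlling $L$, $\alpha_{\sigma(1)}$, small-ball sizes and $N^{-1}\sum_{x,y}A_{xy}$, and the disjointness and localization of the $\f v^{(x)}_{\pm}$ from Section~\ref{sec:lower_bound_eigenvalues}), and adjust the constants — replacing $c$ by, say, $c/2$ and enlarging $\cal C$ depending on $\nu$ and $\kappa$ — to obtain the claimed inequalities with probability at least $1 - \cal C N^{-\nu}$.
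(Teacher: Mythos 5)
Your proposal follows the same basic route as the paper's (very terse) proof: rank-one interlacing $\lambda_{l+1}(\ul A)-\tfrac dN\le\lambda_{l+1}(A)\le\lambda_l(\ul A)-\tfrac dN$ combined with Theorem \ref{thm:correspondence_eigenvalue_large_degree}, together with the Lipschitz bound $\sup_{t\ge2}\Lambda'(t)<\tfrac15$, which is the content of the paper's phrase ``mean value theorem'' and the source of the extra $(\alpha_{\sigma(l)}-\alpha_{\sigma(l+1)})$ error. The details you give are correct, including the small case distinction (using $\kappa<1/2$ so that $d^{-c(\Lambda(\alpha_{\sigma(l+1)})-2)}\lesssim(\log d)^{-2\kappa}$ when $\Lambda(\alpha_{\sigma(l+1)})-2$ is much smaller than $\Lambda(\alpha_{\sigma(l)})-2$) needed to absorb the inherited exponent into the claimed bound.

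Where you go beyond the paper's one-line proof is the boundary index $l=L$, and you are right to flag it: the interlacing $\lambda_{L+1}(A)\ge\lambda_{L+1}(\ul A)-\tfrac dN$ requires a \emph{lower} bound on $\lambda_{L+1}(\ul A)$, which Theorem \ref{thm:correspondence_eigenvalue_large_degree}(ii) does not provide --- it is one-sided. Your fix by a direct min--max trial-space argument on $A$ (the pruned, disjointly supported approximate eigenvectors $\tilde{\f v}^{(\sigma(1))}_\pm,\dots,\tilde{\f v}^{(\sigma(L))}_\pm$ together with the trivial direction $\f e=\f 1/\sqrt N$ giving dimension $L+1$) is a valid way to close it, and it is essentially a rerun of the proof of Proposition \ref{pro:lower_bound_number_outliers} with the matrix $A$ in place of $\ul A$; as you note, it even dispenses with the $(\alpha_{\sigma(l)}-\alpha_{\sigma(l+1)})$ term. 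An alternative, more in the spirit of the paper, is to observe that Proposition \ref{pro:lower_bound_number_outliers} gives lower bounds up to index $L_\geq$, which strictly exceeds $L$ unless $\alpha_{\sigma(L+1)}<\tau_*$, and that this residual event is excluded with sufficient probability because the degree gap $\alpha_{\sigma(L)}-\alpha_{\sigma(L+1)}$ is $O(1/d)$ (cf.\ Appendix \ref{sec:degrees}), which would push $\alpha_{\sigma(L+1)}$ above $\tau_*$. Either way, some extra argument beyond the bare interlacing-plus-MVT statement is needed at $l=L$, and your version supplies it cleanly. A few of the quantitative claims in your trial-space paragraph are a bit loose (the pruned balls have size $N^{\Theta(1)}$ rather than $N^{o(1)}$, and the cross-terms $\scalar{\f e}{A\tilde{\f v}^{(\sigma(l))}_\pm}$ should be controlled as in the treatment of $\f w_0$ in Section \ref{sec:large_degree_induces_eigenvalues}), but the bounds you actually use survive these corrections, so the argument is sound.
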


Note that the additional error term $(\alpha_{\sigma(l)} - \alpha_{\sigma(l+1)})$ is of order $1/d$ with high probability (see Proposition \ref{lem:degree_distr} below). It is well known that the largest eigenvalue $\lambda_1(A)$ is an outlier far outside the bulk spectrum; in fact a trivial perturbation argument using \eqref{norm_estimate} implies that $\abs{\lambda_1(A) - d} \leq \Lambda\pb{\alpha_{\sigma(1)} \vee 2} (1 + o(1)) \sqrt{d}$ with probability at least $1 - \cal C N^{-\nu}$, where $\nu > 0$ and $\cal C \equiv \cal C_\nu$.

Theorem \ref{thm:correspondence_eigenvalue_large_degree} (and its non-centred counterpart) can be combined with a standard analysis of the distribution of the degree sequence $D_{\sigma(1)}, D_{\sigma(2)}, \dots$ of the Erd\H{o}s-R\'enyi graph. For the convenience of the reader, in Appendix \ref{sec:degrees} we collect some basic results about the degree distribution. As an illustration, we state such an application for the extremal eigenvalues of $A$.

For its statement, we need the following facts from Appendix \ref{sec:degrees}. For any $d > 0$ and $1 \leq l \leq \frac{N}{C \sqrt{d}}$, the equation
\begin{equation*}
d(\beta \log \beta - \beta + 1) + \frac{1}{2} \log (2 \pi \beta d)  = \log (N / l)
\end{equation*}
has a unique solution $\beta_l(d)$ in $[1,\infty)$. (Here $C$ is a universal constant.) The interpretation of $\beta_l(d)$ is the typical value of the normalized degree $\alpha_{\sigma(l)}$. 

Then Corollary \ref{cor:non_centred} and Proposition \ref{lem:degree_distr} imply the following result.

\begin{corollary} \label{cor:er_beh}
Under the same conditions and notations as in Theorem \ref{thm:correspondence_eigenvalue_large_degree}, the following holds. Define the deterministic index
\begin{equation*}
\cal L(d) \deq \max \h{l \geq 1 \col \beta_l(d) \geq 2 + (\log d)^{-\kappa}}
\end{equation*}
with the convention that $\cal L(d) = 0$ if $\beta_1(d) < 2 + (\log d)^{-\kappa}$.
\begin{enumerate}
\item
For $1 \leq l \leq \cal L(d)$ we have with probability $1 - o(1)$
\begin{equation*}
\absb{\lambda_{l + 1}(A) - \sqrt{d} \Lambda(\beta_l(d))} + \absb{\lambda_{N + 1 - l}(A) + \sqrt{d} \Lambda(\beta_l(d))}
\leq C \pB{d^{-c (\Lambda(\beta_l(d)) - 2)}+ d^{-\theta/3}} \sqrt{d}.
\end{equation*}
\item
For $l = \cal L(d) + 1$ we have with probability $1 - o(1)$
\begin{equation*}
\max\{ \lambda_{l+1}(A), - \lambda_{N + 1 - l}(A) \} \leq \pb{2 + C (\log d)^{-2\kappa }} \sqrt{d}.
\end{equation*}
\end{enumerate}
(Here $C \equiv C_\kappa$ is a constant depending on $\kappa$.)
\end{corollary}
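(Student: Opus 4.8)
The plan is to deduce Corollary~\ref{cor:er_beh} from Corollary~\ref{cor:non_centred} by a purely deterministic post\nobreakdash-processing on a good event. First I would fix $\nu = 1$ in Corollary~\ref{cor:non_centred} and intersect its event with the high-probability event supplied by Proposition~\ref{lem:degree_distr} on which the degree order statistics are well behaved: $\alpha_{\sigma(l)}$ is close to its typical value $\beta_l(d)$ with an error negligible on the scale of the error terms in Corollary~\ref{cor:non_centred} (say $\absb{\alpha_{\sigma(l)} - \beta_l(d)} = O(1/d)$ uniformly in the relevant range of $l$), the successive gaps satisfy $\alpha_{\sigma(l)} - \alpha_{\sigma(l+1)} = O(1/d)$, and consequently the random index $L$ and the deterministic index $\cal L(d)$ are close. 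The intersection still has probability $1 - o(1)$, and on it the assertions of Corollary~\ref{cor:er_beh} become deterministic consequences of those of Corollary~\ref{cor:non_centred}: one has only to replace $\alpha_{\sigma(l)}$ by $\beta_l(d)$ and $L$ by $\cal L(d)$.

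The analytic input for this replacement is that $\Lambda(t) = t/\sqrt{t-1}$ is smooth on $(1,\infty)$ with $\Lambda'(2) = 0$; hence $\Lambda$ is Lipschitz on $[2,\infty)$ with a universal constant, and $\Lambda(2 + x) - 2 = \tfrac14 x^2 \pb{1 + O(x)}$ as $x \to 0^+$. Lipschitz continuity gives $\sqrt d\,\absb{\Lambda(\alpha_{\sigma(l)}) - \Lambda(\beta_l(d))} = O(d^{-1/2})$, which, together with the gap term $(\alpha_{\sigma(l)} - \alpha_{\sigma(l+1)})\sqrt d = O(d^{-1/2})$ appearing in Corollary~\ref{cor:non_centred}, is absorbed into $\cal C d^{-\theta/3}\sqrt d$ since $\theta \leq 1/2$; it also gives $d^{-c(\Lambda(\alpha_{\sigma(l)}) - 2)} = d^{-c(\Lambda(\beta_l(d)) - 2)}\pb{1 + o(1)}$. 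The quadratic flatness at $t = 2$ controls the region near the cutoff: if $\alpha_{\sigma(l)}$ (or $\beta_l(d)$) lies within $O(1/d)$ of $2 + (\log d)^{-\kappa}$, then $\Lambda$ of it equals $2 + \tfrac14 (\log d)^{-2\kappa}\pb{1 + o(1)}$, so there the rigidity bound of part~(i) and the upper bound of part~(ii) both pin the corresponding eigenvalue to $2\sqrt d$ up to $O\pb{(\log d)^{-2\kappa}}\sqrt d$.

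With these tools the case analysis is short. For every $l$ with $\alpha_{\sigma(l)} \geq 2 + (\log d)^{-\kappa}$ — that is, $l \leq L$, which by $\alpha_{\sigma(l)} \approx \beta_l(d)$ covers all $l \leq \cal L(d)$ except possibly a short block of indices just below $\cal L(d)$ — part~(i) of Corollary~\ref{cor:non_centred} applies, and the substitution above yields part~(i) of Corollary~\ref{cor:er_beh}. For $l = L + 1$, part~(ii) of Corollary~\ref{cor:non_centred} applies directly; monotonicity of the eigenvalues propagates $\lambda_{l'+1}(A) \leq \lambda_{L+2}(A) \leq \pb{2 + \cal C(\log d)^{-2\kappa}}\sqrt d$ to all $l' \geq L+1$, and combining this with the matching bound $\lambda_{l'+1}(A) \leq \lambda_{L+1}(A) = \sqrt d\,\Lambda(\alpha_{\sigma(L)}) + O\pb{\text{error}}$ — where $\Lambda(\alpha_{\sigma(L)})$ is within $O(1/d)$ of $\Lambda(\beta_{l'}(d)) = 2 + O\pb{(\log d)^{-2\kappa}}$ for $l'$ in the block $(L,\cal L(d)]$ — gives part~(i) on the remaining indices and part~(ii) at $l = \cal L(d)+1$. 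The bounds for $\lambda_{N+1-l}(A)$ are identical, as Corollary~\ref{cor:non_centred} controls the smallest eigenvalues symmetrically. The hard part is exactly this last step: controlling the mismatch between the random cutoff $L$ and the deterministic $\cal L(d)$, together with the attendant mismatch in $\beta$\nobreakdash-values, and verifying that it stays inside the error budget of Corollary~\ref{cor:non_centred} near the spectral edge $2\sqrt d$; the flatness $\Lambda'(2) = 0$ is precisely what makes that budget quadratically — hence amply — large enough to absorb it. No new combinatorics is needed, and the only probabilistic content beyond Corollary~\ref{cor:non_centred} resides in Proposition~\ref{lem:degree_distr}.
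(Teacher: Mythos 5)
Your overall strategy --- intersecting the good events of Corollary~\ref{cor:non_centred} and Proposition~\ref{lem:degree_distr} and then substituting $\alpha_{\sigma(l)} \to \beta_l(d)$ using the smoothness of $\Lambda$ --- is exactly the paper's, and your Lipschitz and Taylor estimates for $\Lambda$ near $t=2$ are correct. However, your treatment of the indices $l$ in the (generically nonempty) block $(L,\cal L(d)]$ has a genuine gap, and the specific claim you use to close it is wrong.

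For such $l$ you have $\alpha_{\sigma(l)} < 2 + (\log d)^{-\kappa} \leq \beta_l(d)$, so Corollary~\ref{cor:non_centred}~(i) does not apply. Your replacement argument supplies only \emph{upper} bounds on $\lambda_{l+1}(A)$ --- via $\lambda_{l+1}(A) \leq \lambda_{L+1}(A)$ and $\lambda_{l+1}(A) \leq \lambda_{L+2}(A)$ --- but never a matching lower bound, so the absolute value in part~(i) of Corollary~\ref{cor:er_beh} is not controlled. Worse, even if one granted a lower bound $\lambda_{l+1}(A) \geq 2\sqrt d$, the resulting two-sided squeeze only pins $\lambda_{l+1}(A)$ to $\sqrt d\,\Lambda(\beta_l(d))$ up to $O\bigl((\log d)^{-2\kappa}\bigr)\sqrt d$, whereas the required error budget at the cutoff, where $\Lambda(\beta_l(d)) - 2 = \frac14 (\log d)^{-2\kappa}(1+o(1))$, equals $C\bigl(\exp\bigl(-\tfrac c4(\log d)^{1-2\kappa}\bigr) + d^{-\theta/3}\bigr)\sqrt d$. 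For any fixed $\kappa<1/2$ and $d\to\infty$ one has $(\log d)^{1-2\kappa} \gg \log\log d$, so $\exp\bigl(-\tfrac c4 (\log d)^{1-2\kappa}\bigr) \ll (\log d)^{-2\kappa}$ and likewise $d^{-\theta/3} \ll (\log d)^{-2\kappa}$. Your assertion that the flatness $\Lambda'(2)=0$ makes the budget ``amply large enough'' has the comparison backwards: the quadratic flatness shrinks $\Lambda(\beta_l(d))-2$, which enlarges $d^{-c(\Lambda(\beta_l(d))-2)}$ only to $\exp(-\tfrac c4(\log d)^{1-2\kappa})$, still far below $(\log d)^{-2\kappa}$.

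The clean fix is to apply Theorem~\ref{thm:correspondence_eigenvalue_large_degree} and Corollary~\ref{cor:non_centred} with a slightly larger exponent $\kappa'' \in (\kappa, 1/2)$, so that the random threshold $2 + (\log d)^{-\kappa''}$ lies strictly below $2 + (\log d)^{-\kappa}$. On the event of Proposition~\ref{lem:degree_distr}, $\abs{\alpha_{\sigma(l)} - \beta_l(d)} = O(\xi/d)$ is much smaller than the gap $(\log d)^{-\kappa} - (\log d)^{-\kappa''}$, so every $l \leq \cal L(d)$ satisfies $\alpha_{\sigma(l)} \geq 2 + (\log d)^{-\kappa''}$; that is, the corresponding random index $L_{\kappa''}$ dominates $\cal L(d)$ with high probability. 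Then Corollary~\ref{cor:non_centred}~(i) at exponent $\kappa''$ provides the two-sided bound for every $l \leq \cal L(d)$, and the $\alpha_{\sigma(l)} \to \beta_l(d)$ substitution is absorbed into the budget exactly as you compute (the $d^{-\theta/3}$ term is more than enough, since $\theta \leq 1/2$). Part~(ii) at $l = \cal L(d)+1$ then follows either from part~(i) at that index (if $\cal L(d)+1 \leq L_{\kappa''}$, using $\alpha_{\sigma(\cal L(d)+1)} \leq \beta_{\cal L(d)+1}(d) + O(\xi/d) < 2 + (\log d)^{-\kappa} + O(\xi/d)$) or from part~(ii) of Corollary~\ref{cor:non_centred} at $\kappa''$ (if $\cal L(d) = L_{\kappa''}$, using $(\log d)^{-2\kappa''} \leq (\log d)^{-2\kappa}$). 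This also makes transparent why the constant in Corollary~\ref{cor:er_beh} depends on $\kappa$: it enters through the auxiliary choice of $\kappa''$.
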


The errors $o(1)$ in the probabilities can be easily made quantitative by a slight refinement of the argument in Appendix \ref{sec:degrees}. 
See Figure \ref{fig:graph} for an illustration of Corollary \ref{cor:er_beh}.
An analogous result holds for the matrix $\ul A$, whose details we omit. 

\begin{figure}[!ht]
\begin{center}
{\small 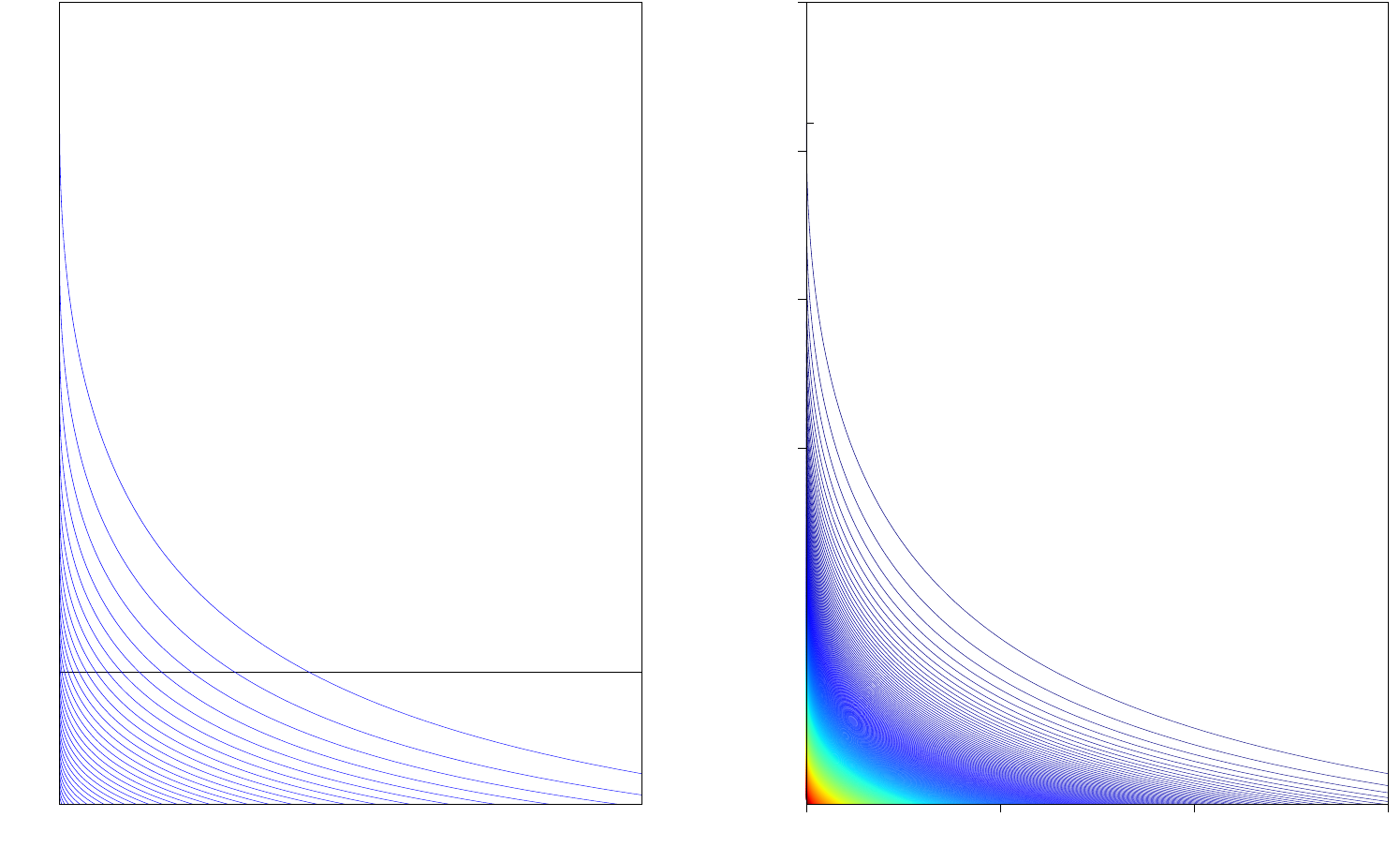}
\end{center}
\caption{An illustration of the typical values of the nontrivial eigenvalues $\lambda_2( A / \sqrt{d}), \lambda_3( A / \sqrt{d}), \dots$ in the interval $(2,\infty)$ (horizontal axis) as a function of $b = d / \log N$ (vertical axis). For each $l = 1,2, \dots$ we plot the function $b \mapsto \Lambda(\beta_l(b \log N))$. Left: $N = 50$; the typical eigenvalue configuration of $ A / \sqrt{d}$ in the interval $(2,\infty)$ for $d = b \log N$ is given by a horizontal slice of the graph at $b$, indicated by black dots. Right: $N = 1000$; we colour the graphs $b \mapsto \Lambda(\beta_l(b \log N))$ depending on $l$ to distinguish them from each other. Note that for $b > b_* \deq \frac{1}{\log 4 - 1} \approx 2.59$ there are no typical eigenvalues in $(2,\infty)$, and for $d = b \log N$ with fixed $b < b_*$ there are $N^{1 - b/b_* + o(1)}$ typical eigenvalues in $(2,\infty)$. \label{fig:graph}}
\end{figure}

\begin{remark}
There is a typical normalized degree $\beta_l(d)$ greater than or equal to $2$ if and only if $\beta_1(d) \geq 2$. Thus, we introduce the critical value $d_*$ as the unique solution of  $\beta_1(d_*) = 2$. It is easy to see that
\begin{equation*}
d_* = b_* \log N  + O(\log \log N), \qquad b_* \deq \frac{1}{\log 4 - 1}.
\end{equation*}
Since $\cal L(d) = 0$ for $d > d_*$ and $\cal L(d) \geq 1$ for $d \leq d_*$, we conclude from Corollary \ref{cor:er_beh} that $\lambda_2(A) / \sqrt{d}$ converges to $2$ in probability if and only if $\liminf \frac{d - d_*}{\log N} \geq 0$.
\end{remark}
\begin{remark} \label{rem:rigidity}
Fix $b < b_*$ and set $d = b \log N$. From the definition of $\beta_l(d)$, we deduce that $\abs{\{l \col \beta_l(d) \geq 2 + o(1)\}} = N^{1 - b/b_* + o(1)}$. Hence, using Corollary \ref{cor:er_beh}, we conclude that with probability $1 - o(1)$ the matrix $A / \sqrt{d}$ has $N^{1 - b/b_* + o(1)}$ eigenvalues in $\R \setminus [-2-o(1), 2 + o(1)]$.
\end{remark}

Our final result is a version of our results for sparse Wigner matrices. Let $A = (A_{xy})$ be as above and $W = (W_{xy})$ be an independent Wigner matrix with bounded entries. That is, $W$ is Hermitian and its upper triangular entries $(W_{xy} \col x \leq y)$ are independent complex-valued random variables with mean zero and variance one, $\E \abs{W_{xy}}^2 = 1$, and $\abs{W_{xy}} \leq K$ almost surely for some constant $K$. Then we define the sparse Wigner matrix $X = (X_{xy})$ as the Hadamard product of $A$ and $W$, with entries $X_{xy} \deq A_{xy} W_{xy}$.

\begin{theorem}\label{thm:eigenvalues_general_sparse_matrices}
Theorem \ref{thm:correspondence_eigenvalue_large_degree} holds also for the eigenvalues $\lambda_l(X)$ of a sparse Wigner matrix $X$ instead of $\lambda_l(\ul A)$, provided that the normalized degree $\alpha_x$ is replaced by
\begin{equation} \label{def_alpha_general}
\alpha_x = \frac{1}{d} \sum_{y \in [N]} \abs{X_{xy}}^2.
\end{equation}
Here, the constant $\Cnu$ from Theorem~\ref{thm:correspondence_eigenvalue_large_degree} depends on $K$ in addition to $\nu$ and $\kappa$. 
\end{theorem}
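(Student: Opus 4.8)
The plan is to re-run the proof of Theorem~\ref{thm:correspondence_eigenvalue_large_degree} carried out in Sections~\ref{sec:large_degree_induces_eigenvalues}--\ref{sec:proofs_finish} essentially line by line, systematically replacing the degree $D_x = \sum_{y} A_{xy}$ of the adjacency case by the squared row norm $\sum_{y} \abs{X_{xy}}^2 = d \alpha_x$, with $\alpha_x$ as in \eqref{def_alpha_general}. Since the entries of $X$ have mean zero, $X$ takes over the role of $\ul A$ directly, with no deterministic rank-type correction to subtract, which is a minor simplification. We indicate below the few places where the argument genuinely has to be adapted; everywhere else it is transcribed verbatim, the constant $\cal C_{\nu,\kappa}$ of Theorem~\ref{thm:correspondence_eigenvalue_large_degree} acquiring an additional dependence on $K$.

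All purely graph-theoretic inputs depend only on the underlying Erd\H{o}s--R\'enyi graph $A$ and carry over unchanged: the control of the number of cycles in small balls, the pruning construction that makes the balls of the appropriate radii around the vertices $\cal V_2 = \h{x \col \alpha_x \geq 2}$ pairwise disjoint while keeping the removed degrees small, and the resulting orthogonality of the family of approximate eigenvectors supported in those balls. The only new ingredient at the level of the tridiagonal analysis of Section~\ref{sec:large_degree_induces_eigenvalues} is a concentration statement for the weighted spheres. Indeed, the Jacobi matrix associated with the (now edge-weighted) tree approximating the ball around $x$ has first off-diagonal entry $\norm{X \f 1_x} = \sqrt{d \alpha_x}$ by the definition of $\alpha_x$, while for $i \geq 2$ its $i$-th off-diagonal entry squared is, up to lower order, a weighted average over $y \in S_{i-1}(x)$ of $\sum_{z} \abs{X_{yz}}^2$ taken over the children $z$ of $y$; conditionally on $A$, this last sum is a sum of independent copies of $\abs{W_{yz}}^2$, which are bounded by $K^2$ with unit mean, so Bernstein's inequality shows that it concentrates around the number of children of $y$, which in turn concentrates around $d$ exactly as in the adjacency case. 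Thus, up to the same errors as before, the Jacobi matrix is $\sqrt d \cdot \mathrm{tridiag}(0; \sqrt{\alpha_x}, 1, 1, \dots)$, whose half-line transfer-matrix analysis produces the eigenvalue $\sqrt d\, \Lambda(\alpha_x)$, and the approximate eigenvectors $\f v^{(x)}$ of Section~\ref{sec:lower_bound_eigenvalues} together with their error bounds are obtained as before.

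For the matching upper bounds of Sections~\ref{sec:proof_upper_bound_adjacency} and~\ref{sec:upper_bound_eigenvalues} one needs the weighted analogues of the Ihara--Bass formula \cite{BBK2} and of the bound on the spectral radius of the nonbacktracking matrix \cite{BBK1}. The Ihara--Bass identity is valid for arbitrary edge weights, and the estimate $\norm{B_X} \leq (1 + o(1)) \sqrt d$ for the nonbacktracking matrix $B_X$ of $X$ follows from the same high-moment computation of $\E\, \tr(B_X B_X^*)^m$ as in \cite{BBK1}: the nonbacktracking structure still suppresses the costly self-intersections of walks, the leading contribution still comes from walks traversing each edge twice, each such edge carrying the factor $\E \abs{X_{xy}}^2 = d/N$ exactly as before, and the only new feature is an overall factor at most $K^{O(m)}$ from the absolute values of the weights, which is harmless after taking the $m$-th root. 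This yields the matrix inequality $X / \sqrt d \leq I_N + \diag(\alpha_x) + o(1)$, after which the remainder of the upper bound---the restriction to the vertices with $\alpha_x \leq 2$, the delocalization bound at high-$\alpha$ vertices for the top eigenvector of the restricted (pruned) matrix via its associated tridiagonal matrix, and the final assembly of Section~\ref{sec:proofs_finish}---goes through with $D_x$ replaced by $d \alpha_x$ and with all moment constants now depending on $K$.

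The one step I expect to require real care is the weighted nonbacktracking bound: one must check that, for bounded but otherwise arbitrary (possibly complex) weights, the weighted Ihara--Bass formula together with the moment method for the weighted nonbacktracking matrix still deliver the \emph{sharp} leading constant $\sqrt d$, so that $X / \sqrt d \leq I_N + \diag(\alpha_x) + o(1)$ holds with the same error as in the unweighted case; the other genuinely analytic point, the delocalization estimate for the restricted tridiagonal matrix at vertices of large $\alpha$, is then obtained by the same argument as for $\ul A$. Once these two ingredients are in place, the rest of the transcription from Sections~\ref{sec:large_degree_induces_eigenvalues}--\ref{sec:proofs_finish} is routine.
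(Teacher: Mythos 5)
Your overall strategy---replacing $D_x$ by $d\alpha_x = \sum_y \abs{X_{xy}}^2$ and re-running the argument with $K$-dependent constants---is exactly what the paper does, but you have the allocation of effort roughly backwards. You single out the Ihara--Bass/nonbacktracking step (Section~\ref{sec:proof_upper_bound_adjacency}) as the place ``requiring real care'' and propose to redo the moment computation $\E\tr(B_X B_X^*)^m$ for the weighted nonbacktracking matrix. In the paper this step is taken over \emph{verbatim}: Lemma~\ref{lem:ihara_bass_formula} is already stated for an arbitrary $N\times N$ matrix, and Proposition~\ref{pro:bound_nonbacktracking_matrix} cites a result in \cite{BBK1} whose setting already covers sparse Wigner-type matrices, so no new moment analysis is done. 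The paper then states explicitly that Sections~\ref{sec:proof_upper_bound_adjacency}--\ref{sec:upper_bound_eigenvalues} require no modifications at all beyond allowing the constants $\Cnu$ and the implicit constants in $\ord$ to depend on $K$.

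Conversely, the part of the proof that genuinely needs adaptation is Section~\ref{sec:large_degree_induces_eigenvalues}, and there your one-paragraph sketch is in the right spirit but underestimates what has to change. The paper does not merely reinterpret the tridiagonal entries as weighted averages over spheres; it replaces the indicator vectors $\f 1_{S_i}$ by genuinely \emph{weighted} sphere vectors $\f g_0 = \f 1_x$, $\f g_i = (X\f g_{i-1})\vert_{S_i(x)}$, so that the first off-diagonal entry of the Jacobi matrix is exactly $\norm{X\f 1_x} = \sqrt{d\alpha_x}$. Controlling the later entries then requires a sup-norm induction $\norm{\f g_i}_\infty \leq (\Cnu K)^i$ (proved via the bounded-cycles lemma), a Bennett-type concentration of $\norm{\f g_{i+1}}^2/\norm{\f g_i}^2$ conditionally on the graph, and a re-derivation of the error terms $\f w_1,\f w_2$ in the decomposition of $(X - \sqrt d\,\Lambda(\alpha_x))\f v$ with the redefined $N_i(y) = \scalar{\f 1_y}{X\f g_i}$; the fourth-moment $\E\abs{W_{yz}}^4$ enters the dyadic level-set argument for $\f w_2$. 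Your observation that the analogue of $\f w_0$ vanishes because $W$ is centered is correct and matches the paper. So: correct route, but the work you budget for the nonbacktracking side is unnecessary, while the tridiagonal-approximation side---not a cosmetic change but a reworking of the basis and the error estimates---is where the paper's Section~\ref{sec:general_sparse_random_matrices} actually spends its pages.
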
 

Similarly, versions of Corollaries~\ref{cor:non_centred} and \ref{cor:er_beh} can be easily obtained if 
all entries of $W$ have a common positive mean with appropriate upper and lower bounds. 
Furthermore, for these results and Theorem~\ref{thm:eigenvalues_general_sparse_matrices} the 
boundedness assumption on the entries of $W$ can be considerably relaxed with some extra work.

\section{Notations}  \label{sec:notations} 

In this section we collect notations and tools used throughout this paper. The reader interested in the strategy of the proof can skip this section at first reading and proceed directly to Section~\ref{sec:ideas_proof}, returning to this section as needed for the precise notations.

We denote the positive integers by $\N = \{1, 2, 3, \ldots \}$ and define $\N_0 \defeq \N \cup \{0\}$. 
We set $[n] \defeq \{1, \ldots, n\}$
for any $n \in \N$, $[0] \defeq \emptyset$ and $\qq{r} \defeq \{ 0, \ldots, r\}$ for any $r \in \N_0$. We write $\abs{X}$ for the cardinality of the finite set $X$. 
We use $\ind{\Omega}$ as symbol for the indicator function of the event $\Omega$. 
Universal constants or estimates involving a universal constant are denoted by $C$ and $O(\,\cdot\,)$, respectively.

\paragraph{Notations related to vectors and matrices} 
Vectors in $\R^N$ are denoted by boldface lowercase Latin letters like $\f u$, $\f v$ and $\f w$ and their Euclidean norms by $\norm{\f u}$, $\norm{\f v}$ and $\norm{\f w}$, respectively. 
For a matrix $M \in \R^{N \times N}$, $\norm{M}$ is its operator norm induced by the Euclidean norm on $\R^N$. 

Let $M \in \R^{N \times N}$ be a matrix and $V \subset [N]$. We define the matrix $M_V \in \R^{\abs{V} \times \abs{V}}$ and the family $M_{(V)}$ through 
\[ M_V \defeq (M_{ij})_{i,j \in V}, \qquad \qquad M_{(V)} \defeq (M_{ij})_{i \in V \;\text{or}\; j \in V}. \]
If $V = \{ x \}$ for some $x \in [N]$ then we also write $M_{(x)}$ instead of $M_{(\{ x \})}$. 

The eigenvalues of a Hermitian matrix $M \in \R^{N\times N}$ are denoted by 
\[ \lambda_1(M) \geq \lambda_2(M) \geq \cdots \geq \lambda_N(M). \]  
Moreover, for Hermitian matrices $R,T \in \R^{N\times N}$ we write $R \geq T$ if 
\[ \scalar{\f w}{R \f w} \geq \scalar{\f w}{T \f w} \] 
for all $\f w \in \R^N$. We remark that this is equivalent to $\lambda_N(R - T) \geq 0$. 

For any $x \in [N]$, we define the standard basis vector $\f 1_x \defeq (\delta_{xy})_{y \in [N]} \in \R^N$.
To any subset $S \subset [N]$ we assign the vector $\f 1_S\in \R^N$ given by $\f 1_S \defeq \sum_{x \in S} \f 1_x$. 
Note that $\f 1_{\{ x\}} = \f 1_x$. 
We also introduce the normalized vector $\f e \defeq N^{-1/2} \f 1_{[N]}$. 
If $V \subset [N]$ and $\f w = (w_x)_{x \in [N]} \in \R^N$ then $\f w\vert_V$ denotes 
the vector in $\R^N$ with components $\scalar{ \f 1_y}{\f w\vert_V} \defeq \scalar{ \f 1_y}{\f w}$ for all $ y\in V$ and 
$\scalar{ \f 1_y}{\f w \vert_V} = 0$ for all $y \in [N] \setminus V$.

\paragraph{Notations related to graphs} 
In the entire paper, we consider finite graphs exclusively. 
Let $H$ and $G$ be two graphs. We write $H \subset G$ if $V(H) \subset V(G)$ and $E(H) \subset E(G)$. If $H \subset G$ then we denote by $G \setminus H$ the graph on $V(G)$ with edge set $E(G) \setminus E(H)$. 
To each graph $G = (V(G), E(G))$ we assign its adjacency matrix $\op{Adj}(G)$.
If $G$ is a graph on $[N]$ then, for any $V \subset [N]$, we denote by $G \vert_V$ the subgraph induced by $G$ on the vertex set $V$.
If $A$ is the adjacency matrix of $G$ then $A_V = \op{Adj}(G \vert_V)$ is the adjacency matrix of $G\vert_V$. 

For simplicity, we specialize to the vertex set $[N]$ in the following definitions. Let $H$ be a graph with vertex set $[N]$ and $M=\op{Adj}(H)$ be its adjacency matrix. 
Vertices in $[N]$ are usually labelled by $x,y,z$. The degree of the vertex $x$ is $D_x^H \deq \sum_{y \in [N]} M_{xy}$.  
With respect to $H$, the graph distance of two vertices $x,y \in [N]$ is denoted by 
\begin{equation*}
d^H(x,y) \deq \min \{k \in \N_0 : (M^k)_{xy} \neq 0\}.
\end{equation*}
For $i \in \N_0$, we introduce the $i$-sphere $S_i^H(x)$ and the $i$-ball $B_i^H(x)$ around $x$ defined through 
\[ S_i^H(x) = \{ y \in [N] \col d^H(x,y) = i \}, \qquad B_i^H(x) = \{ y \in [N] \col d^H(x,y) \leq i \}. \] 

For the remainder of this work, $G$ will be an Erd{\H o}s-R\'enyi graph with vertex set $[N]$ and edge probability $d/N$, where $N$ is a large parameter and $d\equiv d_N$ is a function of $N$. 
Moreover, $A=\op{Adj}(G) = (A_{xy})_{x,y \in [N]} \in \{0,1\}^{N\times N}$ will always denote the adjacency matrix of $G$. In this situation, we write $D_x$, $d(x,y)$, 
$S_i(x)$ and $B_i(x)$ instead of $D_x^G$, $d^G(x,y)$, $S_i^G(x)$ and $B_i^G(x)$, respectively. 
Note the relation $\alpha_x d = D_x$ between the normalized degree $\alpha_x$ defined in \eqref{eq:def_alpha_x} and the degree $D_x$.

\paragraph{Probabilistic notations and tools} 

We now introduce a notion of very high probability event as well as a notation for bounds which hold with very high probability. 
Both will be used extensively throughout the present work. 

\begin{definition}[Very high probability] \phantomsection \label{def:very_high_probability_def} 
\begin{enumerate}
\item
Let $\Xi \equiv \Xi_{N,\nu}$ be a family of events parametrized by $N \in \N$ and $\nu > 0$. We say that $\Xi$ \emph{holds with very high probability} if for every $\nu > 0$ there exists $\Cnu_\nu$ such that
\begin{equation*}
\P(\Xi_{N,\nu}) \geq 1 - \Cnu_\nu N^{-\nu}
\end{equation*}
for all $N \in \N$.
\item
For a $\sigma$-algebra $\cal F_N$ and an event $E_N \in \cal F_N$, we extend the definition (i) to \emph{$\Xi$ holds with very high probability on $E$ conditioned on $\cal F$} if for all $\nu > 0$ there exists $\Cnu_\nu$ such that
\begin{equation*}
\P(\Xi_{N, \nu} \cond \cal F_N) \geq 1 - \Cnu_\nu N^{-\nu}
\end{equation*}
almost surely on $E_N$, for all $N \in \N$.
\end{enumerate}
\end{definition}

We remark that the notion of very high probability survives a union bound involving $N^{O(1)}$ events.  We shall tacitly use this fact throughout the paper.

\begin{convention}[Estimates with very high probability] \label{conv:estimates_very_high_probability} 
In statements that hold with very high probability, we use the symbol $\cal C \equiv \cal C_\nu$ to denote a generic positive constant depending on $\nu$ such that the statement holds with probability at least $1 - c_\nu N^{-\nu}$ provided $\cal C_\nu$ and $c_\nu$ are chosen large enough. 
\end{convention} 

We now illustrate the previous convention by explaining in detail the meaning of $\abs{X} \leq \Cnu Y$ with very high probability. Such estimates often appear throughout the paper. 
The bound \emph{$\abs{X} \leq \Cnu Y$ with very high probability} means that, for each $\nu >0$, there are constants $\Cnu_\nu>0$ and $c_\nu >0$, depending on $\nu$, such that 
\[ \P \big( \abs{X} \leq \Cnu_\nu Y \big) \geq 1 - c_\nu N^{-\nu} \] 
for all $N \in \N$. Here, $X$ and $Y$ are allowed to depend on $N$. 

We also write $X = \cal O(Y)$ to mean $\abs{X} \leq \cal C Y$.

Throughout the following we use the function
\begin{equation} \label{eq:def_h} 
h(\alpha) \deq (1 + \alpha) \log (1 + \alpha) - \alpha
\end{equation}
for $\alpha \geq 0$.

To illustrate Definition~\ref{def:very_high_probability_def} and Convention~\ref{conv:estimates_very_high_probability}, we record the following lemma that we shall need throughout the paper.

\begin{lemma}[Upper bound on the degree]  \label{lem:upper_bound_degree} 
For any $x \in [N]$ we have with very high probability
\begin{equation*}
D_x \leq \Delta \leq \Cnu ( d+ \log N), 
\end{equation*} 
where $\Delta \equiv \Delta(d, N, \Cnu)$ is defined by 
\begin{equation} \label{eq:def_Delta}  
\Delta \defeq \begin{cases}
d + \cal C \sqrt{d \log N} & \text{if } d \geq \frac{1}{2} \log N
\\
\cal C \frac{\log N}{\log \log N - \log d} & \text{if } d \leq \frac{1}{2} \log N. 
\end{cases}
\end{equation}
\end{lemma}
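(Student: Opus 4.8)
The plan is to control $D_x = \sum_{y} A_{xy}$, a sum of $N-1$ independent $\mathrm{Bernoulli}(d/N)$ variables with mean $\mu \deq (N-1)d/N \leq d$, by a standard Chernoff/Bennett bound and then to union-bound over $x \in [N]$. First I would recall the exponential moment estimate: for any $t > 0$,
\[
\P(D_x \geq t) \leq \ee^{-\mu} \pB{\frac{\ee \mu}{t}}^{t} \leq \exp\pB{- \mu\, h(t/\mu - 1)}\,,
\]
where $h$ is the function from \eqref{eq:def_h}; equivalently this is the Cramér bound for Poisson-type tails, $\P(D_x \geq t) \leq \exp(-\mu\, g(t/\mu))$ with $g(s) = s\log s - s + 1$. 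The two regimes in the definition \eqref{eq:def_Delta} of $\Delta$ correspond to the two asymptotic behaviours of this rate function: a Gaussian (quadratic) regime when the deviation $t - \mu$ is of order $\sqrt{\mu \log N}$, relevant when $d$ dominates $\log N$, and a Poisson (super-exponential) regime when $t/\mu \to \infty$, relevant when $\log N$ dominates $d$.

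Next I would verify, in each of the two cases, that the stated choice of $\Delta$ makes the tail summable in $N$ after the union bound. In the case $d \geq \tfrac12 \log N$: set $t = \Delta = d + \cal C\sqrt{d\log N}$, so $t - \mu \geq \cal C\sqrt{d\log N} - d$; using $g(s) \geq \tfrac{(s-1)^2}{2(1 + (s-1)/3)} \geq c\,\min\{(s-1)^2, s-1\}$ and the constraint $d \geq \tfrac12\log N$ (so that $\sqrt{d\log N} \leq \sqrt{2}\,d$ and the deviation stays in the quadratic window when $\cal C$ is moderate, or in the linear window otherwise), one gets $\mu\, g(t/\mu) \geq c\,\cal C^2 \log N$, hence $\P(D_x \geq \Delta) \leq N^{-\nu}$ once $\cal C = \cal C_\nu$ is large. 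In the case $d \leq \tfrac12\log N$: set $t = \Delta = \cal C\frac{\log N}{\log\log N - \log d}$, so $t/\mu \approx \frac{\cal C \log N}{d(\log\log N - \log d)} \to \infty$; then $\mu\, g(t/\mu) = t\log(t/\mu) - t + \mu \geq (1 - o(1))\, t \log(t/d)$, and since $\log(t/d) = \log\cal C + \log\log N - \log d - \log(\log\log N - \log d) = (1-o(1))(\log\log N - \log d)$, we obtain $\mu\, g(t/\mu) \geq (1-o(1))\,\cal C \log N \geq \nu \log N$ for $\cal C$ large. A union bound over $x \in [N]$ then gives $D_x \leq \Delta$ simultaneously for all $x$ with probability at least $1 - N^{1-\nu}$, which is the "very high probability" statement (reparametrizing $\nu$). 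Finally, the bound $\Delta \leq \cal C(d + \log N)$ is immediate in the first case, and in the second case follows from $\log\log N - \log d \geq \log 2$ when $d \leq \tfrac12\log N$, so $\Delta \leq \frac{\cal C}{\log 2}\log N$.

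I do not expect a genuine obstacle here — this is a routine large-deviation computation — but the one point requiring care is the matching of the two regimes at the threshold $d \asymp \log N$ and the verification that the single prefactor $\cal C$ in \eqref{eq:def_Delta} can be chosen uniformly so that both the $N^{-\nu}$ tail bound and the crude bound $\Delta \leq \cal C(d+\log N)$ hold simultaneously; this amounts to checking that the rate function $g$ is bounded below by the correct quadratic-versus-$s\log s$ interpolation on the whole range, which is elementary but should be stated cleanly (e.g. via the inequality $g(s) \geq \tfrac{s}{2}\log s$ for $s \geq e^2$ and $g(s) \geq \tfrac{(s-1)^2}{4}$ for $1 \leq s \leq e^2$). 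The details of the degree distribution, including a sharper two-sided version, are deferred to Appendix~\ref{sec:degrees}; here only the upper bound is needed.
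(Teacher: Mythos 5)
Your proof follows the paper's approach exactly: apply Bennett's inequality (equivalently the Chernoff/Cram\'er bound for a $\operatorname{Binom}(N-1,d/N)$ variable) and then verify, by elementary analysis of the rate function $h$, that the stated $\Delta$ pushes the tail below $N^{-\nu}$ once $\cal C$ is large; the paper leaves this elementary analysis implicit, so your write-up merely makes it explicit. One small inaccuracy: in the case $d \leq \tfrac12\log N$, the asymptotic $\log(t/d) = (1-o(1))(\log\log N - \log d)$ fails near the threshold, where $\log\log N - \log d$ stays bounded (around $\log 2$) while $\log(t/d) \sim \log\cal C$; the needed conclusion $t\log(t/d)\geq c\,\cal C\log N$ still holds, but one should argue directly that $\log(t/d)/(\log\log N - \log d)$ is bounded below by a universal positive constant over the full range $\log 2 \leq \log\log N-\log d\leq\log\log N$, rather than invoking an asymptotic equivalence that is only valid when $\log\log N - \log d\to\infty$. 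The union bound over $x$ at the end is unnecessary since the lemma is stated for a single fixed $x$, though of course it does no harm.
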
 

\begin{proof} 
From Bennett's inequality we obtain
\begin{equation*}
\P(D_x \geq d + \alpha d) \leq \ee^{-d h(\alpha)}.
\end{equation*}
The claim now follows from an elementary analysis of the right-hand side, by requiring that it be bounded by $N^{-\nu}$.
\end{proof}

\section{Main ideas of the proof} \label{sec:ideas_proof} 

In this section we explain the main ideas of the proof of Theorem~\ref{thm:correspondence_eigenvalue_large_degree}. 
Let $G$ be an Erd{\H o}s-R\'enyi graph with vertex set $[N]$ and edge probability $d/N$ and let $A$ be its adjacency matrix. 
In the actual proof, all arguments will be applied to $\underline{A}=A-\E A$. However, in this sketch, we explain certain ideas on the level of $A$ 
for the sake of clarity. In each case, a simple adjustment yields the argument for $\underline{A}$ instead of $A$. 

If $d \ll \log N$, then $A$ has many eigenvalues of modulus larger 
than $2 \sqrt{d}$ and they are related to vertices of large degree \cite{BBK2}. 
On the other hand, if $d \gg \log N$ then there are no eigenvalues whose modulus is larger than $2 \sqrt{d}$
\cite{BBK1} (with the exception of the trivial top eigenvalue of $A$).

In order to understand the relationship between large eigenvalues and vertices of large degree it is very insightful 
to analyse the structure of $G$ in the neighbourhood of a vertex $x \in [N]$ of large normalized degree $\alpha_x$. (In the following, we explain the arguments for large eigenvalues only. 
Dealing with small eigenvalues requires straightforward modifications.)  
If $\alpha_x$ is sufficiently large then there is $r_x \in \N$, depending on $\alpha_x$, such that 
$G$ has with very high probability the following properties. 
\begin{enumerate}[label=(\alph*)] 
\item \label{item:ideas_regular} For each $1 \leq i \leq r_x$, the ratio $\abs{S_{i+1}(x)}/\abs{S_i(x)}$ concentrates around $d$ (Lemma~\ref{lem:concentration_S_i} below). 
\item \label{item:ideas_tree} The subgraph $G|_{B_{r_x}(x)}$ is a tree up to a bounded number of edges (Lemma~\ref{lem:tree_approximation} below). 
\item \label{item:ideas_r_x_infty} The radius $r_x$ tends to infinity with $N$ (cf.~\eqref{eq:def_r_star} below).
\end{enumerate}

Owing to the properties \ref{item:ideas_regular} and \ref{item:ideas_tree} of the local geometry of $G$ around a vertex $x$ of large degree, it is natural to study the spectral properties of the adjacency matrix of the following idealized graph $\cal T$ on $[N]$. We suppose that in the ball $B_{r_x + 1}^{\cal T}(x)$ the graph $\cal T$ is a tree where the root vertex $x$ has $d \alpha_x$ children and the vertices in $B_{r_x}^{\cal T}(x) \setminus \{x\}$ have $d$ children. See Figure \ref{fig:regular_tree} 
for an illustration.

\begin{figure}[!ht]
\begin{center}
{\small 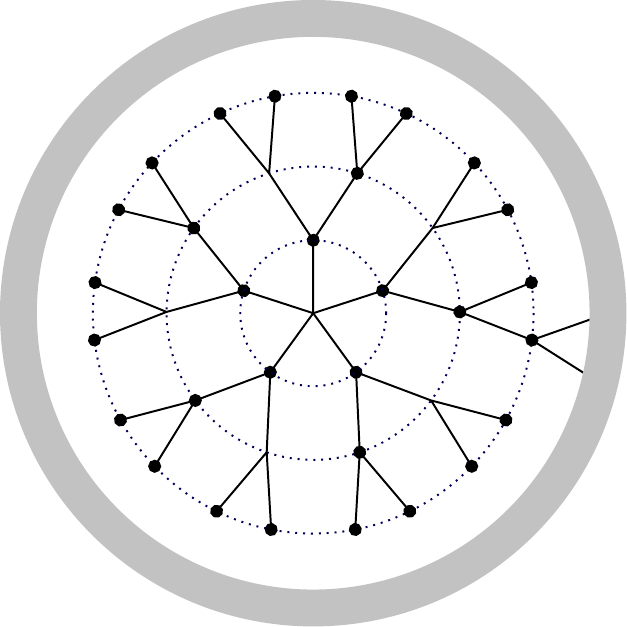}
\end{center}
\caption{The regular tree graph $\cal T$ with $r_x = 2$, $d = 2$, and $D_x = d \alpha_x = 5$. We only draw vertices in the ball $B_{r_x + 1}^{\cal T}(x)$, while the remaining vertices in $[N] \setminus B_{r_x + 1}^{\cal T}(x)$ are in the grey area. \label{fig:regular_tree}}
\end{figure}

The adjacency matrix associated with $\mathcal{T}$ is denoted by $A^\mathcal{T}$. 
The following standard construction \cite{Tro84} yields a convenient approach to the spectral analysis of $A^\mathcal{T}$. 
Let $\f s_0, \ldots, \f s_{r_x}$ be the Gram-Schmidt orthonormalization of $\f 1_x, (A^\mathcal{T}) \f 1_x, \ldots, (A^\mathcal{T})^{r_x} \f 1_x$.
Let $\f s_{{r_x}+1}, \ldots, \f s_{N-1}$ be any completion 
of $\f s_0, \ldots, \f s_{r_x}$ to an orthonormal basis of $\R^N$. We denote by $M^\mathcal{T}$ the matrix representation of $A^\mathcal{T}$ in this basis, i.e., 
\begin{equation} \label{eq:relation_M_A_cal_T} 
 M^\mathcal{T} = S^* A^\mathcal{T} S, \qquad \qquad S \defeq (\f s_0, \ldots, \f s_{N-1}) \in \R^{N\times N}. 
\end{equation}
Note that $A^\mathcal{T}$ and $M^\mathcal{T}$ have the same spectrum.  
The upper-left $(r_x+1) \times (r_x+1)$ block $(M^\mathcal{T})_{\qq{r_x}}$ of $M^\mathcal{T}$ has the tridiagonal form
\begin{equation}\label{eq:M_T_I_x_tridiagonal} 
(M^\mathcal{T})_{ \qq{r_x}} = \sqrt{d} 
\begin{pmatrix}
0 & \sqrt{\alpha_x} &&&&
\\
\sqrt{\alpha_x} & 0 & 1 &&&
\\
& 1 & 0 & 1 &&
\\&&1 & 0 & \ddots&
\\
&&&\ddots & \ddots & 1
\\
&&&& 1 & 0
\end{pmatrix}
\end{equation}
(see Lemma~\ref{lem:Gram_Schmidt} below). 
For $\alpha_x >1$ and $u_0 > 0$, we define the vector $\f u = (u_k)_{k=0}^{N-1}$ with components 
\[ u_1 \defeq \bigg(\frac{\alpha_x}{\alpha_x - 1}\bigg)^{1/2} u_0, \qquad u_i \defeq \bigg( \frac{1}{\alpha_x - 1}\bigg)^{(i-1)/2} u_1, \qquad u_j = 0\] 
for $i = 2, 3, \ldots, r_x$ and $j=r_x + 1, \ldots, N-1$.
If $\alpha_x >2$ then $u_i$ decays exponentially with $i$.  
Therefore, using the tridiagonal structure of $(M^\mathcal{T})_{ \qq{r_x}}$ from~\eqref{eq:M_T_I_x_tridiagonal}
and that $r_x$ is large, we see that 
$\f u$ is an approximate eigenvector of $M^\mathcal{T}$ corresponding to the 
approximate eigenvalue $\sqrt{d} \Lambda(\alpha_x)$, where $\Lambda(t)$ is defined as in \eqref{eq:def_Lambda} 
 (see Lemma~\ref{lem:transfermatrix} below). Therefore, owing to~\eqref{eq:relation_M_A_cal_T}, the vector 
\begin{equation} \label{eq:approximate_eigenvector_tree} 
  \sum_{i=0}^{r_x} u_i \f s_i 
\end{equation}
is an approximate eigenvector of $A^\mathcal{T}$ with approximate eigenvalue $\sqrt{d} \Lambda(\alpha_x)$. 

For $i=0, \ldots, r_x$, we have $\f s_i = \abs{S_i^\cal{T}(x)}^{-1/2} \f 1_{S_i^{\cal{T}}(x)}$ (see Lemma~\ref{lem:Gram_Schmidt} below). 
Hence, the construction in \eqref{eq:approximate_eigenvector_tree} naturally suggests to consider 
\begin{equation} \label{eq:def_v_pedagogical} 
 \f v = \sum_{i=0}^{r_x} u_i \abs{S_i(x)}^{-1/2} \f 1_{S_i(x)} 
\end{equation}
as approximate eigenvector of $A$, i.e., to replace $\f s_i$ in \eqref{eq:approximate_eigenvector_tree} by $\abs{S_i(x)}^{-1/2}\f 1_{S_i(x)}$.  
In Proposition~\ref{pro:approximate_eigenvector} below, we show that $\f v$ is an approximate eigenvector of $A$ 
with approximate eigenvalue $\sqrt{d} \Lambda(\alpha_x)$. The proof heavily relies on the properties \ref{item:ideas_regular}, \ref{item:ideas_tree} and \ref{item:ideas_r_x_infty} listed above 
and justified in Section~\ref{sec:large_degree_induces_eigenvalues}. 

The proof of Theorem~\ref{thm:correspondence_eigenvalue_large_degree} requires two additional key steps. Namely, 
\begin{enumerate}[label=(\roman*)]
\item \label{item:ideas_i} two different vertices of large degree induce two different eigenvalues,
\item \label{item:ideas_ii} all eigenvalues of modulus larger than $2 \sqrt{d}$ arise from vertices of large degree.  
\end{enumerate} 
We remark that \ref{item:ideas_i} is equivalent to a lower bound on the $l$-th largest eigenvalue in terms of the $l$-th largest degree of $G$ 
while \ref{item:ideas_ii} is equivalent to a corresponding upper bound.   

For \ref{item:ideas_i}, we construct the \emph{pruned graph} $G_2$. It is a subgraph of $G$ such that $A$ is well approximated by the adjacency matrix $A_2$ of 
$G_2$ and $B_{r_x}^{G_2}(x)$ and $B_{r_y}^{G_2}(y)$ are disjoint if $x,y \in [N]$, $x \neq y$ and $\alpha_x, \alpha_y \geq 2$ (see Lemma~\ref{lem:subgraph_separating_large_degrees} below).
Hence, the construction in \eqref{eq:def_v_pedagogical} yields two orthogonal approximate eigenvectors of $A$ which thus induce two 
different eigenvalues (or the same eigenvalue with multiplicity at least two). 
This completes \ref{item:ideas_i} (cf. Proposition~\ref{pro:lower_bound_number_outliers}).

Thanks to \ref{item:ideas_i}, we now know that $\lambda_1(\underline{A})\geq \ldots \geq \lambda_L(\underline{A}) \geq (2 + o(1))\sqrt{d}$ if $L \defeq N-\abs{V}$ and $V \defeq \{ x \in [N] \col \alpha_x \leq 2\}$. 
Hence, \ref{item:ideas_ii} will follow if we can show that $\lambda_{L+1}(\underline{A}) \leq (2 + o(1))\sqrt{d}$. 
By the min-max principle, we have 
\[ \max_{\f w \in \mathbb{S}(U)} \scalar{\f w}{\underline{A} \f w} \geq \lambda_{L+1}(\underline{A}), \] 
where $\mathbb{S}(U)$ is the unit sphere in the linear subspace $U \defeq \op{span}\{ \f 1_x \col x \in V \}\subset \R^N$. 
Thus, it suffices to establish an upper bound on the largest eigenvalue $\mu$ of $\underline{A}_V$. 
This will be deduced from  
the matrix inequality
\begin{equation} \label{eq:ihara_bass_inequality_pedagogical} 
 \id + D + o(1)\geq d^{-1/2} \underline{A} 
\end{equation}
which holds with very high probability (see Proposition~\ref{pro:upper_bound_on_adjacency_matrix} below). Here, $D = (\alpha_x \delta_{xy})_{x,y \in [N]}$ is the diagonal matrix of normalized degrees. 
The inequality \eqref{eq:ihara_bass_inequality_pedagogical} is a consequence of an estimate on the nonbacktracking matrix associated with $\underline{A}$ and an Ihara-Bass type formula 
from \cite{BBK1}. 

We now explain how to prove that $\mu$ is at most $(2 + o(1)) \sqrt{d}$. 
Let $\tilde{\f w}=(\tilde{w}_x)_{x \in V}$ be a normalized eigenvector of $\underline{A}_V$ corresponding to $\mu$. 
We define a normalized vector $\f w =(w_x)_{x \in [N]} \in \R^N$ through $w_x = \tilde{w}_x$ for $x \in V$ and $w_x = 0$ for $x \in [N] \setminus V$. 
Since $\scalar{\tilde{\f w}}{\underline{A}_V \tilde{\f w}} = \scalar{\f w}{\underline{A} \f w}$ we can evaluate the inequality in \eqref{eq:ihara_bass_inequality_pedagogical} at $\f w$.  
This yields
\begin{equation} \label{eq:ihara_bass_consequence_pedagogical} 
 \frac{\mu}{\sqrt{d}} - o(1) \leq \scalar{\f w}{(\id + D) \f w} = 1 + \sum_{x \col \alpha_x > 2} \alpha_x w_x^2 + 
\sum_{x \col 2 \geq \alpha_x > \tau} \alpha_x w_x^2 + \sum_{x \col \alpha_x \leq \tau} 
\alpha_x w_x^2 
\end{equation} 
for any $\tau \in (1,2)$, where we used that $\f w$ is normalized. The contribution for $\alpha_x>2$ vanishes as $w_x=0$ for such $x$. Since $\f w$ is normalized the contribution for $\alpha_x \leq \tau$ is at most $\tau$. 
We choose $\tau = 1 + o(1)$. 

What remains is estimating the sum in the regime $2 \geq \alpha_x > \tau$. In the following paragraph, we shall sketch the proof of the bound 
\begin{equation} \label{eq:bound_eigenvector_pedagogical} 
w_x^2 \leq \eps \norm{\f w \vert_{B_{r_x}^{G_\tau}(x)}}^2 
\end{equation}
which holds for some $\eps = o(1)$ uniformly for all $x \in [N]$ satisfying $\tau < \alpha_x \leq 2$. 
Here, $G_\tau$ is the \emph{pruned graph}, a subgraph of $G$ such that $A_\tau=\op{Adj}(G_\tau)$, the adjacency matrix of $G_\tau$, and $A$ are close and $B_{r_x}^{G_\tau}(x)$ and $B_{r_y}^{G_\tau}(y)$ are disjoint for all vertices $x,y \in [N]$ satisfying $x \neq y$ and $\alpha_x, \alpha_y>\tau$ (compare Lemma~\ref{lem:subgraph_separating_large_degrees} below). 
Given \eqref{eq:bound_eigenvector_pedagogical}, we conclude 
\[ \sum_{x \col 2 \geq \alpha_x > \tau} \alpha_x w_x^2 \leq 2 \sum_{x \col 2 \geq \alpha_x > \tau} w_x^2 \leq 2 \eps \norm{\f w}^2, \] 
where we employed in the last step that $(\f 1_{B_{r_x}^{G_\tau}(x)})_{x \col \alpha_x > \tau}$ is a family of orthogonal vectors. 
Since $\norm{\f w} = 1$, $\eps = o(1)$ and $\tau = 1 + o(1)$, we obtain from \eqref{eq:ihara_bass_consequence_pedagogical} that $\mu \leq ( 1 + o(1) + \tau + 2\eps )\sqrt{d} = ( 2 + o(1))\sqrt{d}$. 
Therefore, $\lambda_{L+1}(\underline{A})\leq \mu \leq (2 + o(1))\sqrt{d}$.

We now sketch the proof of \eqref{eq:bound_eigenvector_pedagogical}. For the graph $\cal{T}$ described above, the delocalization estimate in \eqref{eq:bound_eigenvector_pedagogical} can be 
obtained by analysing the tridiagonal matrix $M^\mathcal{T}$ introduced in \eqref{eq:relation_M_A_cal_T} via 
a transfer matrix argument. As $G_\tau$ is close to $\mathcal{T}$ locally around a vertex $x$ satisfying $\alpha_x > \tau$ 
the tridiagonal matrix $\wh{M}$ constructed from $A_\tau$ around $x$ is well approximated by $M^\mathcal{T}$. 
Hence, the transfer matrices associated with $\wh{M}$ and $M^\mathcal{T}$ are also close and a version of the argument for $M^\mathcal{T}$ can be used to deduce \eqref{eq:bound_eigenvector_pedagogical}. 
This completes the sketch of the proof of \ref{item:ideas_ii} and thus the sketch of the proof of Theorem~\ref{thm:correspondence_eigenvalue_large_degree}.

\section{Large eigenvalues induced by vertices of large degree} \label{sec:large_degree_induces_eigenvalues} 

Let $G$ be an Erd{\H os}-R\'enyi graph on the vertex set $[N]$ with edge probability $d/N$.  
Let $A = \op{Adj}(G)$ be the adjacency matrix of $G$ and $\underline{A} \defeq A - \E A$. 
Proposition~\ref{pro:approximate_eigenvector} below, the main result of this section, shows that each vertex of sufficiently large degree induces 
two approximate eigenvectors of $\underline{A}$.  
As explained after the statement of Proposition~\ref{pro:approximate_eigenvector}, this locates a positive and a negative eigenvalue of $\underline{A}$ of large modulus. 

We now introduce the notation necessary to define the approximate eigenvectors.
To lighten notation, we fix the vertex $x$ throughout and omit all arguments $(x)$ from our notation. In particular, we just write $S_i$ and $B_i$ instead of $S_i(x)$ and $B_i(x)$.
Define
\begin{equation}\label{eq:def_r_star} 
{r_x} \defeq \bigg\lfloor \frac{\log N}{3\log D_x} \bigg\rfloor,
\end{equation}
and let $r \leq r_x$.
Let $u_0 > 0$ and define the coefficients
\begin{equation} \label{eq:def_v_x_coeff}
u_1 \defeq \frac{\sqrt{D_x}}{\sqrt{D_x-d}} u_0, \qquad u_i \defeq \frac{d^{(i-1)/2}}{(D_x-d)^{(i-1)/2}} u_1 \quad (i=2, 3, \ldots, r+1).
\end{equation}
Here and in the following, we exclusively consider the event $\{ D_x >d \}$ such that $u_1, \ldots, u_{r+1}$ 
are always well-defined.
Then, on the event $S_i \neq \emptyset$ for $i=1,\ldots, r$, we define the approximate eigenvectors $\f v \equiv \f v(x, r)$ and $\f v_- \equiv \f v_-(x,r)$ through
\begin{equation} \label{eq:def_v_x} 
\f v \defeq \sum_{i=0}^{r} u_i \f s_i, \quad \qquad \f v_- \defeq \sum_{i=0}^r (-1)^i u_i \f s_i, \quad \qquad \f s_i \defeq \abs{S_i}^{-1/2} \f 1_{S_i}.
\end{equation}
Finally, we choose $u_0$ so that the normalization $\norm{\f v}^2 = \norm{\f v_-}^2 = \sum_{i = 0}^{r} u_i^2 = 1$ holds.

For the following proposition, we recall the definition $\alpha_x = D_x / d$. Also, throughout this section we use $\cal K \geq 1$ to denote a constant that is chosen large enough depending on $\nu$ in the definition of very high probability. 

\begin{proposition}[Eigenvectors induced by vertex of large degree] \label{pro:approximate_eigenvector}
Let $x \in [N]$ be a fixed vertex. Suppose that $\cal K \sqrt{\log N} \leq d \leq N^{1/4}$
and $\log d \leq r \leq r_x$. Then
\[ \norm{(\underline{A} - \sqrt{d} \Lambda(\alpha_x)) \f v} + \norm{(\underline{A} + \sqrt{d} \Lambda(\alpha_x)) \f v_-} \leq
\cal C \bigg( \log d + \frac{\log N}{d}\bigg)^{1/2}\bigg( 1+ \frac{\log N}{D_x} \bigg)^{1/2}
\] 
with very high probability on
\begin{equation}\label{eq:approximate_eigenvector_event} 
\hbb{\pbb{2 + 2 \frac{\log d}{r}} d \leq D_x \leq \sqrt{N} (2d)^{-r}}
\end{equation}
conditioned on $D_x$. 
\end{proposition}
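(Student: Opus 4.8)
The plan is to reduce the estimate on $\underline{A}$ to a statement about the tridiagonal matrix obtained by orthonormalizing $\f 1_x,\underline{A}\f 1_x,\dots$, and then to control the error terms using the three structural properties of the neighbourhood of $x$ recalled in Section~\ref{sec:ideas_proof} (concentration of the sphere ratios, tree approximation, and $r_x\to\infty$). First I would write $\underline{A}=A-\E A$ and observe that $\E A$ contributes only a rank-one term proportional to $\f 1_{[N]}\f 1_{[N]}^*$; since $\f v$ and $\f v_-$ are supported on $B_r(x)$, which on the event \eqref{eq:approximate_eigenvector_event} has cardinality at most $\sqrt{N}$ with very high probability, the inner product $\scalar{\f 1_{[N]}}{\f v}$ is $O(\sqrt{|B_r(x)|}/\sqrt N)$, and hence $\|(\E A)\f v\|$ is negligible compared to the claimed right-hand side. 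So it suffices to prove the bound with $A$ in place of $\underline{A}$ for the ``$+$'' vector and, because $A$ has nonnegative entries and $\f v_-$ alternates signs on consecutive spheres, a parallel argument (using that almost all edges from $S_i$ go to $S_{i-1}\cup S_{i+1}$) for the ``$-$'' vector.

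Next I would compute $A\f v$ sphere by sphere. Writing $\f v=\sum_{i=0}^r u_i\abs{S_i}^{-1/2}\f 1_{S_i}$, for a vertex $y\in S_i$ with $1\le i\le r-1$ the quantity $(A\f v)_y$ equals $u_{i-1}\abs{S_{i-1}}^{-1/2}$ times the number of neighbours of $y$ in $S_{i-1}$, plus $u_{i+1}\abs{S_{i+1}}^{-1/2}$ times the number of neighbours of $y$ in $S_{i+1}$, plus $u_i\abs{S_i}^{-1/2}$ times the number of neighbours of $y$ within $S_i$; the tree-approximation Lemma (item~\ref{item:ideas_tree}) says that, after deleting a bounded number of edges, every $y\ne x$ has exactly one neighbour in $S_{i-1}$ and no neighbour inside $S_i$, so the ``main part'' of $(A\f v)\vert_{S_i}$ is $u_{i-1}\abs{S_{i-1}}^{-1/2}+\text{(number of children)}\cdot u_{i+1}\abs{S_{i+1}}^{-1/2}$. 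Using the concentration Lemma (item~\ref{item:ideas_regular}), $\abs{S_{i+1}}/\abs{S_i}=d(1+o(1))$ for all $i\le r_x$ with very high probability, so the vector $A\f v$ agrees with $\sqrt d\,\Lambda(\alpha_x)\f v$ up to (a) boundary terms at $i=0$, $i=r$, $i=r+1$, (b) the $O(1)$ excess/deficient edges from the tree approximation, and (c) the fluctuations in the ratios $\abs{S_{i+1}}/\abs{S_i}$ around $d$. Terms of type (a) at the outer boundary are controlled because $u_i$ decays geometrically with ratio $(\alpha_x-1)^{-1/2}<1$ once $\alpha_x>2+2(\log d)/r$; more precisely the condition $D_x\ge(2+2(\log d)/r)d$ forces $(\alpha_x-1)^{-r/2}\le d^{-1/2}$ roughly, so the tail $\sum_{i>r}u_i^2$ and the single term $u_{r+1}$ are at most an inverse power of $d$. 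The boundary term at $i=0$ and the diagonal relation encoded in \eqref{eq:M_T_I_x_tridiagonal} contribute the identity $\Lambda(\alpha_x)u_0=\sqrt{\alpha_x}u_1$ and $\Lambda(\alpha_x)u_1=\sqrt{\alpha_x}u_0+u_2/\sqrt{\alpha_x}\cdot\sqrt{\alpha_x}$, which hold exactly by the definition \eqref{eq:def_v_x_coeff}; this is the transfer-matrix computation of Lemma~\ref{lem:transfermatrix}. Squaring and summing over $i$, each of (a), (b), (c) contributes a term bounded by $\cal C(\log d+\log N/d)(1+\log N/D_x)$: type (b) because there are $O(\log N)$ bad edges distributed over the geometrically weighted spheres; type (c) because the multiplicative fluctuation of each ratio is of size $O(\sqrt{(\log N)/\abs{S_i}})$ by Bernstein/Bennett, and summing the squared contributions against the weights $u_i^2$ produces the factor $(1+\log N/D_x)$ after using $\abs{S_i}\asymp D_x d^{i-1}$; the extra $\log d$ accounts for the constraint $r\ge\log d$ and the number of spheres that carry non-negligible weight.

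The main obstacle, and the place where I would spend the most care, is item (c): turning the qualitative ``$\abs{S_{i+1}(x)}/\abs{S_i(x)}$ concentrates around $d$'' into a quantitative bound that, when propagated through all $r$ spheres and weighted by the $u_i^2$, still gives the stated error. The subtlety is that $(A\f v)_y$ for $y\in S_i$ involves $\abs{S_{i+1}}^{-1/2}$ times the \emph{number of children of $y$}, and these child-counts vary over $y\in S_i$; one must control not just the aggregate ratio but the $\ell^2$-norm over $y\in S_i$ of the deviation of the child-count from its mean, all conditionally on the graph structure up to radius $i$, so that a martingale/union-bound argument over the $\le N^{O(1)}$ relevant events stays inside the ``very high probability'' framework. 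A further point requiring attention is that all of this must be done conditionally on $D_x$ (equivalently on $\abs{S_1(x)}$), which is why the hypotheses fix the range of $D_x$ in \eqref{eq:approximate_eigenvector_event}: the lower bound $D_x\ge(2+2(\log d)/r)d$ guarantees the geometric decay needed to kill the outer-boundary term, while the upper bound $D_x\le\sqrt N(2d)^{-r}$ guarantees $\abs{B_{r+1}(x)}\le\sqrt N$ so that the ball is sparse enough for the tree approximation and for the $\E A$ term to be negligible. Once these ingredients are assembled, collecting the bounds on (a), (b), (c) and taking square roots yields the claimed estimate, with the identical argument applied to $\f v_-$ after recording that the alternating signs turn the approximate eigenvalue into $-\sqrt d\,\Lambda(\alpha_x)$.
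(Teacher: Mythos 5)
Your proposal follows essentially the same route as the paper: decompose $(\underline{A}-\sqrt{d}\Lambda(\alpha_x))\f v$ into error terms accounting for the $\E A$ contribution, deviations of $B_r(x)$ from a tree, fluctuations of the sphere ratios $\abs{S_{i+1}}/\abs{S_i}$ around $d$, and the boundary at radius $r$, then bound each piece separately. Your handling of $\E A$ (via $\scalar{\f 1_{[N]}}{\f v} = O(\sqrt{\abs{B_r}}/\sqrt{N})$), of the tree-approximation error via the $\ord(1)$ cycle count, and of the outer boundary via the geometric decay of $u_i$ forced by $D_x \geq (2+2\log d/r)d$ all match the paper's $\f w_0$, $\f w_1$, and $\f w_4$.

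There is, however, a genuine gap precisely at the step you flag as the hard one. You correctly identify that one must control the $\ell^2$-norm over $y \in S_i$ of the deviations $E_y \deq N_{i+1}(y) - \E[N_{i+1}(y)\mid A_{(B_{i-1})}]$, but the resolution you sketch (``a martingale/union-bound argument over the $\leq N^{O(1)}$ relevant events'') does not work as stated. The issue is quantitative: a per-vertex Bennett bound only gives $\abs{E_y} \leq \cal C \log N$ with very high probability, so a naive union bound over $y \in S_i$ yields $\frac{1}{\abs{S_i}}\sum_{y} E_y^2 \leq \cal C(\log N)^2$. In the critical regime $d \asymp \log N$ this is of order $d^2$, which is larger than the target $\cal C\, d\,(\log d + \log N/d)$ by a factor of roughly $d/\log d$ and would destroy the final estimate. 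The paper circumvents this by a dyadic decomposition of the level sets $\{y \in S_i \col E_y^2 \in (d^2 e^k, d^2 e^{k+1}]\}$, using tail bounds on the sizes $L_s^i$ of these level sets to show that vertices with large $\abs{E_y}$ are rare enough that the weighted average stays small. This idea (that while individual deviations can be as large as $\log N$, the \emph{empirical distribution} of the deviations concentrates much more sharply, and one must exploit the rarity of large deviations rather than only their magnitude) is the technical heart of the $\f w_2$ estimate, and it is missing from your proposal.

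A secondary, smaller point: your claim that ``each of (a), (b), (c) contributes a term bounded by $\cal C(\log d + \log N/d)(1+\log N/D_x)$'' is imprecise. In the paper's bookkeeping the individual bounds are quite uneven: $\f w_0 = \ord(dN^{-1/4})$, $\f w_1 = \ord(d^{-1/2})$, $\f w_3 = \ord((\log N/D_x)^{1/2})$, and $\f w_4$ decays geometrically in $r$; only $\f w_2$ actually produces the dominant contribution $\ord((\log d + \log N/d)^{1/2}(1+\log N/D_x)^{1/2})$. Keeping track of which piece dominates matters for checking that the hypotheses on $r$ and $D_x$ are actually sufficient.
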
 

We remark that if $M$ is a Hermitian matrix and $\f v$ a normalized vector such that $\norm{M \f v} \leq \epsilon$ then $M$ has an eigenvalue in $[-\epsilon, \epsilon]$. 
Therefore, Proposition~\ref{pro:approximate_eigenvector} implies that $\underline{A}$ possesses with very high 
probability two eigenvalues $\lambda_\pm$ in the vicinity of $\pm \sqrt{d}\Lambda(\alpha_x)$ if $\alpha_x$
is sufficiently large.

We shall show in Lemma~\ref{lem:concentration_S_i} below that $S_i \neq \emptyset$ for $i = 1, \ldots, {r}$ with very high probability on the event $\h{d\leq D_x \leq \sqrt{N}(2d)^{-r}}$.

To prove Proposition~\ref{pro:approximate_eigenvector}, we only consider the term $\norm{(\ul{A} - \sqrt{d} \Lambda(\alpha_x)) \f v}$. The other term is treated in the same way. 
We shall decompose $(\underline{A} - \sqrt{d} \Lambda(\alpha_x))\f v$ into a sum $\f w_0 + \cdots + \f w_4$ of vectors, which are all proved to have a small norm.  (See Lemma~\ref{lem:decomposition_app_eigenvector} below and the estimates in Lemma~\ref{lem:estimates_w_k} below.) Each of the vectors $\f w_i$ will turn out to be small for a different reason, which is why we treat them individually.

In order to define the vectors $\f w_i$, we introduce the notations
\begin{equation} \label{eq:def_e_N_i} 
 \f e \defeq N^{-1/2} \f 1_{[N]}, \qquad N_i(y) \defeq \scalar{\f 1_y}{A \f 1_{S_i}} = \abs{S_i \cap S_1(y)} 
\end{equation}
for all $i=0,\ldots, {r}$ and $y \in [N]$. Thus, $N_i(y)$ is the number of edges starting in $S_i$ and ending in $y$. Note that if the graph $G|_{B_{i+1}}$ is a tree then it is easy to see that
\begin{equation} \label{eq:Ny_tree}
N_i(y) = \ind{y \in S_{i - 1}} (D_y - \ind{i \geq 2}) + \ind{y \in S_{i+1}}\,
\end{equation}
with the convention that $S_{-1} \defeq \emptyset$. See Figure~\ref{fig:Ni} for an illustration of $N_i(y)$. 

\begin{figure}[!ht]
\begin{center}
{\small 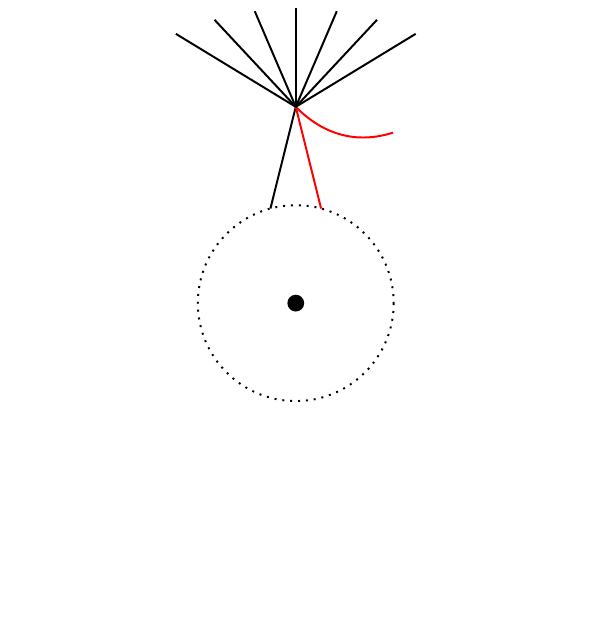}
\end{center}
\caption{An illustration of the definition of $N_i(y)$ from \eqref{eq:def_e_N_i}, where $y \in S_i(x)$. The red edges are forbidden in a tree. For a tree, $N_{i+1}(y) = D_y - 1$, $N_i(y) = 0$, and $N_{i - 1}(y) = 1$.\label{fig:Ni}}
\end{figure}

Define
\begin{equation} \label{eq:def_w_k} 
\begin{aligned}
 \f w_0 & \defeq \frac{d}{N} \f v - d \scalar{\f e}{\f v} \f e, 
 \\
 \f w_1 & \defeq \sum^{{r}}_{i= 0} \frac{u_i}{\sqrt{\abs{S_i}}} \left( \sum_{y \in S_{i+1}} \big(N_i(y) -1\big) \f 1_y 
+\sum_{y \in S_i} N_i(y) \f 1_y\right), \\ 
 \f w_2 & \defeq \sum_{i=1}^{{r}} \frac{u_i}{\sqrt{\abs{S_i}}} \sum_{y \in S_{i-1}} \left( N_i(y) - \frac{\abs{S_i}}{\abs{S_{i-1}}} \right) \f 1_y,  \\ 
\f w_3 & \defeq u_2 \left( \frac{\sqrt{\abs{S_2}}}{\sqrt{\abs{S_1}}} - \sqrt{d} \right) \f s_1 + \sum_{i=2}^{{r} - 1} \left[ u_{i+1} \left(\frac{\sqrt{\abs{S_{i+1}}}}{\sqrt{\abs{S_i}}} -\sqrt{d} \right) 
 + u_{i-1} \left( \frac{\sqrt{\abs{S_i}}}{\sqrt{\abs{S_{i-1}}}} - \sqrt{d} \right)  \right] \f s_i , \\ 
 \f w_4 & \defeq \left( u_{{r}-1} \frac{\sqrt{\abs{S_{r}}}}{\sqrt{\abs{S_{{r}-1}}}}  - u_{{r}-1} \sqrt{d}  - u_{{r}+1} \sqrt{d} \right) \f s_{r}
 + u_{r} \frac{\sqrt{\abs{S_{{r}+1}}}}{\sqrt{\abs{S_{r}}}} \, \f s_{{r}+1}.
\end{aligned}
\end{equation}

\begin{lemma}[Decomposition of $(\underline{A}- \sqrt{d}\Lambda(\alpha_x))\f v$] \label{lem:decomposition_app_eigenvector} 
We have the decomposition 
\begin{equation} \label{eq:decomposition_approximate_eigenvector}
 (\underline{A} - \sqrt{d} \Lambda(\alpha_x)) \f v = \f w_0 + \f w_1 + \f w_2 + \f w_3 + \f w_4. 
\end{equation}
\end{lemma}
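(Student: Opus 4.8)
This is a purely algebraic identity---no probability is needed---valid whenever $\f v$ is well defined, i.e.\ on $\{D_x > d\}$ and on the event that $S_i \neq \emptyset$ for $i = 1, \dots, r$. The plan is to compute $(\underline A - \sqrt d\,\Lambda(\alpha_x))\f v$ directly from the definitions \eqref{eq:def_v_x_coeff}, \eqref{eq:def_v_x}, \eqref{eq:def_e_N_i} and to regroup the result into the five pieces of \eqref{eq:def_w_k}. The centring term is immediate: since $(\E A)_{yz} = \tfrac dN(1 - \delta_{yz})$ we have $\E A = d\,\f e\f e^* - \tfrac dN \id$, hence $-\E A\,\f v = \tfrac dN\f v - d\,\scalar{\f e}{\f v}\f e = \f w_0$. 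It thus remains to prove $A\f v - \sqrt d\,\Lambda(\alpha_x)\f v = \f w_1 + \f w_2 + \f w_3 + \f w_4$.

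I would expand $A\f v = \sum_{i=0}^{r}\tfrac{u_i}{\sqrt{\abs{S_i}}}\,A\f 1_{S_i}$ and use the triangle inequality for the graph distance: every neighbour of a vertex of $S_i$ lies in $S_{i-1}\cup S_i\cup S_{i+1}$, so $A\f 1_{S_i} = \sum_{y\in S_{i-1}}N_i(y)\f 1_y + \sum_{y\in S_i}N_i(y)\f 1_y + \sum_{y\in S_{i+1}}N_i(y)\f 1_y$ with $S_{-1} \defeq \emptyset$ and $N_i(y)$ as in \eqref{eq:def_e_N_i}. Then I peel off two error contributions. Writing $N_i(y) = 1 + (N_i(y) - 1)$ on $S_{i+1}$ and $N_i(y) = 0 + N_i(y)$ on $S_i$---the $1$ and $0$ being the values prescribed by a tree, cf.\ \eqref{eq:Ny_tree}---the sum of the remainders over $i = 0, \dots, r$ is exactly $\f w_1$; writing $N_i(y) = \tfrac{\abs{S_i}}{\abs{S_{i-1}}} + \bigl(N_i(y) - \tfrac{\abs{S_i}}{\abs{S_{i-1}}}\bigr)$ on $S_{i-1}$, the sum of the remainders over $i = 1, \dots, r$ is exactly $\f w_2$. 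What remains is the ``main term''
\[ \f m \defeq \sum_{i=0}^{r}\frac{u_i}{\sqrt{\abs{S_i}}}\,\f 1_{S_{i+1}} \;+\; \sum_{i=1}^{r}\frac{u_i\sqrt{\abs{S_i}}}{\abs{S_{i-1}}}\,\f 1_{S_{i-1}}, \]
and $A\f v = \f w_1 + \f w_2 + \f m$ is then an exact identity.

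It now suffices to show $\f m - \sqrt d\,\Lambda(\alpha_x)\f v = \f w_3 + \f w_4$. Substituting $\f 1_{S_j} = \sqrt{\abs{S_j}}\,\f s_j$ and setting $q_j \defeq \sqrt{\abs{S_j}}/\sqrt{\abs{S_{j-1}}}$ (so $q_1 = \sqrt{D_x}$ exactly, since $\abs{S_0} = 1$ and $\abs{S_1} = D_x$), a shift of the summation index gives $\f m = \sum_{j=0}^{r+1}c_j\f s_j$ with $c_0 = u_1 q_1$, $c_j = u_{j-1}q_j + u_{j+1}q_{j+1}$ for $1 \leq j \leq r-1$, $c_r = u_{r-1}q_r$ and $c_{r+1} = u_r q_{r+1}$. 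On the other side, $\sqrt d\,\Lambda(\alpha_x)\f v = \lambda\sum_{i=0}^r u_i\f s_i$ where $\lambda \defeq \sqrt d\,\Lambda(\alpha_x) = \tfrac{D_x}{\sqrt{D_x - d}}$ (using $\Lambda(t) = t/\sqrt{t-1}$ and $\alpha_x = D_x/d$). The coefficients \eqref{eq:def_v_x_coeff} were chosen precisely so that $\f u$ is an eigenvector of eigenvalue $\lambda$ of the ideal tridiagonal matrix \eqref{eq:M_T_I_x_tridiagonal}; a direct computation from \eqref{eq:def_v_x_coeff} yields the identities $\sqrt{D_x}\,u_1 = \lambda u_0$, $\sqrt{D_x}\,u_0 + \sqrt d\,u_2 = \lambda u_1$ and $\sqrt d\,(u_{i-1} + u_{i+1}) = \lambda u_i$ for $i \geq 2$. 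Feeding these into $\f m - \lambda\f v$ (and using $q_1 = \sqrt{D_x}$), the $\f s_0$-coefficient vanishes, the $\f s_1$-coefficient equals $u_2(q_2 - \sqrt d)$, the $\f s_i$-coefficient for $2 \leq i \leq r-1$ equals $u_{i-1}(q_i - \sqrt d) + u_{i+1}(q_{i+1} - \sqrt d)$, the $\f s_r$-coefficient equals $u_{r-1}(q_r - \sqrt d) - \sqrt d\,u_{r+1}$, and the $\f s_{r+1}$-coefficient equals $u_r q_{r+1}$. Since $q_j = \sqrt{\abs{S_j}}/\sqrt{\abs{S_{j-1}}}$, these are precisely the coefficients of $\f w_3$ (its two ``bulk'' terms) and $\f w_4$ (its two ``boundary'' terms) in \eqref{eq:def_w_k}, which proves \eqref{eq:decomposition_approximate_eigenvector}. (We use $r \geq 2$ here, which holds in every application of this lemma; for $r = 1$ the two ends $i = 1$ and $i = r$ coincide and the displayed identities for $\f u$ no longer suffice.)

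The argument is entirely bookkeeping and I do not anticipate a real obstacle. The one substantive ingredient is the matching of the recursion \eqref{eq:def_v_x_coeff} with the target value $\sqrt d\,\Lambda(\alpha_x) = D_x/\sqrt{D_x - d}$---that is, the three displayed identities, which are exactly the eigenvector equations for the regular-tree tridiagonal matrix \eqref{eq:M_T_I_x_tridiagonal}, truncated at level $r$ (whence the surplus term $u_{r+1}$ and the boundary terms collected in $\f w_4$). The main practical risk is an index-shift error at the two ends ($i = 0, 1$ and $i = r, r+1$); this is precisely why those four end coefficients are isolated into $\f w_3$ and $\f w_4$ rather than folded into a single clean three-term recursion.
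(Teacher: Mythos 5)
Your proof is correct and follows essentially the same route as the paper's: peel off $\f w_0$ (the centring term $-\E A\,\f v$), then $\f w_1$ and $\f w_2$ (the tree/regularity defects) from the decomposition $A\f 1_{S_i} = \sum_{y\in S_{i-1}} N_i(y)\f 1_y + \sum_{y\in S_i} N_i(y)\f 1_y + \sum_{y\in S_{i+1}} N_i(y)\f 1_y$, convert the remaining main term to the $\f s_j$ basis, and subtract $\sqrt d\,\Lambda(\alpha_x)\f v$ via the three-term recursion for $u_i$ to obtain exactly the coefficients of $\f w_3 + \f w_4$. Your caveat that $r\geq 2$ is required so that the four boundary coefficients at $j\in\{0,1,r,r+1\}$ stay distinct is a correct observation the paper leaves implicit; it holds in every application of the lemma since there $r\geq\log d$.
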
 

Lemma \ref{lem:decomposition_app_eigenvector} will be shown in Subsection~\ref{subsec:proof_decomposition_app_eigenvector} below. 
We now explain the origin and interpretation of the different errors $\f w_0, \dots, \f w_4$.
\begin{itemize}
\item
The vector $\f w_0$ is equal to $- (\E A) \f v$, and hence takes care of the expectation $\E A$ in the definition of $\underline{A} = A - \E A$. It will turn out to be small because the vector $\f v$ is localized near the vertex $x$, and hence has a small overlap with $\f e$, which is completely delocalized.
\item
The vector $\f w_1$ quantifies the extent to which $G|_{B_{{r}+1}}$ deviates from a tree. Indeed, by \eqref{eq:Ny_tree} it vanishes if $G|_{B_{{r}+1}}$ is a tree. It will turn out to be small because the number of cycles in $G|_{B_{{r}+1}}$ is not too large.
\item
The vector $\f w_2$ quantifies the extent to which $G|_{B_{r}}$ deviates from a tree with the property that, for each $i \geq 2$, all vertices in $S_{i}$ have the same degree. Indeed, it is immediate that the term $i = 1$ is always zero, and the other terms vanish under the above condition, by \eqref{eq:Ny_tree}. It will turn out to be small because the number of cycles in $G|_{{B_{r+1}}}$ is not too large and because $N_i(y)$ will concentrate around $\frac{\abs{S_i}}{\abs{S_{i-1}}}$ for most vertices $y \in S_{i - 1}$, for any $i \geq 2$.
\item
The vector $\f w_3$ quantifies the extent to which $G|_{B_{r}}$ deviates from a graph with the property that $\abs{S_{i+1}} = d \abs{S_i}$ for all $i \geq 1$. 
The ratios $\abs{S_{i+1}}/\abs{S_i}$ will turn out to concentrate around $d$ with very high probability, 
thus ensuring the smallness of $\f w_3$. 
\item
Finally, the vector $\f w_4$ quantifies the error arising from edges connecting the ball $B_{r}$, where the tree approximation is valid, to the rest of the graph $[N] \setminus B_{r}$, where it is not. It will be small by the exponential decay of the coefficients $u_i$.
\end{itemize}

\begin{lemma}[Estimates on $\f w_0, \ldots, \f w_4$] \label{lem:estimates_w_k} 
Let $\cal K \sqrt{\log N}\leq d \leq N^{1/4}$. 
For any $r \leq r_x$, the estimates
\begin{subequations} 
\begin{align} 
\norm{\f w_0} & =  \ord(d N^{-1/4})  \label{eq:estimate_w_0}  \\ 
\norm{\f w_1} & = \ord( d^{-1/2} ) , \label{eq:estimate_w_1}  \\ 
\norm{\f w_2} & =\ord\bigg(\bigg( \log d + \frac{\log N}{d}\bigg)^{1/2} \bigg( 1+  \frac{\log N}{D_x}  \bigg)^{1/2} \bigg)  , \label{eq:estimate_w_2}  \\ 
\norm{\f w_3} & = \ord\bigg(\bigg(\frac{\log N}{D_x} \bigg)^{1/2}\bigg), \label{eq:estimate_w_3} \\ 
\norm{\f w_4} & =\ord\bigg(\bigg(\frac{d}{D_x-d} \bigg)^{({r}-2)/2}\bigg( \frac{d}{\sqrt{D_x -d}} + \bigg(\frac{\log N}{D_x} \bigg)^{1/2} \bigg)\bigg) \label{eq:estimate_w_4} 
\end{align} 
\end{subequations} 
hold with very high probability on  $\{d < D_x \leq \sqrt{N} (2d)^{-r}\}$  conditioned on $A_{(x)}$.
\end{lemma}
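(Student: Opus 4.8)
The plan is to reveal the graph by breadth-first search from $x$, working throughout conditionally on $A_{(x)}$ (which fixes $D_x$ and the sphere $S_1$) on the event in the statement, and to estimate the five vectors separately, dispatching $\f w_0,\f w_1,\f w_3,\f w_4$ by fairly direct arguments and spending the bulk of the effort on $\f w_2$. I will use four inputs. First, the concentration of the sphere sizes: with very high probability $S_i\neq\emptyset$ and $\absb{\abs{S_{i+1}}/\abs{S_i}-d}\leq\cal C\sqrt{d(\log N)/D_x}$ for all $1\leq i\leq r$ (equivalently $\bigl(\sqrt{\abs{S_{i+1}}/\abs{S_i}}-\sqrt d\bigr)^2\leq\cal C(\log N)/D_x$), and in particular $\abs{S_i}\geq D_x(d/2)^{i-1}$ for $i\geq1$ and $\abs{B_r}\leq\cal C\sqrt N/d$; this is the content of Lemma~\ref{lem:concentration_S_i} below (or it can be proved beforehand by the same first-moment and Bennett estimates used here). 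Second, the tree approximation: with very high probability $G|_{B_{r+1}}$ differs from a tree by at most $\cal C$ edges (Lemma~\ref{lem:tree_approximation} below). Third, the degree bound $D_y\leq\Delta\leq\cal C(d+\log N)$ from Lemma~\ref{lem:upper_bound_degree}. Fourth, elementary consequences of \eqref{eq:def_v_x_coeff}: writing $q\deq d/(D_x-d)$, one has $u_i\leq\cal C u_1 q^{(i-1)/2}$ for $i\geq1$, $\sum_{i=0}^r u_i^2=1$, $\sum_{i\geq1}u_i^2\abs{S_{i-1}}/\abs{S_i}\leq\cal C$, $\sum_{i\geq1}u_i^2/\abs{S_i}\leq\cal C/D_x$, and the sharper $\sum_{i\geq2}u_i^2/\abs{S_i}\leq\cal C u_1^2/(D_x(D_x-d))$ with $u_1^2\leq1$ (and $u_1^2$ small when $D_x-d\ll d$), all via the sphere concentration. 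Finally I record the key probabilistic fact: for $i\geq2$ and $y\in S_{i-1}$, every neighbour of $y$ outside $B_{i-1}$ lies in $S_i$, so $N_i(y)$ is the number of edges from $y$ to $[N]\setminus B_{i-1}$; hence, conditionally on $B_{i-1}$ (and $A_{(x)}$), the family $(N_i(y))_{y\in S_{i-1}}$ consists of i.i.d.\ $\op{Binomial}(N-\abs{B_{i-1}},d/N)$ variables with mean $\mu_i=d(1+O(\abs{B_r}/N))$.

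The estimates on $\f w_0,\f w_1,\f w_3,\f w_4$ are then short. Since $\f w_0=-(\E A)\f v$ we get $\norm{\f w_0}\leq dN^{-1}+d\,\abs{\scalar{\f e}{\f v}}$, and $\abs{\scalar{\f e}{\f v}}=N^{-1/2}\sum_i u_i\abs{S_i}^{1/2}\leq N^{-1/2}\abs{B_r}^{1/2}\leq\cal C N^{-1/4}d^{-1/2}$, which gives \eqref{eq:estimate_w_0}. For $\f w_1$: by \eqref{eq:Ny_tree} every nonzero term in its definition is charged to an excess edge of $G|_{B_{r+1}}$, so the tree approximation gives $\sum_{y\in S_{i+1}}(N_i(y)-1)^2+\sum_{y\in S_i}N_i(y)^2\leq\cal C$ for each $i$ (these are sums of nonnegative integers whose totals are bounded by the excess count), and since $\abs{S_i}\geq D_x$ for $i\geq1$ and the $i=0$ terms vanish, $\norm{\f w_1}^2\leq\cal C\sum_{i\geq1}u_i^2/\abs{S_i}\leq\cal C/D_x=O(1/d)$, which is \eqref{eq:estimate_w_1}. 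For $\f w_3$ and $\f w_4$ the $\f s_i$ are orthonormal, so expanding squares leaves sums of $u_j^2\bigl(\sqrt{\abs{S_{i+1}}/\abs{S_i}}-\sqrt d\bigr)^2$ and, for $\f w_4$ only, two extra terms of the form $u_{r+1}^2 d$ and $u_r^2\abs{S_{r+1}}/\abs{S_r}$; plugging in the sphere concentration and the decay of the $u_i$ yields \eqref{eq:estimate_w_3} and \eqref{eq:estimate_w_4} directly.

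The bound on $\f w_2$ is the heart of the proof, and the step I expect to cost the most work. Here $\norm{\f w_2}^2=\sum_{i=2}^r\tfrac{u_i^2}{\abs{S_i}}\sum_{y\in S_{i-1}}\bigl(N_i(y)-\tfrac{\abs{S_i}}{\abs{S_{i-1}}}\bigr)^2$ (the $i=1$ term vanishes since $N_1(x)=D_x=\abs{S_1}/\abs{S_0}$). Because $\abs{S_i}/\abs{S_{i-1}}$ differs from the empirical mean $\bar N_i\deq\abs{S_{i-1}}^{-1}\sum_{y\in S_{i-1}}N_i(y)$ by at most $\cal C/\abs{S_{i-1}}$ (the number of excess edges between the two spheres), and the empirical mean minimises the sum of squared deviations, it suffices to control $\sum_{y\in S_{i-1}}(N_i(y)-\mu_i)^2$ conditionally on $B_{i-1}$. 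This is where the i.i.d.\ Binomial structure and Bennett's inequality enter: decomposing $S_{i-1}$ by the dyadic size of $\abs{N_i(y)-\mu_i}$, Bennett's inequality controls the number of vertices at each scale, a second Bennett/Bernstein estimate controls the fluctuation of the contribution of each scale, and the degree bound $\Delta$ caps the relevant scales at $O(\Delta)$ (with very high probability no $y\in S_{i-1}$ has $N_i(y)\gtrsim\Delta$). Summing the scales---taking care not to lose a spurious factor $\log N$, since the level-set sizes decay fast enough that the total is governed by the typical scale plus the single extreme vertex---gives, with very high probability conditionally on $B_{i-1}$, a bound of the schematic form $\sum_{y\in S_{i-1}}(N_i(y)-\mu_i)^2\leq\cal C\abs{S_{i-1}}\,d+\cal C\Delta^2$ (up to lower-order terms). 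Substituting and using the coefficient sums, the $\abs{S_{i-1}}d$ part contributes $\cal C\sum_i u_i^2\abs{S_{i-1}}d/\abs{S_i}\leq\cal C$, while the $\Delta^2$ part contributes $\cal C\Delta^2\sum_{i\geq2}u_i^2/\abs{S_i}\leq\cal C u_1^2\Delta^2/(D_x(D_x-d))$, which an elementary comparison (using $\Delta\leq\cal C(d+\log N)$, $u_1^2\leq1$, and the smallness of $u_1^2$ when $D_x-d\ll d$) bounds by $\cal C(\log d+\log N/d)(1+\log N/D_x)$; altogether this is \eqref{eq:estimate_w_2}. A union bound over the $\leq r\leq\log N$ values of $i$ preserves the very-high-probability statement, and all of the above is carried out conditionally on $A_{(x)}$. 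The one genuinely delicate point in the lemma is the sharp form of the degree-concentration estimate---a naive Bernstein bound costs an extra $\log N$ and destroys the claimed $(\log d)$-scaling---so that is where the care is needed.
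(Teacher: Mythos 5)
Your decomposition into $\f w_0,\ldots,\f w_4$, and your treatments of $\f w_0,\f w_1,\f w_3,\f w_4$ via sphere concentration and the tree approximation, match the paper's proof closely and are sound. The weak point is the estimate on $\f w_2$, which is the heart of the lemma, and here there is a genuine gap.

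\textbf{The claimed bound on $\sum_y E_y^2$ is not substantiated, and the sketch would not produce it.} You assert $\sum_{y\in S_{i-1}}(N_i(y)-\mu_i)^2\leq\cal C\abs{S_{i-1}}d+\cal C\Delta^2$ ``up to lower-order terms,'' on the grounds that ``the total is governed by the typical scale plus the single extreme vertex.'' But the dyadic-scales argument you describe inherently produces a multiplicative $\log d$ factor on the bulk term, which is \emph{exactly} the factor appearing in \eqref{eq:estimate_w_2} and which the paper's proof of \eqref{eq:Z_claim} makes explicit. Concretely: for each negative dyadic scale $k\in[-\log d,0]$ (i.e.\ $E_y^2\asymp d^2\ee^k$), a union bound over subsets of $S_{i-1}$ gives only the very-high-probability level-set bound $\abs{\cal N_k^i}\lesssim(\abs{S_{i-1}}+\log N)/(d\ee^k)$, hence a per-scale contribution $d^2\ee^{k+1}\abs{\cal N_k^i}\lesssim d(\abs{S_{i-1}}+\log N)$ that is \emph{the same at every negative scale}; summing over the $\asymp\log d$ scales yields $\cal C d(\abs{S_{i-1}}+\log N)\log d$, not $\cal C\abs{S_{i-1}}d$. (The expected level-set sizes do decay fast, but with very high probability they can all saturate their bounds, and the union bound does not exploit correlations between scales.) Avoiding this $\log d$ would require a genuinely different device---e.g.\ a moment-generating-function argument with $\lambda\asymp\log(\Delta/d)/\Delta$---which you neither state nor need, because the target \eqref{eq:estimate_w_2} already carries the $(\log d+\log N/d)$ factor. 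The paper's approach is cleaner precisely because it \emph{keeps} this factor and then takes a \emph{maximum} over $i$ (using $\abs{S_i}\geq D_x$ and $\sum_iu_i^2\leq1$), so that \eqref{eq:estimate_w_2} drops out directly without any further comparison.

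\textbf{The ``elementary comparison'' is unverified and relies on an inequality that fails at the edge of the hypotheses.} To land on \eqref{eq:estimate_w_2} from your schematic bound you invoke the sharper coefficient estimate $\sum_{i\geq2}u_i^2/\abs{S_i}\leq\cal C u_1^2/(D_x(D_x-d))$; this requires $\sum_{i\geq2}(D_x-d)^{-(i-1)}$ to be bounded, which fails (grows like $r$) when $D_x-d=1$, a case allowed by the hypothesis $D_x>d$. You then need to exploit ``smallness of $u_1^2$'' case-by-case across the regimes $D_x-d=1$, $D_x\asymp 2d$, $d\geq\frac12\log N$, $d\leq\frac12\log N$, etc.; you gesture at this but do not carry it out, and it is not obvious. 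By contrast the paper's proof needs only $\abs{S_i}\gtrsim D_xd$, $\sum_iu_i^2\leq1$, and the $Y_i$ estimate, after which \eqref{eq:estimate_w_2} follows from \eqref{eq:Z_claim} with no casework at all.

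In short: the first four terms and the overall strategy are fine, but the central estimate on $\f w_2$ is asserted rather than proved, the asserted bound is not what the described method yields, and the extra comparison you introduce to compensate needs substantial verification and does not cover the full range $D_x>d$.
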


Lemma \ref{lem:estimates_w_k} is proved in Subsection~\ref{subsec:smallness_w_k} below.

\begin{proof}[Proof of Proposition \ref{pro:approximate_eigenvector}]
From Lemmas \ref{lem:decomposition_app_eigenvector} and \ref{lem:estimates_w_k} we get
\[ \norm{(\underline{A} - \sqrt{d} \Lambda(\alpha_x)) \f v} \leq \cal C
\bigg( \log d + \frac{\log N}{d}\bigg)^{1/2}\bigg( 1+ \frac{\log N}{D_x} \bigg)^{1/2}
+ \cal C\sqrt{d} \bigg(\frac{d}{D_x-d} \bigg)^{({r}-1)/2}
\] 
with very high probability on $\{ 2d < D_x \leq \sqrt{N} (2d)^{-r}\}$. Write $D_x = (2 + t) d$ for $t > 0$. To conclude the proof, it suffices to show that
\begin{equation} \label{t_cond}
\sqrt{d} \bigg(\frac{d}{D_x-d} \bigg)^{({r}-1)/2} \leq 1\qquad \text{i.e.} \qquad
\log (1+t) \geq \frac{\log d}{r - 1}.
\end{equation}
Since by assumption $\log d \leq r$, this condition is satisfied provided that
\begin{equation*}
2 \frac{\log d}{r} \leq t = \frac{D_x}{d} - 2.
\end{equation*}
This concludes the proof for $\f v$. For $\f v_-$ the bound follows in the same way from trivial modifications 
of Lemma~\ref{lem:decomposition_app_eigenvector} and Lemma~\ref{lem:estimates_w_k} obtained by replacing $u_i$ 
by $(-1)^i u_i$. We leave the details of these modifications to the reader. 
\end{proof}

\subsection[Proof of Lemma~\ref{lem:decomposition_app_eigenvector}]{Decomposition of {$(\underline{A} - \sqrt{d}\Lambda(\alpha_x))\mathbf{v}$} -- Proof of Lemma~\ref{lem:decomposition_app_eigenvector}} 
\label{subsec:proof_decomposition_app_eigenvector}

In this subsection, we prove Lemma~\ref{lem:decomposition_app_eigenvector}.
We recall that $\f w_0$, \ldots, $\f w_4$ were defined in \eqref{eq:def_w_k} and $N_i(y)$ in \eqref{eq:def_e_N_i}.

\begin{proof}[Proof of Lemma~\ref{lem:decomposition_app_eigenvector}]
Recalling the definition $\underline{A} = A - \E A$, we have
\begin{equation*}
 \underline{A} \f v  = A \f v + \frac{d}{N} \f v- d \scalar{\f e}{\f v} \f e 
   = \f w_0 + \sum_{i=0}^{r} \frac{u_i}{\sqrt{\abs{S_i}}} A \f 1_{S_i}. 
\end{equation*}
By the definition \eqref{eq:def_e_N_i} of $N_i(y)$ and the triangle inequality for the graph distance, we have
\begin{equation*}
A \f 1_{S_i} = \ind{i \geq 1} \sum_{y \in S_{i-1}} N_i(y) \f 1_y + \sum_{y \in S_i} N_i(y) \f 1_y + \sum_{y \in S_{i+1}} N_i(y) \f 1_y,
\end{equation*}
so that
\begin{align*}
 \underline{A} \f v  & = \f w_0 + \sum_{i=1}^{r} \frac{u_i}{\sqrt{\abs{S_i}}} \sum_{y \in S_{i-1}} N_i(y) \f 1_y + \sum_{i=0}^{r} \frac{u_i}{\sqrt{\abs{S_i}}} \bigg[ \sum_{y \in S_i} N_i(y) \f 1_y + \sum_{y \in S_{i+1}} N_i(y) \f 1_y \bigg] \\ 
 & = \f w_0 + \f w_1 + \sum_{i=1}^{r} \frac{u_i}{\sqrt{\abs{S_i}}} \bigg[\sum_{y \in S_{i-1}} N_i(y) \f 1_y + \f 1_{S_{i+1}} \bigg] + u_0 \f 1_{S_1} \\ 
 & = \f w_0 + \f w_1 + \f w_2 + \sum_{i=1}^{r} \frac{u_i}{\sqrt{\abs{S_i}}} \bigg[ \sum_{y \in S_{i-1}} \frac{\abs{S_i}}{\abs{S_{i-1}}} \f 1_y + \f 1_{S_{i+1}} \bigg] + u_0 \f 1_{S_1}.
\end{align*} 
Thus, we conclude
\begin{equation}\label{eq:under_A_v_minus_w0_w1_w2}
\begin{aligned} 
\underline{A}\f v -\sum_{k=0}^2 \f w_k
& =  u_0 \f 1_{S_1} + \sum_{i=1}^{r} \frac{u_i}{\sqrt{\abs{S_i}}} \bigg[ \frac{\abs{S_i}}{\abs{S_{i-1}}} \f 1_{S_{i-1}} + \f 1_{S_{i+1}} \bigg].
\\
& =  u_0 \sqrt{\abs{S_1}} \, \f s_1 + u_1 \sqrt{\abs{S_1}} \, \f s_0 + u_2 \frac{\sqrt{\abs{S_2}}}{\sqrt{\abs{S_1}}} \, \f s_1 + \sum_{i=2}^{{r}-1}  
\bigg( u_{i+1} \frac{\sqrt{\abs{S_{i+1}}}}{\sqrt{\abs{S_i}}}  + u_{i-1} \frac{\sqrt{\abs{S_i}}}{\sqrt{\abs{S_{i-1}}}}  \bigg) \f s_i \\ 
 & \phantom{=} + u_{r}\frac{\sqrt{\abs{S_{{r}+1}}}}{\sqrt{\abs{S_{r}}}} \, \f s_{{r}+1} + u_{{r}-1} \frac{\sqrt{\abs{S_{r}}}}{\sqrt{\abs{S_{{r}-1}}}} \, \f s_{r}. 
\end{aligned} 
\end{equation}

Since $\sqrt{d} \Lambda(\alpha_x) = \frac{D_x}{\sqrt{D_x - d}}$, from the definition of $u_i$ in \eqref{eq:def_v_x_coeff} we get 
\[ \sqrt{d} \Lambda(\alpha_x) u_0 = \sqrt{D_x} u_1, \qquad \sqrt{d} \Lambda(\alpha_x) u_1 = \sqrt{D_x} u_0 + \sqrt{d} u_2, \qquad \sqrt{d} \Lambda(\alpha_x) u_i = \sqrt{d} u_{i-1} + \sqrt{d}u_{i+1} \] 
for all $i=2,3,\ldots, {r}$.
This implies 
\[ \sqrt{d} \Lambda(\alpha_x) \f v = u_1 \sqrt{\abs{S_1}} \, \f s_0 + \pB{u_0 \sqrt{\abs{S_1}} + u_2 \sqrt{d}} \f s_1 + \sum_{i=2}^{r} \pB{ u_{i-1} \sqrt{d} + u_{i+1} \sqrt{d}} \f s_i.  \] 
Together with \eqref{eq:under_A_v_minus_w0_w1_w2}, this yields 
\[ \begin{aligned} 
& (\underline{A} - \sqrt{d} \Lambda(\alpha_x))\f v - \sum_{k=0}^2 \f w_k 
 \\
 &\quad\qquad = u_2 \bigg( \frac{\sqrt{\abs{S_2}}}{\sqrt{\abs{S_1}}} - \sqrt{d} \bigg) \f s_1 
+ \sum_{i=2}^{{r} - 1}  \left[ u_{i+1} \left(\frac{\sqrt{\abs{S_{i+1}}}}{\sqrt{\abs{S_i}}} -\sqrt{d} \right)  
+ u_{i-1} \left( \frac{\sqrt{\abs{S_i}}}{\sqrt{\abs{S_{i-1}}}} - \sqrt{d} \right)  \right] \f s_i 
\\
&\quad\qquad \phantom{=}
+  \left( u_{{r}-1} \frac{\sqrt{\abs{S_{r}}}}{\sqrt{\abs{S_{{r}-1}}}}  - u_{{r}-1} \sqrt{d}  - u_{{r}+1} \sqrt{d} \right) \f s_{r}
 + u_{r} \frac{\sqrt{\abs{S_{{r}+1}}}}{\sqrt{\abs{S_{r}}}} \, \f s_{{r} + 1} \\
&\quad\qquad = \f w_3 +  \f w_4,
\end{aligned} \]
which concludes the proof.
\end{proof}

\subsection[Proof of Lemma~\ref{lem:estimates_w_k}]{Smallness of $\mathbf{w}_0$, \ldots, $\mathbf{w}_4$ -- Proof of Lemma~\ref{lem:estimates_w_k}} 
\label{subsec:smallness_w_k}

This subsection is devoted to the proof of Lemma~\ref{lem:estimates_w_k}.  

In order to estimate $\f w_0, \ldots,\f w_4$, we shall make frequent use of the following lemma. 
We recall from Section~\ref{sec:notations} that $A_{(x)} = (A_{ij})_{i = x \text{ or } j = x}$. 

\begin{lemma}[Concentration of $\abs{S_{i+1}}/\abs{S_i}$]\label{lem:concentration_S_i}
Let $1 \leq d \leq N^{1/4}$. 
\begin{enumerate}[label=(\roman*)]
\item For $r,i \in \N$ satisfying $1 \leq i \leq r$ we have 
\begin{subequations} 
\begin{equation}
\absa{\frac{\abs{S_{i+1}}}{d\abs{S_i}} - 1 }  = \ord \bigg(\bigg( \frac{\log N}{d \abs{S_i}} \bigg)^{1/2}\bigg) \label{eq:proof_w_3_aux_estimate}
\end{equation}
and
\begin{equation} \label{eq:concentration_S_i}
\abs{S_i} = D_x d^{i-1}\bigg(1 + \ord\bigg( \bigg( \frac{\log N}{dD_x}\bigg)^{1/2} \bigg) \bigg) 
\end{equation}
\end{subequations} 
with very high probability on
\begin{equation} \label{D_assumption}
\hbb{\cal K \frac{\log N}{d}\leq D_x \leq \sqrt{N}(2d)^{-r}}
\end{equation}
conditioned on $A_{(x)}$.
\item Moreover, for all $r, i \in \N$ satisfying $1 \leq i \leq r$, the bound  
\begin{equation} \label{eq:upper_bound_S_i_without_lower_bound_D_x} 
 \abs{S_{i+1}} \leq d \abs{S_i} + \Cnu (d \abs{S_i} \log N)^{1/2} 
\end{equation} 
holds with very high probability on $\{ D_x \leq \sqrt{N} (2d)^{-r}\}$ conditioned on $A_{(x)}$. 
\end{enumerate} 
\end{lemma}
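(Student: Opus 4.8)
The plan is to analyse $\abs{S_i}$ through the breadth-first exploration of $G$ started at the fixed vertex $x$. Conditionally on $A_{(x)}$ the sphere $S_1$ is already determined, with $\abs{S_1}=D_x$. For $i\geq 1$ let $\cal F_i$ be the $\sigma$-algebra generated by $A_{(x)}$ together with all edges incident to $B_{i-1}$, so that $B_0,\dots,B_i$ are $\cal F_i$-measurable. Given $\cal F_i$, the sets $S_i$ and $B_i$ are fixed, and $S_{i+1}$ is exactly the set of vertices of $[N]\setminus B_i$ having a neighbour in $S_i$; the potential edges between the fixed sets $S_i$ and $[N]\setminus B_i$ have not yet been inspected, hence are independent $\op{Bernoulli}(d/N)$ random variables, independent of $\cal F_i$. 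Therefore, conditionally on $\cal F_i$,
\[ \abs{S_{i+1}} \sim \op{Binomial}\pb{N-\abs{B_i},\, p_i}, \qquad p_i \deq 1-(1-d/N)^{\abs{S_i}}, \]
and in particular $\E\qb{\abs{S_{i+1}}\cond\cal F_i}=(N-\abs{B_i})p_i$.

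Next I would record that, on the events of interest, $\abs{S_i}\leq\sqrt N$ — part of the inductive hypothesis below — which together with the hypothesis $D_x\leq\sqrt N(2d)^{-r}$ also gives $\abs{B_i}=\ord(\sqrt N)=o(N)$, so the exploration never exhausts the fresh vertices. Since then $\abs{S_i}d/N\leq d/\sqrt N\leq N^{-1/4}$, expanding the exponential gives $p_i=\tfrac{\abs{S_i}d}{N}\pb{1+\ord(\abs{S_i}d/N)}$ and hence
\[ \E\qb{\abs{S_{i+1}}\cond\cal F_i} = d\abs{S_i} + \ord\pb{d\abs{S_i}/\sqrt N}. \]
The deterministic bias $\ord(d\abs{S_i}/\sqrt N)$ is negligible next to the fluctuation scale $(d\abs{S_i}\log N)^{1/2}$, so what remains is to control the Binomial fluctuations. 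Applying Bennett's inequality (and its lower-tail counterpart) to the conditional Binomial, on the event $\{\abs{S_i}d\geq\cal K\log N\}$ — which in (i) follows from $D_x\geq\cal K\log N/d$ and $\abs{S_i}\geq\tfrac12 D_x d^{i-1}$ on the good event — one gets, conditionally on $\cal F_i$,
\[ \absB{\abs{S_{i+1}}-d\abs{S_i}} \leq \cal C(d\abs{S_i}\log N)^{1/2} \]
with very high probability. For part (ii) one keeps only the upper tail and uses $\E[\abs{S_{i+1}}\cond\cal F_i]\leq d\abs{S_i}$, so there no lower bound on $\abs{S_i}d$, hence none on $D_x$, is needed.

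To finish (i) I would run an induction along the layers. For $1\leq j\leq r$ set
\[ \cal G_j \deq \hbb{ \abs{S_k} = D_x d^{k-1}\pB{1+\ord\pb{(\log N/(dD_x))^{1/2}}} \ \text{for all } 1\leq k\leq j }, \]
with the implicit constant fixed in advance; then $\cal G_j$ is $\cal F_j$-measurable and, on $\{\cal K\log N/d\leq D_x\leq\sqrt N(2d)^{-r}\}$, forces $\abs{S_j}\in[\tfrac12 D_x d^{j-1},2D_x d^{j-1}]\subset[\tfrac12\cal K\log N,\sqrt N]$ and $\abs{B_j}=o(N)$, which is exactly what the one-step estimate needs. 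Since $\cal G_1$ is automatic ($\abs{S_1}=D_x$), applying the one-step estimate conditionally on $\cal F_j$ on $\cal G_j$ gives $\P(\cal G_{j+1}^c\cond\cal F_j)\leq N^{-\nu'}$ on $\cal G_j$ for any prescribed $\nu'$; chaining these and taking a union bound over $1\leq j\leq r$ (with $r\leq\log N$ on the event, as $D_x\geq 1$ there) shows that $\cal G_r$ holds with very high probability conditioned on $A_{(x)}$. This is \eqref{eq:proof_w_3_aux_estimate}, and \eqref{eq:concentration_S_i} then follows from $\abs{S_i}=D_x\prod_{j=1}^{i-1}(\abs{S_{j+1}}/\abs{S_j})=D_x d^{i-1}\prod_{j=1}^{i-1}\pb{1+\ord((\log N/(d\abs{S_j}))^{1/2})}$ by summing the geometric series of ratio $d^{-1/2}$ in the exponent. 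Part (ii) is obtained by the same scheme with the good event at step $j$ recording only the upper bounds $\abs{S_k}\leq d\abs{S_{k-1}}+\cal C(d\abs{S_{k-1}}\log N)^{1/2}$ for $k\leq j$, so the iteration is a one-sided telescoping rather than a two-sided induction, and no lower bound on $D_x$ ever intervenes.

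The points I expect to require the most care are: (a) arranging the ``good event'' at each step to be $\cal F_i$-measurable, so that Bennett's inequality can be invoked conditionally and the per-step failure probabilities chained; (b) checking that the deterministic bias $\E[\abs{S_{i+1}}\cond\cal F_i]-d\abs{S_i}$ — coming from the already-explored fraction $\abs{B_i}/N$ and from the concavity of $p\mapsto 1-(1-d/N)^p$ — stays below the random fluctuation $(d\abs{S_i}\log N)^{1/2}$, which is precisely where the hypothesis $D_x\leq\sqrt N(2d)^{-r}$ gets used, to keep $\abs{B_i}=o(N)$; and (c) controlling the accumulation of the relative errors over all $r$ generations in \eqref{eq:concentration_S_i}, which telescopes harmlessly when $d\geq 2$ thanks to the geometric factor $d^{-(j-1)/2}$ but, when $d$ is close to $1$, is more cleanly handled by estimating the approximate martingale $\abs{S_i}\,d^{-(i-1)}$ directly via a Freedman-type inequality.
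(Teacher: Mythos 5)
Your proposal is correct and follows essentially the same route as the paper's proof: conditioning on the $\sigma$-algebra generated by $A_{(B_{i-1})}$ (your $\cal F_i$), observing that $\abs{S_{i+1}}$ is conditionally $\op{Binom}\pb{N-\abs{B_i},\,1-(1-d/N)^{\abs{S_i}}}$, applying Bennett's inequality one layer at a time, and chaining the per-step estimates by induction with the relative errors summing geometrically via the factor $d^{-j/2}$. The only cosmetic difference is the bookkeeping of the inductive good event — the paper carries the ratio bound \eqref{eq:proof_w_3_aux_estimate} together with the rough two-sided control $D_x(d/2)^{i-1}\leq\abs{S_i}\leq D_x(2d)^{i-1}$ and then deduces \eqref{eq:concentration_S_i} afterwards, whereas you carry \eqref{eq:concentration_S_i} directly — but the ingredients and the logical structure are identical.
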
 

In the applications of Lemma \ref{lem:concentration_S_i} below, we shall always work under the assumptions $d \geq \cal K \sqrt{\log N}$ and $D_x > d$, which imply that the lower bound in \eqref{D_assumption} is always satisfied.
Before proving Lemma~\ref{lem:concentration_S_i}, we first conclude \eqref{eq:estimate_w_0}, \eqref{eq:estimate_w_3} and \eqref{eq:estimate_w_4} from it. 

\begin{proof}[Proof of \eqref{eq:estimate_w_0}, \eqref{eq:estimate_w_3} and \eqref{eq:estimate_w_4}] 
In the whole proof, we exclusively work on the event $\{d < D_x \leq \sqrt{N} (2d)^{-r} \} $. 
For the proof of \eqref{eq:estimate_w_0}, we start by using the Cauchy-Schwarz inequality to obtain
\begin{equation} \label{eq:proof_w_0_aux} 
 \norm{\f w_0}  \leq \frac{d}{N} + \frac{d}{\sqrt N} \sum_{y \in B_{r}} \abs{\f v(y)} 
\leq \frac{d}{N} + \frac{d}{\sqrt{N}} \sqrt{\abs{B_{r}}}. 
\end{equation} 
From Lemma~\ref{lem:concentration_S_i}, we conclude with very high probability
\begin{equation*}
\abs{B_{r}} \leq 1 + \sum_{i = 0}^{r - 1} D_x (2d)^{i} \leq 2 D_x (2 d)^{r - 1} \leq (2 D_x)^{r} \leq (2 D_x)^{r_x} \leq \sqrt{N}
\end{equation*}
by definition of $r_x$. Hence \eqref{eq:estimate_w_0} follows.

We now turn to the proof of \eqref{eq:estimate_w_3}. Estimating the definition of $\f w_3$ yields 
\[\begin{aligned} 
  \norm{\f w_3}^2  & \leq  d \left[ \left( \frac{\sqrt{\abs{S_2}}}{\sqrt{d \abs{S_1}}} -1 \right)^2 u_2^2 + 2 \sum_{i=2}^{{r}-1} \left( \left(\frac{\sqrt{\abs{S_{i+1}}}}{\sqrt{d \abs{S_i}}} - 1\right)^2 u_{i+1}^2 + \left( \frac{\sqrt{\abs{S_i}}}{
\sqrt{d \abs{S_{i-1}}}} - 1 \right)^2 u_{i-1}^2 \right) \right] \\ 
 & \leq \Cnu \frac{\log N}{D_x} \bigg[ u_2^2 + 2 \sum_{i=2}^{{r}-1} \big(u_{i+1}^2 + u_{i-1}^2\big) \bigg] 
\end{aligned} \] 
with very high probability conditioned on $A_{(x)}$, 
where we used \eqref{eq:proof_w_3_aux_estimate}  and \eqref{eq:concentration_S_i} in the last step. 
This completes the proof of \eqref{eq:estimate_w_3}.  

For the proof of \eqref{eq:estimate_w_4}, we apply the triangle inequality to $\f w_4$ to obtain 
\begin{equation} \label{eq:triangle_w_4}  
\norm{\f w_4} \leq \abs{u_{{r}-1}} \absa{ \frac{\sqrt{\abs{S_{r}}}}{\sqrt{\abs{S_{{r}-1}}}} - \sqrt{d}} + \abs{u_{{r}+1}} \sqrt{d} + \abs{u_{r}} \frac{\sqrt{\abs{S_{{r}+1}}}}{\sqrt{\abs{S_{r}}}}.
\end{equation} 
In case $D_x \leq 2d$, this implies  
\eqref{eq:estimate_w_4} directly by $\abs{u_i} \leq 1$, Lemma~\ref{lem:concentration_S_i} and $\sqrt{d} \leq d/\sqrt{D_x- d}$.  
If $D_x \geq 2d$ then using Lemma~\ref{lem:concentration_S_i} we conclude from \eqref{eq:triangle_w_4} that  
\[ \norm{\f w_4} 
 \leq \frac{d^{({r}-2)/2}}{(D_x-d)^{({r}-2)/2}}\bigg( \frac{d}{\sqrt{D_x -d}} +  \Cnu \bigg(\frac{\log N}{D_x} \bigg)^{1/2} \bigg), 
\] 
where we used the definition of $u_{{r}-1}$ and $u_{r}$ as well as $\abs{u_1} \leq 1$ in the last step. This shows the bound \eqref{eq:estimate_w_4}.
\end{proof}

\begin{proof}[Proof of Lemma~\ref{lem:concentration_S_i}]
Define
\begin{equation*}
\cal E_i \deq \frac{d \abs{S_i}}{N} + \frac{1}{\sqrt{N}}.
\end{equation*}
We shall prove below that there are constants $C,c > 0$ such that if there exists $i \geq 1$ 
satisfying $\abs{B_i} \leq \sqrt{N}$ then, for all $\eps \in [0,1]$, we have  
\begin{equation} \label{eq:concentration_S_i_plus_1_general_eps} 
 \P \Big( (1-\eps - C \cal E_i) d \abs{S_i} \leq \abs{S_{i+1}} \leq (1 + \eps + C \cal E_i) d \abs{S_i} \Bigm\vert A_{(B_{i-1})} \Big) \geq 1 - 2 \exp\Big( - c d\abs{S_i} \eps^2\Big).  
\end{equation} 
From \eqref{eq:concentration_S_i_plus_1_general_eps}, 
we now conclude \eqref{eq:proof_w_3_aux_estimate} and
\begin{equation}
D_x \bigg(\frac{d}{2}\bigg)^{i-1} \leq \abs{S_i} \leq D_x (2d)^{i - 1}, \label{eq:lower_bound_S_i}
\end{equation}
for $1 \leq i \leq r$ simultaneously by induction. 
For $i=1$ we choose $\eps^2 = \Cnu \log N/(dD_x)$, which is bounded by $1$ for $\cal K$ in \eqref{D_assumption} large enough. Since $d \geq 1$ the upper bound on $D_x$ from \eqref{D_assumption} implies $\cal E_1 \leq \epsilon$. 
Thus, we obtain
 \eqref{eq:proof_w_3_aux_estimate} directly from \eqref{eq:concentration_S_i_plus_1_general_eps}.  
The estimate \eqref{eq:lower_bound_S_i} is trivial for $i = 1$.

For the induction step, we assume that $\cal K$ in \eqref{D_assumption} is large enough that the right-hand side of \eqref{eq:proof_w_3_aux_estimate} for $i = 1$ is less than $1/2$. Suppose first that with very high probability \eqref{eq:proof_w_3_aux_estimate} and \eqref{eq:lower_bound_S_i} hold up to $i$. Since $\abs{S_i} \geq D_x$ by \eqref{eq:lower_bound_S_i}$_i$, we conclude from \eqref{eq:proof_w_3_aux_estimate}$_{i}$ that \eqref{eq:lower_bound_S_i}$_{i+1}$ holds.
Next, suppose that with very high probability \eqref{eq:proof_w_3_aux_estimate} holds up to $i$ and  \eqref{eq:lower_bound_S_i} up to $i + 1$. By \eqref{eq:lower_bound_S_i}$_{i+1}$, we deduce that for $i+1 \leq r$ we have $\abs{B_{i+1}} \leq D_x (2d)^r \leq \sqrt{N}$, where the last inequality follows by assumption on $D_x$. Hence, we may apply \eqref{eq:concentration_S_i_plus_1_general_eps}$_{i+1}$ to estimate $\abs{S_{i+2}}$, with the choice $\eps^2 = \Cnu \log N/(d\abs{S_{i + 1}})$ with the same $\cal C$ as in the first induction step. 
From $d \geq 1$, the upper bound on $D_x$ in \eqref{D_assumption}, $i + 1 \leq r$ and 
\eqref{eq:lower_bound_S_i}$_{i+1}$, we obtain 
$\cal E_{i + 1} \leq \epsilon$, and hence we conclude \eqref{eq:proof_w_3_aux_estimate}$_{i+1}$ 
after taking the conditional expectation with respect to $A_{(x)}$.
Note that the necessary union bounds are affordably since the right-hand side of \eqref{eq:concentration_S_i_plus_1_general_eps} is always at least $1 - 2 N^{-c \cal C}$. 
 
The expansion in \eqref{eq:concentration_S_i} is a direct consequence of 
\begin{equation}\label{eq:proof_concentration_S_i_aux1} 
 D_x d^{i-1}(1- \eps_i) \leq \abs{S_{i}} \leq D_x d^{i-1}(1 +\eps_i), \qquad \qquad \eps_i \defeq 2 \Cnu \bigg(\frac{\log N}{dD_x}\bigg)^{1/2} \sum_{j=0}^{i-2} d^{-j/2}, 
\end{equation}
for $1 \leq i \leq r$ with very high probability on $\{ \mathcal{K} \log N/d \leq D_x \leq \sqrt{N}(2d)^{-r}\}$ conditioned on $A_{(x)}$ as well as the fact that the geometric sum 
in the definition of $\eps_i$ is bounded by $2$ uniformly in $i$. 

The estimates in \eqref{eq:proof_w_3_aux_estimate} and \eqref{eq:lower_bound_S_i} imply \eqref{eq:proof_concentration_S_i_aux1} by induction as follows. 
The case $i=1$ is trivial. For the induction step, we conclude from \eqref{eq:proof_w_3_aux_estimate} that 
\[ \abs{S_{i+1}} \geq d \abs{S_i}\bigg(1 - \Cnu \bigg(\frac{\log N}{d \abs{S_i}}\bigg)^{1/2}\bigg) \geq D_x d^{i} \bigg( 1 - \eps_i - \Cnu \bigg(\frac{\log N}{d^i D_x (1 - \eps_i)}\bigg)^{1/2} \bigg). 
\] 
Here, we used $\abs{S_i} \neq 0$ by \eqref{eq:lower_bound_S_i} in the first step and the induction hypothesis in the second step. 
As $\eps_i \leq 3/4$ for sufficiently large $\mathcal{K}$ due to $D_x \geq \mathcal{K} \log N/d$ we obtain the lower bound in \eqref{eq:proof_concentration_S_i_aux1}. 
The upper bound is proved completely analogously. This completes the proof of \eqref{eq:concentration_S_i}.

Next, we also conclude \eqref{eq:upper_bound_S_i_without_lower_bound_D_x} from \eqref{eq:concentration_S_i_plus_1_general_eps} by an induction argument. 
For $i = 1$, we can assume that $D_x = \abs{S_1} \neq 0$ as there is nothing to show otherwise. If $\abs{S_1} \neq 0$ then we choose $\eps^2 = \Cnu \log N /(d \abs{S_1})$ in \eqref{eq:concentration_S_i_plus_1_general_eps}. As $\cal E_1 \leq \eps$ this directly implies \eqref{eq:upper_bound_S_i_without_lower_bound_D_x} for $i=1$. In the induction step we assume $\abs{S_{i+1}} \neq 0$ as \eqref{eq:upper_bound_S_i_without_lower_bound_D_x}$_{i+1}$ is trivial for $\abs{S_{i+1}} = 0$. The induction hypothesis and $D_x \leq \sqrt{N} (2d)^{-r}$ imply $\abs{B_{i+1}} \leq \sqrt{N}$. 
Hence, $\abs{S_{i+1}} \neq 0$ allows the choice $\eps^2 = \Cnu \log N/(d \abs{S_{i + 1}})$ in \eqref{eq:concentration_S_i_plus_1_general_eps} in order to bound $\abs{S_{i+2}}$. 
As $\mathcal E_{i+1} \leq \eps$ we obtain \eqref{eq:upper_bound_S_i_without_lower_bound_D_x}$_{i+1}$ by taking the conditional expectation with respect to $A_{(x)}$.

What remains therefore is the proof of \eqref{eq:concentration_S_i_plus_1_general_eps}. We condition on $A_{(B_{i - 1})}$ and suppose that $\abs{B_i} \leq \sqrt{N}$. (Note that $B_i$ is measurable with respect to $A_{(B_{i - 1})}$.) Let us compute the law of $\abs{S_{i+1}}$ conditioned on $A_{(B_{i-1})}$. For $y \in B_i^c$ denote by $F_y = \ind{\text{$y$ adjacent to $S_i$}}$. Then, conditioned on $A_{(B_{i-1})}$, $(F_y)_{y \in B_i^c}$ are i.i.d.\ Bernoulli random variables with expectation $1 - (1 - p)^{\abs{S_i}}$, where $p = d/N$. Thus,
\begin{equation*}
\abs{S_{i + 1}} = \sum_{y \in B_i^c} F_y \eqdist \op{Binom}\Big(1-(1-p)^{\abs{S_i}}, N-\abs{B_i}\Big)
\end{equation*}
conditioned on $A_{(B_{i-1})}$. Thus,
\begin{equation} \label{exp_Si}
E_{i+1} \deq \E \qb{\abs{S_{i+1}} \big\vert A_{(B_{i-1})}} = \pb{1 - (1 - p)^{\abs{S_i}}} (N - \abs{B_i}) = d \abs{S_i} \pb{1 + O (\cal E_i)}, 
\end{equation}
where, in the last step, we used that $1 -p= \ee^{-p +  O(p^2)}$ as well as the assumptions $\abs{B_i} \leq \sqrt{N}$ and $d \leq N^{1/4}$. 
We recall the definition of $h$ from \eqref{eq:def_h}.  
Applying Bennett’s inequality yields 
\begin{align*}
\P \Big(\absb{\abs{S_{i+1}} - E_{i+1}} > \eps \, E_{i+1} \Bigm\vert A_{(B_{i-1})} \Big) 
&\leq 2 \exp \Big( - E_{i+1} \min \{ h(1+\eps), h(1-\eps)\} \Big)
\\
&\leq 2 \exp \Big( - \frac{1}{2} d \abs{S_i} \min \{ h(1+\eps), h(1-\eps)\} \Big).
\end{align*}
where we used \eqref{exp_Si} and that $\cal E_i = O(N^{-1/4})$ as follows from the assumptions $\abs{B_i} \leq \sqrt{N}$ and $d \leq N^{1/4}$. Now the claim \eqref{eq:concentration_S_i_plus_1_general_eps} follows from \eqref{exp_Si} and the observation that there exists a $c > 0$ such that $\min\{ h(1+ \eps), h(1-\eps)\} \geq c \eps^2$ for all $\eps \in [0,1]$.
\end{proof} 

\begin{lemma}[Few cycles in small balls] \label{lem:tree_approximation}
For $k, r \in \N$ we have 
\begin{equation} \label{eq:tree_approx}
\P \Big(\abs{E(G\vert_{B_r})} - \abs{B_{r}} +1 \geq k \Bigm\vert S_1 \Big) \leq
\frac{1}{N^k} \pb{C (d + \abs{S_1})}^{2kr + k} (2kr)^{2k}.
\end{equation}
\end{lemma}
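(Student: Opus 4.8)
The quantity $\abs{E(G\vert_{B_r})} - \abs{B_r} + 1$ is the \emph{first Betti number} (cyclomatic number) of the graph $G\vert_{B_r}$, i.e.\ the number of independent cycles. The event that this number is at least $k$ means that one can find $k$ edges in $B_r$ whose removal still leaves $B_r$ connected to $x$ at each vertex — equivalently, there exist $k$ ``excess'' edges each of which closes a cycle within a short path from $x$. The plan is to union-bound over the combinatorial data describing these $k$ excess edges and the paths realizing their cycles, and to exploit the independence of edges in the Erd\H{o}s--R\'enyi graph conditionally on $S_1$.

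\textbf{Step 1 (encoding a configuration with $k$ independent cycles).} Conditionally on $S_1$, I would argue that on the event $\{\abs{E(G\vert_{B_r})} - \abs{B_r}+1 \geq k\}$ there exist $k$ edges $e_1,\dots,e_k$ of $G\vert_{B_r}$ and, for each $j$, a cycle $\gamma_j$ through $e_j$ contained in $B_r$, such that each $\gamma_j$ uses at least one edge not used by $\gamma_1,\dots,\gamma_{j-1}$. Each such cycle has length at most $2r+1$ (two radial paths of length $\le r$ to $x$ plus one extra edge, or a single excess edge closing a loop of length $\le 2r$). Thus the whole configuration is described by: a choice of $k$ ``new'' edges (the ones contributing to the Betti number), each specified by its two endpoints, together with the vertices along the $\le 2r$ additional path-edges needed to close each cycle. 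In total the configuration is specified by at most $2kr + k$ vertices beyond $x$ and its neighbours, together with bounded combinatorial bookkeeping (which of the $\le 2kr$ slots each chosen vertex occupies), contributing the factor $(2kr)^{2k}$ — roughly $(2kr)$ choices for the ``role'' of each of the two endpoints of the $k$ new edges.

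\textbf{Step 2 (probability of a fixed configuration).} Once the vertex set and edge-list of a putative configuration is fixed, the probability that all the required $\le 2kr+k$ edges are present is at most $(d/N)^{\#\text{edges}}$. The key point, exactly as in the standard first-moment bound for cycle counts in $G(N,d/N)$, is that a configuration with $k$ independent cycles on $v$ new vertices must contain at least $v+k$ edges — the $v$ new vertices require $\ge v$ edges to attach to the existing structure, and the $k$ cycles force $k$ additional edges. Summing $(d/N)^{v+k}$ against the number $\le N^v$ of ways to pick the $v$ new vertices (and $|S_1|$-dependence entering because a cycle may start at a neighbour of $x$ rather than create a new vertex, which is why the bound involves $d+\abs{S_1}$ rather than just $d$) yields, after collecting the combinatorial factors, a bound of the form $N^{-k}\,(C(d+\abs{S_1}))^{2kr+k}(2kr)^{2k}$.

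\textbf{Main obstacle.} The delicate point is \emph{bookkeeping without overcounting}: making precise the claim in Step~1 that $k$ independent cycles can always be realized by $k$ distinct ``new'' edges together with closing paths of length $\le 2r$, and then counting these configurations in a way that genuinely gains a factor $N^{-1}$ per excess edge rather than merely $N^{-1}$ per new vertex. One clean way is to root each of the $k$ cycles: contract the current spanning forest of $B_r$ and observe that each added cycle either introduces a brand-new vertex (costing a vertex choice $\le N$ but an extra edge $d/N$, net neutral) or uses only already-counted vertices (costing nothing in vertices but still an extra edge, net gain $d/N$) — in both cases one picks up the advertised factors. The combinatorial factor $(2kr)^{2k}$ absorbs the finitely many ways of assigning each new edge's endpoints to positions along the radial paths, and $(C(d+\abs{S_1}))^{2kr+k}$ absorbs the at most $2kr+k$ edge-probabilities $d/N$ together with the at most $2kr$ free vertex choices per cycle (bounded by $N$ each, except that vertices at radius $1$ are restricted to $S_1$, whence the $d+\abs{S_1}$).

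I would organize the write-up as: (i) reduce to exhibiting $k$ excess edges with short closing paths; (ii) define the data structure encoding such a configuration and bound the number of data structures by $(C(d+\abs{S_1}))^{2kr+k-?}(2kr)^{2k}$; (iii) bound the probability that the graph realizes a fixed data structure by $(d/N)^{\#\text{edges}}$ with $\#\text{edges} \ge (\text{new vertices}) + k$; (iv) multiply and simplify. The only genuinely subtle step is (ii)–(iii) done jointly so that the $N$-powers cancel correctly.
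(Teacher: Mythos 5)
Your plan is essentially the paper's proof: a first-moment union bound over connected witness subgraphs of $B_r$ carrying $k$ independent cycles, with the $N^{-k}$ gain coming from the $k$ excess edges and the combinatorial factor coming from the $O(kr)$ bound on the witness's size. Where you leave a question mark in the exponent, the paper fills in the counting by taking the witness to be a single connected graph $H$ with $\abs{E(H)} = \abs{V(H)} - 1 + k$ and $\abs{V(H)} \leq 2kr+1$ (the $k$ excess edges plus, from a distance-preserving spanning tree of $B_r$, the radial paths joining their endpoints to $x$), enumerating such graphs via Cayley's formula ($q^{q-2}$ trees on $q$ labelled vertices times $q^{2k}$ placements of the $k$ extra edges), and then combining the sum over vertices in $S_1$ with the sum over vertices outside $B_1$ via the binomial theorem, which is precisely where the $(d+\abs{S_1})$ base comes from.
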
 

\begin{corollary} \label{cor:tree_approximation}
If $d \geq C$ for some universal constant $C>0$ then the number of cycles in $G|_{B_r}$ is bounded,  
\[ \abs{\{ \text{\normalfont{cycles in }} G|_{B_r} \}} = \ord(1), \] 
with very high probability on $\{d < D_x \leq N^{1/4}\} \cap \{r \leq r_x\}$ conditioned on $S_1$.
\end{corollary}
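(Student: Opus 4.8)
The plan is to deduce the corollary directly from the quantitative estimate \eqref{eq:tree_approx} of Lemma~\ref{lem:tree_approximation}, with no additional probabilistic input. First I would observe that, since the ball $B_r$ is connected (we may assume $S_1 \neq \emptyset$, for otherwise $B_r = \{x\}$ and there is nothing to prove), the number of independent cycles of $G|_{B_r}$ equals the cycle rank $\abs{E(G|_{B_r})} - \abs{B_r} + 1$, which is exactly the quantity estimated in \eqref{eq:tree_approx}. Hence it suffices to show that for every $\nu > 0$ there is $k = k_\nu \in \N$ such that the right-hand side of \eqref{eq:tree_approx} is at most $N^{-\nu}$ for all large $N$, uniformly on the event $\{d < D_x \leq N^{1/4}\} \cap \{r \leq r_x\}$; the corollary then follows with $\cal C_\nu = k_\nu$, the finitely many remaining small values of $N$ being absorbed into the constant from Definition~\ref{def:very_high_probability_def}.

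On that event we have $\abs{S_1} = D_x$, so that $d + \abs{S_1} < 2D_x \leq 2N^{1/4}$, while $r \leq r_x \leq \frac{\log N}{3\log D_x}$ gives both $D_x^r \leq N^{1/3}$ and $r \leq \log N$. Writing $C_0$ for the universal constant appearing in \eqref{eq:tree_approx}, I would bound
\[
\frac{1}{N^k}\,\pb{C_0(d + \abs{S_1})}^{2kr + k}(2kr)^{2k} \;\leq\; N^{-k}\,(2C_0)^{2kr}\,\pb{D_x^r}^{2k}\,(2C_0 D_x)^{k}\,(2k\log N)^{2k} \;\leq\; N^{-k}\,(2C_0)^{2kr}\, N^{2k/3}\,(2C_0)^{k}N^{k/4}\,(2k\log N)^{2k}.
\]
The only factor here that is not manifestly negligible against $N^{k}$ is $(2C_0)^{2kr}$, which, since $r \leq \frac{\log N}{3\log D_x}$, is at most $N^{2k\log(2C_0)/(3\log D_x)}$. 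This is where the hypothesis $d \geq C$ is used: choosing the universal constant $C$ to be a large enough power of $C_0$ (say $C = (2C_0)^{100}$) forces $\log D_x \geq 100\log(2C_0)$ via $D_x > d \geq C$, whence $(2C_0)^{2kr} \leq N^{k/150}$. Collecting exponents, the right-hand side of \eqref{eq:tree_approx} is then at most $N^{-23k/300}(2C_0)^{k}(2k)^{2k}(\log N)^{2k}$, which for fixed $k$ tends to zero; taking $k = k_\nu$ large enough that $23k_\nu/300 > \nu$ makes it $\leq N^{-\nu}$ for all large $N$, as required.

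The only genuine point of care I anticipate is the bookkeeping of the constant $(2C_0)^{2kr}$: because the radius $r$ may be as large as $\Theta(\log N/\log D_x)$, this factor is a real power of $N$ unless $D_x$ is large, which is precisely why the corollary requires $d \geq C$ and why $C$ must be taken large relative to the constant of Lemma~\ref{lem:tree_approximation}. Every other loss is paid for by the gain $N^{-k}$ using only the crude inequalities $d, D_x \leq N^{1/4}$ and $D_x^r \leq N^{1/3}$ valid on the event in question, and the passage from this fixed-$k$ tail bound to an ``$\ord(1)$ with very high probability'' statement is just the definition of very high probability together with its stability under a union bound over the single choice of $(r,k)$.
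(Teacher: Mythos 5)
Your proposal is correct and follows the same route as the paper: the paper's proof consists of the single remark that the conclusion follows from \eqref{eq:tree_approx} ``by the lower bound on $d$ and the definition of $r_x$,'' and your argument is precisely that computation made explicit — using $|S_1| = D_x$, $d + |S_1| < 2D_x \leq 2N^{1/4}$, $D_x^r \leq N^{1/3}$, and the observation that $(2C_0)^{2kr}$ costs only $N^{O(k/\log D_x)}$, which is why $d \geq C$ with $C$ large relative to $C_0$ is needed. The bookkeeping is sound.
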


\begin{proof} 
Given $\nu >0$, it is easy to conclude from \eqref{eq:tree_approx} that there is $k \in \N$ 
such that the number of cycles in $G|_{B_r}$ is bounded by $k$ with very high probability
on the event $\{ d < D_x \leq N^{1/4}\} \cap \{ r \leq r_x\}$ 
by the lower bound on $d$ and the definition of $r_x$ in \eqref{eq:def_r_star}. 
\end{proof} 

\begin{corollary}\label{cor:corollary_tree_approximation} 
With very high probability on $\{D_x > d\}$ conditioned on $S_1$, we have for all $i \leq r_x + 1$
\begin{equation} 
 \abs{S_i}  = \sum_{y \in S_i} N_{i-1} (y) + \ord(1), \qquad
 \abs{S_i}  = \sum_{y \in S_{i-1}} N_{i} (y) + \ord(1), \label{eq:rel_S_i_sum_N_i} 
\end{equation}
as well as
\begin{equation} \label{eq:Nloops}
\sum_{y \in S_i} N_i(y) = \ord(1). 
\end{equation}
\end{corollary}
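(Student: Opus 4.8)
The plan is to translate all three identities into statements about counting edges of $G$ inside, or running between, the spheres $S_i = S_i(x)$, and then to exploit that $G\vert_{B_{r_x+1}}$ is a tree up to $\ord(1)$ superfluous edges. Unwinding the definition $N_i(y) = \abs{S_i \cap S_1(y)}$ from~\eqref{eq:def_e_N_i}, one sees that $\sum_{y \in S_i} N_{i-1}(y)$ and $\sum_{y \in S_{i-1}} N_i(y)$ both equal $\abs{E(S_{i-1},S_i)}$, the number of edges of $G$ with one endpoint in $S_{i-1}$ and the other in $S_i$, while $\sum_{y \in S_i} N_i(y) = 2\abs{E(G\vert_{S_i})}$. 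Hence the first two asserted identities reduce to $\abs{E(S_{i-1},S_i)} = \abs{S_i} + \ord(1)$ and the third to $\abs{E(G\vert_{S_i})} = \ord(1)$, uniformly in $1 \leq i \leq r_x + 1$. In a genuine tree these hold with $\ord(1)$ replaced by $0$: each vertex of $S_i$ has a unique neighbour (its parent) in $S_{i-1}$, and a sphere carries no internal edge; compare~\eqref{eq:Ny_tree}.

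First I would condition on $S_1$ and pass to the very high probability event on which the cyclomatic number $\abs{E(G\vert_{B_{r_x+1}})} - \abs{B_{r_x+1}} + 1$ is at most a constant $k = \ord(1)$, which is the content of Corollary~\ref{cor:tree_approximation} (applied at radius $r_x+1$). Here I use that $G\vert_{B_{r_x+1}}$ is connected, since any $G$-geodesic from $x$ to a vertex of $B_{r_x+1}$ stays inside $B_{r_x+1}$ -- which also shows that the spheres of $G\vert_{B_{r_x+1}}$ around $x$ are exactly $S_0,\dots,S_{r_x+1}$, so that this cyclomatic number is precisely the number of edges one must delete from $G\vert_{B_{r_x+1}}$ to obtain a spanning tree. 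On this event I would fix a breadth-first spanning tree $T$ of $G\vert_{B_{r_x+1}}$ rooted at $x$. By the defining property of $T$, every edge of $T$ joins consecutive spheres $S_{j-1}$ and $S_j$, and for each $j$ exactly $\abs{S_j}$ edges of $T$ lie between $S_{j-1}$ and $S_j$ (one per vertex of $S_j$, the edge to its parent); the remaining $k = \ord(1)$ edges of $G\vert_{B_{r_x+1}}$ are arbitrary.

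The three bounds then drop out by bookkeeping. Any edge inside $S_i$ must be one of the $k$ non-tree edges, so $\abs{E(G\vert_{S_i})} \leq k = \ord(1)$. The set $E(S_{i-1},S_i)$ consists of its $\abs{S_i}$ edges in $T$ together with at most $k$ non-tree edges, so $\abs{E(S_{i-1},S_i)} \leq \abs{S_i} + k$; the matching lower bound $\abs{E(S_{i-1},S_i)} \geq \abs{S_i}$ holds because each vertex of $S_i$ has at least one neighbour in $S_{i-1}$ (take the penultimate vertex of a geodesic to $x$). Since $k$ is a single constant controlling every sphere simultaneously, the estimates hold for all $i \leq r_x+1$ at once, with no further union bound over $i$. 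The step I expect to demand the most care is the very first one, namely verifying that the tree-approximation bound~\eqref{eq:tree_approx} still produces a bounded cyclomatic number at radius $r_x+1$ rather than $r_x$: this is straightforward when $D_x$ is not polynomially large in $N$ (as in the critical window $d \asymp \log N$, where $D_x = \ord(d+\log N)$ by Lemma~\ref{lem:upper_bound_degree}), since enlarging the radius by one multiplies the bound in~\eqref{eq:tree_approx} only by a subpolynomial factor, and otherwise a brief reinspection of~\eqref{eq:tree_approx} is needed. The edge-counting in the remaining steps is routine.
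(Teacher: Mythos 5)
Your proof is correct and takes essentially the same route as the paper's: both reduce the three claims to edge counts ($\sum_{y\in S_{i-1}}N_i(y)=\sum_{y\in S_i}N_{i-1}(y)$ is the number of $S_{i-1}$--$S_i$ edges, $\sum_{y\in S_i}N_i(y)=2\,\abs{E(G\vert_{S_i})}$), invoke Corollary~\ref{cor:tree_approximation} for a bounded cyclomatic number, and compare against a BFS spanning tree rooted at $x$. The one place you are actually more careful than the paper is the radius: you work with a spanning tree of $G\vert_{B_{r_x+1}}$, whereas the paper's proof only picks a spanning tree of $G\vert_{B_{r_x}}$, which strictly speaking does not cover the boundary index $i=r_x+1$ that the corollary claims (and that is used, via $\sum_{y\in S_{r+1}}(N_r(y)-1)$ with $r=r_x$, in the proof of \eqref{eq:estimate_w_1}); the caveat you raise about re-inspecting \eqref{eq:tree_approx} at radius $r_x+1$ for large $D_x$ is a genuine one, and is harmless precisely in the range of $D_x$ the surrounding arguments actually restrict to.
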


\begin{proof}
By choosing a spanning tree of $G\vert_{B_{r_x}}$, we conclude that, with very high probability on $\{ D_x > d\}$  conditioned on $S_1$, we can find $\ord(1)$ edges of $G\vert_{B_{r_x}}$ such that removing them yields a tree on the vertex set $B_{r_x}$. Now \eqref{eq:rel_S_i_sum_N_i} follows easily by noting that
\begin{equation*}
\scalar{\f 1_{S_{i - 1}}}{A \f 1_{S_i}} = \sum_{y \in S_{i - 1}} N_i(y) = \sum_{y \in S_i} N_{i - 1}(y),
\end{equation*}
and that the left-hand side equals $\abs{S_i}$ if $A$ is the adjacency matrix of a tree. Finally, \eqref{eq:Nloops} follows by noting that its left-hand side vanishes if $A$ is the adjacency matrix of a tree.
\end{proof}

Before the proof of Lemma~\ref{lem:tree_approximation}, we show how 
\eqref{eq:rel_S_i_sum_N_i} and \eqref{eq:Nloops} are used to bound $\norm{\f w_1}$ and establish \eqref{eq:estimate_w_1}. 
 
\begin{proof}[Proof of \eqref{eq:estimate_w_1}]
Since $S_0 = \{ x\}$ we have that $N_0(y) = 1$ for all $y \in S_1$. Moreover, $N_0(x) = 0$ since $G$ has no loops. Hence,
\begin{equation} \label{eq:w_1_computed} 
 \f w_1 = \sum^{{r}}_{i= 1} \frac{u_i}{\sqrt{\abs{S_i}}} \bigg( \sum_{y \in S_{i+1}} \big(N_i(y) -1\big) \f 1_y +\sum_{y \in S_i} N_i(y) \f 1_y\bigg). 
\end{equation}
By the triangle inequality, $\abs{u_i} \leq 1$, and the fact that $N_i(y) \geq 1$ for all $y \in S_{i+1}$, we find
\begin{equation*}
\norm{\f w_1} \leq \sum^{{r}}_{i= 1} \frac{1}{\sqrt{\abs{S_i}}} \bigg( \sum_{y \in S_{i+1}} \big(N_i(y) -1\big) +\sum_{y \in S_i} N_i(y) \bigg) = \sum^{{r}}_{i= 1} \frac{1}{\sqrt{\abs{S_i}}} \ord(1),
\end{equation*}
where in the last step we used \eqref{eq:rel_S_i_sum_N_i} and \eqref{eq:Nloops}.
The claim follows using \eqref{eq:concentration_S_i}. 
\end{proof}

\begin{proof}[Proof of Lemma~\ref{lem:tree_approximation}]
Throughout the proof we condition on $S_1$.
Let $r \leq N$ and $k \in \N$, and without loss of generality $r,k \geq 1$. Define the set $\cal H_k$ as the set of connected graphs $H$ satisfying $x \in V(H) \subset [N]$, $S_{1}^H \subset S_1$, $\abs{E(H)} = \abs{V(H)} - 1 + k$, and $\abs{V(H)} \leq 2kr + 1$.
Let $H \in \cal H_k$. Then
\begin{equation*}
\P (E(H) \subset E(G) \cond S_1) = \pbb{\frac{d}{N}}^{\abs{E(H)} - \abs{S_{1}^H}} = \pbb{\frac{d}{N}}^{\abs{V(H)} - 1 + k - \abs{S_{1}^H}}.
\end{equation*}
Hence, by a union bound,
\begin{equation*}
\P \pb{\exists H \in \cal H_k ,\, E(H) \subset E(G) \cond S_1} \leq \sum_{H \in \cal H_k} \P (E(H) \subset E(G) \cond S_1).
\end{equation*}
In the sum over $H \in \cal H_k$, we shall sum first over the set of vertices $S_1^H$, the set of vertices $V(H) \setminus (S_1^H \cup \{x\})$, and then over all graphs $H$ on the vertex set $V(H)$. 
Writing $q_1 \deq \abs{S_1^H}$ and $q_2 \deq \abs{V(H) \setminus (S_1^H \cup \{x\})}$, so that $\abs{V(H)} = q_1 + q_2 + 1$, 
 we find
\begin{equation*}
\P \pb{\exists H \in \cal H_k ,\, E(H) \subset E(G) \cond S_1} \leq \sum_{0 \leq q_1 + q_2 \leq 2kr} \binom{\abs{S_1}}{q_1} \binom{N - \abs{S_1} - 1}{q_2} C_{q_1 + q_2 + 1, k} \pbb{\frac{d}{N}}^{q_2 + k},
\end{equation*}
where $C_{q,k}$ is the number of connected graphs on $q$ vertices with $q -1 + k$ edges. To estimate $C_{q,k}$, we note that each such graph can be written as a union of a tree on $q$ vertices and $k$ additional edges. By Cayley's theorem on the number of trees, we therefore conclude that
\begin{equation*}
C_{q,k} \leq q^{q - 2} q^{2k} = q^{q + 2 k - 2}.
\end{equation*}
Putting everything together, we conclude that
\begin{align*}
\P \pb{\exists H \in \cal H_k ,\, E(H) \subset E(G) \cond S_1}
&\leq \sum_{0 \leq q_1 + q_2 \leq 2kr} \frac{\abs{S_1}^{q_1}}{q_1 !} \frac{N^{q_2}}{q_2 !} (q_1 + q_2 + 1)^{q_1 + q_2 + 2k - 1} \pbb{\frac{d}{N}}^{q_2 + k}
\\
& = \frac{d^k}{N^k}\sum_{0 \leq i \leq 2kr} \frac{1}{i!}(|S_1|+d)^{i} (i+1)^{i+2k-1} \\
&\leq \frac{1}{N^k} \pb{C (d + \abs{S_1})}^{2kr + k} (2kr)^{2k},
\end{align*}
where in the second step we used the binomial theorem and in the last step Stirling's approximation. 

In order to conclude the proof, it suffices to show that
\begin{equation} \label{sp_tree_est}
\hb{\abs{E(G\vert_{B_r})} - \abs{B_r} + 1 \geq k} \subset \hb{\exists H \in \cal H_k ,\, E(H) \subset E(G)}.
\end{equation}
To show \eqref{sp_tree_est}, we suppose that $G$ is in the event on the left-hand side of \eqref{sp_tree_est}. Let $T$ be a spanning tree of $B_r$ such that $d_T(x,y) = d(x,y)$ for all $y \in B_r$, where $d_T$ is the graph distance on $T$. Since $\abs{E(G\vert_{B_r})} - \abs{B_r} + 1 \geq k$, we can find $k$ edges of $G\vert_{B_r}$ that are not edges of $T$; denote these edges by $E_1$. Let $U_1$ denote the vertices incident to the edges of $E_1$. Let $U_2$ denote the vertices in all the (unique) paths of $T$ connecting the vertices of $U_1$ to $x$, and $E_2$ the edges of these paths. Consider now the graph $H$ with vertex set $U_1 \cup U_2 \cup \{x\}$ and edge set $E_1 \cup E_2$.
See Figure~\ref{fig:tree_approx} for an illustration.

\begin{figure}[!ht]
\begin{center}
{\small 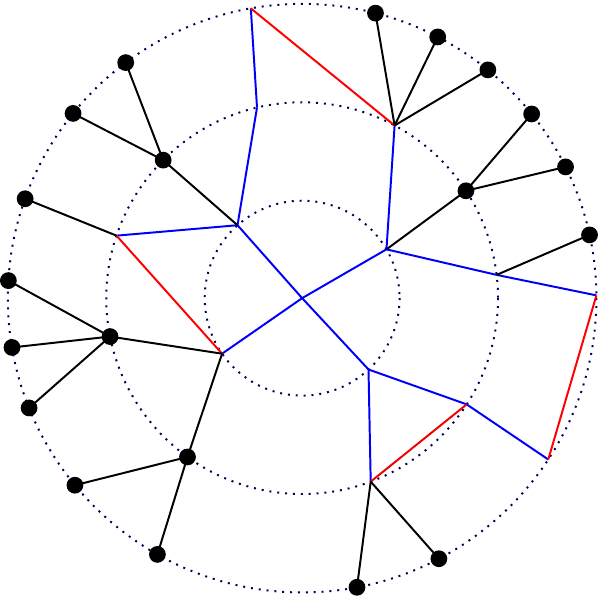}
\end{center}
\caption{Graphical representation of the proof of \eqref{sp_tree_est}. We draw the ball $B_r$ for $r = 3$. The spanning tree $T$ is drawn using black and blue edges, and the spheres of radii $1,2,3$ are drawn using dots. The red edges are $E_1$, the blue edges $E_2$, the red vertices $U_1$, and the blue vertices $U_2 \setminus U_1$. \label{fig:tree_approx}}
\end{figure}

We claim that $H \in \cal H_k$, which will conclude the proof of \eqref{sp_tree_est}. The only non-obvious property to verify is that $\abs{V(H)} = \abs{U_1 \cup U_2 \cup \{x\}} \leq 2kr + 1$. This follows easily from the observation that $\abs{U_1} \leq 2k$ and that each of the above paths has at most $r - 1$ vertices in $[N] \setminus (U_1 \cup \{x\})$, and there are at most $2k$ paths. This concludes the proof.
\end{proof}

\begin{proof}[Proof of \eqref{eq:estimate_w_2}] 
We define $\Xi_i \defeq \{d <  D_x \leq \sqrt{N}(2d)^{-i} \}$ for $i \in \N$. 
We start by noting that 
 \[ \f w_2 = \sum_{i=2}^{{r}} \frac{u_i}{\sqrt{\abs{S_i}}} \sum_{y \in S_{i-1}} \left( N_i(y) - \frac{\abs{S_i}}{\abs{S_{i-1}}} \right) \f 1_y,  \] 
since $S_0 = \{ x\}$ and $N_1(x) = \abs{S_1} = \abs{S_1}/\abs{S_0}$. 

We apply the Pythagorean theorem, use \eqref{eq:rel_S_i_sum_N_i}, $\abs{S_i} \geq D_x d/2$ uniformly for $i \in \{2, \ldots, {r}\}$ 
with very high probability  on $\Xi_r$ conditioned on $A_{(x)}$ by \eqref{eq:concentration_S_i} in Lemma~\ref{lem:concentration_S_i} 
as well as $\sum_{i=1}^{r - 1} u_{i+1}^2 \leq 1$ and obtain
\[ \begin{aligned} 
 \norm{\f w_2}^2  & = \sum_{i=1}^{r - 1} \frac{u_{i+1}^2}{\abs{S_{i+1}}} \sum_{y \in S_{i}} \left(N_{i+1}(y) - \frac{\abs{S_{i+1}}}{\abs{S_{i}}} \right)^2 \\ 
 & \leq  2 \sum_{i=1}^{r - 1} \frac{u_{i+1}^2}{\abs{S_{i+1}}} \sum_{y \in S_{i}} \bigg( N_{i+1}(y) - \frac{1}{\abs{S_{i}}} \sum_{y\in S_{i}} N_{i+1}(y) \bigg)^2 + \frac{\Cnu}{D_x d} \\ 
 & \leq  \frac{4}{d} \max_{1 \leq i \leq r - 1} (Z_i + Y_i^2)+ \frac{\Cnu}{D_x d},
\end{aligned} \] 
where we defined
\begin{equation*}
Z_i \deq \frac{1}{\abs{S_{i}}} \sum_{y \in S_{i}} \pB{ N_{i+1}(y) - \E \qb{N_{i+1}(y) \condb A_{(B_{i-1})}}}^2, \qquad
Y_i \deq \frac{1}{\abs{S_{i}}} \sum_{y \in S_{i}} \pB{N_{i+1}(y) - \E \qb{N_{i+1}(y) \condb A_{(B_{i-1})}}}.
\end{equation*}
Here we used Young's inequality and the fact that $\E \qb{N_{i+1}(y) \condb A_{(B_{i-1})}}$ does not depend on $y \in S_{i}$. In fact, conditioned on $A_{(B_{i - 1})}$, the random variables $(N_{i+1}(y))_{y \in S_{i}}$ are i.i.d.\ with law $\op{Binom}(N - \abs{B_{i}}, d/N)$.

The term $Y_i$ can be easily estimated by Bennett's inequality for $\op{Binom}\pb{\abs{S_{i}} (N - \abs{B_{i}}), d/N}$, which yields $Y_i^2 \leq \cal C \frac{d \log N}{D_x}$
with very high probability on  $\Xi_r$ conditioned on $A_{(x)}$. Here we used that, by Lemma \ref{lem:concentration_S_i}, $N - \abs{B_{i}} \geq N/2$ and $\abs{S_{i}} \geq D_x$, and, by definition of $\Xi_r$, $\frac{\log N}{d D_x} \leq 1$.

What remains, therefore, is the estimate of $Z_i$. We shall prove that for all $1 \leq i \leq r - 1$
\begin{equation} \label{eq:Z_claim}
Z_i \leq \Cnu d \pbb{1 + \frac{\log N}{\abs{S_i}}} \bigg( \log d + \frac{\log N}{d} \bigg)
\end{equation}
with very high probability on $\Xi_i$ conditioned on $A_{(x)}$, which will conclude the proof of the lemma.

The estimate \eqref{eq:Z_claim} can be regarded as a concentration result for the degrees of the vertices in $S_i$; indeed, by Lemma \ref{lem:tree_approximation} for any $y \in S_i$ we have $D_y = N_{i+1}(y) + \cal O(1)$ with very high probability. For any vertex $y \in S_i$ we have the variance estimate $\sqrt{\E (D_y - d)^2} \asymp C \sqrt{d}$. On the other hand, in the relevant regime $d \leq C \log N$, the estimate with very high probability (following from Bennett's inequality) $\abs{D_y - d} \leq \cal C \log N$ is much worse. Essentially, we need an estimate with very high probability of the average $\frac{1}{\abs{S_i}} \sum_{y \in S_i} \big(D_y - d \big)^{2}$, and the trivial bound $\cal C(\log N)^2$ obtained by applying the above estimate is much too large. Instead, we need to use that the typical term of $Z_i$ is much smaller than $(\log N)^2$. We do this using a dyadic classification of the degrees of the vertices in $S_i$.

For the proof of \eqref{eq:Z_claim}, we always condition on $A_{(x)}$ and work on the event $\Xi_i$.  
We abbreviate
\begin{equation*}
E_y \deq N_{i+1}(y) - \E \qb{N_{i+1}(y) \condb A_{(B_{i-1})}}
\end{equation*}
for $y \in S_i$, and introduce the level set sizes
\begin{equation} \label{eq:def_L_s_i}
 L_s^i \deq \absb{\big\{ y \in S_i \col E_y^2 > s^2 d^2 \big\}} 
\end{equation}
for any $s >0$. We have the probabilistic tail bound on $L_s^i$
\begin{equation} \label{eq:L_est}
\P \pb{ L_s^i \geq \ell \condb A_{(B_{i-1})} } \leq \begin{pmatrix} \abs{S_i} \\ \ell \end{pmatrix} \exp (-cd \ell (s \wedge s^2))
\end{equation}
for all $s > 0$ and $\ell \in \N$, with very high probability. Here $c > 0$ is a universal constant. To prove \eqref{eq:L_est}, we use a union bound to get
\begin{equation} \label{eq:proof_L_s_aux0}
\begin{aligned} 
 \P ( L_s^i \geq \ell\mid A_{(B_{i-1})} ) & \leq \sum_{T \subset S_i, ~\abs{T} = {\ell}} \P \Big( E_y^2 > (s d)^2 \text{ for all } y \in T \Bigm\vert A_{(B_{i-1})} \Big) \\ 
 & \leq \begin{pmatrix} \abs{S_i} \\ {\ell} \end{pmatrix} \max_{T \subset S_i, ~\abs{T} = \ell} 
\P \Big( E_y^2 > (s d)^2 
\text{ for all } y \in T \Bigm\vert A_{(B_{i-1})} \Big)
\\
& = 
\begin{pmatrix} \abs{S_i} \\ {\ell} \end{pmatrix}
\P \Big( E_y^2 > (s d)^2 \Bigm\vert A_{(B_{i-1})} \Big)^\ell
\end{aligned} 
\end{equation}
for some $y \in S_i$, since $(N_{i+1}(y))_{y \in S_{i}}$ are i.i.d.\ conditioned on $A_{(B_{i - 1})}$. By Bennett's inequality, we obtain
\begin{equation*}
\P \Big( E_y^2 > (s d)^2 \Bigm\vert A_{(B_{i-1})} \Big) \leq \exp (-cd (s \wedge s^2))
\end{equation*}
since $N - \abs{B_i} \geq N/2$ (by Lemma \ref{lem:concentration_S_i}), and hence \eqref{eq:L_est} follows.

Next we conclude the argument by establishing \eqref{eq:Z_claim}.
We decompose
\begin{equation} \label{eq:proof_concentration_degrees_aux1} 
\abs{S_i} Z_i =  \sum_{y \in S_i} E_y^2  \leq d  \abs{S_i} + \sum_{y \in S_i} E_y^2 \ind{E_y^2 \geq d}. 
\end{equation}
In order to estimate the second summand in \eqref{eq:proof_concentration_degrees_aux1}, we now establish the dyadic 
decomposition 
\begin{equation} \label{eq:proof_concentration_degrees_aux2} 
 \sum_{y \in S_i} E_y^2 \ind{E_y^2 \geq d} \leq \sum_{k=k_{\min}}^{k_{\max}} \sum_{y \in \mathcal{N}_k^i} E_y^2\leq d^2 \sum_{k=k_{\min}}^{k_{\max}} \ee^{k+1} \abs{\mathcal{N}_k^i}   
\end{equation}
with very high probability conditioned on $A_{(B_{i-1})}$, where we introduced 
\[ \begin{aligned} 
 \mathcal{N}_k^i & \defeq \Big\{ y \in S_i \col d^2 \ee^k < E_y^2 \leq \ee^{k+1} d^2 \Big \}, \qquad k \in \Z,\\ 
k_{\min} & \defeq -\ceil{\log(d)}, \\
 k_{\max} & \defeq \bigg\lceil \Cnu + \log\bigg(\frac{\log N}{d}\bigg) \vee \bigg(2 \log \bigg(\frac{\log N}{d}\bigg)\bigg)\bigg\rceil 
\end{aligned} \] 
with some possibly $\nu$-dependent constant $\Cnu>0$.

We now prove \eqref{eq:proof_concentration_degrees_aux2} by showing that, for all $y \in S_i$, $E_y^2 \leq d^2 \ee^{k_{\max}+1}$ with very high probability conditioned on $A_{(B_{i-1})}$.
To that end, we note that
\begin{equation*}
\P \pB{ \exists y \in S_i , E_y^2 > d^2 \ee^{k_{\max} + 1} \condB A_{(B_{i-1})}} = \P \pb{L_{s}^i \geq 1 \condb A_{(B_{i-1})} }, \qquad s = \ee^{(k_{\max} + 1)/2}.
\end{equation*}
Moreover, from \eqref{eq:L_est} with $\ell = 1$ and $\abs{S_i} \leq N$ we obtain
\begin{equation*}
\P \pb{L_{s}^i \geq 1 \condb A_{(B_{i-1})} } \leq N \exp(-cd (s \wedge s^2)) \leq N^{-\nu},
\end{equation*}
where the last inequality follows for $s = \ee^{(k_{\max} + 1)/2}$ and $k_{\max}$ defined above. This establishes~\eqref{eq:proof_concentration_degrees_aux2}.

Next, we estimate $\abs{\mathcal{N}_k^i}$. This will allow us to conclude the statement of the lemma from \eqref{eq:proof_concentration_degrees_aux1} 
and \eqref{eq:proof_concentration_degrees_aux2}. In fact, we have $\P ( \abs{\mathcal{N}_k^i} \geq \ell \mid A_{(B_{i-1})} ) \leq \P ( L_{\ee^{k/2}}^i \geq \ell\mid A_{(B_{i-1})}) $. 
We choose $\ell = \ell_k$, where 
\[ \ell_k \defeq \frac{\Cnu}{d}\big( \abs{S_i} + \log N \big)\begin{cases} \ee^{-k/2} , & \text{ if } k \geq 0, \\ 
\ee^{-k}, & \text{ if }k < 0. \end{cases} \]
Using \eqref{eq:L_est} and estimating $\binom{\abs{S_i}}{\ell} \leq \ee^{\abs{S_i}}$, we deduce that $\abs{\mathcal{N}_k^i} \leq \ell_k$ with very high probability on $\Xi_i$
conditioned on $A_{(x)}$.

With this information, we now estimate the right-hand side of \eqref{eq:proof_concentration_degrees_aux2}. We conclude from \eqref{eq:proof_concentration_degrees_aux1} 
and \eqref{eq:proof_concentration_degrees_aux2} that 
\[\begin{aligned} 
\abs{S_i} Z_i & \leq d\abs{S_i} + d^2 \sum_{k=k_{\min}}^{k_{\max}} \ee^{k+1} \ell_k \\ 
 & \leq d\abs{S_i} + \Cnu d \big( \abs{S_i} + \log N\big) \bigg( \sum_{k=k_{\min}}^0 \ee^{k+1} \ee^{-k} + \ind{k_{\max} \geq 0}\sum_{k=0}^{k_{\max}} \ee^{k+1} \ee^{-k/2} \bigg) \\ 
& \leq d\abs{S_i} + \Cnu d \big(\abs{S_i} + \log N\big)\bigg( \abs{k_{\min}} + \ee^{k_{\max}/2} \bigg) \\ 
 & \leq \Cnu d \big( \abs{S_i} + \log N\big) \bigg( \log (d) + \bigg( \frac{\log N}{d} \bigg)^{1/2} \vee \frac{\log N}{d} \bigg),
\end{aligned} \] 
which concludes the proof of \eqref{eq:Z_claim} and hence also of \eqref{eq:estimate_w_2}.
\end{proof}

\section{Quadratic form estimates on centred adjacency matrix} \label{sec:proof_upper_bound_adjacency} 

The main result of the present section is a bound on $\underline{A} = A - \E A$ in Proposition~\ref{pro:upper_bound_on_adjacency_matrix} below. 
In the following, we write $S \geq T$ for two Hermitian $N\times N$ matrices $S, T$ if $S - T$ is positive semidefinite, i.e.,  
$\scalar{\f w}{S \f w} \geq \scalar{\f w}{T \f w}$ for all $ \f w \in \R^N$. We recall the choice of $\sigma$ from \eqref{eq:def_sigma_permutation}.

\begin{proposition}[Upper bound on $d^{-1/2} \underline{A}$] \label{pro:upper_bound_on_adjacency_matrix} 
If $4 \leq d \leq N^{2/13}$ then, with very high probability, we have 
\[ \id + D + E \geq d^{-1/2} \abs{\underline{A}}, \] 
where $\abs{\underline{A}} = \sqrt{\underline{A}^*\underline{A}}$, $D$ is the diagonal matrix defined through $D \defeq \diag(D_x/d)_{x \in [N]}$ and the error matrix $E$ satisfies 
\[ \norm{E} \leq \frac{\Cnu (d + D_{\sigma(1)})}{d^{3/2}} \leq \Cnu d^{-1/2} \begin{cases} 1 + d^{-1/2} \sqrt{\log N}, & \text{ if } d \geq \frac{1}{2} \log N, \\ \frac{\log N}{d( \log \log N - \log d)}, & \text{ if } d \leq \frac{1}{2} \log N \end{cases} \] 
with very high probability.  
\end{proposition}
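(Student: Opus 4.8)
The plan is to combine two results of \cite{BBK1} (see also \cite{BBK2}) about the nonbacktracking matrix $\underline{B}$ of $\underline{A}$, i.e.\ the operator on the oriented edges of the complete graph on $[N]$ carrying the weight $\underline{A}_{xy}$ on the edge $(x,y)$. The first is the Ihara--Bass identity
\[
\det\bigl(I-z\underline{B}\bigr) = (1-z^2)^{\binom{N}{2}-N}\,\det\bigl(I_N - z\underline{A} + z^2(\underline{\mathcal D}-I_N)\bigr), \qquad z\in\C,
\]
where $\underline{\mathcal D}\deq\diag\bigl(\sum_{y\in[N]}\underline{A}_{xy}^2\bigr)_{x\in[N]}$; an elementary computation gives $\underline{\mathcal D}=\diag(D_x)_{x}+R$ with $\norm{R}\le\mathcal{C}N^{-1}d(d+D_{\sigma(1)})$, which is polynomially small in $N$ in the range $4\le d\le N^{2/13}$. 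The second is the bound $\rho(\underline{B})\le\sqrt{d}+\mathcal{C}$ with very high probability in this range. The latter is the deep probabilistic input; it is essential that its error is \emph{additive} of order $O(1)$ and not merely $\sqrt{d}\,(1+o(1))$, since the former is what produces the sharp error term $E$.

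First I would set $\rho\deq\max(\sqrt{d},\rho(\underline{B}))$, so that $2\le\sqrt{d}\le\rho\le\sqrt{d}+\mathcal{C}$ and $1/\rho<1$. For every real $z$ with $0<|z|<1/\rho\le 1/\rho(\underline{B})$ the product $\det(I-z\underline{B})=\prod_\mu(1-z\mu)$ over the eigenvalues $\mu$ of $\underline{B}$ is nonzero, and $(1-z^2)^{\binom{N}{2}-N}\ne 0$ because $|z|<1$; hence the Hermitian matrix $f(z)\deq I_N-z\underline{A}+z^2(\underline{\mathcal D}-I_N)$ is nonsingular for all real $z\in(-1/\rho,1/\rho)$. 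Since $f$ is continuous, $f(0)=I_N>0$, and no eigenvalue of $f(z)$ can cross $0$ on this interval, we get $f(z)>0$ for all real $z\in(-1/\rho,1/\rho)$. Evaluating the quadratic forms of $f(z)$ and $f(-z)$ for $z\in(0,1/\rho)$, dividing by $z>0$ and using $1-z^2>0$, this yields
\[
-G(z)\le\underline{A}\le G(z),\qquad G(z)\deq\tfrac{1-z^2}{z}I_N+z\underline{\mathcal D},
\]
equivalently $\norm{G(z)^{-1/2}\underline{A}\,G(z)^{-1/2}}\le 1$.

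I would then let $z\uparrow 1/\rho$ (the inequality persists) and simplify using $\underline{\mathcal D}/d=D+R/d$:
\[
d^{-1/2}G(1/\rho)=\tfrac{\rho-1/\rho}{\sqrt{d}}\,I_N+\tfrac{\sqrt{d}}{\rho}\Bigl(D+\tfrac{R}{d}\Bigr)=I_N+D+E,\qquad E\deq\Bigl(\tfrac{\rho-1/\rho}{\sqrt{d}}-1\Bigr)I_N+\Bigl(\tfrac{\sqrt{d}}{\rho}-1\Bigr)D+\tfrac{\sqrt{d}}{\rho}\tfrac{R}{d}.
\]
From $\sqrt{d}\le\rho\le\sqrt{d}+\mathcal{C}$ one has $\bigl|\tfrac{\rho-1/\rho}{\sqrt{d}}-1\bigr|\le\mathcal{C}d^{-1/2}$ and $0\le 1-\tfrac{\sqrt{d}}{\rho}\le\mathcal{C}d^{-1/2}$, so the three summands of $E$ have norms at most $\mathcal{C}d^{-1/2}$, $\mathcal{C}d^{-1/2}\norm{D}=\mathcal{C}d^{-3/2}D_{\sigma(1)}$, and $\norm{R}/d$; here the middle term is precisely where $D_{\sigma(1)}$ enters, since the coefficient $\sqrt{d}/\rho$ of $D$ differs from $1$. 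Thus $\norm{E}\le\mathcal{C}(d+D_{\sigma(1)})/d^{3/2}$, and together with $-\underline{A}\le G(1/\rho)$ and $\underline{A}\le G(1/\rho)$ this is the two-sided bound $\pm d^{-1/2}\underline{A}\le I_N+D+E$, recorded above as $I_N+D+E\ge d^{-1/2}\abs{\underline{A}}$. Finally, inserting $D_{\sigma(1)}\le\Delta$ from Lemma~\ref{lem:upper_bound_degree} and the two cases of \eqref{eq:def_Delta} turns $\mathcal{C}(d+\Delta)/d^{3/2}$ into $\mathcal{C}d^{-1/2}(1+d^{-1/2}\sqrt{\log N})$ when $d\ge\tfrac12\log N$, and (using $\Delta\gg d$ there) into $\mathcal{C}d^{-1/2}\tfrac{\log N}{d(\log\log N-\log d)}$ when $d\le\tfrac12\log N$.

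The main obstacle is the import of the nonbacktracking spectral-radius estimate $\rho(\underline{B})\le\sqrt{d}+\mathcal{C}$ from \cite{BBK1} with this additive $O(1)$ error, and the verification that $4\le d\le N^{2/13}$ is exactly the regime in which that estimate holds and in which $R$ is negligible; the remaining steps are routine bookkeeping. I would also note that only the two-sided bound $\pm d^{-1/2}\underline{A}\le I_N+D+E$, which is what the continuity argument directly gives, is invoked in the applications of Section~\ref{sec:upper_bound_eigenvalues}.
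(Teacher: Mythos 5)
Your overall strategy — Ihara--Bass identity plus the nonbacktracking spectral‑radius bound from \cite{BBK1}, continuity argument, then bookkeeping with $\rho$ close to $\sqrt d$ — is the same one the paper uses. But the central identity you invoke is not correct for the weighted matrix $\underline A$, and this breaks the argument.

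The \emph{polynomial} Ihara--Bass formula
\[
\det\bigl(I-z\underline{B}\bigr)
=(1-z^2)^{\binom{N}{2}-N}\,\det\bigl(I_N-z\underline{A}+z^2(\underline{\mathcal D}-I_N)\bigr)
\]
holds when the underlying (non‑normalized) edge weights are $\pm1$, but it is false when the entries of $\underline A$ are arbitrary reals, as they are here ($\underline A_{xy}\in\{1-d/N,\,-d/N\}$). A quick counterexample: take $N=3$ and $\underline A_{xy}\equiv a$ for $x\neq y$. The nonbacktracking matrix decomposes into two directed $3$‑cycles of weight product $a^3$ plus zero rows/columns, so $\det(I-z\underline B)=(1-z^3a^3)^2$; on the other hand, with $\underline{\mathcal D}=2a^2 I_3$ one computes $\det(I-z\underline A+z^2(\underline{\mathcal D}-I))=(1-2za+z^2(2a^2-1))(1+za+z^2(2a^2-1))^2$, and setting $z=1/a$ gives $(1-a^{-2})(\cdots)^2\neq0$ for $a\neq\pm1$, contradicting the claimed identity. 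The correct Ihara--Bass statement for a general matrix $\underline A$ is the rational‑function version from \cite[Lemma 4.1]{BBK1} (restated as Lemma~\ref{lem:ihara_bass_formula}): $t\in\spec(B)$ iff $\det(M(t)-\underline A(t))=0$ with
\[
\underline{A}_{xy}(t)=\frac{t\sqrt d\,\underline A_{xy}}{t^2d-\underline A_{xy}^2},\qquad
m_x(t)=1+\sum_y\frac{\underline A_{xy}^2}{t^2d-\underline A_{xy}^2},
\]
which does \emph{not} reduce to your polynomial form.

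This matters because your proof genuinely needs exactness. The step where you conclude $f(z)>0$ for all $z\in(-1/\rho,1/\rho)$ uses that $\det f(z)\neq0$, which you deduce from the claimed identity and from $\det(I-z\underline B)\neq0$ on that interval. That deduction only works for an exact algebraic identity; an approximate relation between determinants gives no control on whether $\det f(z)$ vanishes. So even though the discrepancy between $M(t)-\underline A(t)$ and $I_N+t^{-2}D-t^{-1}d^{-1/2}\underline A$ is of the same order $\mathcal{O}\bigl((d+D_{\sigma(1)})/d^{3/2}\bigr)$ as your final error $E$ — and even though your eventual inequality is numerically correct — the derivation has a hole exactly at the point where the positivity is established. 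The fix is to run the continuity argument on $M(t)-\underline A(t)$ (the exact Hermitian matrix whose determinant equals zero on $\spec(B)$), obtain $M(t)-\underline A(t)\geq0$ for $t>\rho(B)$, and only then approximate $M(t)\approx I_N+t^{-2}D$ and $\underline A(t)\approx t^{-1}d^{-1/2}\underline A$ via Schur‑test bounds of the form $\|\underline A(t)-t^{-1}d^{-1/2}\underline A\|\leq C(1+D_{\sigma(1)})/d^{3/2}$ and $\|M(t)-I_N-t^{-2}D\|\leq C(1+D_{\sigma(1)})/d^{2}$, which is exactly the structure of the paper's proof. Your subsequent bookkeeping with $\rho=\sqrt d+\mathcal{O}(1)$, the three‑term decomposition of $E$, the estimate of $R=\underline{\mathcal D}-\diag(D_x)$, and the insertion of $\Delta$ are all sound and would port directly to the corrected argument.
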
 

We postpone the proof of Proposition~\ref{pro:upper_bound_on_adjacency_matrix} to Section~\ref{subsec:proof_upper_bound_on_adjacency_matrix} below. First we state and prove the following corollary of Proposition~\ref{pro:upper_bound_on_adjacency_matrix}. 

\begin{corollary}[Norm bound on $\underline{A}$] \label{cor:norm_bound_underline_A} 
If $4 \leq d \leq N^{2/13}$ then we have 
\[ \norm{\underline{A}} \leq \sqrt{d} + \frac{D_{\sigma(1)}}{\sqrt{d}} + \Cnu \bigg(1 + \frac{D_{\sigma(1)}}{d} \bigg) \leq \begin{cases} 2\sqrt{d} + \Cnu \sqrt{\log N}, & \text{ if } d \geq \frac{1}{2} \log N, \\ 
 \frac{\Cnu \log N}{d^{1/2}( \log\log N - \log d)}, & \text{ if } d \leq \frac{1}{2} \log N \end{cases} \] 
with very high probability. 
\end{corollary}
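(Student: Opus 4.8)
The plan is to obtain Corollary~\ref{cor:norm_bound_underline_A} as an essentially immediate consequence of Proposition~\ref{pro:upper_bound_on_adjacency_matrix} and the degree bound of Lemma~\ref{lem:upper_bound_degree}; the only work is unwinding the error terms and tracking which term dominates in each regime.

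First I would note that $\norm{\underline{A}} = \lambda_1(\abs{\underline{A}})$ since $\abs{\underline{A}} = \sqrt{\underline{A}^*\underline{A}}$ is positive semidefinite. Using Proposition~\ref{pro:upper_bound_on_adjacency_matrix} and monotonicity of $\lambda_1$ under the partial order $\geq$ on Hermitian matrices (which is just the min-max characterisation $\lambda_1(M) = \max_{\norm{\f w} = 1} \scalar{\f w}{M \f w}$), I get, with very high probability,
\[
d^{-1/2}\norm{\underline{A}} = \lambda_1\pb{d^{-1/2}\abs{\underline{A}}} \leq \lambda_1(\id + D + E) \leq 1 + \norm{D} + \norm{E}.
\]
Since $D = \diag(D_x/d)_x$ is diagonal and nonnegative, $\norm{D} = \max_x D_x/d = D_{\sigma(1)}/d$ by the choice of $\sigma$ in \eqref{eq:def_sigma_permutation} (recall $D_x = \alpha_x d$). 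Plugging in the bound $\norm{E} \leq \Cnu(d + D_{\sigma(1)})d^{-3/2}$ supplied by Proposition~\ref{pro:upper_bound_on_adjacency_matrix} and multiplying through by $\sqrt d$ yields the first inequality
\[
\norm{\underline{A}} \leq \sqrt d + \frac{D_{\sigma(1)}}{\sqrt d} + \Cnu\pbb{1 + \frac{D_{\sigma(1)}}{d}}.
\]

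Next I would invoke Lemma~\ref{lem:upper_bound_degree}, together with a union bound over the $N$ vertices, to get $D_{\sigma(1)} \leq \Delta$ with very high probability, with $\Delta$ as in \eqref{eq:def_Delta}. In the range $d \geq \frac12 \log N$ this gives $D_{\sigma(1)}/\sqrt d \leq \sqrt d + \Cnu\sqrt{\log N}$ and $D_{\sigma(1)}/d = \ord(1)$, so the right-hand side collapses to $2\sqrt d + \Cnu\sqrt{\log N}$. In the range $d \leq \frac12 \log N$, writing $\Delta = \Cnu\log N/(\log\log N - \log d)$ and setting $t = d/\log N \in (0,\tfrac12]$, the identity $\log\log N - \log d = -\log t$ gives $d\,(\log\log N - \log d) = (-t\log t)\log N$ and $\sqrt d\,(\log\log N - \log d) = \sqrt{\log N}\cdot\sqrt t(-\log t)$; since $-t\log t \leq 1/e$ and $\sqrt t(-\log t) = \ord(1)$ on $(0,\tfrac12]$, each of the three terms $\sqrt d$, $D_{\sigma(1)}/\sqrt d$, $\Cnu(1 + D_{\sigma(1)}/d)$ is $\ord\pb{\log N/(\sqrt d(\log\log N - \log d))}$, which is the claimed bound. (Any finitely many small values of $N$ for which these elementary comparisons fail are absorbed by enlarging $\Cnu$.)

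I do not anticipate a genuine obstacle: the substance of the estimate lies entirely in Proposition~\ref{pro:upper_bound_on_adjacency_matrix}, and the only care needed here is identifying the dominant term in each regime and checking the two one-variable inequalities $-t\log t \leq 1/e$ and $\sqrt t(-\log t) = \ord(1)$ for $t \in (0,\tfrac12]$.
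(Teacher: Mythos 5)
Your proof is correct and follows the same route as the paper: apply Proposition~\ref{pro:upper_bound_on_adjacency_matrix} to bound $d^{-1/2}\norm{\underline{A}} \leq 1 + \norm{D} + \norm{E}$, use Lemma~\ref{lem:upper_bound_degree} (with a union bound) to control $\norm{D} = D_{\sigma(1)}/d \leq \Delta/d$, and unwind the two regimes of $\Delta$. The paper states this in two lines and leaves the elementary one-variable comparisons implicit; your write-up simply makes them explicit.
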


Corollary~1.3 in \cite{BBK2} and Corollary~3.3 in \cite{BBK1} 
provide similar statements to Corollary~\ref{cor:norm_bound_underline_A}.

\begin{proof}[Proof of Corollary~\ref{cor:norm_bound_underline_A}]
Owing to Lemma~\ref{lem:upper_bound_degree}, we have $\norm{D} \leq D_{\sigma(1)}/d \leq \Delta / d$
with very high probability. Therefore, Corollary~\ref{cor:norm_bound_underline_A} follows immediately from Proposition~\ref{pro:upper_bound_on_adjacency_matrix}. 
\end{proof} 

\subsection{Proof of Proposition~\ref{pro:upper_bound_on_adjacency_matrix}} \label{subsec:proof_upper_bound_on_adjacency_matrix} 

Let $B$ be the \emph{nonbacktracking matrix} associated with $d^{-1/2} \underline{A} = d^{-1/2}(A - \E A)$, i.e.\ the $N^2 \times N^2$ matrix with entries  $B_{ef} \defeq d^{-1/2} \underline{A}_{uv} \ind{y=u} \ind{x \neq v} $ for $e = (x,y) \in [N]^2$ and $f = (u,v)\in [N]^2$. 
The next proposition provides a high probability bound on the spectral radius of the nonbacktracking matrix. It is proved in \cite[Theorem 2.5]{BBK1}. 

\begin{proposition}[Bound on the nonbacktracking matrix of $d^{-1/2}\underline{A}$] \label{pro:bound_nonbacktracking_matrix} 
There are universal constants $C>0$ and $c >0$ such that, for all $4 \leq d \leq N^{2/13}$ and $\eps \geq 0$, we have 
\[ \P \big( \rho(B) \geq 1 +\eps \big) \leq C N^{3 - c \sqrt{d} \log ( 1 + \eps)}. \] 
\end{proposition}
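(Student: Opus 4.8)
The bound is quoted from \cite{BBK1}; the argument is a direct moment-method estimate on the nonbacktracking matrix, and the plan is to control $\rho(B)$ through a high moment of $B$. For every $k \in \N$ we have $\rho(B)^{2k} = \rho(B^k)^2 \leq \norm{B^k}^2 \leq \tr\pb{(B^k)^*B^k}$, the last step because $(B^k)^*B^k$ is positive semidefinite, so Markov's inequality gives
\[
\P\pb{\rho(B) \geq 1+\eps} \;\leq\; (1+\eps)^{-2k}\, \E\tr\pb{(B^k)^*B^k}.
\]
It therefore suffices to prove a moment bound of the form $\E\tr\pb{(B^k)^*B^k} \leq C N^3$, valid uniformly for all $k$ up to order $\sqrt d\,\log N$: choosing $k = \lfloor c_0 \sqrt d\,\log N\rfloor$ and writing $(1+\eps)^{-2k} = \ee^{-2k\log(1+\eps)}$ then gives $\P\pb{\rho(B) \geq 1+\eps} \leq C N^{3-c\sqrt d\,\log(1+\eps)}$ with $c = 2c_0$.

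The next step is to expand $\tr\pb{(B^k)^*B^k}$ combinatorially. Inserting $B_{ef} = d^{-1/2}\underline{A}_{uv}\ind{y=u}\ind{x\neq v}$, one rewrites this trace as a sum over closed walks of length $2k$ in the set of directed edges of $[N]$, where the first $k$ steps follow $B$ and the last $k$ follow $B^*$; each walk carries the weight $\prod d^{-1/2}\underline{A}_{xy}$ over its $2k$ steps ($\underline{A}$ is real, so conjugation is immaterial). Since the entries $(\underline{A}_{xy})_{x<y}$ are independent and centred with $\absb{\E\underline{A}_{xy}^m} \leq \E\underline{A}_{xy}^2 \leq d/N$ for all $m \geq 2$ (using $\abs{\underline{A}_{xy}} \leq 1$), only walks in which every undirected edge of the traced-out graph is used at least twice contribute to the expectation. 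If such a walk traces a connected graph with $v$ vertices and $e$ edges, its expected contribution is at most $d^{-k}(d/N)^e$ in modulus, and a given combinatorial shape embeds into $[N]$ in at most $N^v$ ways; therefore
\[
\E\tr\pb{(B^k)^*B^k} \;\leq\; d^{-k}\sum_{\text{shapes}} d^{e}\,N^{v-e}.
\]
Connectedness forces $v - e \leq 1$, and $e \leq k$ because each of the $e$ edges occurs at least twice among the $2k$ passages; thus every shape contributes at most $N$, and a shape of excess $g \deq e-v+1 \geq 1$ carries an additional suppression $N^{-g}$.

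The hard part is counting the admissible walk shapes with a given pair $(v,e)$, and this is precisely where the nonbacktracking structure is indispensable. For the plain matrix $\underline{A}$ the dominant shapes are doubled plane trees, enumerated by Catalan numbers of order $4^k$, and these generate both the edge $2\sqrt d$ and the cycle corrections that are \emph{not} uniformly controllable when $d$ is small. Here, by contrast, two consecutive directed edges of a walk may not be the reverse of one another --- except possibly at the two ``junctions'' $e_0$ and $e_k$ --- which excludes the doubled-tree walks and rigidly constrains all the remaining ones. A careful accounting (carried out in \cite{BBK1}) shows that the number of admissible shapes is at most $N^{O(1)}$ times a polynomial in $k$, and that the gains $N^{-g}$ from cyclic shapes outweigh their number, provided $4 \leq d \leq N^{2/13}$ so that the bookkeeping factors of order $d/N$ are genuinely small. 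This yields $\E\tr\pb{(B^k)^*B^k} \leq C N^3$ uniformly for $k$ of order $\sqrt d\,\log N$, and substituting into the Markov bound above completes the proof.
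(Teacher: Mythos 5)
The paper itself contains no proof of this proposition: it is imported verbatim from \cite[Theorem 2.5]{BBK1}, which you correctly identify as the source and to which you also defer the hard shape enumeration. Your sketch of the BBK1 moment method --- the reduction $\rho(B)^{2k}\leq\tr\pb{(B^k)^*B^k}$ followed by Markov, the per-shape weight $d^{e-k}N^{v-e}$ with $v-e\leq 1$ and $e\leq k$, the suppression $N^{-g}$ from excess, and the role of the nonbacktracking constraint in excluding the doubled-tree terms that would otherwise give Catalan-order growth --- is an accurate outline of how the theorem is obtained there, and the parameter choice $k\asymp\sqrt d\,\log N$ together with the resulting exponent $3-c\sqrt d\,\log(1+\eps)$ is exactly right. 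The one loose sentence is the claim that ``the number of admissible shapes is at most $N^{O(1)}$ times a polynomial in $k$'': the raw shape multiplicity grows with the excess $g$, and the real content of the BBK1 bound is that the $N^{-g}$ suppression controls the sum over $g$ rather than the shape count being small; since you (like the paper) explicitly outsource that bookkeeping to \cite{BBK1}, this is a matter of phrasing, not a gap.
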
 

The Ihara-Bass-type formula in the following lemma relates the spectra of $B$ and $\underline{A}$. 
Its formulation is identical to the one of Lemma 4.1 in \cite{BBK1}.  
Therefore, we shall not present its proof in this paper. 

\begin{lemma}[Ihara-Bass-type formula] \label{lem:ihara_bass_formula} 
Let $\ul A$ be an $N \times N$ matrix and let $B$ be the nonbacktracking matrix associated with $d^{-1/2}\underline{A}$. Let $t \in \C$ satisfy $t^2 \neq d^{-1} \underline{A}_{xy} \underline{A}_{yx}$ for all $x,y 
\in [N]$. 
We define the matrices $\underline{A}(t)=(\underline{A}_{xy}(t))_{x,y \in [N]}$ and $M(t)= (m_x(t)\delta_{xy})_{x,y \in [N]}$ through 
\[ \underline{A}_{xy}(t) \defeq \frac{t\sqrt{d}\underline{A}_{xy}}{t^2d -\underline{A}_{xy} \underline{A}_{yx}}, \qquad m_x(t) \defeq 1 + \sum_y \frac{\underline{A}_{xy}\underline{A}_{yx}}{t^2 d - \underline{A}_{xy}\underline{A}_{yx}}. \] 
Then $t \in \spec(B)$ if and only if $\det(M(t) - \underline{A}(t)) = 0$. 
\end{lemma}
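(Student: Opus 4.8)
The plan is to prove the equivalence by setting up an explicit linear correspondence between eigenvectors of $B$ for the eigenvalue $t$ — which are indexed by the oriented edge set $[N]^2$ — and vectors in $\ker(M(t)-\ul A(t))$, which are indexed by the vertex set $[N]$. The mechanism is that the eigenvector equation $B\f g=t\f g$ couples a component $\f g_{(x,y)}$ only to components $\f g_{(y,v)}$ sharing the middle vertex $y$, so that the vertex-indexed quantity $w_y\defeq\sum_{v}\ul A_{yv}\f g_{(y,v)}$ obeys a closed system. I would record at the outset that the hypothesis $t^2\neq d^{-1}\ul A_{xy}\ul A_{yx}$ is exactly what makes every denominator $t^2d-\ul A_{xy}\ul A_{yx}$ occurring below nonzero.

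For the forward implication, suppose $B\f g=t\f g$ with $\f g\neq0$. Writing $(B\f g)_{(x,y)}=d^{-1/2}\sum_{v\neq x}\ul A_{yv}\f g_{(y,v)}=d^{-1/2}\bigl(w_y-\ul A_{yx}\f g_{(y,x)}\bigr)$, the eigenvalue equation becomes $t\,\f g_{(x,y)}+d^{-1/2}\ul A_{yx}\f g_{(y,x)}=d^{-1/2}w_y$. Pairing this with the corresponding identity for the reversed edge $(y,x)$ gives a $2\times2$ linear system for $(\f g_{(x,y)},\f g_{(y,x)})$ whose determinant is $t^2-d^{-1}\ul A_{xy}\ul A_{yx}\neq0$; inverting it yields
\[
\f g_{(x,y)}=\frac{t\sqrt d\,w_y-\ul A_{yx}w_x}{t^2d-\ul A_{xy}\ul A_{yx}}.
\]
Substituting this back into the definition of $w_y$ and regrouping the two resulting sums identifies them as $\sum_v\ul A_{yv}(t)w_v$ and $(m_y(t)-1)w_y$, so that $m_y(t)w_y=\sum_v\ul A_{yv}(t)w_v$, i.e.\ $(M(t)-\ul A(t))\f w=0$ with $\f w=(w_y)_{y\in[N]}$. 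Since $\f g$ is recovered from $\f w$ through the displayed formula, $\f g\neq0$ forces $\f w\neq0$, hence $\det(M(t)-\ul A(t))=0$.

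For the converse, given $\f w\neq0$ with $(M(t)-\ul A(t))\f w=0$, I would \emph{define} $\f g$ by the displayed formula and check the two required properties directly. First, rearranging $(\ul A(t)\f w)_y=m_y(t)w_y$ gives $\sum_v\ul A_{yv}\f g_{(y,v)}=w_y$; then a one-line computation in which the two terms proportional to $\ul A_{xy}\ul A_{yx}w_y$ cancel produces $(B\f g)_{(x,y)}=d^{-1/2}(w_y-\ul A_{yx}\f g_{(y,x)})=t\,\f g_{(x,y)}$, so $B\f g=t\f g$. Second, $\f g\neq0$: if all $\f g_{(x,y)}$ vanished, the formula would force $t\sqrt d\,w_y=\ul A_{yx}w_x$ for all $x,y$, and a short case analysis (treating $t=0$ and $t\neq0$ separately) combined with the hypothesis at the index $x=y$ contradicts $\f w\neq0$. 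Hence $t\in\spec(B)$.

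All of the above is elementary algebra, so the only real care is bookkeeping: verifying the non-vanishing of denominators (immediate from the hypothesis) and the two non-vanishing claims $\f g\neq0\Leftrightarrow\f w\neq0$, which is precisely where the assumption $t^2\neq d^{-1}\ul A_{xy}\ul A_{yx}$ is genuinely needed and where the degenerate possibility $t=0$ must be handled. As an alternative one could instead derive the classical Bass-type determinant identity for $B$ by factoring $B$ through vertex–edge incidence matrices and invoking $\det(I-XY)=\det(I-YX)$ with a Schur-complement reduction, from which the stated equivalence follows; the eigenvector argument above is the shorter route for the "$\det=0$ if and only if" formulation that is actually required.
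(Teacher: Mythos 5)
Your proof is correct. The only delicate point you need to keep in mind (and you correctly flagged it) is the two non-degeneracy claims $\f g\neq 0\Leftrightarrow \f w\neq 0$. In the forward direction, the linearity of the inversion formula $\f g_{(x,y)}=(t\sqrt d\,w_y-\ul A_{yx}w_x)/(t^2d-\ul A_{xy}\ul A_{yx})$ makes the implication $\f w=0\Rightarrow\f g=0$ immediate, so $\f g\neq 0$ forces $\f w\neq 0$. In the converse direction, the way you use the hypothesis at $x=y$ is exactly right: if $\f g$ were identically zero, then $t\sqrt d\,w_y=\ul A_{yx}w_x$ for all $x,y$; taking $x=y$ gives $(t\sqrt d-\ul A_{yy})w_y=0$, and the hypothesis at $x=y$ reads $t^2d\neq \ul A_{yy}^2$, hence $t\sqrt d\neq \ul A_{yy}$, so $w_y=0$ for all $y$, contradicting $\f w\neq 0$. (This argument handles $t=0$ automatically, so the case split you mention is not strictly needed.) One small remark: for the diagonal edges $e=(x,x)$ the "pair $(x,y),(y,x)$" degenerates to a single equation, but your displayed formula remains consistent there since the system becomes $(t+d^{-1/2}\ul A_{xx})\f g_{(x,x)}=d^{-1/2}w_x$, which after multiplying numerator and denominator by $t\sqrt d-\ul A_{xx}$ (nonzero by hypothesis) agrees with the general formula; it is worth a sentence to say this explicitly.

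As for comparison with the paper: the paper does not present a proof of this lemma at all; it states that the formulation is identical to Lemma 4.1 of \cite{BBK1} and defers to that reference. So there is nothing in this paper to compare against, but your eigenvector-correspondence argument is a standard and self-contained way to establish the "$t\in\spec(B)\Leftrightarrow\det(M(t)-\ul A(t))=0$" formulation, and it is more elementary than the alternative route you also mention (factoring $B$ through incidence matrices and invoking $\det(I-XY)=\det(I-YX)$ with a Schur complement), which would additionally give the full characteristic polynomial identity rather than just the vanishing equivalence needed here.
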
 

An argument similar to the following proof of Proposition~\ref{pro:upper_bound_on_adjacency_matrix} has been used to show Proposition 4.2 in \cite{BBK1}. 

\begin{proof}[Proof of Proposition~\ref{pro:upper_bound_on_adjacency_matrix}]
We only show that $d^{-1/2} \underline{A} \leq \id + D + E$. The same proof implies that $-d^{-1/2} \underline{A}$ 
satisfies the same bound. 
In this proof, we use the matrices $\underline{A}(t)$ and $M(t)$ defined in Lemma~\ref{lem:ihara_bass_formula} exclusively for $t \in \R$. Note that $\underline{A}(t)$ and $M(t)$ are Hermitian for all $t \in \R$. 
If $t \in \R$ converges to $+ \infty$ then we have that $M(t) - \underline{A}(t) \to \id$. 
Therefore, $M(t) - \underline{A}(t)$ is strictly positive definite for all sufficiently large $t >0$. Let $t_*$ be the infimum of all $t >0$ such that $M(t) - \underline{A}(t)$ is strictly positive definite. 
Hence, by continuity, the smallest eigenvalue of $M(t_*) - \underline{A}(t_*)$ is zero while all eigenvalues of $M(t)- \underline{A}(t)$ are strictly positive for $t >t_*$. Therefore, Lemma~\ref{lem:ihara_bass_formula} implies that 
$t_* \in \spec(B)$ and $M(t) - \underline{A}(t)$ is strictly positive definite for all $t \in (\rho(B), \infty)$. 
Hence, Proposition~\ref{pro:bound_nonbacktracking_matrix} yields that 
\begin{equation} \label{eq:proof_upper_bound_adjacency_aux1} 
\P\big( M(1 + \eps) - \underline{A}( 1 + \eps) \geq 0 \big) \geq 1- C N^{3 - c \sqrt{d} \log ( 1 + \eps)}. 
\end{equation} 

We shall establish below the existence of a constant $C>0$ such that
\begin{equation} \label{eq:proof_upper_bound_adjacency_aux2} 
\norm{\underline{A}(t) - t^{-1} d^{-1/2}  \underline{A}} \leq \frac{C( 1+ D_{\sigma(1)})}{d^{3/2}}, \qquad \norm{M(t) - \id - t^{-2} D} \leq \frac{C(1 + D_{\sigma(1)})}{d^2} 
\end{equation} 
for each $t \in [1,2]$. 
Since $M(t) - \underline{A}(t) \geq 0$ and $D \geq 0$ imply 
\[ d^{-1/2} \underline{A} \leq \id + D + (t-1)\id + t\big(\norm{\underline{A}(t) - t^{-1} d^{-1/2}  \underline{A}} + \norm{M(t) - \id - t^{-2} D}\big)\id, \] 
choosing $t = 1 + \eps$ with $\eps = \Cnu d^{-1/2}$ and using \eqref{eq:proof_upper_bound_adjacency_aux1}, \eqref{eq:proof_upper_bound_adjacency_aux2} as well as 
$\log(1 + \eps) \geq c \eps$ for some $c>0$ establish Proposition~\ref{pro:upper_bound_on_adjacency_matrix} up to showing \eqref{eq:proof_upper_bound_adjacency_aux2}. 

We now prove \eqref{eq:proof_upper_bound_adjacency_aux2}. In order to estimate $\underline{A}(t) - t^{-1} d^{-1/2} \underline{A}$, we use the Schur test to conclude
\[ \norm{\underline{A}(t) - t^{-1} d^{-1/2} \underline{A}} \leq \max_{x} \sum_{y} \abs{\underline{A}_{xy}(t) - t^{-1} d^{-1/2} \underline{A}_{xy}}.  \] 
A short computation shows that 
\[ \max_x\sum_{y} \abs{\underline{A}_{xy}(t) - t^{-1} d^{-1/2} \underline{A}_{xy}}  \leq \max_x \sum_y \frac{\abs{\underline{A}_{xy}}^3}{t\sqrt{d}(t^2d - \underline{A}_{xy}^2)}
\leq \frac{2}{t^3 d^{3/2}} \big( D_{\sigma(1)} + 1\big),   \] 
where we used $\abs{A_{xy}} \leq 1$, $t^2d/2 \geq \underline{A}_{xy}^2$ and $\sum_y \underline{A}_{xy}^2 \leq D_x + 1\leq D_{\sigma(1)} + 1$ in the last step (recalling that $\ul A_{xy} = A_{xy} - d/N$). 
Thus, the first bound in \eqref{eq:proof_upper_bound_adjacency_aux2} follows due to $t \geq 1$. 

As $M(t)$ and $D$ are diagonal matrices by definition, we obtain 
\[ \norm{M(t) - \id - t^{-2} D} = \max_x \absbb{\sum_{y} \bigg(\frac{\underline{A}_{xy}^2}{t^2d - \underline{A}_{xy}^2} - \frac{1}{t^2d} A_{xy}^2 \bigg)} \leq \max_x \sum_y \bigg(\frac{\underline{A}_{xy}^4}
{t^2d(t^2d - \underline{A}_{xy}^2)} + \frac{2 \underline{A}_{xy}^2}{t^2N} + \frac{d}{t^2 N^2} \bigg) 
. \] 
Arguing similarly as in the proof of the first bound in \eqref{eq:proof_upper_bound_adjacency_aux2} implies the second bound in \eqref{eq:proof_upper_bound_adjacency_aux2}. 
This completes the proof of \eqref{eq:proof_upper_bound_adjacency_aux2} and, thus, the one of Proposition~\ref{pro:upper_bound_on_adjacency_matrix}. 
\end{proof}

\section{Lower bounds on large eigenvalues}  \label{sec:lower_bound_eigenvalues} 

The main result of this section is the following proposition. It states that the $l$-th largest eigenvalue of $\underline{A}$, ${\lambda}_l(\underline{A})$, is bounded from below 
by $\sqrt{d}\Lambda(\alpha_{\sigma(l)})$ , up to a small error term, as long as $\alpha_{\sigma(l)}$ is sufficiently large. 
We recall that $\alpha_x \defeq D_x/d$ for any $x \in [N]$ and the permutation $\sigma$ of $[N]$ is chosen such that $(\alpha_{\sigma(l)})_{l=1}^N$ is nonincreasing (cf.~\eqref{eq:def_sigma_permutation}). 
Similarly, up to a small error term, $-\sqrt{d} \Lambda(\alpha_{\sigma(l)})$ bounds the $l$-th smallest eigenvalue, $\lambda_{N+1-l}(\underline{A})$, of $\underline{A}$ from above.

\begin{proposition} \label{pro:lower_bound_number_outliers} 
Let $\cal K \sqrt{\log N} \leq d \leq \exp\pb{\sqrt{\log N} / 4}$.
There is a universal constant $C>0$ such that if 
the random index $L_\geq$ is defined through
\begin{equation} \label{eq:def_L_geq_tau_star} 
 L_\geq \defeq \max\bigg\{  l \in [N] \col \alpha_{\sigma(l)} \geq \tau_* \bigg\}, \qquad
\tau_* \deq 2 + C \frac{(\log d)^2}{d\wedge \log N}\, 
\end{equation} 
then, for any $l \in [L_\geq]$, the bound 
\[ \begin{aligned} 
\min\big\{\lambda_l(\underline{A}),- \lambda_{N+1-l}(\underline{A})\big\} 
& \geq \sqrt{d} \Lambda\big(\alpha_{\sigma(l)} \big) + \ord\bigg( \bigg( \log d + \frac{\log N}{d} \bigg)^{1/2} 
\bigg( 1 + \frac{\log N}{D_{\sigma(l)}} \bigg)^{1/2} \\ 
 & \quad \qquad \qquad \quad \qquad \qquad \qquad \qquad  +
 \bigg(1 +  \frac{\Delta}{d} \bigg)\bigg( \frac{ \Delta}{D_{\sigma(l)}} \frac{ d + \log N}{d} \bigg)^{1/2}\bigg) 
\end{aligned} \] 
holds with very high probability. Here, $\Delta$ is defined as in \eqref{eq:def_Delta}. 
In the definition of $L_\geq$, we use the convention that $L_\geq=0$ if $\alpha_{\sigma(1)} < \tau_*$.
\end{proposition}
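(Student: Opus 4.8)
\emph{Strategy.} This is the ``lower bound'' half of the large-eigenvalue/large-degree correspondence. The plan is to build, around each of the $l$ largest-degree vertices, a unit vector that is an approximate eigenvector of $\underline A$ (resp.\ of $-\underline A$) for the eigenvalue $\sqrt d\Lambda(\alpha_{\sigma(m)})$ and is localised in a small ball, to arrange these vectors to be mutually orthogonal by passing to a pruned graph, and then to invoke the min-max principle.

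\emph{Construction.} Fix $l\in[L_\geq]$ and put $x_m\deq\sigma(m)$ for $m\in[l]$. Using Lemma~\ref{lem:subgraph_separating_large_degrees} I would pass to the pruned subgraph $G_2\subset G$, with adjacency matrix $A_2$, for which $\norm{A-A_2}$ and the degree defects $D_{x_m}-D_{x_m}^{G_2}$ are quantitatively controlled and the balls $B_{r_{x_m}}^{G_2}(x_m)$ are pairwise $G_2$-separated. Writing $\alpha'_x\deq D_x^{G_2}/d$, and noting that $G_2$ inherits from $G$ the local geometric properties (sphere concentration \eqref{eq:concentration_S_i}, few cycles) underlying Section~\ref{sec:large_degree_induces_eigenvalues}, the proof of Proposition~\ref{pro:approximate_eigenvector} applies to $\underline A_2$ with the radius $r_m\deq r_{x_m}$: it produces unit vectors $\f v_m,\f v_{-,m}$ supported in $B_{r_m}^{G_2}(x_m)$ with $\norm{(\underline A_2-\sqrt d\Lambda(\alpha'_{x_m}))\f v_m}+\norm{(\underline A_2+\sqrt d\Lambda(\alpha'_{x_m}))\f v_{-,m}}\leq\delta_m$, where $\delta_m\deq\cal C(\log d+\log N/d)^{1/2}(1+\log N/D_{x_m})^{1/2}$. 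Here the hypothesis $\alpha_{x_m}\geq\tau_*$ is used, with the universal constant in $\tau_*$ taken large, to check $\alpha'_{x_m}\geq 2+2\log d/r_m$, and $D_{x_m}\leq\Delta$ to check that $r_m$ lies in the admissible range $[\log d,r_{x_m}^{G_2}]$; the constraint $\cal K\sqrt{\log N}\leq d\leq N^{1/4}$ of Proposition~\ref{pro:approximate_eigenvector} holds throughout our range of $d$. By $G_2$-separation, $(\f v_m)_m$ and $(\f v_{-,m})_m$ are orthonormal families. A Lipschitz estimate for $\Lambda$ on $[\tau_*/2,\infty)$ together with the degree-defect bound controls $\sqrt d\abs{\Lambda(\alpha_{x_m})-\Lambda(\alpha'_{x_m})}$ by part of the second error term.

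\emph{Min-max.} Set $\Pi\deq\op{span}(\f v_1,\dots,\f v_l)$, an $l$-dimensional space, and $\Pi^-$ the analogue for the $\f v_{-,m}$. Using $\lambda_l(\underline A)\geq\lambda_l(\underline A_2)-\norm{A-A_2}$ and Courant--Fischer, it suffices to lower-bound $\scalar{\f w}{\underline A_2\f w}$ for unit $\f w=\sum_{m\leq l}c_m\f v_m\in\Pi$ (and $-\scalar{\f w}{\underline A_2\f w}$ for $\f w\in\Pi^-$). The diagonal terms give $\sum_m c_m^2\scalar{\f v_m}{\underline A_2\f v_m}\geq\sqrt d\min_{m\leq l}\Lambda(\alpha'_{x_m})-\max_m\delta_m\geq\sqrt d\Lambda(\alpha_{\sigma(l)})-\delta_l-(\text{degree-defect term})$, using that $\Lambda$ is increasing, $D_{x_m}\geq D_{\sigma(l)}$ for $m\leq l$, and the degree-defect bound from Lemma~\ref{lem:subgraph_separating_large_degrees}. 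For $m\neq m'$ the $A_2$-part of $\scalar{\f v_m}{\underline A_2\f v_{m'}}$ vanishes by $G_2$-separation of the supports, leaving $-\scalar{\f v_m}{(\E A)\f v_{m'}}=-\tfrac dN\scalar{\f 1}{\f v_m}\scalar{\f 1}{\f v_{m'}}$; summed against $c_mc_{m'}$, the off-diagonal contribution is $\geq-\tfrac dN\scalar{\f 1}{\f w}^2$. The same computation works for $-\underline A_2$ on $\Pi^-$, where the sign of the square is even favourable.

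\emph{The crux, and the main obstacle.} It remains to show $\tfrac dN\scalar{\f 1}{\f w}^2=\ord(1)$; a priori this term is only $\ord(d)$, which would ruin the estimate since $\sqrt d\Lambda\asymp\sqrt d$. By Cauchy--Schwarz $\scalar{\f 1}{\f w}^2\leq\sum_{m\leq l}\scalar{\f 1}{\f v_m}^2$. Since $\f v_m=\sum_{i\leq r_m}u_i\abs{S_i^{G_2}(x_m)}^{-1/2}\f 1_{S_i^{G_2}(x_m)}$ with the $u_i$ decaying geometrically at rate $(\alpha'_{x_m}-1)^{-1/2}<1$ (as $\alpha'_{x_m}>2$) and $\abs{S_i^{G_2}(x_m)}\asymp D_{x_m}^{G_2}d^{i-1}$ by \eqref{eq:concentration_S_i}, the sum $\scalar{\f 1}{\f v_m}=\sum_i u_i\abs{S_i^{G_2}(x_m)}^{1/2}$ is dominated by its $i=r_m$ term, whence $\scalar{\f 1}{\f v_m}^2\leq\cal C\,\abs{B_{r_m}^{G_2}(x_m)}\,(\alpha'_{x_m}-1)^{-(r_m-1)}$. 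Because $\alpha'_{x_m}-1\geq 1+\tfrac12 C(\log d)^2/(d\wedge\log N)$ while $r_m\asymp(d\wedge\log N)/\log d$, the decay factor is $\leq d^{-1}$ (indeed much smaller) once the constant $C$ in $\tau_*$ is large enough; hence $\scalar{\f 1}{\f v_m}^2\leq\cal C\,\abs{B_{r_m}^{G_2}(x_m)}/d$, and summing over $m$ with the pairwise disjointness of the balls $B_{r_m}^{G_2}(x_m)$ (so that $\sum_m\abs{B_{r_m}^{G_2}(x_m)}\leq N$) gives $\sum_m\scalar{\f 1}{\f v_m}^2\leq\cal C N/d$, i.e.\ $\tfrac dN\scalar{\f 1}{\f w}^2\leq\cal C$; identically for $\f v_{-,m}$, whose alternating coefficients only shrink $\abs{\scalar{\f 1}{\f v_{-,m}}}$. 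This is the delicate point: the exponential off-origin localisation of $\f v_m$, quantified by $(\alpha'_{x_m}-1)^{-(r_m-1)}$, is precisely what is needed, and it is why $\tau_*$ is taken of the shape $2+C(\log d)^2/(d\wedge\log N)$ — the window $2\log d/r_m\leq\alpha_{x_m}-2$ demanded by Proposition~\ref{pro:approximate_eigenvector}, together with $r_m=r_{x_m}\asymp(d\wedge\log N)/\log d$, forces $(\alpha'_{x_m}-1)^{r_m}\gg d$. Assembling the three paragraphs and absorbing $\norm{A-A_2}$, the $\Lambda$-defect term, and the $\ord(1)$ above into the second error term yields the claim; the only genuinely heavy input, the pruned graph $G_2$ of Lemma~\ref{lem:subgraph_separating_large_degrees}, is used here as a black box.
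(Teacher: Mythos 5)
Your overall strategy matches the paper's (pruned graph, localised approximate eigenvectors, min--max), but there is a genuine gap in the step where you produce the approximate eigenvectors. You claim that ``the proof of Proposition~\ref{pro:approximate_eigenvector} applies to $\underline A_2$'' on the strength of a parenthetical remark that $G_2$ inherits the local geometry of $G$. This is exactly the step the paper is careful to avoid: Proposition~\ref{pro:approximate_eigenvector} and all of Section~\ref{sec:large_degree_induces_eigenvalues} (Lemma~\ref{lem:concentration_S_i}, Lemma~\ref{lem:tree_approximation}, the decomposition into $\f w_0,\dots,\f w_4$) are proved for the Erd\H{o}s--R\'enyi matrix $\underline A$ and rely on the explicit distribution of $A$. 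Transporting them to $\underline A_2$ is not automatic --- $G_2$ is a complicated function of $G$, its spheres differ from $G$'s spheres, and it is not even clear what $\underline A_2$ means (note the upper-bound section defines $\underline A_\tau \defeq A_\tau - \Pi_\tau(\E A)\Pi_\tau$, not $A_\tau-\E A$). The paper instead applies Proposition~\ref{pro:approximate_eigenvector} to $\underline A$ itself and then restricts the resulting vector $\f v^{(x)}$ to $B^{G_2}_{r_{x,2}-2}(x)$, controlling the truncation error $\|\f v^{(x)}-\tilde{\f v}^{(x)}\|$ by the tailored estimate of Lemma~\ref{lem:subgraph_separating_large_degrees}~\ref{item:subgraph_S_i}. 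This is where the $(1+\Delta/d)\bigl(\tfrac{\Delta}{D_{\sigma(l)}}\tfrac{d+\log N}{d}\bigr)^{1/2}$ term in the statement actually comes from; your sketch gestures at a ``$\Lambda$-defect'' from $\alpha_x\to\alpha'_x$ but never derives this error quantitatively, and without the truncation step it is unclear how you would get it.

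There is also a sloppiness in the radius: you set $r_m\defeq r_{x_m}$ (the quantity from \eqref{eq:def_r_star}), but the separation from Lemma~\ref{lem:subgraph_separating_large_degrees}~\ref{item:subgraph_paths} only holds for balls of radius $r_{x,\tau}=(\tfrac14 r_x)\wedge(\tfrac12 r(\tau))$; your later asymptotics $r_m\asymp(d\wedge\log N)/\log d$ correspond to $r_{x,\tau}$, not $r_x$, so the two uses of ``$r_m$'' are inconsistent. Moreover to make the $A_2$-part of $\scalar{\f v_m}{\underline A_2\f v_{m'}}$ vanish exactly one needs the supports to be at distance at least $2$, which is why the paper shrinks the radius by an extra $2$.

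On the other hand, your handling of the $\E A$ contribution is a genuinely different and attractive argument. The paper bounds $\|(\E A)|_{\wt W_l}\|$ by a Hilbert--Schmidt estimate using $|Z_l|\leq 2N/d^2$, whereas you bound $\tfrac dN\scalar{\f 1}{\f w}^2$ directly by Cauchy--Schwarz and the geometric decay $u_{r_m}^2\leq(\alpha'_{x_m}-1)^{-(r_m-1)}$, which together with $(\alpha'_{x_m}-1)^{r_m-1}\geq d$ (forced by the shape of $\tau_*$) and disjointness of the balls gives $\tfrac dN\scalar{\f 1}{\f w}^2=\cal O(1)$. Both arguments work and yield a sub-leading error; yours nicely isolates \emph{why} $\tau_*$ has to have the form $2+C(\log d)^2/(d\wedge\log N)$. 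But this does not repair the main gap: without either a pruned-graph version of Section~\ref{sec:large_degree_induces_eigenvalues} or the truncation step via Lemma~\ref{lem:subgraph_separating_large_degrees}~\ref{item:subgraph_S_i}, the proof is incomplete.
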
 

The following lemma will be a key ingredient in the proof of the previous proposition. For its formulation, we introduce the 
set $\cal V_\tau$ of vertices of large degree given by 
 \[ \cal V_\tau \defeq \big\{ x \in [N] \col D_x \geq \tau d\big\},\] 
where $\tau >1$. 
We recall the definition of $h$ from \eqref{eq:def_h}. 
The following lemma provides a subgraph $G_\tau$ of $G$ such that, as $N$ goes to infinity, the length of the shortest path in $G_\tau$ of two vertices in $\cal{V}_\tau$ tends to 
infinity with a lower bound given in terms of $r(\tau)$ defined through  
\begin{equation} \label{eq:def_r_tau} 
r(\tau) \defeq \frac{d}{2 \log d}  h\bigg( \frac{\tau - 1}{2} \bigg)  -2. 
\end{equation} 
The next lemma establishes the existence of the prunder graph $G_\tau$ and lists its properties. 

\begin{lemma}[Existence of pruned graph] 
\label{lem:subgraph_separating_large_degrees} 
Let $\tau > 1$ and $r(\tau)$ be defined as in \eqref{eq:def_r_tau}. For all $x \in \cal V_\tau$, we set $r_{x,\tau} \defeq (\frac{1}{4} r_x)\wedge ( \frac{1}{2} r(\tau)) $ with $r_x$ from \eqref{eq:def_r_star}. 
Then there exists a subgraph $G_\tau$ of $G$ with the following properties. 
\begin{enumerate}[label=(\roman*)]
\item \label{item:subgraph_paths} If a path $p$ in $G_\tau$ connects two vertices $x, y \in \cal V_\tau$, $x \neq y$, then $p$ has length at least $r_{x,\tau} + r_{y,\tau} +1$. In particular, the balls $B_{r_{x,\tau}}^{G_\tau}(x)$
for $x \in \cal V_\tau$ are disjoint.
\item \label{item:subgraph_tree} The induced subgraph $G_\tau|_{B_{r_{x,\tau}}^{G_\tau}(x)}$ is a tree for each $x \in \cal V_\tau$. 
\item \label{item:subgraph_cut_only_in_S_1} For each edge in $G\setminus G_\tau$, there is at least one vertex in $\cal{V}_\tau$ incident to it. 
\item \label{item:subgraph_inclusion_S_i} For each $x \in \cal V_\tau$ and each $i \in \N$ satisfying $1 \leq i \leq r_{x,\tau}$ we have $S_i^{G_\tau}(x) \subset S_i^G(x)$. 
\item \label{item:subgraph_N_s_agree} For each $x \in \cal V_\tau$, we have  
\[S_1^{G_\tau}(y) \cap S_i^{G_\tau}(x) = S_1^{G}(y) \cap S_i^{G}(x)\] 
 for all $y \in B_{r_{x,\tau}}^{G_\tau}(x)\setminus \{ x\}$ and $1 \leq i \leq r_{x,\tau}$. 
\item \label{item:subgraph_degrees} Let $ C \leq d \leq N^{1/4}$ and $\tau \geq 1 + \mathcal{K} \big(\frac{\log N}{d^2} \big)^{1/3}$. 
The degrees induced on $[N]$ by $G\setminus G_\tau$ are bounded according to
\[\max_{x \in [N]} D_x^{G \setminus G_\tau} = \ord\bigg(1 + \frac{\log N}{h( (\tau-1)/2)d}\bigg)  \] 
with very high probability. 
\item \label{item:subgraph_S_i} 
Let $\mathcal{K}\log \log N\leq d \leq \mathcal{K}^{-1} N^{1/4}$. 
For each $x \in \cal V_\tau$ and all $2 \leq i \leq \log N/(4 \log d)$, the bound 
\begin{equation}  
 \abs{S_i^G(x) \setminus S_i^{G_\tau}(x)} \leq  D_x^{G \setminus G_\tau} d^{i-2} \Delta \bigg[ 1+ \Cnu \sqrt{\frac{\log N}{d \Delta}}\bigg], 
\label{eq:bound_S_i_setminus_S_i_tau}
\end{equation}
holds with very high probability. Here, $\Delta$ is defined as in \eqref{eq:def_Delta}.  
\end{enumerate} 
\end{lemma}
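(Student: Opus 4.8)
\emph{Overview.} The plan is to produce $G_\tau$ by an explicit pruning procedure, reading \ref{item:subgraph_paths}--\ref{item:subgraph_N_s_agree} off the construction (with a trivial fallback on a bad event) and obtaining the quantitative bounds \ref{item:subgraph_degrees}, \ref{item:subgraph_S_i} from first-moment estimates tuned to the definition \eqref{eq:def_r_tau} of $r(\tau)$. Fix a very high probability event $\Omega$ on which the cycle rank of $G\vert_{B_r(x)}$ is $\ord(1)$ for every $x\in\cal V_\tau$ and every $r\leq\frac{\log N}{4\log d}$ (by Lemma~\ref{lem:tree_approximation}, or Corollary~\ref{cor:tree_approximation} when $r\leq r_x$), on which Lemma~\ref{lem:concentration_S_i} and Lemma~\ref{lem:upper_bound_degree} hold, and which carries the counting estimate used for \ref{item:subgraph_degrees} below. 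Off $\Omega$ we simply take $G_\tau$ to be $G$ with every edge incident to $\cal V_\tau$ deleted: then each $x\in\cal V_\tau$ is isolated in $G_\tau$, so \ref{item:subgraph_cut_only_in_S_1} is clear and \ref{item:subgraph_paths}, \ref{item:subgraph_tree}, \ref{item:subgraph_inclusion_S_i}, \ref{item:subgraph_N_s_agree} hold trivially (all positive-radius balls around $\cal V_\tau$ are singletons), while \ref{item:subgraph_degrees} and \ref{item:subgraph_S_i} are only asserted on $\Omega$.

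\emph{Construction on $\Omega$.} Order $\cal V_\tau=\{x_1,x_2,\dots\}$ with $r_{x_1,\tau}\geq r_{x_2,\tau}\geq\cdots$ (ties by label), set $H_0\defeq G$, and obtain $H_j$ from $H_{j-1}$ by deleting every edge $\{x_j,z\}$ for which the portion of $H_{j-1}$ reachable from $z$ within distance $r(\tau)$ without passing through $x_j$ either fails to be a tree or meets $\cal V_\tau\setminus\{x_j\}$, together with a minimal further set of edges at $x_j$ that breaks every cycle through $x_j$ inside $B_{r_{x_j,\tau}}^{H_{j-1}}(x_j)$ and expels the vertices lying on it; put $G_\tau\defeq H_m$, $m\defeq\abs{\cal V_\tau}$. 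Every deletion is an edge at the $x_j$ being processed, which gives \ref{item:subgraph_cut_only_in_S_1}. One then checks by induction on $j$ that after step $j$ the ball $B_{r_{x_j,\tau}}^{H_j}(x_j)$ is a tree in which $x_j$ reaches no vertex of $\cal V_\tau$ other than $x_j$, that it is untouched by later steps (which delete edges only at the already-severed $x_{j'}$, $j'>j$, all of which lie outside this ball), and that distances to $x_j$ inside it coincide with those in $G$; this yields \ref{item:subgraph_tree}, \ref{item:subgraph_inclusion_S_i} and \ref{item:subgraph_N_s_agree}. For \ref{item:subgraph_paths}, a path in $G_\tau$ joining distinct $x,y\in\cal V_\tau$ of length at most $r_{x,\tau}+r_{y,\tau}\leq r(\tau)$ would have been detected and cut when the first of $x,y$ was processed; disjointness of the balls $B_{r_{x,\tau}}^{G_\tau}(x)$ follows at once.

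\emph{Proof of \ref{item:subgraph_degrees}.} The number of edges deleted at $x_j$ is at most the cycle rank of $G\vert_{B_{r(\tau)}(x_j)}$, which is $\ord(1)$ on $\Omega$, plus the number of vertices of $\cal V_\tau$ in $B_{r(\tau)}(x_j)$. The latter is bounded uniformly in $x\in[N]$ by a first-moment argument: up to a subpolynomial factor, the probability that some $k$ distinct vertices all lie in $\cal V_\tau$ and within distance $r(\tau)$ of $x$ is at most $N^{kr(\tau)}(d/N)^{kr(\tau)}\ee^{-kh(\tau-1)d}=\bigl(d^{r(\tau)}\ee^{-h(\tau-1)d}\bigr)^{k}$, since the tree witnessing ``all within $r(\tau)$ of $x$'' has at most $kr(\tau)$ edges and, conditionally on it, each degree obeys the Bennett bound $\P(D_{y_j}\geq\tau d)\leq\ee^{-h(\tau-1)d}(1+o(1))$ of Lemma~\ref{lem:upper_bound_degree}. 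A union bound over $x\in[N]$ costs a factor $N$, and \eqref{eq:def_r_tau} is arranged precisely so that
\[
r(\tau)\log d-h(\tau-1)d=\tfrac12 d\,h\bigl(\tfrac{\tau-1}{2}\bigr)-2\log d-h(\tau-1)d\leq-\tfrac12 h(\tau-1)d-2\log d,
\]
using $h(\tfrac{\tau-1}{2})\leq h(\tau-1)$; hence taking $k$ of order $1+\frac{\log N}{h(\tau-1)d}\leq 1+\frac{\log N}{h((\tau-1)/2)d}$ forces the probability below $N^{-\nu}$. This is where the formula for $r(\tau)$, and the mildly wasteful replacement of $h(\tau-1)$ by $h((\tau-1)/2)$ in the statement, come from.

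\emph{Proof of \ref{item:subgraph_S_i} and main obstacle.} If the asserted bound already exceeds $\abs{S_i^G(x)}$ (bounded via \eqref{eq:upper_bound_S_i_without_lower_bound_D_x}), \ref{item:subgraph_S_i} is trivial; otherwise we argue on $\Omega$. There, apart from $\ord(1)$ vertices with non-unique geodesics, $S_i^G(x)\setminus S_i^{G_\tau}(x)$ consists of vertices whose geodesic from $x$ crosses a deleted edge, and the deleted edges met by these geodesics are, up to a geometrically decaying remainder, the $D_x^{G\setminus G_\tau}$ edges incident to $x$ (a vertex of $\cal V_\tau$ deeper in $x$'s neighbourhood was already severed at $x$, and contributes a factor $d^{-(\text{its distance})}$). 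Each such geodesic therefore ends in the subtree hanging below a deleted edge $\{x,z\}$, whose slice at distance $i$ from $x$ lies in $S_{i-1}^G(z)$; iterating \eqref{eq:upper_bound_S_i_without_lower_bound_D_x} from $\abs{S_1^G(z)}=D_z\leq\Delta$ gives $\abs{S_{i-1}^G(z)}\leq D_z d^{i-2}\bigl(1+\Cnu\sqrt{\log N/(D_z d)}\bigr)$, and summing over the $D_x^{G\setminus G_\tau}$ deleted edges with $\sum_z D_z\leq D_x^{G\setminus G_\tau}\Delta$, $\sum_z D_z^{1/2}\leq D_x^{G\setminus G_\tau}\Delta^{1/2}$, produces \eqref{eq:bound_S_i_setminus_S_i_tau}. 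I expect the main obstacle to be the construction and the deterministic bookkeeping for \ref{item:subgraph_paths}--\ref{item:subgraph_N_s_agree} — in particular handling cycles \emph{through} a vertex of $\cal V_\tau$ (a single deletion would merely re-route vertices and break \ref{item:subgraph_inclusion_S_i}, so one must remove enough incident edges to expel them, which is where the $\ord(1)$ cycle-rank bound is indispensable) — with the probabilistic estimates \ref{item:subgraph_degrees} and \ref{item:subgraph_S_i} being comparatively routine once $r(\tau)$ has been calibrated.
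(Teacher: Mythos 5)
Your overall strategy — prune $G$ by cutting only edges incident to $\cal V_\tau$, read \ref{item:subgraph_paths}--\ref{item:subgraph_N_s_agree} off the construction, and get \ref{item:subgraph_degrees} from a first-moment bound on $\abs{\cal V_\tau \cap B_{r(\tau)}(x)}$ calibrated to the definition \eqref{eq:def_r_tau} — matches the paper's proof. The paper builds $H_\tau = H^{(1)} \cup H^{(2)}$ in two simultaneous passes (first tree-pruning, then path-pruning), whereas you process the vertices of $\cal V_\tau$ sequentially; this is a cosmetic difference, and your version of the bookkeeping for \ref{item:subgraph_paths}--\ref{item:subgraph_N_s_agree} is, if anything, slightly more transparent. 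Your proof of \ref{item:subgraph_S_i} also coincides with the paper's: bound $\abs{S_i^G(x) \setminus S_i^{G_\tau}(x)}$ by $\sum_z \abs{S_{i-1}^G(z)}$ over deleted edges $\{x,z\}$, then apply Lemma~\ref{lem:upper_bound_S_i_without_lower_bound_D_z} (you keep $D_z$ rather than replacing it by $\Delta$, but the two give the same final bound after summing).

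The one genuine gap is in your proof of \ref{item:subgraph_degrees}. Writing the probability that $x, y_1, \dots, y_k$ are connected by a tree of $\leq kr(\tau)$ edges and all have degree $\geq \tau d$ as
\[
N^{kr(\tau)} (d/N)^{kr(\tau)}\,\ee^{-k\,h(\tau-1)d}
\]
tacitly treats the events $\{D_{y_j} \geq \tau d\}$ as conditionally independent given the witnessing tree. They are not: any edge inside the set $Y = \{x, y_1, \dots, y_k\}$ that is not part of the tree is counted in two of the degrees, and one cannot simply multiply the marginal Bennett bounds. This is precisely the issue the paper's Lemma~\ref{lem:number_large_degree_distance_r} addresses via the decomposition
\[
D_{y_i} - d = X_i - \E\qb{X_i \cond A_Y} + \delta_i,
\]
where the $X_i$ \emph{are} independent and $\delta_i = \sum_{z\in Y}\pb{A_{zy_i} - d/N}$ is the shared part. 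Controlling $\max_i \delta_i$ is exactly what eats half of $(\tau-1)d$ and produces the exponent $h((\tau-1)/2)$ that appears in \eqref{eq:def_r_tau} and in the statement of \ref{item:subgraph_degrees}. So the replacement of $h(\tau-1)$ by $h((\tau-1)/2)$ is not a cosmetic relaxation you may freely undo — it is the place where the dependence is paid for. Your calculation reaches a bound of the right form, but as written it is a heuristic: to make it rigorous you need an explicit decoupling step of this kind, and then you land on $h((\tau-1)/2)$, not $h(\tau-1)$, in the exponent.

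Two smaller remarks. First, your fallback definition of $G_\tau$ off the good event $\Omega$ (delete every edge incident to $\cal V_\tau$) is a clean way to make \ref{item:subgraph_paths}--\ref{item:subgraph_N_s_agree} deterministic, and is a nice simplification the paper does not bother with. Second, in \ref{item:subgraph_degrees} you also need to bound $D_x^{G\setminus G_\tau}$ for $x \notin \cal V_\tau$; by \ref{item:subgraph_cut_only_in_S_1} this reduces to counting $\cal V_\tau$-neighbours of $x$, which you can route through $\abs{\cal V_\tau \cap B_2(y)}$ for any $y \in S_1(x) \cap \cal V_\tau$, as the paper does — worth stating explicitly since your sketch only discusses $x \in \cal V_\tau$.
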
 

We postpone the proof to the following subsection. First we now conclude Proposition~\ref{pro:lower_bound_number_outliers} from Proposition~\ref{pro:approximate_eigenvector}, Corollary~\ref{cor:norm_bound_underline_A} and Lemma~\ref{lem:subgraph_separating_large_degrees}.

\begin{proof}[Proof of Proposition~\ref{pro:lower_bound_number_outliers}]
We always assume that $L_\geq >0$. Otherwise there is nothing to prove. 
We shall only prove the statement about $\lambda_l(\underline{A})$ and leave the necessary modifications for the analogous statement about ${\lambda}_{N+1-l}(\underline{A})$ 
to the reader (see the proof of Proposition~\ref{pro:approximate_eigenvector}). 
Let $G_2$ be a subgraph of $G$ possessing the properties described in Lemma~\ref{lem:subgraph_separating_large_degrees} for $\tau =2$. 

We fix $l \in [L_\geq]$ and set $x \defeq \sigma(l)$. 
Let $\f v^{(x)}$ be the associated approximate eigenvector of $\underline{A}$ around $x$ constructed in~\eqref{eq:def_v_x} with $r=r_{x,2} - 2$, where $r_{x,2} \defeq  (\frac{1}{4}r_x) \wedge (\frac{1}{2} r(2))$ 
for $r(2)$ defined in \eqref{eq:def_r_tau}. We now apply Proposition~\ref{pro:approximate_eigenvector}. The condition $\log d \leq r$ is satisfied provided that $\log d \leq \frac14 r_x$, which, by Lemma \ref{lem:upper_bound_degree}, holds with very high probability under our assumption $\log d \leq \sqrt{\log N} / 4$. The upper bound on $D_x$ in \eqref{eq:approximate_eigenvector_event} holds with very high probability due to Lemma~\ref{lem:upper_bound_degree}. Finally, the lower bound on $D_x$ in \eqref{eq:approximate_eigenvector_event} follows from $\alpha_x \geq \tau_*$ by definition of $L_\geq$ (see \eqref{eq:def_L_geq_tau_star}). Thus, from Proposition~\ref{pro:approximate_eigenvector}, we conclude for $\lambda_x = \sqrt{d} \Lambda(\alpha_x)$ that 
\begin{equation} \label{eq:bound_A_lambda_x_v_restricted} 
 \norm{(\underline{A} - \lambda_x)\f v^{(x)}} = \ord\bigg( \bigg(\log d + \frac{\log N}{d} \bigg)^{1/2} \bigg( 1 + \frac{\log N}{D_x}\bigg)^{1/2} \bigg) 
\end{equation}
with very high probability.

We define $\tilde {\f v}^{(x)}\defeq (\tilde v^{(x)}(y))_{y \in [N]}$ through
\begin{equation*}
\tilde v^{(x)}(y) \deq v^{(x)}(y) \ind{y \in B^{G_{2}}_{r_{x,2} - 2}(x)}. 
\end{equation*}
We note that the vector $\tilde{\f v}^{(x)}$ is not normalized. 
By the explicit definition of $\f v^{(x)}$ in \eqref{eq:def_v_x}, we therefore conclude that
\begin{equation*}
\norm{\f v^{(x)} - \tilde{\f v}^{(x)}}^2 = \sum_{i = 1}^{r_{x,2}-2} u_i^2 \frac{\abs{S_i^G(x)\setminus S_i^{G_{2}} (x)}}{\abs{S_i^G(x)}} = \ord\bigg(\frac{\Delta}{D_x}\frac{D_x^{G\setminus G_{2}}}{d} \bigg) 
\end{equation*}
with very high probability due to Lemma~\ref{lem:subgraph_separating_large_degrees} \ref{item:subgraph_S_i}, \eqref{eq:concentration_S_i} combined with $\cal K \log N \leq D_x d$ (see the remark below Lemma \ref{lem:concentration_S_i}) by \eqref{eq:def_L_geq_tau_star}  
and $\sum_{i=1}^{r_{x,2}} u_i^2 \leq 1$. Here, when applying Lemma~\ref{lem:subgraph_separating_large_degrees} \ref{item:subgraph_S_i}, we also employed that $r_{x,2} \leq r_x/4 \leq \log N/(4 \log d)$ as $D_x \geq 2d$. 
Hence, we have that 
\begin{equation} \label{eq:bound_v_x_minus_tilde_v_x} 
 \norm{\f v^{(x)} - \tilde{\f v}^{(x)}} = \ord\bigg(\bigg(\frac{\Delta}{D_x}\frac{D_x^{G\setminus G_{2}}}{d} \bigg)^{1/2} \bigg) 
\end{equation}
with very high probability. 
Therefore, 
from \eqref{eq:bound_v_x_minus_tilde_v_x}, Corollary~\ref{cor:norm_bound_underline_A}, and \eqref{eq:bound_A_lambda_x_v_restricted}, 
we deduce for $\lambda_x = \sqrt{d}\Lambda(\alpha_x)$ that
\begin{equation}\label{eq:proof_lower_bound_approximate_eigenvector} 
\begin{aligned} 
(\underline{A} - \lambda_x) \frac{\tilde{\f v}^{(x)}}{\norm{\tilde{\f v}^{(x)}}} & =
(\underline{A} - \lambda_x) \frac{\f v^{(x)}}{\norm{\tilde{\f v}^{(x)}}} + (\underline{A}-\lambda_x) \frac{\tilde{\f v}^{(x)}- \f v^{(x)}}{\norm{\tilde{\f v}^{(x)}}} \\ 
& = \ord\bigg(\bigg(\log d + \frac{\log N}{d} \bigg)^{1/2} \bigg( 1 + \frac{\log N}{D_x}\bigg)^{1/2} + \frac{d + \Delta}{\sqrt{d}} \bigg(\frac{\Delta}{D_x} 
\frac{D_x^{G \setminus G_{2}}}{d} \bigg)^{1/2} \bigg) \\ 
& = \ord\bigg( \bigg( \log d + \frac{\log N}{d} \bigg)^{1/2}\bigg( 1 + \frac{\log N}{D_x} \bigg)^{1/2} + \bigg(1 +  \frac{\Delta}{d} \bigg)\bigg( \frac{\Delta}{D_x} 
 \frac{d + \log N}{d} \bigg)^{1/2} \bigg) 
\end{aligned} 
\end{equation}
with very high probability. Here, we used Lemma~\ref{lem:subgraph_separating_large_degrees} \ref{item:subgraph_degrees} in the last step. 
Hence, $(\tilde{\f v}^{(\sigma(l))})_{l=1}^{L_\geq}$ defines a family of orthogonal approximate eigenvectors of $\underline{A}$ as their supports are disjoint by Lemma~\ref{lem:subgraph_separating_large_degrees} \ref{item:subgraph_paths}. 

We set $\wt{W}_l \defeq \op{span}\{ \wt{\f v}^{(\sigma(k))} \col k \in [l] \}$. 
In the following, we write $\mathbb{S}(W)$ for the unit sphere with respect to the Euclidean norm in any linear subspace $W \subset \R^N$. 
Let $A_2=\op{Adj}(G_2)$ be the adjacency matrix of $G_2$. 
The max-min principle for $\lambda_l(\underline{A})$ yields 
\begin{equation} \label{eq:proof_lower_bound_estimate_lambda_l} 
\begin{aligned} 
 \lambda_l(\underline{A}) &  = \max_{\dim W = l} \min_{\f w \in \mathbb{S}(W)} \scalar{\f w}{\underline{A} \f w } \\ 
& \geq \min_{\f w \in \mathbb{S}(\wt{W}_l)} \scalar{\f w }{\underline{A} \f w } \\ 
& \geq \min_{\f w \in \mathbb{S}(\wt{W}_l)} \scalar{\f w }{A_{2} \f w} -\norm{A-A_{2}} - \norm{(\E A)|_{\wt{W}_l}}\\ 
& \geq \min_{k \in [l]} \lambda_{\sigma(k)} - 2\norm{A-A_{2}} -2   \norm{(\E A)|_{\wt{W}_l}} - \max_{k \in [l]} \norm{(\underline{A} - \lambda_{\sigma(k)})\tilde{\f v}^{(\sigma(k))}}\norm{\tilde{\f v}^{(\sigma(k))}}^{-1}. 
\end{aligned} 
\end{equation} 
Here, we added and subtracted $A_{2}$ in the third step and denote by $(\E A)|_{\wt{W}_l}$ the restriction of the matrix $\E A$ to the subspace $\wt{W}_l$. The last step follows from the definition of $\wt{W}_l$, the orthogonality of $\tilde{\f v}^{(\sigma(k))}$ and $A_{2}\tilde{\f v}^{(\sigma(k'))}$ for $k \neq k'$ and 
\[ \frac{\scalarb{\tilde{\f v}^{(\sigma(k))}}{A_{2} \tilde{\f v}^{(\sigma(k))}}}{\norm{\tilde{\f v}^{(\sigma(k))}}^2} \geq \lambda_{\sigma(k)}  - \norm{\tilde{\f v}^{(\sigma(k))}}^{-1} \norm{(\underline{A}- \lambda_{\sigma(k))}) \tilde{\f v}^{(\sigma(k))}} - \norm{A - A_{2}} - \norm{(\E A)|_{\wt{W}_l}}. \]

Now, we estimate the terms on the right-hand side of \eqref{eq:proof_lower_bound_estimate_lambda_l} to obtain the lower bound on $\lambda_l(\underline{A})$ in the proposition. 
Since $t \mapsto \Lambda(t)$ is monotonically increasing for $t \geq 2$, the first term is bounded from below by $\sqrt{d} \Lambda(\alpha_{\sigma(l)})$. 
For the second term, we use $\norm{A-A_{2}} \leq \max_{x \in [N]}D_x^{G\setminus G_{2}} \leq \Cnu ( 1 +  \log N/d)$ with very high probability by Lemma~\ref{lem:subgraph_separating_large_degrees} 
\ref{item:subgraph_degrees}. If $\f w \in \mathbb{S}(\wt{W}_l)$ then 
$\supp \f w \subset Z_l$, where $Z_l \deq \bigcup_{k \in [l]} B^{G_2}_{r_{\sigma(k),2}-2}(\sigma(k))$. 
Therefore, by the definition of $(\E A)_{Z_l}$, we obtain 
\[ \norm{(\E A)|_{\wt{W}_l}}^2 \leq \norm{(\E A)_{Z_l}}^2 \leq \frac{d^2}{N^2} \abs{Z_l}^2 \leq  \frac{4}{d^2} . \] 
Here, we estimated the operator norm of $(\E A)_{Z_l}$ by its Hilbert-Schmidt norm 
and used that $\E A_{ij} = \frac{d}{N}$ for $i \neq j$ as well as 
$\abs{Z_l} \leq \frac{2}{d^2} \sum_{k \in [l]} \abs{B_{r_{\sigma(k),2}}^{G_2}(\sigma(k))} \leq \frac{2N}{d^2}$ by Lemma~\ref{lem:concentration_S_i} and Lemma~\ref{lem:subgraph_separating_large_degrees} ~\ref{item:subgraph_paths}. 
For the fourth term in \eqref{eq:proof_lower_bound_estimate_lambda_l}, we use \eqref{eq:proof_lower_bound_approximate_eigenvector}.  
This completes the proof of Proposition~\ref{pro:lower_bound_number_outliers}.  
\end{proof}

\subsection{Proof of Lemma~\ref{lem:subgraph_separating_large_degrees}} 

For the proof of Lemma~\ref{lem:subgraph_separating_large_degrees} we need the next lemma. 
For any $x \in \cal V_\tau$, it provides a bound on the number of other vertices in $\cal V_\tau$ whose distance from $x$ is sufficiently small. 

\begin{lemma} \label{lem:number_large_degree_distance_r} 
There is a universal constant $C>0$ and a ($\nu$-dependent) $\cal K >0$ such that if 
$ C \leq d \leq N^{1/4}$ and $\tau \geq 1 + \mathcal{K} \big(\frac{\log N}{d^2} \big)^{1/3}$ 
then the following holds. 

For any $r\in \N$ satisfying $r\leq r(\tau)$ with $r(\tau)$ from \eqref{eq:def_r_tau} 
and for any $x \in\cal V_\tau$, we have
\begin{equation} \label{eq:V_alpha_B_r_bound}
 \abs{\cal V_\tau \cap B_r(x)} = \ord\bigg( \frac{\log N}{h((\tau-1)/2) d} \bigg) 
\end{equation}
with very high probability.  
\end{lemma}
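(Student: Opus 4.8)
The plan is to reduce the statement to a first-moment (union bound) calculation over the possible positions of the vertices in $\cal V_\tau \cap B_r(x)$. Fix $x \in \cal V_\tau$ and suppose $\abs{\cal V_\tau \cap B_r(x)} \geq k$ for some $k$ to be chosen. Then there exist $k$ distinct vertices $y_1, \dots, y_k$, each of degree at least $\tau d$, each lying within graph distance $r$ of $x$. I would encode this as the existence of a connected subgraph $H$ containing $x$ and $y_1, \dots, y_k$: take $H$ to be the union of the (at most $k$) tree paths in a breadth-first spanning structure of $B_r(x)$ connecting each $y_j$ to $x$, so that $H$ is a tree with at most $kr + 1$ vertices, together with the requirement that each $y_j$ has $\geq \tau d$ neighbours in $G$. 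The probability that a fixed such configuration occurs factorizes: the tree $H$ appears with probability $(d/N)^{\abs{E(H)}}$, and — after conditioning on $H$ — the events that each $y_j$ has at least $\tau d - O(r)$ additional neighbours outside $H$ are independent (they involve disjoint edge slots), each with probability bounded by $\exp(-d\, h(\tau - 1 - o(1)))$ via Bennett's inequality applied to $\op{Binom}(N, d/N)$, as in the proof of Lemma \ref{lem:upper_bound_degree}.

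Next I would assemble the union bound. Summing over the choice of $y_1, \dots, y_k$ and over the tree structure $H$ connecting them to $x$ contributes a combinatorial factor of order $(CN)^{\abs{V(H)} - 1}$ for the vertex choices against the $(d/N)^{\abs{E(H)}} = (d/N)^{\abs{V(H)}-1}$ from the edges, leaving a factor $(Cd)^{\abs{V(H)}-1} \leq (Cd)^{kr}$; the tree shapes on $\leq kr+1$ labelled vertices contribute at most $(kr)^{O(kr)}$ by Cayley. Against this we have the gain $\exp(-k d\, h((\tau-1)/2))$ from the $k$ independent large-degree events (I use $h((\tau-1)/2)$ rather than $h(\tau-1)$ to absorb the $O(r)$ correction in the number of "free" neighbours, which is legitimate since $r \leq r(\tau) = \frac{d}{2\log d} h(\frac{\tau-1}{2}) - 2$ makes $O(r) \leq \frac{1}{2}\tau d$ or so — this is exactly the role of the definition \eqref{eq:def_r_tau}). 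So the total bound is roughly
\begin{equation*}
\P\pb{\abs{\cal V_\tau \cap B_r(x)} \geq k} \leq (Cd)^{kr} (kr)^{Ckr} \exp\pb{-kd\, h((\tau-1)/2)} \leq \exp\pB{-k\pb{d\, h((\tau-1)/2) - Cr\log(drk)}}.
\end{equation*}
Using $r \leq r(\tau)$ and $d \geq C$, the bracket is at least $\tfrac12 d\, h((\tau-1)/2)$ provided $k \leq \text{poly}(N)$ (the $\log k$ term is harmless after also using $\tau \geq 1 + \cal K(\log N/d^2)^{1/3}$, which guarantees $d\, h((\tau-1)/2) \gtrsim (\log N)^{1/3} d^{1/3} \gg \log\log N$). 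Choosing $k = k(\nu) \deq \lceil C\nu \log N / (d\, h((\tau-1)/2)) \rceil$ makes the right-hand side at most $N^{-\nu}$, which is exactly the asserted bound \eqref{eq:V_alpha_B_r_bound} after a union bound over $x \in [N]$ (affordable since it costs only a factor $N$).

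The main obstacle I anticipate is bookkeeping the "free neighbours" correctly: the vertices $y_j$ may lie close to one another or to $x$, so their neighbourhoods and the tree $H$ can overlap, and one must be careful that the large-degree events, after conditioning on the edges of $H$, still each have at least $\tau d - \abs{V(H)} \geq \tau d - kr$ genuinely free (unconditioned, pairwise-disjoint) edge slots and remain mutually independent. The clean way is to condition on $A_{(V(H))}$ — all edges incident to $V(H)$ — note $H \subset G$ and the degree lower bounds are then measurable events about disjoint edge sets $\{\{y_j, z\} : z \notin V(H)\}$, hence independent; and the constraint $r \leq r(\tau)$ together with $k$ polynomially bounded ensures $kr \leq \tfrac12 \tau d$ (say), so replacing $h(\tau - 1 - kr/d)$ by $h((\tau-1)/2)$ only weakens the bound by a constant factor in the exponent. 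Everything else is a routine Bennett/Cayley/Stirling estimate of the same flavour as Lemma \ref{lem:tree_approximation}.
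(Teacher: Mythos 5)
Your overall strategy — a first-moment/union bound over the possible configurations of the nearby large-degree vertices, combined with Bennett's inequality for the degree tails and a Cayley-type count of connecting structures — is the same as the paper's. However, your proposal for decoupling the degree events has a genuine quantitative gap, and there is also a notational slip.

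The slip first: conditioning on $A_{(V(H))}$ (in the paper's notation, all edges with at least one endpoint in $V(H)$) already determines the edges $\{y_j,z\}$ with $z \notin V(H)$, since $y_j \in V(H)$; this would fix the degree events rather than leave them free. You presumably mean to condition on $A_{V(H)}$, the edges internal to $V(H)$, or equivalently on the event $E(H)\subset E(G)$ together with the internal edges among $V(H)$.

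The substantive issue is the size of the shift. Conditioning on all internal edges of $V(H)$ forces you to lower the degree threshold to $\tau d - \abs{V(H)}$, and $\abs{V(H)}$ can be as large as $kr+1$. To use $h((\tau-1)/2)$ in place of $h\bigl(\tau-1-\abs{V(H)}/d\bigr)$ you need $\abs{V(H)} \leq \tfrac12(\tau-1)d$, not $\tfrac12\tau d$ as you write; for $\tau$ near $1$ (which is exactly the regime of interest, e.g.\ $\tau=2$ is used in Section~\ref{sec:lower_bound_eigenvalues} but $\tau = 1 + d^{-\theta/4}$ in Section~\ref{sec:upper_bound_eigenvalues}) the difference is decisive. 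With $k \asymp \log N / (d\,h((\tau-1)/2))$ and $r \leq r(\tau) \asymp d\,h((\tau-1)/2)/\log d$, you get $kr \asymp \log N/\log d$, and the requirement $kr \leq \tfrac12(\tau-1)d$ together with the hypothesis $\tau - 1 \geq \cal K(\log N / d^2)^{1/3}$ reduces to $d^{1/3}\log d \gtrsim (\log N)^{2/3}$. In the critical regime $d \asymp \log N$ this reads $\log\log N \gtrsim (\log N)^{1/3}$, which fails. So your bound degrades precisely where the lemma is needed.

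The paper sidesteps this by \emph{not} conditioning on the whole connecting structure. It unions over $k$-tuples of simple paths (not a single tree) and bounds $\P(\Xi_{x,\f y,\f z}^{(k)}) \leq P_{x,\f y}\prod_j p^{r_j+1}$, where $P_{x,\f y}$ only involves the degree events at $x,y_1,\dots,y_k$ with the modest shifts $\tau d - k$ (for $x$) and $\tau d - 1$ (for each $y_j$). The remaining dependence — edges among $Y=\{x,y_1,\dots,y_k\}$, a set of size only $k+1$ rather than $kr+1$ — is isolated via the decomposition $D_{y_i}-d = X_i - \E[X_i\mid A_Y] + \delta_i$ in \eqref{eq:decomposition_D_y_i}; the independent Bennett bound is applied to the $X_i$'s, and the small crossterm $\max_i\delta_i$ is controlled separately (giving the second summand in \eqref{eq:prob_D_x_D_y_i}), using the same hypothesis on $\tau$. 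With the shift reduced from $kr$ to $k$, the required inequality $k \leq \tfrac12(\tau-1)d$ does follow from $\tau - 1 \geq \cal K(\log N/d^2)^{1/3}$. So the fix for your argument would be to restrict the conditioning to $A_Y$ rather than $A_{V(H)}$, and handle the edges internal to $Y$ with a separate, explicit tail bound, as the paper does.
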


The following lemma controls the growth of $\abs{S_i(z)}$ in terms of $d$ and $\Delta$. In contrast to \eqref{eq:concentration_S_i} in Lemma~\ref{lem:concentration_S_i}, 
no lower bound on $D_z$ is required and no lower bound on $\abs{S_i(z)}$ is provided. 

\begin{lemma} \label{lem:upper_bound_S_i_without_lower_bound_D_z} 
Let $\mathcal{K} \log \log N \leq d \leq \mathcal{K}^{-1} N^{1/4}$ and let $z \in [N]$. 
For any $i \leq \log N/(4 \log d)$, the bound 
\[ \abs{S_i(z)} \leq \Delta d^{i-1} \bigg[ 1 + \Cnu \bigg( \frac{\log N}{d \Delta} \bigg)^{1/2} \bigg] \] 
holds with very high probability. Here, $\Delta$ is defined as in \eqref{eq:def_Delta}. 
\end{lemma}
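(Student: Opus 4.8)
The plan is to prove Lemma~\ref{lem:upper_bound_S_i_without_lower_bound_D_z} by inducting on $i$, iterating the one-step growth estimate \eqref{eq:upper_bound_S_i_without_lower_bound_D_x} from Lemma~\ref{lem:concentration_S_i}(ii) starting from the crude bound $\abs{S_1(z)} = D_z \leq \Delta$ supplied by Lemma~\ref{lem:upper_bound_degree}. Since \eqref{eq:upper_bound_S_i_without_lower_bound_D_x} is stated for the distinguished vertex of Section~\ref{sec:large_degree_induces_eigenvalues} but uses no special property of it, we apply it with the vertex $z$ in place of $x$. The role of the induction is to accumulate the multiplicative errors geometrically, exactly as in the passage from \eqref{eq:proof_w_3_aux_estimate} to \eqref{eq:proof_concentration_S_i_aux1}: this collapses the $i$-dependent error into the single factor $1 + \Cnu(\log N/(d\Delta))^{1/2}$.

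First I would fix $r \defeq \floor{\log N/(4\log d)}$, so that every integer $i$ with $i \leq \log N/(4\log d)$ lies in $\{1,\dots,r\}$ (and $r \geq 1$ since $d \leq \mathcal{K}^{-1}N^{1/4} < N^{1/4}$). A direct computation then shows that, for $\mathcal{K}$ and $N$ large enough, the hypotheses $\mathcal{K}\log\log N \leq d \leq \mathcal{K}^{-1}N^{1/4}$ force $(2d)^r \leq \sqrt N/\Delta$, i.e.\ $\Delta \leq \sqrt N(2d)^{-r}$: writing $r\log(2d) \leq \tfrac14\log N(1 + \log 2/\log d)$ and using $\Delta \leq \Cnu(d + \log N)$ from Lemma~\ref{lem:upper_bound_degree}, the binding case is $d$ close to $N^{1/4}$ (where $r=1$), and this is exactly why the hypothesis reads $d \leq \mathcal{K}^{-1}N^{1/4}$ rather than $d \leq N^{1/4}$. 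Consequently, on the very-high-probability and $A_{(z)}$-measurable event $\{D_z \leq \Delta\}$ the hypothesis of \eqref{eq:upper_bound_S_i_without_lower_bound_D_x} is met, so — conditioning on $A_{(z)}$, taking an affordable union bound over $i \in \{1,\dots,r\}$, and then removing the conditioning via Lemma~\ref{lem:upper_bound_degree} — with very high probability $\abs{S_{i+1}(z)} \leq d\abs{S_i(z)} + \Cnu(d\abs{S_i(z)}\log N)^{1/2}$ simultaneously for all $1 \leq i \leq r$.

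It then remains to run the induction. Set $\eps_i \defeq \Cnu'(\log N/(d\Delta))^{1/2}\sum_{j=0}^{i-2} d^{-j/2}$, so $\eps_i \leq 2\Cnu'(\log N/(d\Delta))^{1/2}$ for $d \geq 4$, and assume $\abs{S_i(z)} \leq \Delta d^{i-1}(1+\eps_i)$. Plugging this into the one-step bound, factoring out $\Delta d^i$ and using $\Delta d^i \geq \Delta d$, one gets $\abs{S_{i+1}(z)} \leq \Delta d^i\big[(1+\eps_i) + \Cnu\sqrt{(1+\eps_i)\log N/(\Delta d)}\,d^{-(i-1)/2}\big] \leq \Delta d^i(1+\eps_{i+1})$, provided $1+\eps_i$ stays below a fixed constant and $\Cnu'$ is chosen large enough relative to $\Cnu$. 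The apparent circularity (a fixed threshold for $1+\eps_i$ is needed before $\Cnu'$ is chosen, yet it bounds $\Cnu'$) is broken by the lower bound $d \geq \mathcal{K}\log\log N$: in the sparse case $d \leq \tfrac12\log N$ it gives $\log N/(d\Delta) = (\log\log N - \log d)/(\Cnu d) \leq 1/(\Cnu\mathcal{K})$, while in the dense case $d \geq \tfrac12\log N$ one has $d\Delta \geq d^2 \geq \tfrac14(\log N)^2$; either way $\log N/(d\Delta)$ is below any prescribed threshold for $\mathcal{K}$ (or the constant in \eqref{eq:def_Delta}) large, so $\eps_i = o(1)$ and the induction closes. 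Absorbing constants, $\eps_i \leq \Cnu(\log N/(d\Delta))^{1/2}$, which is the claim.

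I expect the main obstacle to be precisely the two bookkeeping points just flagged: verifying $\Delta \leq \sqrt N(2d)^{-r}$ with the chosen $r$ (which pins down the need for the $\mathcal{K}^{-1}$ factor in the upper bound on $d$), and making the constant selection in the induction non-circular by using $d \geq \mathcal{K}\log\log N$ to push $\log N/(d\Delta)$ under a fixed threshold. Everything else is a routine iteration of the Bennett-type estimate already packaged in Lemma~\ref{lem:concentration_S_i}.
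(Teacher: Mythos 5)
Your proposal is correct and takes essentially the same approach as the paper's proof: both start from $\abs{S_1(z)} = D_z \leq \Delta$ via Lemma~\ref{lem:upper_bound_degree}, iterate the one-step bound \eqref{eq:upper_bound_S_i_without_lower_bound_D_x} from Lemma~\ref{lem:concentration_S_i}(ii) inductively, and control the accumulated geometric error by using $d \geq \mathcal{K}\log\log N$ to keep $\sqrt{\log N/(d\Delta)}$ small. Your additional detail on verifying $\Delta \leq \sqrt{N}(2d)^{-r}$ and on breaking the apparent circularity in choosing constants is accurate but supplements rather than alters the paper's argument.
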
 
The proofs of the previous two lemmas are postponed until the end of this section.  

\begin{proof}[Proof of Lemma~\ref{lem:subgraph_separating_large_degrees}] 
In the entire proof, we write $\cal V$ instead of $\cal V_\tau$. 
We shall construct a subgraph $H_\tau$ of $G$ in two steps such that $G_\tau = G\setminus H_\tau$ satisfies the properties stated in the lemma. 
For a graphical depiction of the following argument, we refer to Figure~\ref{fig:pruning}. 

\begin{figure}[!ht]
\begin{center}
\includegraphics{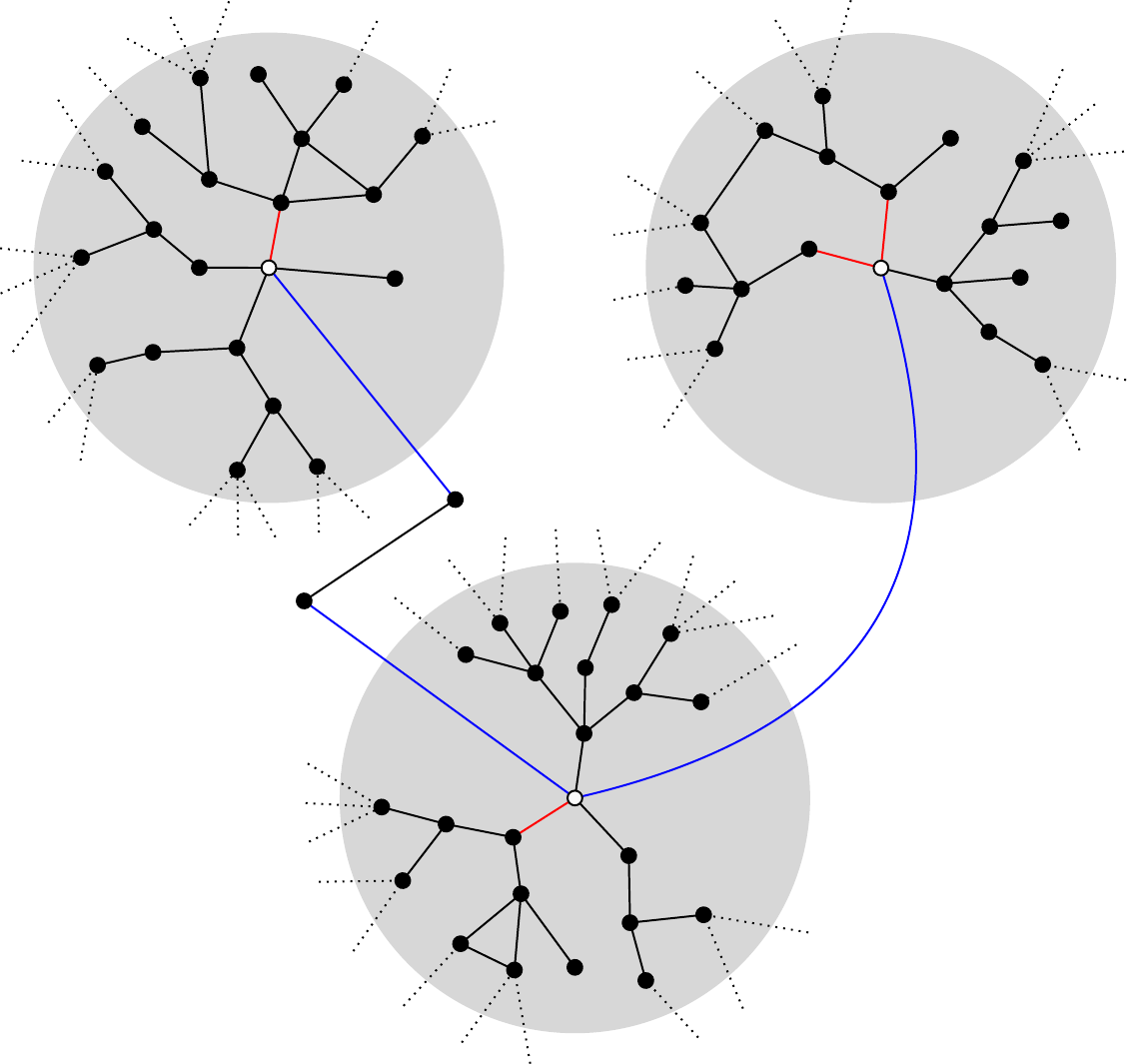}
\end{center}
\caption{A schematic illustration of the algorithm in the proof of Lemma \ref{lem:subgraph_separating_large_degrees}. The vertices of $\cal V$ are white and the other vertices black. The balls $B_{r_{x,\tau}}^{G_\tau}(x)$, for each $x \in \cal V$, are indicated using grey balls, and they are disjoint by construction. The edges of the subgraph $H^{(1)}$ are drawn in red. The edges of the subgraph $H^{(2)}$ are drawn in blue.
\label{fig:pruning}}
\end{figure}

In the following construction of $H_\tau$, we shall identify those edges indicent to a 
vertex $x \in \cal V_\tau$ that lead to a loop (i.e.\ prevent the graph from being a tree in the 
vicinity of $\cal V_\tau$) or a connection to another vertex in $\cal V_\tau$. 
We shall exclusively cut these edges, thus removing the whole corresponding branches in $B_{r_x}(x)$, 
while leaving the other branches of $B_{r_x}(x)$ unchanged.

First, we construct a subgraph $H^{(1)}\subset G$ such that $B_{r_x}^{G \setminus H^{(1)}}(x)$ is a tree for each $x \in \cal V$. 
Indeed, for any $x \in \cal V$ we apply the following algorithm. 
For each $y \in S^G_1(x)$, let $T_y$ be the set of those vertices that are connected to $y$ by a path of length at most $r_x$ not traversing the edge connecting $x$ and $y$. 
If $G|_{T_y}$ is not a tree, i.e., $\abs{T_y} < \abs{E(G|_{T_y})}+1$, or $x \in T_y$, then we include the edge between $x$ and $y$ into $H^{(1)}$. 
We now show that 
\begin{equation} \label{eq:max_degree_H_1} 
 \max_{x \in \cal V} D_x^{H^{(1)}} \leq \Cnu + q_1 
\end{equation}
with very high probability, 
where $q_i$ denotes the maximal number of vertices in $\cal V$ that is in the ball of radius $i$ around a vertex in $\cal V$, i.e., 
\begin{equation} \label{eq:def_q_i} 
 q_i \defeq \max_{x \in \cal V} \abs{\cal V \cap B_i^G(x) \setminus \{x \}}. 
\end{equation}
Let $x \in \cal V$. 
Indeed, owing to Corollary~\ref{cor:tree_approximation}, with very high probability, there are at most $\ord(1)$ edges in $B_{r_x}^G(x)$ that prevent it from being a tree. 
Moreover, $S_1^G(x)$ contains at most $q_1$ vertices in $\cal V$. This proves \eqref{eq:max_degree_H_1} 
and by construction $(G\setminus H^{(1)})|_{B_{{r_x/2}}^{G\setminus H^{(1)}}(x)}$ is a tree for any $x \in \cal V$. 

Second, the subgraph $H^{(2)} \subset G$ consists of edges incident to vertices $x \in \cal V$ that are traversed by paths in $G \setminus H^{(1)}$ of length at most $2 r_{x,\tau}$ connecting $x$ to another vertex in $\cal V$. 
More precisely, for $x \in \cal V$ we add the following edges to $H^{(2)}$. Since $(G\setminus H^{(1)})|_{B_{{r_x/2}}^{G\setminus H^{(1)}}(x)}$ is a tree,  for each $y \in (\cal V \cap B_{2 r_{x,\tau}}^{G\setminus H^{(1)}} (x)) \setminus \{ x\}$, there is a unique vertex $z \in S_1^{G \setminus H^{(1)}}(x)$ such that each path in $G\setminus H^{(1)}$ of length at most $2r_{x,\tau}$ connecting $x$ and $y$
traverses the edge between $x$ and $z$. All such edges between $x$ and $z$ are added to $H^{(2)}$. 
This algorithm yields that 
\begin{equation} \label{eq:max_degree_H_2} 
\max_{x \in \cal V} D_x^{H^{(2)}} \leq q_{r(\tau)}, 
\end{equation}
where $q_{r(\tau)}$ is defined in \eqref{eq:def_q_i} with $i ={r(\tau)}$. 

We set $H_\tau \defeq H^{(1)} \cup H^{(2)}$ and $G_\tau \defeq G \setminus H_\tau$. 
By construction, each path in $G_\tau$ between $x, y\in \cal V$ with $x \neq y$ has length at least $2(r_{x,\tau} \vee r_{y,\tau}) +1$. 
This establishes property \ref{item:subgraph_paths} of Lemma~\ref{lem:subgraph_separating_large_degrees}. 
Moreover, since $G_\tau \subset G\setminus H^{(1)}$ is a subgraph and the latter is a tree when restricted to $B_{r_x}^{G \setminus H^{(1)}}(x)$ 
we obtain \ref{item:subgraph_tree}. 

We also note that the construction of $H_\tau$ explained above yields 
\begin{equation}\label{eq:edges_H_tau} 
 E(H_\tau) \subset \bigcup_{x \in \cal V} \bigcup_{y \in S_1^G(x)} \{ x, y \}, 
\end{equation}
i.e., for each edge in $H_\tau$, there is at least one vertex $x \in \cal V$ incident to it. This shows \ref{item:subgraph_cut_only_in_S_1}. 

For the proof of property \ref{item:subgraph_inclusion_S_i} let $x \in \cal V$ be fixed. The construction of $H^{(1)}$ implies that $S_i^{G\setminus H^{(1)}}(x) \subset S_i^{G}(x)$ for all $1 \leq i \leq r_{x}/2$. 
As $(G\setminus H^{(1)})|_{B_{r_x/2}^{G\setminus H^{(1)}}(x)}$ is a tree, a vertex lies in $S_i^{G_\tau}(x)$ only if it was in $S_i^{G \setminus H^{(1)}}(x)$ 
due to the construction of $H^{(2)}$. Hence, $S_i^{G_\tau}(x) \subset S_i^{G \setminus H^{(1)}}(x) \subset S_i^{G}(x)$ and we deduce \ref{item:subgraph_inclusion_S_i}.  

Property~\ref{item:subgraph_N_s_agree} follows directly from the construction of $G_\tau$ as it 
left all branches in $B_{r_x}^{G_\tau}(x)$ for $x \in \cal V_\tau$ unchanged. 

For each $x \in [N]$, we now verify the bound on $D_x^{G\setminus G_\tau}=D_x^{H_\tau}$ in \ref{item:subgraph_degrees}. 
For any $x \in \cal V$, we have 
\[ D_x^{H_\tau} = D_x^{H^{(1)}} + D_x^{H^{(2)}} \leq \Cnu + q_1 + q_{r(\tau)} \leq \Cnu + 2 \max_{y \in \cal V} \abs{\cal V \cap B_{r(\tau)}(y)} \] 
due to \eqref{eq:max_degree_H_1}, \eqref{eq:max_degree_H_2} and  $q_1 \leq q_{r(\tau)} \leq \max_{y \in \cal V} \abs{\cal V \cap B_{r(\tau)}(y)}$. 
Thus, Lemma~\ref{lem:number_large_degree_distance_r} implies \ref{item:subgraph_degrees} for all $x \in \cal V$.  
Let $x \in [N] \setminus \cal V$. If $S_1^G(x)\cap \cal V = \emptyset$ 
then $D_x^{H_\tau} = 0$ due to \eqref{eq:edges_H_tau}. If $x \in S_1^G(y)$ for some $y \in \cal V$ then 
\[ D_x^{H_\tau} \leq\Cnu \abs{S_1^G(x) \cap \cal V} \leq \Cnu \abs{S_2^G(y) \cap \cal V} \leq \Cnu \frac{\log N}{h\big(\frac{\tau-1}{2}\big)d} \] 
by Lemma~\ref{lem:number_large_degree_distance_r}. This completes the proof of \ref{item:subgraph_degrees}.

What remains is the proof of \eqref{eq:bound_S_i_setminus_S_i_tau}. We fix $x \in \cal V$ and conclude from 
\eqref{eq:edges_H_tau} that 
\begin{equation}
\abs{S_i^G(x) \setminus S_i^{G_\tau}(x)} \leq \sum_{z \in S_1(x) \col \{x,z\} \in E(H_\tau)} \abs{S_{i-1}^G (z)}.
\end{equation}
Lemma~\ref{lem:upper_bound_S_i_without_lower_bound_D_z} provides a uniform bound on the summands in the previous sum. 
The number of elements in this sum is bounded by $D_x^{G \setminus G_\tau}$. 
Hence, we conclude \ref{item:subgraph_S_i}.   
This completes the proof of Lemma~\ref{lem:subgraph_separating_large_degrees}. 
\end{proof} 

\begin{proof}[Proof of Lemma~\ref{lem:number_large_degree_distance_r}] 
The lemma will follow from an estimate on the probability of the event $\Xi^{(k)}$ defined through 
\[ \Xi^{(k)} \defeq \{ \exists \, x \in [N] \col x \in \cal V_\tau,\, \abs{\cal V_\tau \cap B_r(x)} \geq k\} \] 
 for some $k \in \N$ to be chosen later. 
We decompose $\Xi^{(k)}$ according to 
\begin{equation} \label{eq:decomposition_Xi_k} 
\hspace*{-0.15cm} 
\begin{aligned} 
\Xi^{(k)} & = \bigcup_{x, \f y, \f z} \Xi^{(k)}_{x,\f y, \f z}, \\ 
\Xi_{x, \f y, \f z}^{(k)} & = \Big\{ x, y_j \in \cal V_\tau, \, \{x,z^{(j)}_1\}, \{ z^{(j)}_i, z^{(j)}_{i+1}\}, 
\{ z^{(j)}_{r_j}, y_j \} \in E(G) \text{ for all } i \in [r_j-1] \text{ and } j \in [k] \Big\}, 
\end{aligned} 
\end{equation}
where the union is taken over all $x \in [N]$, $k$-tuples $\f y =(y_1, \ldots,  y_k)$ of distinct elements of $[N] \setminus \{ x\}$ 
and $k$-tuples $\f z = (z^{(1)}, \ldots, z^{(k)})$ of paths $z^{(j)} = (x, z_1^{(j)}, \ldots, z_{r_j}^{(j)}, y_j)$  of length $r_j\in \{ 0, \ldots, r\}$ for $j=1, \ldots, k$. 

We fix such $x$, $\f y = (y_1, \ldots, y_k)$ and $\f z = (z^{(1)}, \ldots, z^{(k)})$. 
Then, recalling $p = d/N$, it is easy to see that 
\begin{equation} \label{eq:prob_Xi_x_y_z} 
 \P( \Xi_{x,\f y, \f z}^{{(k)}}) \leq P_{x, \f y} \prod_{j=1}^k p^{r_j+ 1}, 
\end{equation}
where $P_{x, \f y} \defeq \P ( D_x \geq \tau d - k, D_{y_1} \geq \tau d - 1, \ldots, D_{y_k} \geq \tau d-1)$.  
We now show that 
\begin{equation} \label{eq:prob_D_x_D_y_i} 
 P_{x, \f y} \leq C^{k+1} \exp\bigg( -d (k + 1) h\bigg(\frac{\tau-1}{2}\bigg)\bigg) + (k + 1) \bigg(\frac{ (k+1)d}{N} \bigg)^{d(\tau - 1)/2 - k }. 
\end{equation}
We start the proof of \eqref{eq:prob_D_x_D_y_i} by exploiting the fact that $D_x, D_{y_1}, \ldots, D_{y_k}$ are almost independent. Indeed, setting $y_0 \defeq x$, 
$a_0 \defeq (\tau- 1) d -k$, $a_1 \defeq \ldots \defeq a_k \defeq (\tau -1)d  - 1$, we obtain 
\begin{equation} \label{eq:prob_D_x_D_y_i_decoupling} 
 P_{x, \f y} = \E \big[ \P ( D_{y_0} -d \geq a_0, \ldots, D_{y_k} - d \geq a_k\mid A_{Y} ) \big] 
  = \E \bigg[ \prod_{i=0}^k\P (D_{y_i} - d \geq a_i \mid A_{Y} ) \bigg], 
\end{equation} 
where $Y \defeq \{ y_0, \ldots, y_k \}$ and in the last step we used that $D_{y_0}, \ldots D_{y_k}$ 
are independent conditionally on $A_Y$ as
\begin{equation} \label{eq:decomposition_D_y_i} 
 D_{y_i} -d = X_i - \E[X_i\mid A_Y] + \delta_i, \quad \qquad X_i \defeq  \sum_{z \in [N] \setminus Y} A_{z{y_i}}, \qquad \delta_i \defeq \sum_{z \in Y} \bigg( A_{z{y_i}} - \frac{d}{N} \bigg)
\end{equation}
and $X_0, \ldots, X_k$ are independent while the remainder is measurable with respect to $A_Y$.  
Hence, \eqref{eq:decomposition_D_y_i} and Bennett's inequality imply 
\[ \P( D_{y_i} - d \geq a_i \mid A_{Y} ) \leq C \exp \Big( -d h((a_i -\delta_i)/d)\Big)\leq C \exp\Big( -d h \Big(\min_{i=0,\ldots, k} a_i/d - \max_{i=0,\ldots, k} \delta_i/d\Big)\Big). \] 
Therefore, since $\min_i a_i = (\tau - 1) d - k$, \eqref{eq:prob_D_x_D_y_i_decoupling} yields 
\[ 
P_{x, \f y} \leq C^{k+1} \exp\Big( -d (k + 1) h(( \tau-1)/2)\Big) + \P\Big( \max_{i=0,\ldots, k} \delta_i > (\tau - 1)d /2 - k \Big). \] 
We choose $n = (\tau - 1)d /2 - k$ and use the bound 
\[ \P ( \delta_i > n ) \leq \P \bigg( \sum_{z \in Y} A_{z{y_i}} \geq n \bigg) \leq \begin{pmatrix} k+1 \\ n
\end{pmatrix} \bigg( \frac{d}{N}\bigg)^n\leq \bigg(\frac{ (k+1)d}{N} \bigg)^n  \] 
to conclude \eqref{eq:prob_D_x_D_y_i}. 

We now finish the proof by combining the previous estimates. In fact, from \eqref{eq:decomposition_Xi_k}, \eqref{eq:prob_Xi_x_y_z}, $p=d/N$ and \eqref{eq:prob_D_x_D_y_i},  we conclude 
\[ \begin{aligned} 
\P (\Xi^{(k)}) & \leq \sum_{x, \f y, \f z} \P (\Xi_{x,\f y, \f z}^{ {(k)}} ) \\ 
 & \leq 
\begin{pmatrix} N \\ k + 1\end{pmatrix} \sum_{r_1 = 0}^r \ldots \sum_{r_k=0}^r \begin{pmatrix} N - k - 1 \\ r_1 \end{pmatrix} 
\ldots \begin{pmatrix} N -k - 1 -\sum_{l=1}^{k-1} r_l \\ r_k \end{pmatrix} p^{k + \sum_{l=1}^k r_l} \max_{x,\f y} P_{x, \f y} \\ 
 & \leq \sum_{r_1,\ldots,r_k=0}^r N^{k + 1 + \sum_{l=1}^k r_l} p^{k +  \sum_{l=1}^k r_l} \max_{x,\f y} P_{x, \f y} \\ 
 & \leq N \bigg(\sum_{l=0}^r d^{l+1}\bigg)^k \max_{x,\f y} P_{x, \f y} \\ 
 & = N \bigg(\frac{d^{r+2} - 1}{d -1}\bigg)^k \bigg(  C^{k + 1}  \exp\bigg( -d (k + 1) h\bigg(\frac{\tau-1}{2}\bigg)\bigg) + (k + 1) \bigg(\frac{ (k+1)d}{N} \bigg)^{d(\tau - 1)/2 - k } \bigg) . 
\end{aligned} \] 
Therefore, in order to obtain \eqref{eq:V_alpha_B_r_bound}, 
we now show separately that each of the terms in this upper bound is dominated by $N^{-\nu}$. 
For the first term, 
the condition 
\[ (k + 1) \Big(\log C - d h((\tau - 1)/2) + (r + 2) \log d \Big) + \log N < - \nu \log N  \] 
has to be satisfied. This condition is met if $k = \Cnu \log N/( d h( (\tau - 1)/2))$, $r\leq r(\tau)$ and $d \geq C$. Here we used that $d h( (\tau-1)/2) \geq 4 \log d$ as we can assume $r(\tau) \geq 0$ without loss of generality (otherwise there is nothing to be proved). 

The upper bound on the second term follows from $k d \leq \cal C N^{1/3} (\log N)^{2/3}$, 
$d (\tau -1) \geq \cal C k$ and $d(\tau- 1) \geq \cal C$. 
These latter estimates are consequences of $d \leq N^{1/4}$, the lower bound on $\tau - 1$ 
and $h(\tau-1) \asymp (\tau - 1)^2 \wedge (\tau- 1)$. 
This completes the proof of Lemma~\ref{lem:number_large_degree_distance_r}.  
\end{proof}  

\begin{proof}[Proof of Lemma~\ref{lem:upper_bound_S_i_without_lower_bound_D_z}]
By Lemma~\ref{lem:upper_bound_degree}, $D_z \leq \Delta$ with very high probability. Defining $r \deq \log N/(4\log d)$ we thus obtain $D_z \leq \sqrt{N} (2d)^{-r}$ with very high probability
as $d \leq \mathcal{K}^{-1} N^{1/4}$.  
Hence, a simple induction argument 
starting from $\abs{S_{1}(z)} = D_z \leq \Delta$ and  
using \eqref{eq:upper_bound_S_i_without_lower_bound_D_x} as well as $i \leq r$ for the induction step yields 
\begin{equation} \label{eq:proof_subgraph_aux2} 
\abs{S_i(z)} \leq d^{i-1} \Delta \bigg[ 1 + 2 \Cnu \sqrt{\frac{\log N}{d \Delta}} \sum_{k=1}^{i-1}  d^{-(k-1)/2} \bigg] 
\end{equation}
with very high probability for all $1 \leq i \leq \log N/(4 \log d)$.  
Here, we used that $\sqrt{\log N/(d \Delta)}$ is small if $\mathcal{K}$ is large due to $d \geq \mathcal{K}\log \log N$, and for $i \geq 1$, 
\begin{equation} \label{eq:proof_subgraph_aux3} 
 \sum_{j=1}^{i-1} d^{-(j-1)/2} \leq (1-d^{-1/2})^{-1} \leq 2. 
\end{equation}
Combining \eqref{eq:proof_subgraph_aux2} and \eqref{eq:proof_subgraph_aux3} completes the proof of Lemma~\ref{lem:upper_bound_S_i_without_lower_bound_D_z}.  
\end{proof}

\section{Upper bounds on large eigenvalues} \label{sec:upper_bound_eigenvalues}

The following proposition provides the upper bound on the $l$-th largest eigenvalue matching the lower bound of Proposition~\ref{pro:lower_bound_number_outliers}.
We recall that the permutation $\sigma$ was chosen in \eqref{eq:def_sigma_permutation}. 

\begin{proposition} \label{pro:upper_bound_large_eigenvalues} 
Let $\kappa \in (0,1/2)$ be fixed
and suppose that $\theta \in (0,1/2]$.  
 Suppose that $(\log N)^{4/(5-2\theta)} \leq d \leq \mathcal{K}^{-1} N^{1/4}$. 
Define the random index $L_\leq$ through 
\begin{equation} \label{eq:def_L_leq_tau_star} 
L_\leq \defeq \max\{ l \geq 1 \col \alpha_{\sigma(l)} \geq 2 + (\log d)^{-\kappa} \} 
\end{equation}
with the convention that $L_\leq = 0$ if $\alpha_{\sigma(1)} < 2 + (\log d)^{-\kappa}$.  
There is a universal constant $c>0$ such that the following holds with very high probability. 
\begin{enumerate}[label=(\roman*)]
\item If $L_\leq > 0 $ then, for all $l \in [L_\leq]$, \label{item:lemma_upper_bound_l}
\[ \max\{ \lambda_l(\underline{A}), -\lambda_{N+1-l}(\underline{A}) \} 
\leq \sqrt{d} \Lambda(\alpha_{\sigma(l)} ) + \Cnu \sqrt{d} \, \pB{d^{-c(\Lambda(\alpha_{\sigma(l)}) - 2)}+ d^{-\theta/3}}.  \] 
\item \label{item:lemma_upper_bound_0}If $L_\leq = 0$ then 
\[ \max\{ \lambda_1(\underline{A}), - \lambda_N(\underline{A}) \} \leq 2 \sqrt{d} + \Cnu \sqrt{d} (\log d)^{-2\kappa }.\] 
\end{enumerate}
 (Here the constant $\Cnu$ depends on $\kappa$.) 
\end{proposition}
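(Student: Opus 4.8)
The plan is to deduce both statements, via the Courant--Fischer min--max principle, from an upper bound on the largest eigenvalue of a restricted matrix, controlled by combining the operator inequality of Proposition~\ref{pro:upper_bound_on_adjacency_matrix} with a delocalisation estimate for its maximal eigenvector. First I would fix a pruning threshold $\tau_*=1+o(1)$, as small as Lemma~\ref{lem:subgraph_separating_large_degrees}\ref{item:subgraph_degrees} permits, set $\underline A_{\tau_*}\deq A_{\tau_*}-\E A$ with $A_{\tau_*}=\op{Adj}(G_{\tau_*})$, and record that $\norm{\underline A-\underline A_{\tau_*}}=\norm{A-A_{\tau_*}}\le\max_x D_x^{G\setminus G_{\tau_*}}$ is, with very high probability, at most $\cal C\sqrt d\,d^{-\theta/3}$; together with $\norm E\le\cal C\sqrt d\,d^{-\theta/3}$ and $\tau_*-1\le\cal C d^{-\theta/3}$, this is precisely where the hypothesis $d\ge(\log N)^{4/(5-2\theta)}$ enters, since it forces all of these errors below the target $d^{-\theta/3}$. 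As perturbing $\underline A$ by an operator of norm $o(\sqrt d)$ moves every eigenvalue by $o(\sqrt d)$, it suffices to prove the bounds for $\underline A_{\tau_*}$. For part \ref{item:lemma_upper_bound_0}: by definition of $L_\leq$ the subspace $\op{span}\{\f 1_x\col\alpha_x<2+(\log d)^{-\kappa}\}$ has codimension $L_\leq$, so min--max gives $\lambda_{L_\leq+1}(\underline A_{\tau_*})\le\mu\deq\lambda_1\big((\underline A_{\tau_*})_V\big)$ with $V\deq\{x\col\alpha_x<2+(\log d)^{-\kappa}\}$. For part \ref{item:lemma_upper_bound_l}, fix $l\in[L_\leq]$, set $\tau_l\deq\alpha_{\sigma(l)}\ge2+(\log d)^{-\kappa}$ and work with $G_{\tau_l}$; since $\op{span}\{\f 1_x\col\alpha_x>\tau_l\}$ has codimension $\le l-1$, $\lambda_l(\underline A_{\tau_l})\le\mu_l\deq\lambda_1\big((\underline A_{\tau_l})_{V_l}\big)$ with $V_l\deq\{x\col\alpha_x\le\tau_l\}$.

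Let $\f w$, extended by zero, be a normalised maximal eigenvector with eigenvalue $\mu$. Restricting $d^{-1/2}\underline A_{\tau_*}\le\id+D+E$ to $V$ and testing against $\f w$ gives
\[
 \frac{\mu}{\sqrt d}\le 1+\sum_{x\in V}\alpha_x w_x^2+\norm E\le 2+o(1)+\sum_{x\in V,\;\alpha_x>\tau_*}\alpha_x w_x^2 ,
\]
since $\{\alpha_x\le\tau_*\}$ contributes at most $\tau_*=1+o(1)$ and $\norm E=o(1)$. I would then argue by a dichotomy: if $\mu/\sqrt d\le2+\cal C(\log d)^{-2\kappa}$ we are done; otherwise $\mu/\sqrt d$ is separated from $\Lambda(\alpha_x)$ for every $x\in V$ --- for $\alpha_x\ge2$ because $\Lambda(\alpha_x)\le\Lambda(2+(\log d)^{-\kappa})=2+\tfrac14(\log d)^{-2\kappa}+O((\log d)^{-3\kappa})$ by monotonicity of $\Lambda$ on $[2,\infty)$ (see \eqref{eq:def_Lambda}), and for $\alpha_x<2$ because $\Lambda(\alpha_x)$ admits no square-summable transfer-matrix eigenfunction of \eqref{eq:M_T_I_x_tridiagonal} at all. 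In this second alternative the delocalisation estimate below yields $w_x^2\le\eps_x\norm{\f w\vert_{B^{G_{\tau_*}}_{r_{x,\tau_*}}(x)}}^2$ with $\eps_x$ superpolynomially small in $N$, and since the balls $B^{G_{\tau_*}}_{r_{x,\tau_*}}(x)$ are disjoint (Lemma~\ref{lem:subgraph_separating_large_degrees}\ref{item:subgraph_paths}) the last sum above is $o(1)$, so $\mu/\sqrt d\le2+o(1)$, contradicting the alternative; this proves \ref{item:lemma_upper_bound_0}. For part \ref{item:lemma_upper_bound_l} the argument is identical with cutoff $\tau_l$ in place of $2+(\log d)^{-\kappa}$: in the alternative $\mu_l/\sqrt d>\Lambda(\alpha_{\sigma(l)})+\cal C(d^{-c(\Lambda(\alpha_{\sigma(l)})-2)}+d^{-\theta/3})$, monotonicity of $\Lambda$ gives, for every $x\in V_l$ with $\alpha_x\ge2$, a gap $\mu_l/\sqrt d-\Lambda(\alpha_x)\ge\cal C d^{-c(\Lambda(\alpha_{\sigma(l)})-2)}$, while $\alpha_x<2$ is again off-resonance; combined with $\mu_l/\sqrt d-2\ge\Lambda(\alpha_{\sigma(l)})-2$ this makes $\max_x\alpha_x\eps_x$ much smaller than the gap, forcing $\mu_l/\sqrt d\le2+o(1)\le\Lambda(\alpha_{\sigma(l)})$, a contradiction. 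The two terms of the claimed error are exactly these two mechanisms: the exponential decay at rate $\asymp\sqrt{\Lambda(\alpha_{\sigma(l)})-2}$ over the ball radius $r_{x,\tau_l}\asymp\log N/\log D_x$ produces $d^{-c(\Lambda-2)}$, and $\tau_*-1$, $\norm E$, $\norm{A-A_{\tau_*}}$ produce the floor $d^{-\theta/3}$.

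What remains, and what I expect to be the main obstacle, is the delocalisation estimate: for the maximal eigenvector $\f w$ of the restricted pruned matrix with eigenvalue $\mu$, and any vertex $x$ with $\alpha_x>\tau$, one has $w_x^2\le\eps_x\norm{\f w\vert_{B_{r_{x,\tau}}^{G_\tau}(x)}}^2$ with $\eps_x$ quantitatively small whenever $\mu/\sqrt d$ is bounded away from $\Lambda(\alpha_x)$. I would prove it by passing, as in \eqref{eq:relation_M_A_cal_T}, to the tridiagonal representation $\wh M$ of the restricted matrix around $x$: by Lemma~\ref{lem:subgraph_separating_large_degrees}\ref{item:subgraph_tree}--\ref{item:subgraph_N_s_agree} together with Lemmas~\ref{lem:concentration_S_i} and~\ref{lem:tree_approximation}, the ball $B_{r_{x,\tau}}^{G_\tau}(x)$ is a tree with $\abs{S_{i+1}}/\abs{S_i}=d(1+o(1))$, so the leading block of $\wh M$ is close to \eqref{eq:M_T_I_x_tridiagonal}, and the projection $\hat{\f u}$ of $\f w$ onto $\op{span}\{\f s_0,\dots,\f s_{r_{x,\tau}}\}$ nearly solves $u_{i-1}+u_{i+1}=(\mu/\sqrt d)u_i$ with boundary coupling $u_0=(\sqrt{\alpha_x}\,\sqrt d/\mu)u_1$. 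Expressing $\hat{\f u}$ through the fundamental solutions $z_\pm(\mu/\sqrt d)^i$ (so $z_+z_-=1$ and $z_+\ge1\ge z_->0$ once $\mu/\sqrt d\ge2$), the boundary coupling fixes the coefficient of the growing mode $z_+^i$ to be proportional to $z_+-\sqrt d\,\alpha_x/\mu$, which vanishes precisely when $\mu=\sqrt d\,\Lambda(\alpha_x)$ and is otherwise bounded below by a multiple of the gap $\mu/\sqrt d-\Lambda(\alpha_x)$; when that coefficient is of definite size the profile of $\f w$ on the ball is dominated by $z_+^i$, so $w_x^2=u_0^2$ is smaller than $\norm{\f w\vert_{B_{r_{x,\tau}}^{G_\tau}(x)}}^2\gtrsim z_+^{2r_{x,\tau}}$ by a factor that is superpolynomially small in $N$, because $z_+-1\gtrsim\sqrt{\mu/\sqrt d-2}$ and $r_{x,\tau}\asymp\log N/\log D_x\to\infty$ with $D_x\le\Delta$ (Lemma~\ref{lem:upper_bound_degree}); for $\alpha_x<2$ the coupling forces the growing mode unconditionally, giving the same conclusion. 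The delicate part is to carry this transfer-matrix comparison out quantitatively --- keeping the tree-approximation and Gram--Schmidt errors (controlled as for $\f w_1,\dots,\f w_4$ in Section~\ref{sec:large_degree_induces_eigenvalues}), $\norm E$, and $\norm{A-A_\tau}$ all below the target accuracies --- uniformly over all $x$ and simultaneously on the single pruned graph, so that the resulting bounds hold for all indices $l$ at once.
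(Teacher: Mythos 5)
Your high-level strategy — restrict to the subspace spanned by vertices of small degree, test the matrix inequality $\id+D+E\ge d^{-1/2}\underline A$ of Proposition~\ref{pro:upper_bound_on_adjacency_matrix} against the maximal eigenvector of the restricted, pruned matrix, and control the medium-degree contribution via a transfer-matrix delocalisation estimate — coincides with the paper's. But there is a genuine gap in the treatment of part~\ref{item:lemma_upper_bound_l}, namely your choice of pruning threshold. You propose to set $\tau_l\deq\alpha_{\sigma(l)}\ge 2+(\log d)^{-\kappa}$ and work with $G_{\tau_l}$ and $\underline A_{\tau_l}$. The difficulty is that after testing the quadratic form you are left with the sum $\sum_{x\in V_l,\ \alpha_x>\tau_*}\alpha_x w_x^2$, whose index set consists of vertices with $\alpha_x\in(\tau_*,\alpha_{\sigma(l)}]$. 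To bound this sum you need, for \emph{every} $x$ in this range, the tree structure, disjointness of the balls $B^{G_\tau}_{r_{x,\tau}}(x)$, and the sphere-ratio concentration that feed the transfer-matrix argument. But Lemma~\ref{lem:subgraph_separating_large_degrees} guarantees all these properties only around vertices in $\cal V_\tau$, and with $\tau=\tau_l\ge 2$ the set $\cal V_{\tau_l}\cap V_l$ consists at most of vertices with $\alpha_x=\alpha_{\sigma(l)}$, on which $\f w$ vanishes anyway. The vertices with $\tau_*<\alpha_x<\alpha_{\sigma(l)}$, which carry the actual contribution, are untouched by the pruning at threshold $\tau_l$, so neither the tree structure nor the ball disjointness holds around them and the delocalisation is unusable there. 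This is exactly why the paper keeps a \emph{single, small} pruning threshold $\tau=1+d^{-\theta/4}$ for every $l$, varies only the restriction set $V_l=[N]\setminus\{\sigma(1),\dots,\sigma(l-1)\}$, and then observes that $V_l\cap\cal V_\tau\subset\cal W_{\tau,\mu,\zeta}$ so that Proposition~\ref{pro:delocalization_estimate} applies uniformly over the whole off-resonance range. Your part~\ref{item:lemma_upper_bound_0}, where $V$ and $\tau_*$ are decoupled from the start, does not have this problem, but the claim that $\eps_x$ is superpolynomially small in $N$ is an overstatement: in the borderline regime $\mu/\sqrt d-2\asymp(\log d)^{-2\kappa}$ the delocalisation bound behaves like $d^{-c(\mu/\sqrt d-2)}=\exp\pb{-c(\log d)^{1-2\kappa}}$, which is $o(1)$ but not superpolynomially small; what must be checked — and what generates both the constraint \eqref{eq:lemma_bound_eigenvector_condition_zeta} on $\zeta$ and the $d^{-c(\Lambda-2)}$ term in the final error — is that this quantity is nonetheless much smaller than the surplus $\mu/\sqrt d-2-\norm E-(\tau_*-1)$, and this comparison has to be carried out with the explicit $\mu$-dependence, not discarded as a crude $o(1)$.
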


Let $L_\leq$ be defined as in \eqref{eq:def_L_leq_tau_star}. If $L_\leq >0$ then, for $l \in [L_\leq]$, we set 
\begin{equation} \label{eq:def_V_l} 
V_l \defeq \begin{cases} [N], & \text{ if } l = 1, \\ [N]\setminus \{ \sigma(1), \ldots, \sigma(l-1)\}, & \text{ if } l \geq 2.  \end{cases} 
\end{equation}

Let $G_\tau$ be the subgraph of $G$ introduced in Lemma~\ref{lem:subgraph_separating_large_degrees}. We denote by  $A_\tau=\op{Adj} (G_\tau)$ the 
adjacency matrix of $G_\tau$ and also define 
\begin{equation} \label{eq:def_underline_A_tau} 
 \underline{A}_\tau \defeq A_\tau - \Pi_\tau(\mathbb{E}A)\Pi_\tau, \qquad \cal{Z}_\tau \defeq  \bigcup_{x\in \cal{V}_\tau} B^{G_\tau}_{r_{x,\tau}-2}(x),  
\end{equation}
where $\Pi_\tau$ is the orthogonal projection onto $\op{Span}\{ \f 1_y \col y \in [N]\setminus \cal{Z}_\tau\}$. 
Moreover, we introduce the $N\times N$-matrix $\underline{A}_{\tau,l}$ with entries 
\[ (\underline{A}_{\tau,l})_{xy} \defeq (\underline{A}_{\tau})_{xy} \ind{x \in V_l} \ind{y \in V_l} \] 
for $x,y \in [N]$. 

For all $\tau, \zeta, \mu >0$, we define a subset $\cal W_{\tau, \mu, \zeta}$ of $\cal V_\tau$ through 
\begin{equation} \label{def_calW}
\cal W_{\tau, \mu, \zeta} \defeq \{ x \in [N] \col \alpha_x \geq \tau, \, \mu \geq \sqrt{d}(\Lambda(\alpha_x \vee 2) + \zeta) \}.
\end{equation} 

The formulation of the following proposition uses the function $\alpha \col [2,\infty) \to [2, \infty)$ defined through 
\begin{equation} \label{eq:def_alpha}  
 \alpha(\eta) \defeq \frac{\eta}{2} \big( \eta + \sqrt{\eta^2 - 4} \big). 
\end{equation}  
Note that $\alpha$ is monotonically increasing and $\Lambda(\alpha(\eta) ) = \eta$ for all $\eta \geq 2$.

\begin{proposition}[Delocalization estimate] \label{pro:delocalization_estimate} 
Let $(\log N)^{4/(5-2\theta)} \leq d \leq \mathcal{K}^{-1} N^{1/4}$ for some $\theta \in (0,1/2]$ and $1 + d^{-\theta/4} \leq \tau \leq 2$.  
Let $L_\leq$ be defined as in \eqref{eq:def_L_leq_tau_star}, $l \in [L_\leq]$ and $V_l$ be defined as in \eqref{eq:def_V_l}. 
Then there are a universal constant $c>0$ and a constant $\Cnu >0$ such that the following holds. 
If an eigenvalue $\mu> 2\sqrt{d}$ of $\underline{A}_{\tau,l}$ and some $\zeta>0$ satisfy 
\begin{subequations} \label{eq:condition_mu_zeta} 
\begin{align} 
\zeta & \geq \bigg(\Cnu \bigg(\frac{\mu}{\mu - 2\sqrt{d}}\bigg)^{1/2} d^{-\theta/2} \bigg)\vee d^{-c(\mu/\sqrt{d} - 2)}, 
\label{eq:lemma_bound_eigenvector_condition_zeta} \\ 
\mu & \geq \sqrt{d}(2 + \zeta)
\end{align}
\end{subequations}
then, for any normalized eigenvector $\f w$ of $\underline{A}_{\tau,l}$ associated with $\mu$, we have 
\[ \alpha\bigg(\frac{\mu}{\sqrt{d}} \bigg) \sum_{x \in \cal W_{\tau, \mu, \zeta}} \scalar{\f w}{\f 1_x}^2 
\leq \Cnu d^{-c((\mu/\sqrt{d} - 2)\wedge \log (\mu/\sqrt{d}))} \] 
with very high probability.  
\end{proposition}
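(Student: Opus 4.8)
The plan is to reduce the statement to a per-vertex delocalization bound and then sum. Concretely, I will show that for each $x \in \cal W_{\tau,\mu,\zeta}$ with $x \in V_l$,
\begin{equation*}
\scalar{\f w}{\f 1_x}^2 \;\leq\; \frac{\Cnu}{\alpha(\mu/\sqrt{d})}\, d^{-c((\mu/\sqrt{d}-2)\wedge\log(\mu/\sqrt{d}))}\,\norm{\f w\vert_{B^{G_\tau}_{r_{x,\tau}}(x)}}^2 ,
\end{equation*}
after which the proposition follows by summing over $x \in \cal W_{\tau,\mu,\zeta}$: the balls $B^{G_\tau}_{r_{x,\tau}}(x)$ are pairwise disjoint by Lemma~\ref{lem:subgraph_separating_large_degrees}~\ref{item:subgraph_paths}, so the right-hand sides sum to at most $\alpha(\mu/\sqrt d)^{-1}\Cnu d^{-c(\cdots)}\norm{\f w}^2$. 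Vertices $x \notin V_l$ contribute nothing since $\f w$ is supported on $V_l$, and every $x \in \cal W_{\tau,\mu,\zeta}$ satisfies $\alpha_x \geq \tau$, hence lies in $\cal V_\tau$, so the properties of $G_\tau$ apply.

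Fix such an $x$ and abbreviate $\eta \deq \mu/\sqrt d > 2$, $r \deq r_{x,\tau}$, $S_i \deq S_i^{G_\tau}(x)$, $B \deq B^{G_\tau}_r(x)$, $\f s_i \deq \abs{S_i}^{-1/2}\f 1_{S_i}$ and $\psi_i \deq \scalar{\f s_i}{\f w}$ for $0 \leq i \leq r$; then $\psi_0 = \scalar{\f w}{\f 1_x}$ and $\sum_{i}\psi_i^2 \leq \norm{\f w\vert_B}^2$. By Lemma~\ref{lem:subgraph_separating_large_degrees}~\ref{item:subgraph_tree} and \ref{item:subgraph_paths} the graph $G_\tau\vert_B$ is a tree, it is disjoint from the balls around $\sigma(1),\dots,\sigma(l-1) \in \cal V_\tau$, and no such vertex is $G_\tau$-adjacent to $B_{r-1}^{G_\tau}(x)$; projecting the eigenvalue equation $\underline A_{\tau,l}\f w = \mu\f w$ onto the spheres $S_i$ therefore gives a perturbed three-term recursion
\begin{equation*}
\eta\psi_0 = \sqrt{\alpha_x^{G_\tau}}\,\psi_1 + \varepsilon_0, \qquad \eta\psi_1 = \sqrt{\alpha_x^{G_\tau}}\,\psi_0 + \psi_2 + \varepsilon_1, \qquad \eta\psi_i = \psi_{i-1} + \psi_{i+1} + \varepsilon_i \quad(2 \leq i \leq r-1),
\end{equation*}
where $\alpha_x^{G_\tau} \deq D_x^{G_\tau}/d = \alpha_x(1+o(1))$ by Lemma~\ref{lem:subgraph_separating_large_degrees}~\ref{item:subgraph_degrees} and $\tau \geq 1 + d^{-\theta/4}$, and the $\varepsilon_i$ collect the $\E A$ contribution (of size $\ord(d/\sqrt N)$), the discrepancy between $\abs{S_{i+1}}/\abs{S_i}$ and $d$ (controlled by Lemma~\ref{lem:concentration_S_i}), and the deviation of $N_i(\cdot)$ from being constant on $S_{i-1}$ — this last estimated exactly as in the treatment of $\f w_2$ in Lemma~\ref{lem:estimates_w_k} (Lemma~\ref{lem:tree_approximation} plus the dyadic degree decomposition), so that $\abs{\varepsilon_i}$ carries a factor $\norm{\f w\vert_{S_{i-1}}}$ together with a coefficient that is $o(1)$ in the regime $d \geq (\log N)^{4/(5-2\theta)}$.

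The core is the transfer-matrix analysis. Let $\rho_\pm \deq \tfrac12(\eta \pm \sqrt{\eta^2-4})$, so $\rho_+\rho_- = 1$, $\rho_+ + \rho_- = \eta$ and $\rho_+^2 = \alpha(\eta)-1$. Writing the general solution of the homogeneous part as $\psi_i = A\rho_+^i + B\rho_-^i$ for $i \geq 1$, one has the exact identity $A\rho_+^i(\rho_+-\rho_-) = \psi_{i+1} - \rho_-\psi_i$; evaluating it at $i = r-1$ and using $\psi_{r-1}^2 + \psi_r^2 \leq \norm{\f w\vert_B}^2$ together with $\rho_- \leq 1$ gives $A^2 \leq 2(\rho_+-\rho_-)^{-2}\rho_+^{-2(r-1)}\norm{\f w\vert_B}^2$, while the two boundary equations at $0$ and $1$ force $A(\rho_+-\rho_-) = \psi_0\,\rho_-(\alpha(\eta)-\alpha_x^{G_\tau})/\sqrt{\alpha_x^{G_\tau}}$. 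Combining these, the singular factor $(\rho_+-\rho_-)^{-2}$ cancels and (up to the propagated error) we obtain
\begin{equation*}
\scalar{\f w}{\f 1_x}^2 = \psi_0^2 \;\leq\; \frac{2\,\alpha_x^{G_\tau}}{(\alpha(\eta)-\alpha_x^{G_\tau})^2}\,\rho_+^{-2(r-2)}\norm{\f w\vert_B}^2 \,+\, (\text{error}).
\end{equation*}
Since $x \in \cal W_{\tau,\mu,\zeta}$ we have $\alpha_x \leq \alpha(\eta - \zeta)$ when $\alpha_x \geq 2$ (by monotonicity of $\alpha$ and $\Lambda(\alpha_x) + \zeta \leq \eta$) and $\alpha_x < 2 < \alpha(\eta)$ otherwise, so $\alpha(\eta)-\alpha_x^{G_\tau}$ is bounded below in terms of $\zeta$ and $\eta - 2$; moreover $r = r_{x,\tau}$ diverges (being $\asymp r_x \wedge r(\tau)$) and $\log\rho_+ \asymp \sqrt{\eta-2}\wedge\log\eta$, whence the prefactor times $\rho_+^{-2(r-2)}$ is at most $\Cnu\,\alpha(\eta)^{-1}d^{-c((\eta-2)\wedge\log\eta)}$. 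The two conditions in \eqref{eq:condition_mu_zeta} on $\zeta$ are precisely what is needed to absorb the error into the same bound, and summing over $x$ as above finishes the proof.

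The main obstacle is the error bookkeeping. Unlike in the construction of the approximate eigenvectors in Section~\ref{sec:large_degree_induces_eigenvalues}, where the $\f w_k$ multiply the explicit exponentially decaying coefficients $u_i$, here the per-step errors $\varepsilon_i$ multiply the a priori unknown quantities $\norm{\f w\vert_{S_{i-1}}}$, and must be propagated through $r$ steps of a recursion whose homogeneous part expands at rate $\rho_+ > 1$ without destroying the gain $\rho_+^{-2(r-2)}$. This requires solving the inhomogeneous recursion by variation of constants with the correct weights — an error at radius $i$ should enter the final estimate multiplied by $\rho_+^{-(r-i)}$ — and then, via Cauchy--Schwarz, controlling $(\sum_i \rho_+^{-2(r-i)})^{1/2} = (1-\rho_+^{-2})^{-1/2}$ against the lower bound imposed on $\zeta$. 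This is delicate precisely as $\eta \downarrow 2$, where $\rho_+ \to 1$, which is the origin of the exponent $(\mu/\sqrt d - 2)\wedge\log(\mu/\sqrt d)$ and of the precise form of \eqref{eq:condition_mu_zeta}.
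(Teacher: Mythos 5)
Your reduction to a per-vertex estimate and summation over disjoint balls is precisely how the paper proceeds (Lemma~\ref{lem:delocalization_estimate_at_x} implies Proposition~\ref{pro:delocalization_estimate}). However, the per-vertex analysis takes a genuinely different route, and the ``main obstacle'' you flag at the end is, I believe, a real gap that your sketch does not close. You project the eigenvector equation onto the normalized sphere indicators $\f s_i = |S_i|^{-1/2}\f 1_{S_i}$, obtaining an \emph{inexact} three-term recursion for $\psi_i = \scalar{\f s_i}{\f w}$ with per-step errors $\varepsilon_i$ bounded only by $\delta\,\norm{\f w\vert_{S_{i-1}}}$ for some $\delta \sim d^{-\theta/2}$. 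Following your own variation-of-constants plan, the error at radius $j$ enters the extracted $\rho_+$-amplitude $A(\rho_+-\rho_-)$ with weight $\rho_+^{-j}$ (not $\rho_+^{-(r-j)}$, though the Cauchy--Schwarz sum is the same geometric series), so the accumulated contribution is additive of size $\lesssim \delta\,(1-\gamma(\eta)^2)^{-1/2}\norm{\f w\vert_B}$, carrying \emph{no} factor $\rho_+^{-r}$. Comparing with the boundary identity $A(\rho_+-\rho_-) = \psi_0\,\rho_-\,(\alpha(\eta)-\alpha_x)/\sqrt{\alpha_x}$, whose right-hand side is of order $\zeta\,\psi_0$, and using that condition~\eqref{eq:lemma_bound_eigenvector_condition_zeta} gives $\delta/\sqrt{1-\gamma(\eta)^2} \lesssim \zeta/\Cnu$, the error dominates the main term and leaves you only with $\abs{\psi_0} \lesssim \Cnu^{-1}\norm{\f w\vert_B} + (\text{exponentially small})$, hence $\alpha(\eta)\psi_0^2 \lesssim \Cnu^{-2}\alpha(\eta)\norm{\f w\vert_B}^2$. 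This is an $O(1)$ bound, not the claimed $\Cnu\, d^{-c((\mu/\sqrt{d}-2)\wedge\log(\mu/\sqrt{d}))}$: the $d$-dependence has been lost.

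The paper avoids this by working not with the sphere indicators but with the Lanczos orthogonalization $(\f g_k)$ of $\f 1_x, \underline{A}_{\tau,l}\f 1_x, \ldots$ (Appendix~\ref{sec:tridiagonalization}). In that basis the relation $(\wh M_{\qq r}-\mu I_{r+1})\f b \in \op{Span}\{\f e_r\}$ holds \emph{exactly}, with no per-step residual; all the imprecision is packaged as a \emph{multiplicative} perturbation of the entries of the tridiagonal matrix $\wh M_{\qq r}$ relative to $\sqrt d\, M(\alpha_x)$, which Proposition~\ref{pro:approx_trimatrix} controls (paying the price of the factor $(3\sqrt\tau+2)^k$, hence the careful choice of $r$ in~\eqref{eq:delocalization_choice_r}). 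Proposition~\ref{pro:bound_eigenvector_perturbation_M_alpha} then shows that such a multiplicative perturbation merely shifts the decay rate from $\gamma(\eta)^{-1}$ to $\gamma_\geq \geq 1 + \tfrac14(\mu/\sqrt d - 2)$, so the exponential gain $\gamma_\geq^{-2r}$ survives intact. It is exactly this ``perturb the recursion coefficients, not the recursion'' device that protects the $d^{-c(\cdots)}$ decay and that your sphere-indicator scheme lacks; to repair your argument you would need either to pass to the Lanczos basis or to show that $\norm{\f w\vert_{S_{i-1}}}$ is itself controlled by $\abs{\psi_{i-1}}$ up to exponentially decaying errors, which is not available a priori.
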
 

\begin{proof}[Proof of Proposition~\ref{pro:upper_bound_large_eigenvalues}]
We shall only prove the upper bound on the large eigenvalues. 
The corresponding lower bound on the small eigenvalues is shown similarly (cf.\ the proof of Proposition~\ref{pro:approximate_eigenvector}). 
We first prove \ref{item:lemma_upper_bound_l} assuming $L_\leq >0$.
We fix $l \in [L_\leq]$ and define the sphere $\bb S_l \deq \h{\f w \in \R^N \col \norm{\f w} = 1, \f w \vert_{V_l} = \f w}$.
The min-max principle implies that 
\begin{equation} \label{eq:proof_upper_bound_aux1} 
 \max_{\tilde{\f w} \in \mathbb{S}_l} \scalar{\tilde{\f w}}{\underline{A}\tilde{\f w}} \geq \lambda_l(\underline{A}). 
\end{equation}

Let $\mu$ be the largest eigenvalue of $\underline{A}_{\tau,l}$ and $\f w\in \bb S_l$ be a corresponding eigenvector.
Since $\scalar{\tilde{\f w}}{\underline{A}_{\tau, l} \tilde{\f w}} = \scalar{\tilde{\f w}}{\underline{A}_\tau \tilde{\f w}}$ for all $\tilde{\f w} \in \mathbb{S}_l$, we get  
\begin{equation} \label{eq:proof_upper_bound_aux7} 
 \mu =\max_{\tilde{\f w}\in \mathbb{S}_l} \scalar{\tilde{\f w}}{\underline{A}_{\tau, l} \tilde{\f w}} = \max_{\tilde{\f w} \in \mathbb{S}_l} \scalar{\tilde{\f w}}{\underline{A}_\tau \tilde{\f w}}. 
\end{equation}
Thus, we obtain the lower bound 
\begin{equation} \label{eq:proof_upper_bound_aux2}
 \mu \geq \max_{\tilde{\f w} \in \mathbb{S}_l} \scalar{\tilde{\f w}}{\underline{A}\tilde{\f w}} - \norm{\underline{A}_\tau - \underline{A}}. 
\end{equation}
On the other hand, \eqref{eq:proof_upper_bound_aux7} and Proposition~\ref{pro:upper_bound_on_adjacency_matrix} imply the upper bound  
\begin{equation} \label{eq:proof_upper_bound_aux3} 
\sqrt{d}\, \scalar{\f w}{(\id + D + E) \f w} \geq \scalar{\f w}{\ul A \f w} \geq \mu - \norm{\underline{A}_\tau - \underline{A}} \geq \lambda_l(\underline{A}) - 2 \norm{\underline{A}_\tau - \underline{A}}. 
\end{equation}
Here, we used \eqref{eq:proof_upper_bound_aux2} and \eqref{eq:proof_upper_bound_aux1} in the last step.

We choose $\tau = 1 + d^{-\theta/4}$ and apply Proposition~\ref{pro:delocalization_estimate}. 
To that end, let $\zeta >0$ be defined by the right-hand side of \eqref{eq:lemma_bound_eigenvector_condition_zeta}. 
For a proof by contradiction, we now assume that 
\begin{equation} \label{eq:proof_upper_bound_aux5} 
 \lambda_l(\underline{A}) > \sqrt{d}\Big( \Lambda(\alpha_{\sigma(l)}) + \zeta \Big) + \norm{\underline{A} - \underline{A}_\tau}. 
\end{equation}
From \eqref{eq:proof_upper_bound_aux5}, \eqref{eq:proof_upper_bound_aux1} and \eqref{eq:proof_upper_bound_aux2},
we deduce  
 $\mu \geq \sqrt{d} \big(\Lambda(\alpha_{\sigma(l)}) + \zeta \big)$. 
This implies that $V_l \cap \cal V_\tau \subset \cal{W}_{\tau, \mu, \zeta}$.  

Moreover, as $\mu \geq \sqrt{d}( \Lambda(\alpha_{\sigma(l)})+\zeta) \geq \sqrt{d}\Lambda(\alpha_{\sigma(l)})$ we conclude that 
\begin{equation} \label{eq:proof_upper_bound_aux8} 
\alpha_x \leq \alpha_{\sigma(l)} \leq \alpha\bigg(\frac{\mu}{\sqrt{d}}\bigg) 
\end{equation}
for all $x \in V_l$, where we used that $\alpha$ defined in \eqref{eq:def_alpha} is the inverse function of $\Lambda$. 
Since $\scalar{\f w}{\f 1_x} = 0$ for all $x \in [N]\setminus V_l$ and $[N] \setminus V_l \subset \cal V_\tau$, we have
\begin{equation} \label{eq:proof_upper_bound_aux4} 
\scalar{\f w}{D\f w} = \sum_{x \in [N] \setminus \cal V_\tau}
\scalar{\f w}{\f 1_{x}}^2 \alpha_x + \sum_{x \in \cal V_\tau \cap V_l}   \scalar{\f w}{\f 1_{x}}^2 \alpha_x 
 \leq \tau +   \Cnu d^{-c ((\Lambda(\alpha_{\sigma(l)}) - 2) \wedge 1)} 
\end{equation}
with very high probability due to Proposition~\ref{pro:delocalization_estimate}, $V_l \cap \cal{V}_\tau \subset \cal{W}_{\tau, \mu,\zeta}$ and \eqref{eq:proof_upper_bound_aux8}. 
Proposition~\ref{pro:delocalization_estimate} is applicable since $\zeta$ and $\mu$ satisfy \eqref{eq:condition_mu_zeta} due to  $\mu \geq \sqrt{d} \big(\Lambda(\alpha_{\sigma(l)}) + \zeta \big)$ and the lower bound on $\Lambda(\alpha_{\sigma(l)})$ implied by the definition of $L_\leq$. 

We use the assumption \eqref{eq:proof_upper_bound_aux5} in \eqref{eq:proof_upper_bound_aux3}, employ \eqref{eq:proof_upper_bound_aux4} and $\tau = 1 + d^{-\theta/4}$ 
and obtain 
\begin{equation} \label{eq:proof_upper_bound_aux6} 
 2 + d^{-\theta/4} + \Cnu d^{-c ((\Lambda(\alpha_{\sigma(l)}) - 2) \wedge 1)}
+ \norm{E} \geq \Lambda(\alpha_{\sigma(l)}) + \zeta -d^{-1/2} \norm{\underline{A}_\tau - \underline{A}}  
\end{equation}
with very high probability. Thus, the bound on $\norm{E}$ in Proposition~\ref{pro:upper_bound_on_adjacency_matrix} yields 
\begin{equation} \label{eq:proof_upper_bound_aux9} 
 d^{-\theta/4} + \Cnu d^{-c((\Lambda(\alpha_{\sigma(l)}) -2)\wedge 1)}  + {\Cnu d^{-1/4 - \theta/2}} - (\Lambda(\alpha_{\sigma(l)}) - 2) \geq \zeta 
\end{equation}
with very high probability. 
Here, we used that 
\begin{equation} \label{eq:proof_upper_bound_aux11} 
 \norm{\underline{A}_\tau - \underline{A}}  \leq \norm{A_\tau - A} + \norm{\mathbb{E}A-\Pi_\tau(\mathbb{E}A)\Pi_\tau}  
  \leq \max_{x \in [N]} D_x^{G\setminus G_\tau} + 2
= \ord\big(d ^{1/4 - \theta/2} \big)
\end{equation}
with very high probability. 
This bound follows from Lemma~\ref{lem:subgraph_separating_large_degrees} \ref{item:subgraph_degrees}, $h(\eps) \geq c \eps^2$, $d \geq (\log N)^{4/(5 - 4 \theta)}$, $\tau = 1 + d^{-\theta/4}$, $\theta \leq 1/2$, and 
\begin{equation} \label{eq:bound_norm_EA_minus_Pi_tau_EA}
\norm{\mathbb{E}A-\Pi_\tau(\mathbb{E}A)\Pi_\tau} \leq 2 
\end{equation}
with very high probability.  
For the proof of \eqref{eq:bound_norm_EA_minus_Pi_tau_EA}, we remark that, by construction of $G_\tau$, all balls $B^{G_\tau}_{r_{x,\tau}}(x)$ for $x \in \cal{V}_\tau$ are disjoint and we have \[
|\cal{Z}_\tau|=\sum_{x\in \cal{V}_\tau} \abs{B^{G_\tau}_{r_{x,\tau}-2}(x)}\leq \frac{2}{d^2}\sum_{x\in \cal{V}_\tau} \abs{B^{G_\tau}_{r_{x,\tau}}(x)}\leq \frac{2N}{d^2}
\] with very high probability, where we used Lemma \ref{lem:concentration_S_i} for the middle inequality. Thus, estimating the operator norm by the Hilbert-Schmidt norm yields 
\[ \norm{\mathbb{E}A-\Pi_\tau(\mathbb{E}A)\Pi_\tau}^2  \leq \frac{d^2}{N^2} \big( \abs{\cal{Z}_\tau}^2 + 2 \abs{\cal{Z}_\tau} (N - \abs{\cal{Z}_\tau}) \big)  \leq  \frac{d^2 2 \abs{\cal{Z}_\tau}N}{N^2} \leq \frac{2d^2 \abs{\cal{Z}_\tau}}{N}  \leq 4\] 
with very high probability. This proves \eqref{eq:bound_norm_EA_minus_Pi_tau_EA} and, hence, \eqref{eq:proof_upper_bound_aux9}. 

The definition of $L_\leq$ in \eqref{eq:def_L_leq_tau_star} and $l \in [L_\leq]$ imply $\alpha_{\sigma(l)} \geq 2 + (\log d)^{-\kappa}$ and, hence,  
 $\Lambda(\alpha_{\sigma(l)}) \geq 2 + C(\log d)^{-2\kappa}$  for some $C>0$. 
Therefore, we can bound the other error terms from \eqref{eq:proof_upper_bound_aux9} by 
$\frac{1}{2}(\Lambda(\alpha_{\sigma(l)})-2)$, multiply the result by 2 and obtain  
\begin{equation}\label{eq:proof_upper_bound_aux10} 
  \Cnu d^{-c((\Lambda(\alpha_{\sigma(l)})-2)\wedge 1)} - (\Lambda(\alpha_{\sigma(l)}) - 2) \geq 2\zeta 
\end{equation}
with very high probability. 
Using $\alpha_{\sigma(l)} \geq 2 + (\log d)^{-\kappa}$ and
 $\Lambda(\alpha_{\sigma(l)}) \geq 2 + C(\log d)^{-2\kappa}$, we see 
that \eqref{eq:proof_upper_bound_aux10}, however, contradicts \eqref{eq:proof_upper_bound_aux10} and $\zeta>0$. 
Therefore, \eqref{eq:proof_upper_bound_aux5} is wrong, which implies part \ref{item:lemma_upper_bound_l} of Proposition~\ref{pro:upper_bound_large_eigenvalues} 
due to \eqref{eq:proof_upper_bound_aux11}, $\theta \leq 1/2$, and $\Lambda(\alpha_{\sigma(l)}) \geq 2 + C(\log d)^{-2\kappa}$.

We now prove \ref{item:lemma_upper_bound_0} assuming $L_\leq = 0$. We follow the proof of \ref{item:lemma_upper_bound_l} with $l=1$ and $V_1 = [N]$ and assume for the proof by contradiction that 
\[\lambda_1(\underline{A}) > \sqrt{d}(2+ \Cnu \zeta) + \norm{\underline{A}_\tau - \underline{A}}, \] 
for some sufficiently large $\Cnu >0 $ and $\zeta \defeq (\log d)^{-2\kappa}$. 

Owing to \eqref{eq:proof_upper_bound_aux11}, we have $\mu\geq\lambda_{1}(\underline{A})-\|\underline{A}_{\tau}-\underline{A}\| \geq2\sqrt{d}+(\mathcal{C}-1)(\log d)^{-2\kappa}$. 
Hence, $\zeta$ obviously satisfies \eqref{eq:lemma_bound_eigenvector_condition_zeta}.  
Moreover, together with $\Lambda(\alpha_{\sigma(1)} \vee 2) \leq 2 + C(\log d)^{-2\kappa}$ for some universal constant $C>0$, we obtain 
\[ \frac{\mu}{\sqrt{d}} -\Lambda(\alpha_{\sigma(1)}\vee2)\geq2+(\mathcal{C}-1)(\log d)^{-2\kappa}-(2+C(\log d)^{-2\kappa})\geq(\mathcal{C}-1 - C)(\log d)^{-2\kappa}\geq\zeta, \] 
where we assumed that $\mathcal{C} \geq 2 + C$ in the last step.  
Hence, $\cal{V}_\tau \subset \cal{W}_{\tau, \mu, \zeta}$. 

Similarly to the arguments in part \ref{item:lemma_upper_bound_l} we obtain 
 \[ \Cnu d^{-c \zeta } - \Cnu \zeta \geq \zeta  \] 
as analogue of \eqref{eq:proof_upper_bound_aux10}. 
This is a contradiction since $\zeta = (\log d)^{-2\kappa}$ which completes the proof of Proposition~\ref{pro:upper_bound_large_eigenvalues}.  
\end{proof}

The condition $\mu \geq \sqrt{d}(\Lambda(\alpha_x \vee 2) + \zeta)$ in the definition of $\cal{W}_{\tau,\mu,\zeta}$ in \eqref{def_calW} is an upper bound on $\alpha_x$. In  
fact, we have $\alpha_x \leq \omega$ for all $x \in \cal{W}_{\tau,\mu,\zeta}$, where the parameter $\omega \equiv \omega(\mu,\zeta)$ is defined as the unique solution in $(2,\infty)$ of 
\begin{equation} \label{def_omega}
\mu = \sqrt{d}(\Lambda(\omega) + \zeta).
\end{equation}
For the following result, we need the definition
\begin{equation} \label{def_r_omega}
r_\tau^\omega \defeq \frac{\log N}{12 \log (\omega d) } \wedge \bigg( \frac{d}{4 \log d} h \bigg(\frac{\tau - 1}{2} \bigg)  - 1 \bigg) - 2.
\end{equation}
Note that $r_\tau^\omega \leq r_x/4$ if $x \in [N] \setminus \cal V_\omega$ and $r_\tau^\omega \leq  r(\tau)/2$ with $r(\tau)$ as in \eqref{eq:def_r_tau}.
Hence, owing to Lemma~\ref{lem:subgraph_separating_large_degrees} \ref{item:subgraph_paths}, the balls $(B_r^{G_\tau}(x))_{x \in \cal V_\tau\setminus \cal V_\omega}$ are disjoint for $r = r_\tau^\omega$. 

\begin{lemma} \label{lem:delocalization_estimate_at_x}
Let $(\log N)^{4/(5-2\theta)}\leq d \leq \mathcal{K}^{-1} N^{1/4}$ for some $\theta \in (0,1/2]$ and let $1 + d^{-\theta/4} \leq \tau \leq 2$. 
Let $\zeta>0$. Let $\mu> 2\sqrt{d}$ be an eigenvalue of $\underline{A}_{\tau,l}$. 
Suppose that $\zeta$ and $\mu$ satisfy \eqref{eq:condition_mu_zeta}.  
Let $\omega \geq 2$ be the unique solution of \eqref{def_omega}. 
Then there exist an $r \in [r_\tau^\omega]$, a universal constant $c>0$ and a constant $\Cnu>0$ such that
for any $x \in \cal W_{\tau, \mu,\zeta}$ and 
any eigenvector $\f w$ of $\underline{A}_{\tau,\ell}$ associated to $\mu$, 
we have 
\[ \alpha(\mu/\sqrt{d}) \scalar{\f w}{\f 1_x}^2 \leq \Cnu d^{-c((\mu/\sqrt{d} - 2)\wedge \log (\mu/\sqrt{d}))} \norm{\f w \vert_{B^{G_\tau}_r (x)}}^2 \] 
with very high probability.  
\end{lemma}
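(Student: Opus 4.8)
The plan is to localize the analysis to the ball $B_r^{G_\tau}(x)$ around a fixed vertex $x \in \cal W_{\tau,\mu,\zeta}$ and to extract from the eigenvalue equation $\underline{A}_{\tau,l}\f w = \mu \f w$ a one-dimensional recursion along spheres, exactly as in the tridiagonal heuristic of Section~\ref{sec:ideas_proof}. Concretely, I would introduce the sphere-averaged quantities $p_i \deq \absb{S_i^{G_\tau}(x)}^{-1/2}\scalar{\f 1_{S_i^{G_\tau}(x)}}{\f w}$ (for $i = 0, \ldots, r$, where $r = r_\tau^\omega$), and show that the vector $(p_0, \ldots, p_r)$ approximately satisfies the tridiagonal eigenvalue equation associated with the matrix $(M^{\cal T})_{\qq{r}}$ from \eqref{eq:M_T_I_x_tridiagonal}, with $\alpha_x = D_x/d \leq \omega$. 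This requires three inputs, all already available: (a) the tree approximation of $G_\tau|_{B_r^{G_\tau}(x)}$ (Lemma~\ref{lem:subgraph_separating_large_degrees}\ref{item:subgraph_tree} gives this exactly, so there are \emph{no} cycle error terms here, unlike in Section~\ref{sec:large_degree_induces_eigenvalues}), (b) the concentration $\abs{S_{i+1}^{G_\tau}(x)}/\abs{S_i^{G_\tau}(x)} = d(1 + o(1))$, which follows from Lemma~\ref{lem:concentration_S_i} together with Lemma~\ref{lem:subgraph_separating_large_degrees}\ref{item:subgraph_inclusion_S_i}--\ref{item:subgraph_N_s_agree} controlling the difference between $G$ and $G_\tau$ spheres, and (c) the fact that $\f w$ restricted to $V_l$ means the relevant degrees entering the recursion are bounded (the centering $\E A$ contributes only a rank-one/small error handled as in $\f w_0$ of \eqref{eq:def_w_k}).

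The heart of the argument is then a transfer-matrix estimate: the recursion $\mu p_i = \sqrt{d}(p_{i-1} + p_{i+1})$ for $i \geq 2$ (with the modified relations at $i = 0, 1$ coming from the $\sqrt{\alpha_x}$ entries), together with $\mu/\sqrt{d} = \eta > 2$, forces $p_i$ to be a combination of the two geometric solutions $\rho_\pm^i$ with $\rho_\pm = \frac{1}{2}(\eta \pm \sqrt{\eta^2 - 4})$. Since $\rho_+ \rho_- = 1$ and the vector $\f w$ has bounded $\ell^2$ norm on the $r$ spheres, the \emph{growing} mode $\rho_+^i$ must have exponentially small coefficient, which pins down $p_0$ (i.e.\ $\scalar{\f w}{\f 1_x}$, since $S_0 = \{x\}$) in terms of $\norm{\f w\vert_{B_r^{G_\tau}(x)}}$. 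Quantitatively, I expect to get $\scalar{\f w}{\f 1_x}^2 \leq C \alpha(\eta)^{-1}\big(\rho_-^{2r} + (\text{approximation errors})^2\big)\norm{\f w\vert_{B_r^{G_\tau}(x)}}^2$, and the factor $\alpha(\eta) = \rho_+ \cdot \eta/2 \cdot(\cdots)$ — precisely the inverse of the normalization $\sum_i u_i^2$ in \eqref{eq:def_v_x_coeff}–\eqref{eq:def_v_x} evaluated with $\alpha_x$ replaced by $\alpha(\eta)$ — is exactly what appears on the left-hand side of the claimed bound. The term $\rho_-^{2r}$ is $d^{-c(\eta - 2)}$-type small when $\eta$ is bounded away from $2$ (since $\rho_- \approx 1 - \sqrt{\eta-2}$ and $r \asymp \log N/\log(\omega d) \asymp d/\log d$ up to the $h((\tau-1)/2)$ truncation), and $d^{-c\log\eta}$-type when $\eta$ is large; this matches the stated $d^{-c((\mu/\sqrt d - 2)\wedge\log(\mu/\sqrt d))}$.

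The approximation errors come from: the deviation of $\abs{S_{i+1}^{G_\tau}}/\abs{S_i^{G_\tau}}$ from $d$, bounded by $O((\log N/(d D_x))^{1/2})$ via \eqref{eq:concentration_S_i}; the fluctuation of individual degrees $N_i(y)$ around their sphere-average, controlled as in the $\f w_2$ estimate \eqref{eq:estimate_w_2}; the $\E A$ term (size $d/N$, negligible); and the difference between $G$-spheres and $G_\tau$-spheres, which by Lemma~\ref{lem:subgraph_separating_large_degrees}\ref{item:subgraph_S_i} involves a factor $D_x^{G\setminus G_\tau} = O(1 + \log N/(h((\tau-1)/2)d))$; using $\tau \geq 1 + d^{-\theta/4}$ and $d \geq (\log N)^{4/(5-2\theta)}$ one checks this is $d^{-\theta/2+o(1)}$-small, which after propagating through the transfer matrix over $r \asymp d/\log d$ steps yields an error of the order $\big(\frac{\mu}{\mu - 2\sqrt d}\big)^{1/2} d^{-\theta/2}$ — this is why the hypothesis \eqref{eq:lemma_bound_eigenvector_condition_zeta} on $\zeta$ takes that form, and it is how $\zeta$ enters: the condition $\mu \geq \sqrt d(\Lambda(\alpha_x \vee 2) + \zeta)$ defining $\cal W_{\tau,\mu,\zeta}$ guarantees $\eta - \Lambda(\alpha_x) \geq \zeta$, hence $\rho_-(\eta) < \rho_-(\Lambda(\alpha_x))$ with a gap that makes the truncation of the growing mode robust against these errors.

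\textbf{Main obstacle.} The genuinely delicate point is accumulating the multiplicative errors over the $\sim r$ transfer-matrix steps without losing the gain: a naive bound $\prod_{i}(1 + \epsilon_i)$ with $\epsilon_i = O((\log N/(dD_x))^{1/2})$ and $r \asymp d/\log d$ could blow up like $e^{r\epsilon} = e^{\sqrt{d/\log N}\cdot\text{something}}$, which is not small. The resolution — and the technical core I would develop carefully — is that the transfer matrix for the \emph{contracting} direction is itself a contraction with rate $\rho_- < 1$ bounded away from $1$ whenever $\eta - 2 \gtrsim (\log d)^{-2\kappa}$ (guaranteed on $\cal W_{\tau,\mu,\zeta}$ by \eqref{def_calW} and the definition of $L_\leq$), so the product telescopes into a \emph{convergent} geometric series of errors rather than an exponential; one writes $p_0 = \sum_i (\text{product of transfer matrices from }i\text{ to }0)\cdot(\text{local error at }i)$ and bounds each term by $\rho_-^{\,i}\cdot\epsilon_i$, summing to $O(\epsilon_{\max}/(1 - \rho_-))$. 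Making this rigorous requires tracking the two-component transfer matrix (not just the scalar recursion), diagonalizing it uniformly in $i$, and checking the change-of-basis matrices are uniformly bounded — routine but bookkeeping-heavy. I would carry it out as: (1) derive the exact recursion for $p_i$ from the eigenvalue equation plus the tree/concentration structure, isolating the error vector; (2) set up the $2\times 2$ transfer matrix and its eigendecomposition; (3) run the variation-of-constants argument to express $p_0$ via the stable mode plus a summable error; (4) plug in the sphere-size and degree-fluctuation bounds from Lemmas~\ref{lem:concentration_S_i} and \ref{lem:subgraph_separating_large_degrees} and the $\f w_2$-type estimate; (5) choose $r = r_\tau^\omega$ and simplify, using $\alpha(\eta)\rho_-^{2r} \leq d^{-c((\eta-2)\wedge\log\eta)}$.
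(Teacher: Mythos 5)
Your proposal shares the paper's overall strategy — a transfer-matrix analysis of the tridiagonal structure around $x$, using the tree property of $G_\tau$, the concentration of $\abs{S_{i+1}}/\abs{S_i}$, and the stable/unstable splitting of the $2\times 2$ transfer matrix — and you correctly identify how $\zeta$ enters. But the concrete implementation differs from the paper in a way that creates genuine gaps.

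First, the paper does not use the sphere averages $p_i = \abs{S_i^{G_\tau}}^{-1/2}\scalar{\f 1_{S_i^{G_\tau}}}{\f w}$. It works with $b_i = \scalar{\f g_i/\norm{\f g_i}}{\f w}$, where $(\f g_i)$ is the Gram--Schmidt basis of $(\underline A_{\tau,l})^j\f 1_x$ (Appendix \ref{sec:tridiagonalization}). By construction, $(b_i)$ satisfies the \emph{exact} tridiagonal relation $(\wh M_{\qq r}-\mu I)\f b\in\op{Span}\{\f e_r\}$, so all approximation errors are pushed into the matrix entries $\wh M_{ij}$ (Proposition~\ref{pro:approx_trimatrix}) and the delocalization bound is then a deterministic linear-algebra fact, Proposition~\ref{pro:bound_eigenvector_perturbation_M_alpha}. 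Your $p_i$ do \emph{not} satisfy a closed recursion: projecting the eigenvalue equation onto $\f 1_{S_i^{G_\tau}}$ leaves $\sum_{y\in S_{i-1}}N_i(y)w_y$, where the fluctuation $N_i(y)-\abs{S_i}/\abs{S_{i-1}}$ is contracted against the unknown coordinates $w_y$ rather than a fixed test vector; the resulting discrepancy can only be bounded by $\bigl(\sum_y(N_i(y)-\bar N_i)^2\bigr)^{1/2}\norm{\f w\vert_{S_{i-1}}}$, i.e.\ by the full local norm, not by $|p_{i-1}|$. So the recursion for $p_i$ has an inhomogeneous forcing term that is not small relative to the scalar sequence $(p_i)$, and you would need an inhomogeneous version of the transfer-matrix lemma, not an application of Proposition~\ref{pro:bound_eigenvector_perturbation_M_alpha} as stated. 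This is doable, but it is exactly where the work lives, and your sketch passes over it.

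Second, the scale of $r$ is off, and this is not a detail. You set $r=r_\tau^\omega\asymp\log N/\log(\omega d)$ and claim $\rho_-^{2r}$ is then of size $d^{-c(\eta-2)}$. With $\rho_-\approx 1-c\sqrt{\eta-2}$ and $r\asymp d/\log d$ that would give $\exp(-c'(d/\log d)\sqrt{\eta-2})$, which is super-polynomially small in $d$ — much better than the claimed bound, which should have raised a flag. The paper in fact chooses $r=O(\log d)$ (see \eqref{eq:delocalization_choice_r}), and it is forced to: the error $\cal E_{\tau,r}$ from \eqref{def_calE} carries a $(3\sqrt\tau+2)^r$ factor, arising from the two-step recursion for $\f q_k=\f f_k-\f 1_{S_k^{G_\tau}}$ in the proof of Proposition~\ref{pro:approx_trimatrix}. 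The exponent $c((\mu/\sqrt d-2)\wedge\log(\mu/\sqrt d))$ in the statement is precisely the outcome of trading the $\gamma_\geq^{-2r}$ gain against this $r$-exponential error, and the hypothesis \eqref{eq:lemma_bound_eigenvector_condition_zeta} on $\zeta$ is what makes that trade close. Your obstacle paragraph argues about error accumulation for $r\asymp d/\log d$, which is the wrong regime, and it does not account for the multiplicative growth of the approximation error in the sphere index; to make your variant rigorous you would either have to show that the sphere-average recursion avoids such growth (not obvious: the degree fluctuations propagate through $A_\tau$, whose norm on the ball is $\sim 2\sqrt{\tau d}$, producing an analogous $\sqrt\tau$-factor per level) or reproduce the paper's restriction of $r$ to $O(\log d)$.
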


\begin{proof}[Proof of Proposition~\ref{pro:delocalization_estimate}]
Let $\eta \defeq \mu/\sqrt{d}$. Lemma~\ref{lem:delocalization_estimate_at_x} and $r_\tau^\omega \leq (\frac{1}{4} r_x) \wedge ( \frac{1}{2} r(\tau))$ for $\alpha_x \leq \omega$ imply
\[
\alpha(\eta) \sum_{x \in \cal W_{\tau, \mu,\zeta}} \scalar{\f w}{\f 1_x}^2
  \leq \Cnu d^{-c((\eta - 2)\wedge \log (\eta))} \sum_{x \in \cal W_{\tau, \mu,\zeta}} \norm{\f w \vert_{B^{G_\tau}_{r_\tau^\omega}(x)}}^2  \leq 
\Cnu d^{-c((\eta - 2)\wedge \log(\eta))} 
\norm{\f w}^2. 
\] 
Here, we used in the second step that $(B_{r_\tau^\omega}^{G_\tau}(x))_{x \in \cal W_{\tau, \mu, \zeta}}$ are disjoint sets by Lemma~\ref{lem:subgraph_separating_large_degrees} \ref{item:subgraph_paths}. 
As $\f w$ is normalized, Proposition~\ref{pro:delocalization_estimate} follows. 
\end{proof} 

The next subsection is devoted to the proof of Lemma~\ref{lem:delocalization_estimate_at_x}.

\subsection{Proof of Lemma~\ref{lem:delocalization_estimate_at_x}}

For the rest of this section we fix $x \in \cal W_{\tau, \mu,\zeta}$ and omit it from our notation. 
For the proof of Lemma \ref{lem:delocalization_estimate_at_x} we shall need some basic facts about the tridiagonalization of matrices, which are summarized in Appendix \ref{sec:tridiagonalization}, and which we refer to throughout this section. Throughout this section, we only work with indices $i$ of tridiagonal matrices satisfying $i \leq m$, where $m$ was defined in Appendix \ref{sec:tridiagonalization}. This is always a simple application of \eqref{eq:concentration_S_i} in Lemma~\ref{lem:concentration_S_i} and we shall not dwell on this issue further.

In the following result, we compare a tridiagonalization of $\underline{A}_{\tau,l}$ with 
the tridiagonal matrix $M(\alpha)$ for an appropriately chosen $\alpha>0$. 
For $r \in \N$ and $\alpha>0$, the matrix $M(\alpha)$ is defined through 
 \begin{equation} \label{eq:def_M_alpha} 
M(\alpha) =
\begin{pmatrix}
0 & \sqrt{\alpha} &&&&
\\
\sqrt{\alpha} & 0 & 1 &&&
\\
& 1 & 0 & 1 &&
\\&&1 & 0 & \ddots&
\\
&&&\ddots & \ddots & 1
\\
&&&& 1 & 0
\end{pmatrix}\in \R^{(r + 1) \times (r + 1)}.
\end{equation}

\begin{proposition} \label{pro:approx_trimatrix}
Let $(\log N)^{4/(5-2\theta)} \leq d \leq \mathcal{K}^{-1} N^{1/4}$ for some $\theta \in (0,5/2)$  and let 
 $1 + d^{-\theta/4} \leq \tau \leq 2$. Let $r \leq r_\tau^\omega$ and $x \in \cal V_\tau$. For $k \in \N$ we define the error parameter
\begin{equation} \label{def_calE}
\cal E_{\tau,k} \deq \frac{(3\sqrt{\tau}+2)^{k}}{\sqrt{d}}\qbb{\pbb{\log d+\frac{\log N}{d}}\pbb{1+\frac{\log N}{d \tau}}}^{\frac{1}{2}}\, . 
\end{equation}
Let $\wh M \deq M^{(\underline{A}_{\tau,l}, x)}$ be the tridiagonal matrix associated with $\underline{A}_{\tau,l}$ around $x$, and $(\f g_k)$ the associated orthogonal basis (see Appendix \ref{sec:tridiagonalization}). Then there exists a constant $\cal C$ such that if $\cal E_{\tau,r} \leq \frac{1}{2 \cal C}$ then we have for all $k \leq r$ with very high probability
\begin{equation} \label{thm:approx_trimatrix_itemg}
\frac{\|\f g_k- \f 1_{S^{G_\tau}_k}\|}{\norm{\f 1_{S^{G_\tau}_k}}} \leq \Cnu \cal E_{\tau,k},
\end{equation}
and
\begin{equation} \label{thm:approx_trimatrix_itemM}
\|\wh M_{\qq{r}}-\sqrt{d}M(\alpha_x)\| = \cal O \bigg(\sqrt{d}\cal E_{\tau,r} + \frac{\log N}{\tau d^{3/2} } \bigg( 1 + \frac{\log N}{(\tau-1)^2 d} \bigg) \bigg).
\end{equation}
\end{proposition}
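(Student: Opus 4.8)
The plan is to run the Lanczos / tridiagonalization construction for $\underline{A}_{\tau,l}$ started from $\f 1_x$ and show that, on the pruned graph $G_\tau$, the orthogonal basis vectors $\f g_k$ stay close to the normalized sphere indicators $\abs{S_k^{G_\tau}(x)}^{-1/2}\f 1_{S_k^{G_\tau}(x)}$, so that $\wh M_{\qq r}$ is close to the idealized tridiagonal matrix of a regular tree. First I would record the key structural facts guaranteeing an (almost) tree structure on the relevant ball: by Lemma~\ref{lem:subgraph_separating_large_degrees}\,\ref{item:subgraph_tree} the induced graph $G_\tau|_{B_{r_{x,\tau}}^{G_\tau}(x)}$ is \emph{exactly} a tree for $r \leq r_\tau^\omega \leq \tfrac12 r(\tau)$, and by \ref{item:subgraph_N_s_agree} the local degrees coincide with those in $G$. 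On a tree, $A_\tau \f 1_{S_k^{G_\tau}} = \sum_{y\in S_{k-1}^{G_\tau}} N_k(y)\f 1_y + \f 1_{S_{k+1}^{G_\tau}}$ with $N_k(y) = D_y - \ind{k\geq 2}$, so the Lanczos recursion closes up to the ``non-regularity'' errors: the fluctuations of $\abs{S_{k+1}^{G_\tau}}/\abs{S_k^{G_\tau}}$ around $d$ and of the individual $N_k(y)$ around $\abs{S_k^{G_\tau}}/\abs{S_{k-1}^{G_\tau}}$. These are exactly the quantities already controlled in Lemma~\ref{lem:concentration_S_i} (using Lemma~\ref{lem:subgraph_separating_large_degrees}\,\ref{item:subgraph_inclusion_S_i} to transfer the $G$-estimates to $G_\tau$), together with the deviations $\sum_{y\in S_k}(N_{k+1}(y) - \abs{S_{k+1}}/\abs{S_k})^2$ estimated in the proof of \eqref{eq:estimate_w_2}. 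The contribution of $\E A$ is handled exactly as $\f w_0$ in Section~\ref{sec:large_degree_induces_eigenvalues}, and is negligible on the small ball.

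The main technical step is the induction on $k$ for \eqref{thm:approx_trimatrix_itemg}. I would set $\delta_k \deq \norm{\f g_k - \abs{S_k^{G_\tau}}^{-1/2}\f 1_{S_k^{G_\tau}}}$ and show a recursive inequality of the shape $\delta_{k+1} \leq (C\sqrt\tau) \delta_k + (C\sqrt\tau)^{k}\,\eta_k$, where $\eta_k$ collects the one-step fluctuation errors of size $O\big(d^{-1/2}[(\log d + \tfrac{\log N}{d})(1+\tfrac{\log N}{d\tau})]^{1/2}\big)$ coming from Lemma~\ref{lem:concentration_S_i} and \eqref{eq:Z_claim}. The amplification factor per step is roughly the operator norm of the relevant slice of the tridiagonal matrix, which on a tree with out-degree $\approx d$ and normalized matrix $\wh M/\sqrt d$ is $\leq 2 + o(1)$ after normalization, but in the unnormalized Lanczos bookkeeping (where the off-diagonal entries are $\approx \sqrt d$ and the first entry $\approx \sqrt{\tau d}$) produces the constant $3\sqrt\tau + 2$ appearing in $\cal E_{\tau,k}$ --- this is why the error is allowed to grow geometrically in $k$, and why the hypothesis $\cal E_{\tau,r}\leq \tfrac1{2\cal C}$ is needed to keep the induction from blowing up. Summing the geometric series gives $\delta_k \leq \Cnu \cal E_{\tau,k}$. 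I would be careful that the orthonormalization in the Lanczos step is only \emph{approximately} the Gram--Schmidt of $\f 1_x, \underline A_{\tau,l}\f 1_x,\dots$, so one must check that the small perturbations do not cause the $\f g_k$ to degenerate (the denominators $\abs{S_k^{G_\tau}}$ stay bounded below by \eqref{eq:concentration_S_i} combined with the lower bound $D_x \geq \tau d \geq \cal K\log N/d$).

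Given \eqref{thm:approx_trimatrix_itemg}, the bound \eqref{thm:approx_trimatrix_itemM} follows by reading off the entries of $\wh M_{\qq r}$: the sub/super-diagonal entry at level $k$ is $\scalar{\f g_{k-1}}{\underline A_{\tau,l}\f g_k}$, which equals $\sqrt d\cdot M(\alpha_x)_{k-1,k}$ up to (i) the substitution error $\abs{S_{k}^{G_\tau}}^{1/2}/\abs{S_{k-1}^{G_\tau}}^{1/2} - \sqrt d$ bounded via \eqref{eq:proof_w_3_aux_estimate}, (ii) the error from replacing $\f g_k$ by the sphere indicator, bounded by $\delta_{k-1}+\delta_k$ times the local norm $O(\sqrt{\tau d})$, hence $O(\sqrt d\,\cal E_{\tau,r})$, and (iii) the $\E A$ contribution of size $O(d|B_r|/N) = o(1)$. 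The diagonal entries are $O$ of the same quantities. Estimating $\norm{\wh M_{\qq r} - \sqrt d M(\alpha_x)}$ by, say, its Hilbert--Schmidt norm or by the tridiagonal structure (three nonzero diagonals, so the operator norm is at most $3$ times the max entry) then yields the stated bound, the second term $\tfrac{\log N}{\tau d^{3/2}}(1+\tfrac{\log N}{(\tau-1)^2 d})$ being precisely the $\f w_2$-type and $\f w_3$-type contributions made explicit. The expected main obstacle is tracking the geometric amplification constant cleanly through the Lanczos induction --- i.e. proving the recursion $\delta_{k+1}\leq (3\sqrt\tau+2)\delta_k + (\text{fresh error})$ with the \emph{correct} base $3\sqrt\tau+2$ rather than a worse constant --- since a loose constant here would, after $r \sim \log N/\log d$ steps, destroy the smallness of $\cal E_{\tau,r}$ in the critical regime $d \asymp \log N$.
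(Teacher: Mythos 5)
Your overall outline is right — use the fact that the pruned ball $B_{r_\tau^\omega}^{G_\tau}(x)$ is a tree, feed in the concentration estimates on $\abs{S_{i+1}}/\abs{S_i}$ and on the $N_i(y)$, run a recursion that picks up a factor of order $3\sqrt\tau+2$ per step, and then deduce \eqref{thm:approx_trimatrix_itemM} from \eqref{thm:approx_trimatrix_itemg} via the entrywise formulas for a tridiagonal matrix. The second part is essentially what the paper does (with the small simplification that $\wh M_{ii}$ \emph{vanishes exactly} by bipartiteness rather than just being small, and $\wh M_{i,i+1}=\norm{\f g_{i+1}}/\norm{\f g_i}$ by Lemma~\ref{lem:tridiag_norm_expression}, so only the offdiagonal ratios need estimating).

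The gap is in the induction for \eqref{thm:approx_trimatrix_itemg}. You propose a one-step recursion $\delta_{k+1}\leq(C\sqrt\tau)\delta_k+\eta_k$ for $\delta_k=\norm{\f g_k-\abs{S_k^{G_\tau}}^{-1/2}\f 1_{S_k^{G_\tau}}}$, i.e.\ you want to propagate the error directly through the Lanczos recursion $\f g_{k+1}=Q_k A_\tau\f g_k$. Two problems: (a) the true recursion is necessarily two-step (three-term), since on a tree $A_\tau\f 1_{S_k^{G_\tau}}$ has components on both $S_{k-1}^{G_\tau}$ and $S_{k+1}^{G_\tau}$, and (b) more seriously, the projection $Q_k$ itself depends on all of $\f g_0,\dots,\f g_k$, so as soon as $\f g_j\neq\f 1_{S_j^{G_\tau}}$ for $j\leq k$, the projection introduces its own error which you have no mechanism to control, and which threatens to accumulate on top of the fluctuation errors $\eta_k$. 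You flag this vaguely ("be careful that the orthonormalization... is only approximately the Gram--Schmidt"), but you offer no fix. The paper's resolution is the key structural trick you are missing: introduce the auxiliary family $\f f_0=\f 1_x$, $\f f_1=\f 1_{S_1^{G_\tau}}$, $\f f_2=\f 1_{S_2^{G_\tau}}$, $\f f_{k+2}=Q_0(A_\tau\f f_{k+1}-d\f f_k)$, where the projection is the \emph{fixed} rank-one projection $Q_0$ (orthogonal to $\f 1_x$ only) and the coefficient $d$ is a fixed scalar. One checks that each $\f f_k=P_k(A_\tau)\f 1_x$ for a monic degree-$k$ polynomial $P_k$, hence $\f g_k=Q_{k-1}(A_\tau^k\f 1_x)=Q_{k-1}\f f_k$ and $\norm{\f g_k}\leq\norm{\f f_k}$; together with $\f g_k-\f 1_{S_k^{G_\tau}}\perp\f 1_{S_k^{G_\tau}}$ (a consequence of Lemma~\ref{lem:g_i_for_tree}) and $\f f_k-\f 1_{S_k^{G_\tau}}\perp\f 1_{S_k^{G_\tau}}$ this gives
\[\norm{\f g_k-\f 1_{S_k^{G_\tau}}}^2=\norm{\f g_k}^2-\norm{\f 1_{S_k^{G_\tau}}}^2\leq\norm{\f f_k}^2-\norm{\f 1_{S_k^{G_\tau}}}^2=\norm{\f f_k-\f 1_{S_k^{G_\tau}}}^2.\]
This reduces the problem to a three-term recursion for $\f q_k\defeq\f f_k-\f 1_{S_k^{G_\tau}}$ that involves only $Q_0$, so that no error from the Gram--Schmidt projections accumulates. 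The coefficient $3\sqrt\tau$ then comes cleanly from the operator-norm bound $\norm{A_\tau\f v}\leq 2\sqrt{\tau d}\norm{\f v}$ for $\f v$ supported off $x$ in the pruned ball, and the $2$ from the explicit $d\f q_k$ term; without this decoupling it is unclear how you would extract the advertised base $3\sqrt\tau+2$ rather than something worse. So the proposal identifies the right ingredients but the induction as written would not close.
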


We postpone the proof of Proposition~\ref{pro:approx_trimatrix} to the following subsection.

\begin{proof}[Proof of Lemma~\ref{lem:delocalization_estimate_at_x}] 
We denote the standard basis vectors of $\R^{r+1}$ by $\f e_0, \ldots, \f e_r$. Let $\wh M \deq M^{(\underline{A}_{\tau,{l}}, x)}$ be the tridiagonal matrix associated with $\underline{A}_{\tau,{l}}$ around $x$, and $(\f g_k)$ the associated orthogonal basis (see Appendix \ref{sec:tridiagonalization}).
Let $\f w$ be an eigenvector of $\underline{A}_{\tau,{l}}$ with eigenvalue $\mu$. 
We denote by $\f b= (b_k)_{k \in \qq{N-1}}$ the vector representing $\f w$ with respect to the orthonormal basis $(\f g_k/\norm{\f g_k})_k$. 
Then, by the tridiagonal property of $\wh M$, we have $(\wh{M}_{\qq{r}} - \mu I_{r+1}) \f b \in \op{Span}\{\f e_r\}$.
Hence, we can apply Proposition~\ref{pro:bound_eigenvector_perturbation_M_alpha} with $\wt M \deq \frac{1}{\sqrt{d}}\wh M_{\qq{r}}$ to estimate $b_0^2 /\big(\sum_{i=0}^{r} b_i^2 \big)$ once we have verified 
its condition \eqref{eq:bound_eigenvector_condition}.

Because $G_{\tau}\vert_{B^{G_\tau}_r}$ is a tree, $\wt M_{00}=\wt M_{11}=0$ by Lemma~\ref{lem:tridiag_norm_expression} and $\wt M_{01} =\sqrt{\alpha_x}$. 
From \eqref{thm:approx_trimatrix_itemM} with $r = 2$, $\tau \geq 1 + d^{-\theta/4}$, $\log N \leq d^{5/4-\theta/2}$ and $\theta \leq 1/2$, we conclude 
\begin{equation} \label{eq:wt_M_12_eps_2} 
|1- \wt M_{12}|\leq \epsilon_2 \deq \cal C d^{-\theta/2}. 
\end{equation}

Throughout this proof, we need some Lipschitz-type bounds on $\Lambda(\alpha)$ and $\alpha(\eta)$. We have 
\begin{equation} \label{eq:Lambda_alpha_bounds} 
 \eta - \Lambda(\alpha\vee 2) \leq C \bigg( \frac{\alpha(\eta) - 2}{\alpha(\eta)^{3/2}} \bigg)\big ( \alpha(\eta) - \alpha \vee 2\big) 
\end{equation}
for all $\alpha >0$ and $\eta > \Lambda(\alpha \vee 2)$. 
The bound in \eqref{eq:Lambda_alpha_bounds} is a consequence of 
\begin{align*}
 \Lambda(\alpha(\eta)) - \Lambda(\alpha \vee 2)  = \int_{\alpha \vee 2}^{\alpha(\eta)} \Lambda'(t) \dd t = \int_{\alpha \vee 2}^{\alpha(\eta)} \frac{ t -2 }{2(t-1)^{3/2}} \dd t  
\end{align*}
and distinguishing cases $\alpha > \alpha(\eta)/2$ and $\alpha \leq \alpha(\eta)/2$.

With the notation $\eta = \frac{\mu}{\sqrt{d}}$, we calculate $\delta$ introduced in \eqref{eq:def_function_delta} below. From \eqref{eq:wt_M_12_eps_2} we conclude
\begin{equation} \label{eq:bound_delta_delocalization_proof}  
 \abs{\delta(0, \sqrt{\alpha_x}, 0, \wt M_{12}, \eta)} = \frac{\abs{\gamma(\eta) \eta^2 - \gamma(\eta) \alpha_x - \eta\wt{M}_{12}}}{\abs{\alpha_x + \gamma(\eta)\eta \wt{M}_{12} - \eta^2}}
\leq \frac{C\eta}{\abs{\alpha_x - \alpha(\eta)}} 
\end{equation}
for some universal constant $C>0$. Here, 
we used that the numerator is bounded from above by $C \eta$ due to $\gamma(\eta) \eta \leq 2$, \eqref{eq:wt_M_12_eps_2} and 
$\sqrt{\alpha_x} \leq \Lambda(\alpha_x \vee 2) \leq \eta$ as $x \in \cal W_{\tau, \mu, \zeta}$. 
The denominator is bounded from below by $\abs{\alpha_x - \alpha(\eta)} - \gamma(\eta)\eta \abs{1- \wt{M}_{12}}$ which proves \eqref{eq:bound_delta_delocalization_proof} since 
$\alpha(\eta) - \alpha_x \geq \eps_2\geq 2 \gamma(\eta) \eta\abs{1-\wt{M}_{12}}$. 
This last bounds are consequences of 
\begin{equation} \label{eq:proof_delocalization_lemma_aux} 
 \alpha(\eta) - \alpha_x \vee 2 \geq C \frac{\alpha(\eta)^{3/2}}{\alpha(\eta) -2} (\eta -\Lambda(\alpha_x \vee 2)) \geq \cal C d^{-\theta/2} \sqrt{\frac{\eta}{\eta - 2}} \frac{\alpha(\eta)^{3/2}}{\alpha(\eta)-2} \geq \cal C d^{-\theta/2}, 
\end{equation}
as well as \eqref{eq:wt_M_12_eps_2} and $\gamma(\eta)\eta \leq 2$. 
For the proof of \eqref{eq:proof_delocalization_lemma_aux},  
we use \eqref{eq:Lambda_alpha_bounds}, $\eta - \Lambda(\alpha_x \vee 2) \geq \zeta$ and 
the first condition on $\zeta$ in \eqref{eq:lemma_bound_eigenvector_condition_zeta} 
and observe that $t \mapsto \frac{t}{t - 2}$ and $t \mapsto \frac{t^{3/2}}{t -2}$ have strictly 
positive lower bounds for $t > 2$.

Let $\eps$ be defined as in Proposition~\ref{pro:bound_eigenvector_perturbation_M_alpha} below. 
Owing to Proposition~\ref{pro:approx_trimatrix}, we know that $\eps \leq \eps_r$ with very high probability, where $\eps_r$ is defined through 
\begin{equation} \label{eq:def_eps_r} 
 \epsilon_r \defeq \cal C \bigg( \cal E_{\tau, r} + \frac{\log N}{\tau d^{2}}\bigg( 1 + \frac{\log N}{(\tau-1)^2 d} \bigg)\bigg).  
\end{equation}

We now verify that the choice 
\begin{equation}\label{eq:delocalization_choice_r} 
 r = \bigg( c \log \Big( \zeta d^{1/5 + \theta/2} \sqrt{(\eta - 2)/\eta} \Big) + \tilde{\Cnu} \bigg) \wedge \frac{\log (d ^{1/5 + \theta/2} ) - \log (4 \cal{\Cnu})}{\log 8} 
\end{equation}
for some sufficiently small universal constant $c >0$ and some $\tilde{\Cnu} >0$ implies
\begin{equation}\label{eq:condition_decay_1}
 \eta^2 \geq 4 + \frac{C(1+\eta)^2\eps_r^2}{(1 - \gamma(\eta)^2)^2} \big ( 1 + 1 \vee \delta^2\big), 
\qquad \eps_r \leq 1/2, 
\end{equation}
thus, justifying the conditions of Proposition~\ref{pro:bound_eigenvector_perturbation_M_alpha}. 
We remark that $r\geq 1$ due to the lower bound on $\zeta$ from \eqref{eq:lemma_bound_eigenvector_condition_zeta} 
and the lower bound on $d$. Clearly, $r \leq r_\tau^{\omega}$.  
From the definitions of $\eps_r$ and $\cal{E}_{\tau,r}$ in \eqref{eq:def_eps_r} and \eqref{def_calE}, 
respectively, as well as $1 + d^{-\theta/4}\leq \tau \leq 2$ and $\log N \leq d^{5/4 - \theta/2}$, we conclude 
\begin{equation} \label{eq:proof_delocalization_lemma_eps_r}  
\eps_r \leq \Cnu \Big( 8^r d^{-1/5 - \theta/2} + \frac{d^{-\theta}}{(\tau - 1)^2} \Big) \leq \Cnu \Big( 8^r d^{-1/5- \theta/2} + d^{-\theta/2} \Big). 
\end{equation}
Hence, the second bound on $r$ in \eqref{eq:delocalization_choice_r} yields $\eps_r \leq 1/2$ in \eqref{eq:condition_decay_1}. 

The first bound in \eqref{eq:condition_decay_1} is equivalent to 
\begin{equation} \label{eq:condition_decay_2} 
 \eps_r^2 \leq \frac{ (\eta^2- 4) (1-\gamma(\eta)^2)^2}{C\eta^2 (1 \vee \delta^2)}.  
\end{equation}
 Owing to $8^r \leq \ee^{r/c}$ for a sufficiently small universal $c>0$, 
the definition of $r$ in \eqref{eq:delocalization_choice_r} and the first condition on $\zeta$ in \eqref{eq:lemma_bound_eigenvector_condition_zeta}, 
we obtain from \eqref{eq:proof_delocalization_lemma_eps_r} that 
 \[ \eps_r  \leq \Cnu \sqrt{\frac{\eta -2}{\eta}}\, \zeta    
\leq \Cnu \sqrt{\frac{\eta -2}{\eta}} \frac{(\alpha(\eta) - 2) (\alpha(\eta) - \alpha_x)} {\eta^3}  
  \leq \Cnu \sqrt{\frac{\eta^2 - 4}{\eta^2}} \frac{1-\gamma(\eta)^2}{\abs{\delta}}.  \] 
Here, the second step is a consequence of $\zeta \leq \eta - \Lambda(\alpha_x \vee 2)$, \eqref{eq:Lambda_alpha_bounds} and $\alpha(\eta) \geq \eta^2/2$ for $\eta > 2$. 
The third step follows from 
$(\alpha(\eta) - 2)/\eta^2\leq 1 - \gamma(\eta)^2$ and \eqref{eq:bound_delta_delocalization_proof}. 
This proves \eqref{eq:condition_decay_2} and, thus, the remaining estimate in~\eqref{eq:condition_decay_1}. 
Hence, we have justified the conditions of Proposition~\ref{pro:bound_eigenvector_perturbation_M_alpha}. It implies 
\[ 
\alpha(\eta) \frac{b_0^2}{\sum_{i=0}^{r} b_i^2} \leq \frac{C \alpha_x\alpha(\eta)}{(\alpha_x - \alpha(\eta))^2} \big( \gamma_\geq\big)^{-2r}   
  \leq \frac{C}{\zeta^2} \bigg( 1 + \frac{1}{4} (\eta - 2)\bigg)^{-2(r-2)}. 
 \] 
Here, we employed $\abs{\alpha(\eta) -\alpha_x} \geq 2 \gamma(\eta) \eta \abs{1- \wt{M}_{12}}$ in the first step. 
In the second step, we used $\zeta \leq C \alpha(\eta)^{-1/2} ( \alpha(\eta) - \alpha_x)$ due to 
\eqref{eq:Lambda_alpha_bounds} and $x \in \cal{W}_{\tau,\mu,\zeta}$. 
We also used \eqref{eq:lower_bound_gamma_geq} as well as $\alpha_x \leq \alpha(\eta) \leq \eta^2$ due to $x \in \cal{W}_{\tau,\mu,\zeta}$. 

Thus, our choice of $r$ in \eqref{eq:delocalization_choice_r}, the first condition in \eqref{eq:lemma_bound_eigenvector_condition_zeta} and the definition of $\eta$ yield 
\begin{equation} \label{eq:proof_delocalization_last_step} 
\begin{aligned} 
   \alpha(\eta) \frac{b_0^2}{\sum_{i=0}^{r} b_i^2} & \leq \frac{C}{\zeta^2} \exp\bigg(-2(r-2) \log \bigg( 1  + \frac{1}{4} (\eta - 2) \bigg)\bigg) \\ 
 & \leq \frac{\Cnu}{\zeta^2} \exp \bigg(- \frac{c}{2} \log d \log \bigg( 1 + \frac{1}{4} ( \eta - 2 ) \bigg) \bigg) \\ 
& \leq \Cnu \exp\Big( - c \log d \big( (\eta - 2) \wedge \log \eta  \big) \Big). 
\end{aligned} 
\end{equation} 
Here, we used the second condition on $\zeta$ in \eqref{eq:lemma_bound_eigenvector_condition_zeta} for a sufficiently small $c$ in the third step. 

Finally, Lemma~\ref{lem:delocalization_estimate_at_x} follows from \eqref{eq:proof_delocalization_last_step} 
since $b_0 = \scalar{\f w}{\f 1_x}$ and  
\[
\| \f w \vert_{B^{G_\tau}_r}\|^{2} \geq \sum_{i=0}^r \scalarbb{\frac{\f g_i}{\|\f g_i\|}}{\f w}^2= \sum_{i=0}^r{b}_{i}^{2}. 
\]
Here, we used that $(\f g_i)_{i \in \qq{r}}$ is a family of orthogonal vectors in $\text{Span} \{\f 1_y: y\in B^{G_\tau}_r \} $ to obtain the previous inequality. 
\end{proof}

\subsection{Proof of Proposition~\ref{pro:approx_trimatrix}}

\begin{proof}[Proof of Proposition~\ref{pro:approx_trimatrix}]
We first remark that $A_\tau$ and $\underline{A}_{\tau,l}$ agree in the vicinity of $x \in \cal{V}_\tau \cap V_l$ in the sense that 
\[ (\underline{A}_{\tau,l})^i \f 1_x = (A_\tau)^i \f 1_x \] 
for all $i \in \qq{r_{x,\tau}-2}$.  
This follows from first verifying the same identity with $A_\tau$ replaced by $\underline{A}_\tau$ and then using that the shift in the definition of $\underline{A}_\tau$ in \eqref{eq:def_underline_A_tau} vanishes on $B_{r_{x,\tau}-2}^{G_\tau}(x)$. 

For the proof of \eqref{thm:approx_trimatrix_itemg}, we now introduce a second family $(\f f_k)_k$ of vectors that will turn out to be a good approximation of $(\f g_k)_k$. 
The vectors $\f f_k$ are defined through 
\[
\f f_{0}  =\f 1_{x}, \qquad \f f_{1}  =\f 1_{S_{1}^{G_\tau}}, \qquad \f f_{2}  =\f 1_{S_{2}^{G_\tau}},\qquad \f f_{k+2}  =Q_0\big(A_{\tau}\f f_{k+1}-d\f f_{k}\big)
\]
for all $k\geq1$. 
Here and in the following, $Q_i$ denotes the orthogonal projection on $\text{Span}\{(A_\tau)^j\f 1_x\col j \in \qq{i}\}^\perp $ 
as in Appendix \ref{sec:tridiagonalization}. 

The careful analysis of $\f f_k$ presented below will imply \eqref{thm:approx_trimatrix_itemg} due to the bound 
\begin{equation} \label{eq:bound_g_k_S_k_leq_q_k} 
 \norm{\f g_k - \f 1_{S_k^{G_\tau}}} \leq \norm{\f q_k},  
\end{equation}
where we introduced 
\[ \f q_k \defeq \f f_k - \f 1_{S_k^{G_\tau}}. \] 

Before estimating $\f q_k$, we now establish \eqref{eq:bound_g_k_S_k_leq_q_k}.  
It is easy to check that there exists a monic polynomial $P_k$ of degree $k$ such that $\f f_k = P_k(A_\tau)\f 1_x$ and then  
\begin{equation} \label{eq:g_k_equals_Q_k_minus_1_of_f_k} 
 \f g_{k}=Q_{k-1}(A_\tau^k\f 1_x) =Q_{k-1}(P_k(A_\tau) \f 1_x) = Q_{k-1} \f f_{k}.
\end{equation}
Hence, $\norm{\f g_{k}} \leq \norm{\f f_{k}}$ and, thus, we have
\[\norm{\f g_k - \f 1_{S_k^{G_\tau}}}^2 = \norm{\f g_{k}}^2 - \norm{\f 1_{S_{k}^{G_\tau}}}^2 \leq \norm{\f f_{k}}^2 - \norm{\f 1_{S_{k}^{G_\tau}}}^2=\norm{\f q_{{k}}}^2. \]
Here, we used in the first step that $\f g_k - \f 1_{S_k^{G_\tau}}$ is orthogonal to $\f 1_{S_k^{G_\tau}}$. This is a consequence of $\supp(\f g_k - \f 1_{S_k^{G_\tau}}) \subset B^{G_\tau}_{k-2}$ 
by Lemma~\ref{lem:g_i_for_tree}. 
In the last step, we used $\f q_k \perp \f 1_{S_k^{G_\tau}}$ which 
follows from $\supp \f q_k \subset B_{k-1}^{G_\tau}$, a consequence of $\scalar{\f f_k}{\f 1_y} = \scalar{\f g_k}{\f 1_y} = 1$ for all $y \in S_k^{G_\tau}$ by \eqref{eq:g_k_equals_Q_k_minus_1_of_f_k}. 
This shows~\eqref{eq:bound_g_k_S_k_leq_q_k}. 

Owing to \eqref{eq:bound_g_k_S_k_leq_q_k}, the bound in \eqref{thm:approx_trimatrix_itemg} follows directly from 
\begin{equation} \label{eq:bound_q_k} 
 \norm{\f q_k} \leq \tilde{c}_k \norm{\f 1_{S_k^{G_\tau}}} 
\end{equation} 
which holds with very high probability for all $k \leq r_{x,\tau}-2$ as we shall show below. 
Here, $\tilde{c}_k$ is the unique solution of 
\[ 
\tilde{c}_{k+2}= 2\tilde{c}_k+3\sqrt{\tau}\tilde{c}_{k+1}+\frac{\Cnu}{\sqrt{d}}\bigg(\bigg(\log d+\frac{\log N}{d}\bigg)^{1/2}\bigg(1+\frac{\log N}{D_{x}}\bigg)^{1/2}\bigg)
\] 
with the initial choices $\tilde{c}_1 = 0$ and $\tilde{c}_2 =0$.

We now prove \eqref{eq:bound_q_k} by induction on $k$. The induction basis for $k=1$ and $k=2$ is trivial.  
For the induction step, we decompose
\begin{align*}
A_{\tau}\f f_{k+1}-d\f f_{k} & =A_{\tau}(\f 1_{S_{k+1}^{G_\tau}}+ \f q_{k+1})-d(\f 1_{S_{k}^{G_\tau}}+ \f q_{k})\\
 & =\f 1_{S_{k+2}^{G_\tau}}+\sum_{y\in S_{k}^{G_\tau}}(N_{k+1}(y)-d)\f 1_{y}+A_{\tau} \f q_{k+1}-d \f q_{k}\\
 & =\f 1_{S_{k+2}^{G_\tau}}+\sum_{y\in S_{k}^{G_\tau}}\bigg(N_{k+1}(y)-\frac{|S_{k+1}|}{|S_{k}|}\bigg)\f 1_{y}+\bigg(\frac{|S_{k+1}|}{|S_{k}|}-d\bigg)\f 1_{S_{k}^{G_\tau}}+A_{\tau} \f q_{k+1}-d \f q_{k}. 
\end{align*}
Here, we used in the second step that $\scalar{\f 1_y}{A_\tau \f 1_{S_{k+1}^{G_\tau}}}
= \abs{S_1^{G_\tau}(y)\cap S_{k+1}^{G_\tau}} = N_{k+1}(y)$ for all $y \in S_k^{G_\tau}$ by Lemma~\ref{lem:subgraph_separating_large_degrees} \ref{item:subgraph_N_s_agree} as $k >1$.

Therefore, following the proof of \eqref{eq:estimate_w_2} yields
\begin{equation}\label{eq:proof_approx_ideal_tridiagonal_aux1} 
\begin{aligned}
\frac{1}{\sqrt{|S_{k+1}|}}\normbb{\sum_{y\in S_{k}^{G_\tau}}\bigg(N_{k+1}(y)-\frac{|S_{k+1}|}{|S_{k}|}\bigg)\f 1_{y}}& \leq \frac{1}{\sqrt{|S_{k+1}|}} \normbb{\sum_{y\in S_{k}}\bigg(N_{k+1}(y)-\frac{|S_{k+1}|}{|S_{k}|}\bigg)\f 1_{y}} \\& = \ord\bigg(\bigg(\log d+\frac{\log N}{d}\bigg)^{1/2}\bigg(1+\frac{\log N}{D_{x}}\bigg)^{1/2}\bigg)
\end{aligned}
\end{equation}
with very high probability. Moreover using Lemma \ref{lem:concentration_S_i} and Lemma~\ref{lem:subgraph_separating_large_degrees} \ref{item:subgraph_inclusion_S_i} we get 
\begin{align*}
 & \frac{1}{\sqrt{|S_{k+1}|}}\normbb{\bigg(\frac{\abs{S_{k+1}}}{\abs{S_{k}}}-d\bigg)\f 1_{S_{k}^{G_\tau}}} \leq \frac{\sqrt{\abs{S_k}}}{\sqrt{|S_{k+1}|}}\absbb{\frac{\abs{S_{k+1}}}{\abs{S_{k}}}-d}\leq \mathcal{O}\bigg(\bigg[\frac{\log N}{dD_{x}}\bigg]^{\frac{1}{2}}\bigg)
\end{align*}
for $k \geq 2$, 
which is smaller than the right-hand side of \eqref{eq:proof_approx_ideal_tridiagonal_aux1}. 
Since $A_{\tau}$ has degree at most $\tau d$ on $B^{G_\tau}_{r} \setminus \{x\}$
then $\|A_{\tau}\f v\|\leq2\sqrt{\tau d}\|\f v\|$ for all $\f v$ with $\supp(\f v) \subset B^{G_\tau}_r$
and $\scalar{\f v}{\f 1_x}=0$ \cite[Chap.~11, Ex.~14]{Lov93}. 
Therefore we have 
\[
\|A_{\tau}\f q_{k+1}\|= \|A_{\tau}Q_0\f q_{{k+1}}\|\leq2\sqrt{\tau d}\|\f q_{{k+1}}\|.
\]
We put everything together and get
\[ \begin{aligned} 
\|\f q_{{k+2}}\|& =\norm{Q_0 (\f f_{k+2} - \f 1_{S_{k+2}^{G_\tau}})}\\ 
 & \leq d\|\f q_{{k}}\|+2\sqrt{\tau d}\|\f q_{{k+1}}\|+\sqrt{|S_{k+1}|}\mathcal{O}\bigg(\bigg[\bigg(\log d+\frac{\log N}{d}\bigg)\bigg(1+\frac{\log N}{D_{x}}\bigg)\bigg]^{\frac{1}{2}}\bigg)
\end{aligned} \]
with very high probability. 
We set $c_k = \frac{\|\f q_{{k}}\|}{\sqrt{|S_k|}}$. Thus, the previous estimate implies 
\[
c_{k+2}\leq \frac{d\sqrt{|S_{k}|}}{{\sqrt{|S_{k+2}|}}}c_k+\frac{2\sqrt{\tau d|S_{k+1}|}}{\sqrt{|S_{k+2}|}}c_{k+1}+\frac{\sqrt{|S_{k+1}|}}{\sqrt{|S_{k+2}|}}\mathcal{O}\bigg(\bigg[\bigg(\log d+\frac{\log N}{d}\bigg)\bigg(1+\frac{\log N}{D_{x}}\bigg)\bigg]^{\frac{1}{2}}\bigg).
\]
We use Lemma \ref{lem:concentration_S_i} and obtain 
\[
c_{k+2}\leq (1+o(1))c_k+2\sqrt{\tau}(1+o(1))c_{k+1}+\frac{1}{\sqrt{d}}\mathcal{O}\bigg(\bigg[\bigg(\log d+\frac{\log N}{d}\bigg)\bigg(1+\frac{\log N}{D_{x}}\bigg)\bigg]^{\frac{1}{2}}\bigg). 
\]
This completes the induction step. Thus, we have proved \eqref{eq:bound_q_k} and, hence, \eqref{thm:approx_trimatrix_itemg} as well.

We now verify \eqref{thm:approx_trimatrix_itemM}. We have \[
\|\f g_i\|^2=|S_i|\bigg(1+\bigg(\frac{\|\f g_i\|^2}{|S_i^{G_\tau}|}-1\bigg)\bigg)\bigg(1+\bigg(\frac{|S_i^{G_\tau}|}{|S_i|}-1\bigg)\bigg)=|S_i|\bigg(1+\mathcal{O}\bigg(\cal E_{\tau,r}^{2} + \frac{\log N}{\tau d^2 } \bigg( 1 + \frac{\log N}{(\tau-1)^2 d} \bigg) \bigg)\bigg). 
\]
Here, we used \eqref{thm:approx_trimatrix_itemg} and the orthogonality of $\f g_i$ and $\f g_i - \f 1_{S_i^{G_\tau}}$ to see that the second factor equals $1 + \ord(\cal E_{\tau,r}^2)$. 
To estimate the third factor, we used 
\eqref{eq:bound_S_i_setminus_S_i_tau}, Lemma~\ref{lem:subgraph_separating_large_degrees} \ref{item:subgraph_inclusion_S_i}, 
\eqref{eq:concentration_S_i} 
and $D_x \geq \tau d$. 
Using the assumption on $\cal E_{\tau,r}$, the lower bound $\tau \geq 1 + d^{-\theta/4}$ 
and the lower bound on $d$, we see that error term in this expansion of $\norm{\f g_i}^2$ 
is smaller than 1. 
Therefore, denoting the entries of $\wh{M}$ by $\wh{M}_{ij}$ and 
using Lemma~\ref{lem:tridiag_norm_expression} yield  
\[ \wh M_{i\text{ }i+1} = \frac{\norm{\f g_{i+1}}}{\norm{\f g_{i}}} = \sqrt{\frac{|S_{i+1}|}{|S_i|}}\Big(1+\mathcal{O}\bigg(\cal E_{\tau,r}^{2} + \frac{\log N}{\tau d^2} \bigg( 1 + \frac{\log N}{(\tau-1)^2 d} \bigg) \bigg)\bigg). 
 \]
Therefore, \eqref{eq:proof_w_3_aux_estimate} 
and $\wh{M}_{ii} =0$ by Lemma~\ref{lem:tridiagonal_bipartite_graph} \ref{item:tridiagonal_bipartite_vanishing_diagonal} 
complete the proof of \eqref{thm:approx_trimatrix_itemM} and, thus, the one of Proposition~\ref{pro:approx_trimatrix}. 
\end{proof}

\section{Proofs of the results in Section \ref{sec:results}} \label{sec:proofs_finish}

In this short section we state how to conclude the results of Section \ref{sec:results}. For $d \leq \exp\pb{\sqrt{\log N} / 4}$, Theorem \ref{thm:correspondence_eigenvalue_large_degree} follows from Propositions \ref{pro:lower_bound_number_outliers} and \ref{pro:upper_bound_large_eigenvalues}, noting that $L = L_\leq \leq L_\geq$. For $\exp\pb{\sqrt{\log N} / 4} \leq d \leq N/2$, Theorem \ref{thm:correspondence_eigenvalue_large_degree} follows immediately from \cite[(2.4) and Theorem 2.6]{BBK1}. Corollary \ref{cor:non_centred} follows from Theorem \ref{thm:correspondence_eigenvalue_large_degree} by eigenvalue interlacing, $\lambda_l(\ul A) \geq \lambda_{l + 1}(A) \geq \lambda_{l+1}(\ul A)$ for $1 \leq l \leq N - 1$, as well as the mean value theorem. Finally, the proof of Theorem \ref{thm:eigenvalues_general_sparse_matrices} is very similar to that of Theorem \ref{thm:correspondence_eigenvalue_large_degree}, and we explain the needed minor modifications in the next section.

\section{Modifications for sparse Wigner matrices} \label{sec:general_sparse_random_matrices} 

In this section, we explain how the arguments in the previous sections can be adapted to yield the proof of Theorem~\ref{thm:eigenvalues_general_sparse_matrices}. 
We consider a sparse Wigner matrix $X$ with entries $X_{xy} = W_{xy} A_{xy}$. Here, $A$ is the adjacency matrix of an Erd{\H o}s-R\'enyi graph $G$ on $[N]$
with edge probability $d/N$ and $W=(W_{xy})_{x,y\in [N]}$ is an independent Wigner matrix with bounded entries. That is, $W$ is Hermitian and the random variables $(W_{xy} \col x \leq y)$ are independent and 
\begin{equation}\label{eq:assumptions_on_W} 
 \E W_{xy} = 0, \qquad \E \abs{W_{xy}}^2 = 1, \qquad \abs{W_{xy}} \leq K 
\end{equation}
for all $x,y \in [N]$ and some constant $K>0$.

The assumptions imply that $X$ is symmetric and we consider $X$ as the adjacency matrix of an undirected weighted graph with edge weights $W_{xy}$. 
According to this philosophy, we define 
\[ S_i(x) \defeq \{ y \in [N] \col \min\{ j \geq 0 \col (X^j)_{xy} \neq 0 \} = i \}, \qquad B_i(x) \defeq \bigcup_{j \in \qq{i}} S_j(x)  \] 
for all $x \in [N]$.

In the remainder of this section, we explain the necessary adjustments in order to conclude Theorem~\ref{thm:eigenvalues_general_sparse_matrices} along the proof of Theorem~\ref{thm:correspondence_eigenvalue_large_degree} 
with the definition of $\alpha_x$ from \eqref{def_alpha_general} and $D_x = \alpha_x d$.
Throughout this section, the constant $\Cnu$ as well as the implicit constant in $\ord$ are also allowed to depend on $K$, the uniform bound on $W_{xy}$ in \eqref{eq:assumptions_on_W}. 
With this convention, the arguments in Section~\ref{sec:proof_upper_bound_adjacency} to Section~\ref{sec:upper_bound_eigenvalues} do not require any changes. They only have to be understood with respect to the new 
definition of $\Cnu$ and $\ord$. 
The necessary modifications of Section~\ref{sec:large_degree_induces_eigenvalues} are explained in the following subsection. 
Once they are taken into account Theorem~\ref{thm:eigenvalues_general_sparse_matrices} follows from Propositions~\ref{pro:lower_bound_number_outliers} and \ref{pro:upper_bound_large_eigenvalues}. 

\subsection{Modifications in Section \ref{sec:large_degree_induces_eigenvalues}} 

In this subsection, we fix $x \in [N]$ and explain the modifications required in Section~\ref{sec:large_degree_induces_eigenvalues} to obtain the corresponding results in the setup described above. 
\paragraph{Definition of the approximate eigenvector, decomposition of the error terms} 
We now introduce the analogue of the approximate eigenvector $\f v$ from \eqref{eq:def_v_x} in the present setup. 
We define $\f g_0 \defeq \f 1_x$. For $i \geq 1$, we define 
\[ \f g_i \defeq (X \f g_{i-1})\vert_{S_i(x)}. \] 
Note that $\f g_0, \ldots, \f g_i$ are orthogonal. As a heuristic for the following argument, suppose that for some $r \geq 1$ the graph $G$ restricted to $B_r(x)$ is a tree and that for all $i \in [r]$ we have $\sum_{z \in S_{i+1}(x)} \scalar{\f 1_z}{X \f 1_y}^2 = d$ for all $y \in S_i(x)$; then the upper-left $(r+1) \times (r+1)$ block of the tridiagonal matrix associated with $X$ around $x$ (see Appendix \ref{sec:tridiagonal_tree}) is equal to the right-hand side of \eqref{eq:block_structure_M} with $D_x = d \alpha_x$ and $\alpha_x$ given by \eqref{def_alpha_general}. This motivates the construction of $\f v$ in the following paragraph.

With the choices of $u_i$ from \eqref{eq:def_v_x_coeff}, we set 
\[ \f v = \sum_{i=0}^r u_i \hat{\f g}_i,\qquad \qquad \hat{\f g}_i \defeq \frac{\f g_i}{\norm{\f g_i}}.  \] 
Similarly to the proof of Lemma~\ref{lem:decomposition_app_eigenvector}, we obtain
\[ (X - \sqrt{d} \Lambda(\alpha_x)) \f v = \f w_1 + \ldots  + \f w_4, \] 
where the error terms $\f w_1, \ldots, \f w_4$ are defined through 
\[\begin{aligned} 
 \f w_1 & \defeq \sum_{i=0}^r \frac{u_i}{\norm{\f g_i}} \bigg( \sum_{y \in S_{i+1}} \Big(N_i(y) - \scalar{\f 1_y}{\f g_{i+1}} \Big) \f 1_y + \sum_{y \in S_i} N_i(y) \f 1_y \bigg),\\ 
 \f w_2 & \defeq \sum_{i=1}^r \frac{u_i}{\norm{\f g_i}} \bigg(\sum_{y \in S_{i-1}}  N_i(y)\f 1_y - \frac{\norm{\f g_i}^2}{\norm{\f g_{i-1}}^2} {\f g}_{i-1} \bigg), \\ 
\f w_3 & \defeq  
 u_2 \left( \frac{\norm{\f g_2}}{\norm{\f g_1}} - \sqrt{d} \right) \hat{\f g}_1 + \sum_{i=2}^{{r} - 1} \left[ u_{i+1} \left(\frac{\norm{\f g_{i+1}}}{\norm{\f g_i}} -\sqrt{d} \right) 
 + u_{i-1} \left( \frac{\norm{\f g_i}}{\norm{\f g_{i-1}}} - \sqrt{d} \right)  \right] \hat{\f g}_i , \\ 
 \f w_4 & \defeq \left( u_{{r}-1} \frac{\norm{\f g_{r}}}{\norm{\f g_{r-1}}}  - u_{{r}-1} \sqrt{d}  - u_{{r}+1} \sqrt{d} \right) \hat{\f g}_{r}
 + u_{r} \frac{\norm{\f g_{{r}+1}}}{\norm{\f g_{r}}} \, \hat{\f g}_{{r}+1}.
\end{aligned} \] 
Here, $N_i(y) \defeq \scalar{\f 1_y}{X \f g_i}$ for all $y\in [N]$. 
We remark that the analogue of $\f w_0$ vanishes as the entries $W_{yz}$ are centred for all $y,z \in [N]$. 

\paragraph{Concentration of $\norm{\f g_{i+1}}/\norm{\f g_i}$}  
In order to establish that the ratio $\frac{\norm{\f g_{i+1}}}{\norm{\f g_i}}$ concentrates around $\sqrt{d}$ if $i \geq 1$ 
we follow the proof of Lemma~\ref{lem:concentration_S_i}. 
It suffices to verify \eqref{eq:concentration_S_i_plus_1_general_eps} in the new setup. 
 We first prove by induction that if $K$ is the uniform upper bound on the entries of $W$ then 
\begin{equation} \label{eq:sup_norm_bound_g_i}
 \norm{\f g_i}_\infty \leq (\Cnu K)^i 
\end{equation}
for $i \geq 0$ with very high probability. 
The case $i = 0$ holds trivially. 
Since $\supp \f g_{i-1} \subset S_{i-1}$ by definition the definition of $\f g_i$ implies for $y \in S_i$ that 
\[ \absbb{\sum_{z \in [N]} X_{yz} \scalar{\f 1_z}{\f g_{i-1}}} = \absbb{\sum_{z \in S_{i-1}} W_{yz} A_{yz} \scalar{\f 1_z}{\f g_{i-1}}} 
\leq \norm{\f g_{i-1}}_\infty K  N_{i-1}(y) \leq \norm{\f g_{i-1}}_\infty K \cal C,   \] 
where we used Corollary~\ref{cor:tree_approximation} in the last step. 
As $\supp \f g_i \subset S_i$ by definition, this proves \eqref{eq:sup_norm_bound_g_i}. 

Using Bennett's inequality, it is easy to see that 
\begin{equation} \label{eq:sparse_wigner_concentration_S_i_general_p} 
 \absbb{\frac{\norm{\f g_{i+1}}^2}{d \norm{\f g_i}^2} - 1} \leq \Cnu(1 + K^2) \sqrt{\frac{ \log N}{d}} \frac{\norm{\f g_i}_{4}^2}{\norm{\f g_i}^2} 
\end{equation}
with very high probability. 
Using $\norm{\f g_i}_4^2 \leq (\Cnu K)^i \norm{\f g_i}_2$ by \eqref{eq:sup_norm_bound_g_i} 
in \eqref{eq:sparse_wigner_concentration_S_i_general_p} 
yields the desired analogue of \eqref{eq:concentration_S_i_plus_1_general_eps} in the setup of sparse Wigner matrices.   
This proves the analogue of Lemma~\ref{lem:concentration_S_i}.

\paragraph{Estimate on $\f w_1$} 

We remark that the $i=0$ contribution in the definition of $\f w_1$ vanishes as 
$N_0(y) = \scalar{\f 1_y}{X \f g_0} = \scalar{\f 1_y}{\f g_1}$ for any $y$ as $A_{xx} = 0$. Moreover,  $A_{xx} =0$ also 
implies that $N_0(y) = 0$ for $y \in S_0$. 
Hence, 
\[ \norm{\f w_1}^2 = \normbb{ \sum_{i=1}^r \frac{u_i}{\norm{\f g_i}} \sum_{y \in S_i} N_i(y) \f 1_y}^2 = \sum_{i=1}^r \frac{u_i^2}{\norm{\f g_i}^2} \sum_{y \in S_i} N_i(y)^2. \] 
Here, we also used that 
$ N_i(y) - \scalar{\f 1_y}{\f g_{i+1}} = \scalar{\f 1_y}{(X \f g_i)\vert_{[N] \setminus S_{i+1}(x)}} = 0$
for any $y \in S_{i+1}$ due to the fact that $(X\f g_i)\vert_{[N] \setminus S_{i+1}(x)}$ vanishes on $S_{i+1}(x)$. 

Thus, in order to estimate $\norm{\f w_1}^2$, we use the following version of \eqref{eq:Nloops} in Corollary \ref{cor:corollary_tree_approximation}. 
Namely, for all $i \geq 1$, the bounds
\[ \sum_{y \in S_i(x)} N_i(y)^2 = \sum_{y \in S_i(x)} \scalar{\f 1_y}{X \f g_i}^2 = \sum_{y \in S_i(x)} \bigg( \sum_{y_1 \in S_i(x)} \scalar{X \f 1_y}{\f 1_{y_1}} \scalar{\f 1_{y_1}}{\f g_i} \bigg)^2 \leq \Cnu \norm{\f g_i}_\infty^2 \] 
hold with very high probability.
In the last step, we used Lemma~\ref{lem:tree_approximation} to conclude that there are at most $\ord(1)$ many nonzero terms. 
Therefore, \eqref{eq:sup_norm_bound_g_i} yields \eqref{eq:estimate_w_1} in the current setup due to the growth of $\norm{\f g_i}$ by the analogue of \eqref{eq:concentration_S_i}.

\paragraph{Estimate on $\f w_2$} 
Here, we follow the proof of \eqref{eq:estimate_w_2}. By the Pythagorean theorem, we have 
\[ \norm{\f w_2}^2 = \sum_{i=1}^{r - 1} \frac{u_{i+1}^2}{\norm{\f g_{i+1}}^2} \sum_{y \in S_{i}} \bigg( N_{i + 1}(y) - \frac{\norm{\f g_{i + 1}}^2}{\norm{\f g_{i}}^2} \scalar{\f 1_y}{\f g_{i}} \bigg)^2, \] 
where we used that $S_0 = \{ x\}$, $N_1(x) = \scalar{X \f 1_x}{\f g_1} = \norm{\f g_1}^2$ as $\f g_1 = X \f 1_x$ due to $A_{xx}=0$.  
As $\sum_{i=1}^{r-1} u_{i+1}^2 \leq 1$, we obtain 
 \[
 \norm{\f w_2}^2 \leq \max_{i \in [r-1]} \frac{1}{\norm{\f g_{i+1}}^2} \sum_{y \in S_{i}}\bigg( N_{i+1}(y) - \frac{\norm{\f g_{i+1}}^2}{\norm{\f g_{i}}^2} \scalar{\f 1_y}{\f g_{i}} \bigg)^2 \leq \,  \frac{4}{d} \max_{i \in [r-1]}
\big( Z_i + \tilde{Y}_i\big) 
\] 
where we introduced
\[ \begin{aligned} 
Z_i & \defeq \frac{1}{\norm{\f g_i}^2}\sum_{y \in S_{i}} \bigg( N_{i+1}(y) - \E[ N_{i+1}(y) \vert X_{(B_{i-1})} ] \bigg)^2, \\ 
 \tilde{Y}_i & \defeq \frac{1}{\norm{\f g_i}^2}\sum_{y \in S_i} \bigg( \E[ N_{i+1}(y) \vert X_{(B_{i-1})}] - \frac{\norm{\f g_{i+1}}^2}{\norm{\f g_i}^2} \scalar{\f 1_y}{\f g_i} \bigg)^2. 
\end{aligned} \]  
We first estimate $\tilde{Y}_i$. As $\E[ N_{i+1}(y) \vert X_{(B_{i-1})}] = \scalar{\f 1_y}{\f g_i} d (1 - \frac{\abs{B_i}}{N})$, we conclude   
\[ \tilde{Y}_i = \frac{1}{\norm{\f g_i}^2}\bigg ( d - \frac{\norm{\f g_{i+1}}^2}{\norm{\f g_i}^2} - \frac{d \abs{B_i}}{N} \bigg)^2   \sum_{y \in S_i} \scalar{\f 1_y}{\f g_i}^2 
  \leq 2  \bigg( \bigg( d - \frac{\norm{\f g_{i+1}}^2}{\norm{\f g_i}^2} \bigg)^2 + 1 \bigg) 
\leq \Cnu  \bigg( \frac{d \log N}{D_x} + 1 \bigg) 
\] 
with very high probability for all $i \in [r-1]$. 

In order to estimate $Z_i$, we follow the proof of \eqref{eq:Z_claim} and explain the necessary changes. 
We redefine 
\[ E_y \defeq \frac{1}{\scalar{\f 1_y}{\f g_i}} \Big( N_{i+1}(y) - \E [ N_{i+1}(y) \vert X_{(B_{i-1})} ]\Big). \] 
and use Bennett's inequality to obtain 
\[ \P \Big( E_y^2 > (s \kappa d)^2 \Bigm\vert X_{(B_{i-1})}  \Big) \leq \exp \big( - c \kappa d ( s \wedge s^2) \big), \] 
where $\kappa = \E W_{yz}^4$ for some $y, z \in [N]$. 
Hence, using this bound in the proof of \eqref{eq:L_est} yields  
\[ \P \big( L_s^i \geq \ell  \bigm\vert X_{(B_{i-1})} \big) \leq \begin{pmatrix} \norm{\f g_i}^2\\ \ell \end{pmatrix} \exp \big ( - c \kappa d \ell ( s \wedge s^2) \big).\] 
Applying this estimate in the remainder of the proof of \eqref{eq:Z_claim}, we deduce 
\[ \abs{Z_i} \leq \Cnu (1 + K)^{2i} d \bigg( 1 + \frac{\log N}{\norm{\f g_i}^2} \bigg) \bigg( \log d + \frac{\log N}{d} \bigg). \]
Here, we employed that $\norm{\f g_i}_\infty \leq \Cnu (1 + K)^i$. 
Therefore, using the growth of $\norm{\f g_i}$, we obtain the same bound on $\norm{\f w_2}$ as in \eqref{eq:estimate_w_2}. 

When following the proof of \eqref{eq:estimate_w_2} in the proof of Proposition~\ref{pro:approx_trimatrix}, the same adjustments yield the bound used there.

\appendix 

\section{Tridiagonalization} \label{sec:tridiagonalization}

Let $X \in \R^{N \times N}$ be a symmetric matrix and $x \in [N]$. Let $m(x) \deq \dim \op{Span}\{X^n \f 1_x \col n \in \N_0\}$. For $i \in \qq{m - 2}$ define by induction
\begin{equation*}
\f g_0 \deq \f 1_x, \qquad \f g_{i+1} \deq Q_i X \f g_i,
\end{equation*}
where $Q_i$ is the orthogonal projection onto the orthogonal complement of $\op{Span}\{ X^j \f 1_x \col j \in \qq{i}\}$. We call $(\f g_i)_{i \in \qq{m-1}}$ the orthogonal basis associated with $X$ and $x$. Note that this basis is in general not normalized. For convenience, if $m < N - 1$, i.e.\ $\f 1_x$ is not a cyclic vector of $X$, we complete the basis $(\f g_i)_{i \in \qq{m-1}}$ to an orthogonal basis $(\f g_i)_{i \in \qq{N-1}}$ of $\R^N$ in an arbitrary fashion.

We define $M \equiv M^{(X,x)}$ as the matrix $X$ in the orthonormal basis $(\f g_i / \norm{\f g_i})_{i \in \qq{N-1}}$; that is,
\begin{equation*}
M_{ij} \deq \scalarbb{\frac{\f g_i}{\norm{\f g_i}}}{X \frac{\f g_j}{\norm{\f g_j}}}
\end{equation*}
for $i,j \in \qq{N-1}$.

\begin{remark}
It is easy to check that the matrix $M_{\qq{m-1}}$ is tridiagonal, i.e.\ $M_{ij} = 0$ if $\abs{i - j} > 1$ and $i,j \in \qq{m-1}$. Hence, we call $M$ the \emph{tridiagonal matrix} associated with $X$ around $x$.
\end{remark}

\begin{lemma} \label{lem:tridiag_norm_expression}
If $M$ is the tridiagonal matrix of $X$ then
\begin{equation*}
M_{i \, i+1} = \frac{\norm{\f g_{i+1}}}{\norm{\f g_{i}}}
\end{equation*}
for $i \in \qq{m - 2}$.
\end{lemma}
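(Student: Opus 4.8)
The plan is to read off the entry $M_{i\,i+1}$ directly from its definition together with the recursion $\f g_{i+1} = Q_i X \f g_i$, which is in force precisely for $i \in \qq{m-2}$. First I would record that for such $i$ both $\f g_i$ and $\f g_{i+1}$ are nonzero, so that $M_{i\,i+1}$ and the quotient $\norm{\f g_{i+1}}/\norm{\f g_i}$ are well defined. This follows from the standard fact that $\op{Span}\{\f g_0, \dots, \f g_j\} = \op{Span}\{X^k \f 1_x \col 0 \leq k \leq j\}$ for all $j \in \qq{m-1}$; since the right-hand side has dimension $j+1$ for $j \leq m-1$ by definition of $m$, the vectors $\f g_0, \dots, \f g_{m-1}$ are linearly independent, hence in particular none of them vanishes.

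Next, fixing $i \in \qq{m-2}$, I would write, straight from the definition of $M$,
\[
M_{i\,i+1} = \scalarbb{\frac{\f g_i}{\norm{\f g_i}}}{X \frac{\f g_{i+1}}{\norm{\f g_{i+1}}}} = \frac{\scalar{\f g_i}{X \f g_{i+1}}}{\norm{\f g_i}\,\norm{\f g_{i+1}}},
\]
and then simplify the numerator. Using that $X$ is symmetric and $\f g_{i+1} = Q_i X \f g_i$, it equals $\scalar{X \f g_i}{Q_i X \f g_i}$; and since $Q_i$ is an orthogonal projection, so that $Q_i = Q_i^* = Q_i^2$, this is $\scalar{Q_i X \f g_i}{Q_i X \f g_i} = \norm{Q_i X \f g_i}^2 = \norm{\f g_{i+1}}^2$. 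Plugging back yields $M_{i\,i+1} = \norm{\f g_{i+1}}^2/(\norm{\f g_i}\,\norm{\f g_{i+1}}) = \norm{\f g_{i+1}}/\norm{\f g_i}$, which is the claim.

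There is no genuine obstacle here: the whole argument is the two-line computation above. The only points that require a little care are the range restriction $i \in \qq{m-2}$ — which is exactly what guarantees that $\f g_{i+1}$ is the vector produced by the recursion rather than a member of the arbitrary completion of the basis — and the simultaneous use of both the self-adjointness and the idempotence of the projection $Q_i$ when collapsing $\scalar{X \f g_i}{Q_i X \f g_i}$ to $\norm{\f g_{i+1}}^2$.
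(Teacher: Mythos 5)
Your argument is correct and is essentially the same two-line computation as the paper's proof: rewrite $M_{i\,i+1}$ using the definition and symmetry of $X$, substitute $\f g_{i+1} = Q_i X \f g_i$, and collapse the inner product using $Q_i = Q_i^* = Q_i^2$. The extra remarks on well-definedness and the range of $i$ are fine but not strictly needed; the paper simply carries them implicitly.
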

\begin{proof}
We have
\[ M_{i \, i+1} = \frac{\scalar{X \f g_i}{\f g_{i+1}}}{\norm{\f g_i} \norm{\f g_{i+1}}}= \frac{\scalar{X \f g_i}{Q_i X \f g_i}}{\norm{\f g_i}\norm{\f g_{i+1}}} = \frac{\scalar{Q_i X \f g_i}{Q_i X \f g_i}}{\norm{\f g_i}\norm{ \f g_{i+1}}}
= \frac{\norm{\f g_{i+1}}}{\norm{\f g_i}} . \qedhere \] 
\end{proof}

\begin{lemma} \label{lem:tridiagonal_bipartite_graph} 
Let $X = A$ be the adjacency matrix of a bipartite graph (e.g.\ a tree) with vertex set 
$V_0 \cup V_1$ such that $A_{V_0} = 0$ and $A_{V_1} = 0$. Then the following holds. 
\begin{enumerate}[label=(\roman*)]
\item \label{item:support_g_i_bipartite} If $x \in V_0$ then $\f g_i |_{V_{(i+1)\, \mathrm{mod}\, 2}}$ vanishes for all $i$. 
\item \label{item:tridiagonal_bipartite_vanishing_diagonal} The diagonal of the associated tridiagonal matrix $M$ vanishes. 
\end{enumerate} 
\end{lemma}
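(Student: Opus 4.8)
The plan is to establish \ref{item:support_g_i_bipartite} by induction on $i$ and then read off \ref{item:tridiagonal_bipartite_vanishing_diagonal} as an immediate corollary. First I would reformulate the hypothesis: writing $\R^N = \R^{V_0} \oplus \R^{V_1}$ for the orthogonal splitting determined by the bipartition, where $\R^{V_\epsilon} \deq \op{Span}\{\f 1_y \col y \in V_\epsilon\}$, the conditions $A_{V_0} = 0$ and $A_{V_1} = 0$ say exactly that $A$ has no edge with both endpoints in $V_0$ and none with both endpoints in $V_1$. Equivalently, $X = A$ maps $\R^{V_0}$ into $\R^{V_1}$ and $\R^{V_1}$ into $\R^{V_0}$; in particular $X$ sends any vector supported on $V_\epsilon$ to a vector supported on $V_{1-\epsilon}$.

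For \ref{item:support_g_i_bipartite}, I would prove by induction on $i \in \qq{m-1}$ that $\f g_i \in \R^{V_{i \bmod 2}}$, which, since $x \in V_0$, is precisely the assertion that $\f g_i|_{V_{(i+1) \bmod 2}} = 0$. The base case $i = 0$ is immediate since $\f g_0 = \f 1_x$ with $x \in V_0$. For the induction step, assume $\f g_j \in \R^{V_{j \bmod 2}}$ for all $j \le i$. Since the $\f g_j$ arise from $\f 1_x, X\f 1_x, \dots, X^i \f 1_x$ by Gram--Schmidt, the Krylov subspace $W_i \deq \op{Span}\{X^j \f 1_x \col j \in \qq{i}\}$ equals $\op{Span}\{\f g_j \col j \in \qq{i}\}$, and hence decomposes orthogonally as $W_i = (W_i \cap \R^{V_0}) \oplus (W_i \cap \R^{V_1})$. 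Consequently the orthogonal projection onto $W_i$, and therefore also $Q_i$, commutes with the orthogonal projections onto $\R^{V_0}$ and $\R^{V_1}$; in particular $Q_i$ maps $\R^{V_\epsilon}$ into itself. By the reformulation above, $X \f g_i \in \R^{V_{(i+1) \bmod 2}}$, so $\f g_{i+1} = Q_i X \f g_i \in \R^{V_{(i+1) \bmod 2}}$, which closes the induction.

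For \ref{item:tridiagonal_bipartite_vanishing_diagonal}, unwinding the definition of $M$ gives $M_{ii} = \norm{\f g_i}^{-2} \scalar{\f g_i}{X \f g_i}$ for each $i \in \qq{m-1}$ (with $\f g_i \neq 0$ in this range by construction). By part \ref{item:support_g_i_bipartite}, $\f g_i$ is supported on $V_{i \bmod 2}$ while $X \f g_i$ is supported on $V_{(i+1) \bmod 2}$, and these two coordinate subspaces are orthogonal; hence $\scalar{\f g_i}{X \f g_i} = 0$ and $M_{ii} = 0$. Since the only portion of $M$ that is tridiagonal is the block indexed by $\qq{m-1}$, this gives the claim.

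The argument is essentially bookkeeping, and the only point that deserves care is the claim that $Q_i$ respects the bipartite splitting of $\R^N$; this is why it pays to phrase the inductive hypothesis in terms of the already-constructed orthogonal vectors $\f g_0, \dots, \f g_i$ --- each homogeneous with respect to the bipartition --- rather than working directly with the vectors $X^j \f 1_x$.
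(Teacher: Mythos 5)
Your proof is correct and takes essentially the same approach as the paper: the paper disposes of part (i) with the single sentence that it ``follows directly from the bipartite structure of the graph'' and then derives (ii) from the formula $M_{ii} = \scalar{A \f g_i}{\f g_i}/\norm{\f g_i}^2$ exactly as you do. What you add is the justification the paper leaves implicit, namely that $Q_i$ respects the bipartite splitting; your argument via the orthogonal decomposition $W_i = (W_i \cap \R^{V_0}) \oplus (W_i \cap \R^{V_1})$, available once the inductive hypothesis makes each $\f g_j$ homogeneous, is exactly the right way to see this.
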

\begin{proof}
Part (i) follows directly from the bipartite structure of the graph. 
Part (ii) is immediate from $M_{ii} = \scalar{A \f g_i}{\f g_i}/\norm{\f g_i}^2$ and the first part. 
\end{proof}

\begin{lemma} \label{lem:g_i_for_tree} 
If $X= A$ is the adjacency matrix of a tree then 
\[ \f g_i = \f 1_{S_i} + \f d_i \] 
with some $\f d_i \in \R^N$ satisfying $\f d_0 = \f d_1 = 0$ and $\supp \f d_i \subset B_{i-2}$ for all $2 \leq i \leq m$.   
\end{lemma}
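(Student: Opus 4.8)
If $X=A$ is the adjacency matrix of a tree, then $\f g_i = \f 1_{S_i} + \f d_i$ with $\f d_0 = \f d_1 = 0$ and $\supp \f d_i \subset B_{i-2}$ for all $2 \leq i \leq m$.

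The plan is to argue by induction on $i$, directly unwinding the recursion $\f g_{i+1} = Q_i X \f g_i$ that defines the orthogonal basis in Appendix~\ref{sec:tridiagonalization}. First I would record the base cases: $\f g_0 = \f 1_x = \f 1_{S_0}$, so $\f d_0 = 0$; and $\f g_1 = Q_0 X \f 1_x = Q_0 \f 1_{S_1} = \f 1_{S_1}$, where the last equality holds because $\f 1_{S_1}$ is already orthogonal to $\f 1_x$ (the tree has no loops, so $x \notin S_1$). Thus $\f d_1 = 0$. It also helps to compute $\f g_2$ explicitly to see the pattern: $X \f 1_{S_1} = \f 1_{S_2} + \sum_{y \in S_1}(D_y - 1)\f 1_y$ using that in a tree each $y \in S_1$ has exactly one neighbour in $S_0$ (namely $x$) and $D_y - 1$ neighbours in $S_2$ (cf.~\eqref{eq:Ny_tree}); also $x$ receives $N_1(x) = |S_1|$ contributions back. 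Projecting off $\op{Span}\{\f 1_x, \f 1_{S_1}\}$ removes the $S_1$ and $S_0$ parts, leaving $\f g_2 = \f 1_{S_2} + \f d_2$ with $\supp \f d_2 \subset \{x\} = B_0$ — consistent with the claim, since the projection $Q_1$ may reinstate a multiple of $\f 1_x$.

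For the induction step, assume $\f g_j = \f 1_{S_j} + \f d_j$ with $\supp \f d_j \subset B_{j-2}$ for all $j \leq i$. The key computation is $X \f g_i = X \f 1_{S_i} + X \f d_i$. By the triangle inequality for the tree distance, $X \f 1_{S_i}$ is supported on $S_{i-1} \cup S_i \cup S_{i+1}$; by \eqref{eq:Ny_tree}, since $G$ is a tree, $X \f 1_{S_i} = \f 1_{S_{i+1}} + \sum_{y \in S_{i-1}} N_i(y) \f 1_y$ with $N_i(y) = D_y - \ind{i \geq 2}$ (no $S_i$ component), so $X \f 1_{S_i} = \f 1_{S_{i+1}} + (\text{vector supported on } S_{i-1})$. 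Meanwhile $\supp X\f d_i \subset B_{i-1}$ since $X$ increases distance by at most one and $\supp \f d_i \subset B_{i-2}$. Hence $X \f g_i = \f 1_{S_{i+1}} + \f r_i$ with $\supp \f r_i \subset B_{i-1}$. Applying $Q_i$, the orthogonal projection onto $\op{Span}\{\f 1_x, X\f 1_x, \dots, X^i \f 1_x\}^\perp = \op{Span}\{\f g_0, \dots, \f g_i\}^\perp$, I note that $\op{Span}\{\f g_0,\dots,\f g_i\}$ — equivalently $\op{Span}\{\f 1_x,\dots,X^i\f 1_x\}$ — contains the subspace of vectors supported in $B_{i-1}$ together with $\f 1_{S_i}$-type pieces; more precisely, by the induction hypothesis $\op{Span}\{\f g_0,\dots,\f g_i\}$ equals $\op{Span}\{\f 1_{S_0},\dots,\f 1_{S_i}\}$, which contains all of $\ell^2(B_i)$ only if each sphere is a single vertex — which is \emph{not} true in general, so I must be careful: $Q_i$ does not annihilate $\f r_i$ entirely. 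Instead, $Q_i \f r_i$ is the component of $\f r_i$ orthogonal to $\op{Span}\{\f 1_{S_0},\dots,\f 1_{S_i}\}$, which is still supported in $B_{i-1} \subset B_{(i+1)-2}$. Likewise $Q_i \f 1_{S_{i+1}} = \f 1_{S_{i+1}} - (\text{projection onto }\op{Span}\{\f 1_{S_0},\dots,\f 1_{S_i}\})$; since $\f 1_{S_{i+1}} \perp \f 1_{S_j}$ for all $j \leq i$ (disjoint supports), this projection vanishes. Therefore $\f g_{i+1} = \f 1_{S_{i+1}} + Q_i \f r_i$, and $\f d_{i+1} \defeq Q_i \f r_i$ has support in $B_{i-1} = B_{(i+1)-2}$, completing the induction.

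The main obstacle I anticipate is keeping the bookkeeping on the projection $Q_i$ honest: one must use that $\op{Span}\{X^j \f 1_x : j \in \qq{i}\} = \op{Span}\{\f 1_{S_j} : j \in \qq{i}\}$ in the tree case — this is itself a small lemma proved along the same induction (the $\f g_j$'s span the same space as the $X^j \f 1_x$'s by construction, and $\f g_j = \f 1_{S_j} + \f d_j$ with $\f d_j \in \op{Span}\{\f 1_{S_0},\dots,\f 1_{S_{j-2}}\}$ by the inductive hypothesis, so triangularity gives the span identity). Given that, the two facts that do the work are (i) $X$ maps $\ell^2(S_i)$ into $\ell^2(S_{i-1} \cup S_i \cup S_{i+1})$ with \emph{no} $S_i$-component in a tree (from \eqref{eq:Ny_tree}), and (ii) $Q_i$ preserves support inside $B_{i-1}$ because it only subtracts linear combinations of $\f 1_{S_0},\dots,\f 1_{S_i}$, each of which is supported in $B_i$ — and in fact the subtracted part lies in $B_{i-1}$ too once the $S_i$ and $S_{i+1}$ pieces are accounted for. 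Everything else is routine, so the writeup should be short.
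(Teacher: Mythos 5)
Your argument is on the right track through the computation $A\f g_i = \f 1_{S_{i+1}} + \f r_i$ with $\supp\f r_i\subset B_{i-1}$, but the step where you conclude $\supp Q_i\f r_i\subset B_{i-1}$ relies on the identity $\op{Span}\{\f g_0,\dots,\f g_i\}=\op{Span}\{\f 1_{S_0},\dots,\f 1_{S_i}\}$, which is \emph{false} for a general tree. The inductive hypothesis $\supp\f d_j\subset B_{j-2}$ tells you only where $\f d_j$ lives, not that it is a linear combination of the $\f 1_{S_k}$ (i.e.\ constant on each sphere), so the ``triangularity'' you invoke is not available. The failure already appears at $j=3$: writing out $A^3\f 1_x$, its restriction to $S_1$ has coefficient $D_x+D_y-1$ at $y\in S_1$, so the non-constant vector $\sum_{y\in S_1}(D_y-1)\f 1_y$ enters the Krylov span while $\f 1_{S_3}$ does not, unless all vertices of $S_1$ have equal degree. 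Consequently $\f d_3$ need not be proportional to $\f 1_{S_1}$, and the two spans genuinely differ.

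The real danger is the $j=i$ term in $Q_i\f r_i=\f r_i-\sum_{j\leq i}\frac{\scalar{\f g_j}{\f r_i}}{\norm{\f g_j}^2}\f g_j$: since $\f g_i=\f 1_{S_i}+\f d_i$, its $\f 1_{S_i}$-component escapes $B_{i-1}$ unless $\scalar{\f g_i}{\f r_i}=0$, and this orthogonality is not automatic from support considerations alone, as both $\f d_i$ and $\f r_i$ can charge $B_{i-2}$. The missing ingredient is the bipartite structure of a tree rooted at $x$: by Lemma~\ref{lem:tridiagonal_bipartite_graph}, $\f g_i$ is supported on vertices at distance of parity $i\pmod 2$ from $x$, whence $\scalar{\f g_i}{A\f g_i}=0$, and since $\scalar{\f g_i}{\f 1_{S_{i+1}}}=0$ by disjoint supports, $\scalar{\f g_i}{\f r_i}=\scalar{\f g_i}{A\f g_i-\f 1_{S_{i+1}}}=0$. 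With this, the $j=i$ term vanishes; every remaining $\f g_j$ with $j\leq i-1$ is supported in $B_j\subset B_{i-1}$, so indeed $\supp Q_i\f r_i\subset B_{i-1}$ and the induction closes. This bipartite observation is precisely what the paper's terse ``as well as that $A$ is the adjacency matrix of a tree'' is standing in for.
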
 

\begin{proof} 
We prove the lemma by induction on $i$. For the induction start, we note that 
$\f g_0 = \f 1_x$, $Q_0$ is the projection onto the complement $\op{Span}\{ \f 1_x \}$ and, thus, 
$\f g_1 = \f 1_{S_1}$.
The induction step follows from 
\[ A \f g_i = \f 1_{S_{i+1}} + ( A \f 1_{S_i} - \f 1_{S_{i+1}} ) + A \f d_i \] 
since $\f 1_{S_{i+1}}$ is invariant under of $Q_i$ and $\supp Q_i (A \f 1_{S_i} - \f 1_{S_{i+1}} )$, $\supp Q_i A \f d_i \subset B_{i-1}$. 
Here, to show the inclusions of the supports, we used that $\supp Q_i\f y\subset B_{i-1}$ if $\supp \f y \subset B_{i-1}$ as well as that $A$ is the adjacency matrix of a tree. 
\end{proof} 

\section{Tridiagonal matrix associated with a regular tree}  \label{sec:tridiagonal_tree}

In this appendix, we compute the tridiagonal matrix representation of $A = \op{Adj}(G)$ if, in the vicinity of some vertex, $G$ has the idealized graph structure 
described in Section~\ref{sec:ideas_proof}. The section complements the explanations in Section~\ref{sec:ideas_proof} and the results are not used in the rest of the paper. 

Throughout this section, we assume that there are $x \in [N]$ and $r \in \N$ such that $G$ has the following structure in $B_{r}(x)$. 
\begin{enumerate}[label=(\roman*)]
\item The induced subgraph $G\vert_{B_{r}(x)}$ on $B_{r}(x)$ is a tree with root $x$. 
\item The root $x$ has $D_x$ children and the vertices in $B_{r}(x)\setminus \{x\}$ have $d$ children. 
\end{enumerate}

\begin{lemma}[Basis and tridiagonal representation] \label{lem:Gram_Schmidt}
Let $\f s_0, \ldots, \f s_{r}$ be the Gram-Schmidt orthonormalization of $\f 1_{x}, A \f 1_{x}, \ldots, A^{r}\f 1_{x}$. Then the following hold true 
\begin{enumerate}
\item For all $i = 0, \ldots, r$,  we have 
\[ \f s_i = \abs{S_i(x)}^{-1/2} \f 1_{S_i(x)}.\] 
\item Let $\f s_{r + 1}, \ldots, \f s_{N-1}$ be a completion of $\f s_0, \ldots, \f s_{r}$ to an 
orthonormal basis of $\R^N$ and 
\[ M \defeq S^* A S, \qquad S \defeq (\f s_0, \ldots, \f s_{N-1}), \] 
the representation of $A$ in this basis. 
Then the upper-left $(r+1) \times (r+1)$ block $M_{\qq{r}}$ 
 of $M$ is independent of $\f s_{r+1}, \ldots, \f s_{N-1}$ and has the tridiagonal form
 \begin{equation} \label{eq:block_structure_M} 
M_{\qq{r}} = 
\begin{pmatrix}
0 & \sqrt{D_x} &&&&
\\
\sqrt{D_x} & 0 & \sqrt{d} &&&
\\
& \sqrt{d} & 0 & \sqrt{d} &&
\\&&\sqrt{d} & 0 & \ddots&
\\
&&&\ddots & \ddots & \sqrt{d}
\\
&&&& \sqrt{d} & 0
\end{pmatrix}.
\end{equation}
\end{enumerate}
\end{lemma}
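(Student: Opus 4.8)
The plan is to verify both parts of Lemma~\ref{lem:Gram_Schmidt} by a direct induction on the radius $i$, exploiting the idealized tree structure of $G$ on $B_r(x)$.

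For part (1), I would first observe that, because $G\vert_{B_r(x)}$ is a tree, $A^i\f 1_x$ is supported in $B_i(x)$ and its component at a vertex $y$ counts the number of length-$i$ walks from $x$ to $y$ in the tree; since any such walk reaching $S_j(x)$ with $j\le i$ and $i-j$ even is possible while reaching $S_j(x)$ with $i-j$ odd is impossible, one sees that $A^i\f 1_x\in\op{Span}\{\f 1_{S_0(x)},\f 1_{S_1(x)},\dots,\f 1_{S_i(x)}\}$, and moreover that $\f 1_{S_i(x)}$ appears with a nonzero coefficient. Hence the subspace spanned by $\f 1_x,A\f 1_x,\dots,A^i\f 1_x$ equals $\op{Span}\{\f 1_{S_j(x)}\col 0\le j\le i\}$. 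The vectors $\f 1_{S_0(x)},\dots,\f 1_{S_i(x)}$ are pairwise orthogonal (disjoint supports) and $A^i\f 1_x$ has positive overlap with $\f 1_{S_i(x)}$, so the Gram--Schmidt procedure produces exactly $\f s_i=\abs{S_i(x)}^{-1/2}\f 1_{S_i(x)}$ at step $i$. A clean way to phrase this: I would show by induction that $\f s_j=\abs{S_j(x)}^{-1/2}\f 1_{S_j(x)}$ for $j<i$ implies $A^i\f 1_x-\text{(projection onto earlier }\f s\text{'s)}$ is a positive multiple of $\f 1_{S_i(x)}$, which follows because $A$ maps $\f 1_{S_{i-1}(x)}$ into a combination of $\f 1_{S_{i-2}(x)}$ and $\f 1_{S_i(x)}$ only (tree property), with the $\f 1_{S_i(x)}$-part having positive coefficient.

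For part (2), the key computation is $A\f s_i$ for $0\le i\le r$, which I would evaluate using the regular-tree structure: a vertex $y\in S_i(x)$ with $1\le i\le r$ has exactly one neighbour in $S_{i-1}(x)$ and (by assumption (ii)) exactly $d$ neighbours in $S_{i+1}(x)$, while the root $x=S_0(x)$ has $D_x$ neighbours in $S_1(x)$. Counting edges gives $\abs{S_1(x)}=D_x$ and $\abs{S_{i+1}(x)}=d\abs{S_i(x)}$ for $1\le i\le r-1$, and the adjacency action is $A\f 1_{S_0(x)}=\f 1_{S_1(x)}$, $A\f 1_{S_i(x)}=d\,\f 1_{S_{i-1}(x)}\cdot\ind{i\ge 2}+\f 1_{S_1(x)}\cdot\ind{i=1}\cdot\text{(coeff)}+\f 1_{S_{i+1}(x)}$ — more precisely $A\f 1_{S_i(x)}=c_{i-1}\f 1_{S_{i-1}(x)}+\f 1_{S_{i+1}(x)}$ where $c_{i-1}$ is the number of neighbours in $S_{i-1}(x)$ of a vertex in $S_i(x)$ times nothing, i.e.\ for $i=1$ one gets $A\f 1_{S_1(x)}=D_x\f 1_{S_0(x)}+\f 1_{S_2(x)}$ (each of the $D_x$ vertices in $S_1$ contributes an edge back to $x$), and for $2\le i\le r$ one gets $A\f 1_{S_i(x)}=d\,\f 1_{S_{i-1}(x)}+\f 1_{S_{i+1}(x)}$. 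Normalizing via $\f s_i=\abs{S_i(x)}^{-1/2}\f 1_{S_i(x)}$ and using $\abs{S_1(x)}=D_x$, $\abs{S_{i+1}(x)}=d\abs{S_i(x)}$ then yields $M_{01}=\scalar{\f s_0}{A\f s_1}=\sqrt{D_x}$, $M_{i\,i+1}=\sqrt{d}$ for $1\le i\le r-1$, and $M_{ii}=0$ (bipartiteness of the tree, or directly $\scalar{\f s_i}{A\f s_i}=0$ since $A\f s_i$ has support disjoint from $S_i(x)$). This establishes \eqref{eq:block_structure_M}. The independence of $M_{\qq r}$ from the completion $\f s_{r+1},\dots,\f s_{N-1}$ is immediate since $A\f s_i$ for $i\le r-1$ lies in $\op{Span}\{\f s_0,\dots,\f s_r\}$, so the entries $M_{ij}$ with $i,j\le r$ only involve the first $r+1$ basis vectors; the entry $M_{r\,r+1}$ is not part of the block $M_{\qq r}$, so no issue arises there.

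I do not expect a genuine obstacle here — this is essentially a bookkeeping lemma. The only mildly delicate point is being careful with the $i=1$ row: the ``back-edges'' from $S_1(x)$ to the root produce the coefficient $D_x$ rather than $d$, which is exactly what creates the distinguished $\sqrt{D_x}$ entry; one must not blindly apply the ``every vertex has one parent and $d$ children'' rule at the root. I would also note (but not dwell on) that the cases $r=0$ and $r=1$ are trivial or degenerate and can be checked separately if one wants to be scrupulous about the range of indices in \eqref{eq:block_structure_M}.
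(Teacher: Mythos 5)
Your proposal is correct and follows essentially the same route as the paper: in both arguments the key identity is the explicit action of $A$ on the sphere indicators, $A\f 1_{S_0}=\f 1_{S_1}$, $A\f 1_{S_1}=D_x\f 1_{S_0}+\f 1_{S_2}$, and $A\f 1_{S_l}=d\,\f 1_{S_{l-1}}+\f 1_{S_{l+1}}$ for $2\le l\le r-1$, derived from the tree structure, combined with the orthogonality of the $\f 1_{S_i}$ and the size relations $\abs{S_1}=D_x$, $\abs{S_{i+1}}=d\abs{S_i}$. The paper phrases part (1) as showing $\f 1_{S_i(x)}=Q_i(A^i\f 1_x)$ by induction and part (2) via $\scalar{\f s_i}{A\f s_j}=\abs{S_i}^{-1/2}\abs{S_j}^{-1/2}\scalar{\f 1_{S_i}}{A\f 1_{S_j}}$, which is a mild repackaging of your computation.
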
 

Note that the spectra of $A$ and $M$ coincide. We stress that, for all our arguments in the rest of the paper motivated by the construction of $M$ above, only $M_{\qq{r}}$ plays a role. 
Therefore, the special choice of the basis vectors $\f s_{r + 1}, \ldots, \f s_{N-1}$ has no influence on these arguments.

\begin{proof}[Proof of Lemma~\ref{lem:Gram_Schmidt}] 
For the proof of (i), we show inductively that 
\begin{equation} \label{eq:Gram_Schmidt_induction} 
 \f 1_{S_i(x)} = Q_i (A^{i}\f 1_{x}) 
\end{equation}
for $i = 0, 1, \ldots, r$, 
where $Q_i$ is the orthogonal projection onto the orthogonal complement of $\f 1_{x}, \ldots, A^{i-1} \f 1_{x}$ for $i \geq 1$ and $Q_0 = \id$. 
The initial step is trivial as $S_0(x) = \{ x\}$. 

For all $i \geq 1$, we have 
\[ Q_i( A^i \f 1_{x})  = Q_i (A \f 1_{S_{i-1}(x)}) \] 
as well as 
\begin{equation} \label{eq:A_1_S_l} 
 A \f 1_{S_{l}(x)} = \begin{cases} \f 1_{S_1(x)}, & \text{if } l=0, \\ 
 \f 1_{S_2(x)} + D_x \f 1_{x},  &  \text{if } l=1, \\ 
 \f 1_{S_{l+1}(x)} + d \f 1_{S_{l-1}(x)},  & \text{if } l \in [r-1] \setminus \{1\}. 
\end{cases} 
\end{equation} 
Therefore, \eqref{eq:Gram_Schmidt_induction} follows immediately as $\f 1_{S_i(x)}$ and $\f 1_{S_j(x)}$ are orthogonal for $i \neq j$ and $\norm{\f 1_{S_i(x)}} = \abs{S_i(x)}^{1/2}$. 

We start the proof of (ii) by concluding 
\begin{equation} \label{eq:computation_scalar_product} 
 \scalar{\f s_i}{A \f s_j} = \abs{S_i(x)}^{-1/2} \abs{S_j(x)}^{-1/2} \scalar{\f 1_{S_i(x)}}{A \f 1_{S_j(x)}} 
\end{equation} 
for $i,j = 0, \ldots, r$ from (i). 
If $\abs{ i - j} \neq 1$ then this immediately yields $\scalar{\f s_i}{ A \f s_j} = 0$.
Moreover $\abs{S_0(x)} =1$ and $\abs{S_i(x)} = D_x d^{i-1}$ for $i \geq 1$ to \eqref{eq:computation_scalar_product}. For all $i, j = 0, 1, \ldots, r$, we have  
\[ \scalar{\f s_i}{A \f s_j} = \begin{cases} \sqrt{D_x}, & \text{if } \abs{i-j} = 1 \text{ and } ( i=0 \text{ or } j = 0), \\ 
 \sqrt{d}, & \text{if } \abs{i - j} = 1 \text{ and } ( i >0 \text{ and } j > 0), \\ 0, & \text{if } \abs{i - j } \neq 1.  \end{cases} \] 

This yields (ii) and, thus, completes the proof of Lemma~\ref{lem:Gram_Schmidt}. 
\end{proof}

\section{Spectral properties of tridiagonal matrices} \label{app:tridiagonal}

In this section we analyse the spectral properties of the tridiagonal matrices $M(\alpha)$. 
These $(r+1) \times (r+1)$-matrices were defined in \eqref{eq:def_M_alpha} for 
$\alpha >0$ and $r \in \N$.

In Lemma~\ref{lem:transfermatrix} below, we collect and prove a few spectral properties of $M(\alpha)$ for large $r$, in particular about its extreme eigenvalues 
and corresponding approximate eigenvectors. Although we shall not need Lemma~\ref{lem:transfermatrix}, it serves as motivation for the approximate eigenvectors introduced in Section~\ref{sec:large_degree_induces_eigenvalues} for large eigenvalues of the Erd{\H o}s-R\'enyi graph. Moreover, the key concepts behind the proof of Lemma~\ref{lem:transfermatrix} will be needed for the proof of Proposition \ref{pro:bound_eigenvector_perturbation_M_alpha}, and they are most transparent in the simple setting of Lemma~\ref{lem:transfermatrix}. 

\begin{lemma}[Eigenvalues and approximate eigenvectors of $M(\alpha)$]\label{lem:transfermatrix} 
If $\alpha>2$ then the following holds.
\begin{enumerate}[label=(\roman*)]
\item (Extreme eigenvalues) The largest and smallest eigenvalues of $M(\alpha)$, $\lambda_1(M(\alpha))$ and $\lambda_{r+1}(M(\alpha))$, converge to $\Lambda(\alpha)$ and $- \Lambda(\alpha)$, respectively, as $r\rightarrow \infty$.  
\label{item:transfert_1}
\item \label{item:transfermatrix_bulk} (Bulk eigenvalues) The eigenvalues $\lambda_2(M(\alpha)), \ldots, \lambda_{r}(M(\alpha))$ lie in $[-2,2]$. 
\item (Approximate eigenvectors) \label{item:transfert_2} 
Let $\f u=(u_i)_{i=0}^{r}$ and $\f u_-=((-1)^i u_i)_{i=0}^{r}$ have components
\begin{equation} \label{eq:candidate_eig}
u_0 \in \R\setminus \{0\}, \qquad u_1 \defeq \bigg(\frac{\alpha}{\alpha-1}\bigg)^{1/2} u_0, \qquad u_i \defeq \bigg(\frac{1}{\alpha-1}\bigg)^{(i-1)/2} u_1 \quad (i=2, 3, \ldots,r).
\end{equation}
Then $\f u$ and $\f u_-$ are approximate eigenvectors, as $r \to \infty$, of $M(\alpha)$ corresponding to its largest and smallest eigenvalue, respectively. 
\end{enumerate}
\end{lemma}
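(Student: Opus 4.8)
The plan is to handle (iii) first by a direct transfer-matrix computation of the residual, then deduce (ii) from Cauchy interlacing against a path graph, and finally combine these two facts to obtain (i) and the identification of $\f u,\f u_-$ with the extreme eigenvectors.

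For (iii), I would write out the eigenvalue relations for the tridiagonal matrix $M(\alpha)$: a vector $\f u=(u_k)_{k=0}^r$ satisfies $(M(\alpha)\f u)_k=\Lambda(\alpha)u_k$ for all $0\le k\le r-1$ exactly when $\sqrt{\alpha}u_1=\Lambda(\alpha)u_0$, $\sqrt{\alpha}u_0+u_2=\Lambda(\alpha)u_1$, and $u_{k-1}+u_{k+1}=\Lambda(\alpha)u_k$ for $2\le k\le r-1$. The bulk recursion has characteristic roots $t_\pm$ with $t_+t_-=1$ and $t_++t_-=\Lambda(\alpha)$; since $\Lambda(2)=2$ and $\Lambda$ is strictly increasing on $(2,\infty)$ we have $\Lambda(\alpha)>2$ for $\alpha>2$, so the roots are real and $t_-=(\alpha-1)^{-1/2}\in(0,1)$. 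A short calculation shows that the $\f u$ defined in \eqref{eq:candidate_eig} is precisely the decaying solution $u_k\propto t_-^{\,k}$ (with the prescribed boundary behaviour near $k=0,1$), hence it satisfies all of the above relations, and the only nonzero coordinate of $(M(\alpha)-\Lambda(\alpha))\f u$ is the last one, equal to $u_{r-1}-\Lambda(\alpha)u_r=-u_{r+1}$, where $u_{r+1}\deq(\alpha-1)^{-r/2}u_1$ extends the geometric recursion. Because $\norm{\f u}^2=u_0^2+u_1^2\sum_{k=1}^r(\alpha-1)^{-(k-1)}$ is bounded above and below by positive constants uniformly in $r$ (the ratio is $<1$), we obtain $\norm{(M(\alpha)-\Lambda(\alpha))\f u}/\norm{\f u}=\ord((\alpha-1)^{-r/2})\to0$. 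The statement for $\f u_-$ follows from the chiral symmetry $DM(\alpha)D=-M(\alpha)$ with $D=\diag((-1)^k)_{k=0}^r$: then $\f u_-=D\f u$ and $\norm{(M(\alpha)+\Lambda(\alpha))\f u_-}=\norm{(M(\alpha)-\Lambda(\alpha))\f u}$.

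For (ii), I would delete the first row and column of $M(\alpha)$; the resulting $r\times r$ matrix is the adjacency matrix of the path $P_r$, whose eigenvalues are $2\cos\!\bigl(j\pi/(r+1)\bigr)$, $j=1,\dots,r$, all in $(-2,2)$. Cauchy interlacing for a single row/column deletion gives $\lambda_{i+1}(M(\alpha))\le\lambda_i(P_r)\le\lambda_i(M(\alpha))$ for $1\le i\le r$; taking $i=1$ yields $\lambda_2(M(\alpha))\le\lambda_1(P_r)<2$, and taking $i=r$ yields $\lambda_r(M(\alpha))\ge\lambda_r(P_r)>-2$. Since $\lambda_2(M(\alpha))\ge\cdots\ge\lambda_r(M(\alpha))$, all of $\lambda_2(M(\alpha)),\dots,\lambda_r(M(\alpha))$ lie in $(-2,2)\subset[-2,2]$. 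Then (i) follows by combining: by the standard fact that a Hermitian matrix has an eigenvalue within $\norm{(M-\mu)\f w}/\norm{\f w}$ of $\mu$, the computation in (iii) gives $\dist(\pm\Lambda(\alpha),\spec(M(\alpha)))\to0$; since $\Lambda(\alpha)>2$ and, by (ii), only $\lambda_1(M(\alpha))$ can exceed $2$ and only $\lambda_{r+1}(M(\alpha))$ can be below $-2$, for all sufficiently large $r$ the eigenvalue near $\Lambda(\alpha)$ must equal $\lambda_1(M(\alpha))$ and the one near $-\Lambda(\alpha)$ must equal $\lambda_{r+1}(M(\alpha))$. This proves (i) and completes (iii).

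I do not expect a substantive obstacle, as this is essentially a model computation. The two places that need care are: keeping track of both boundary effects in the residual of (iii) (the $\sqrt{\alpha}$-defect near vertices $0,1$, and the truncation at vertex $r$), so that exactly one coordinate is nonzero and it decays like $(\alpha-1)^{-r/2}$; and, in (ii), using the principal-submatrix (Cauchy) interlacing rather than a rank-$2$ perturbation bound, since the latter would only control $\lambda_3,\dots,\lambda_{r-1}$, whereas deleting the row/column at the defect is exactly what is needed to reach the endpoints $\lambda_2$ and $\lambda_r$.
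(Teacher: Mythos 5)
Your argument is correct, and parts (i) and (iii) follow essentially the same route as the paper: the transfer-matrix computation shows that $(M(\alpha)-\Lambda(\alpha))\f u$ is supported on the last coordinate and equals $-u_{r+1}\f e_r$ with $\abs{u_{r+1}}=(\alpha-1)^{-r/2}\abs{u_1}\to 0$, the statement for $\f u_-$ follows by the sign-flip symmetry, and (i) is then a consequence of (ii) together with the fact that a Hermitian matrix has an eigenvalue within $\norm{(M-\mu)\f w}/\norm{\f w}$ of $\mu$.

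For (ii) you take a genuinely different (and slightly cleaner) path. You delete the first row and column and apply Cauchy interlacing against the path $P_r$, obtaining directly $\lambda_2(M(\alpha))\leq\lambda_1(P_r)<2$ and $\lambda_r(M(\alpha))\geq\lambda_r(P_r)>-2$. The paper instead regards $M(\alpha)$ as a perturbation of $M(1)$ and applies Weyl interlacing, using the key observation that $M(\alpha)-M(1)$ --- a rank-two matrix supported on the $(0,1)$ and $(1,0)$ entries --- has precisely one positive and one negative eigenvalue. With that signature information the Weyl bound improves to $\lambda_{k+1}(M(1))\leq\lambda_k(M(\alpha))\leq\lambda_{k-1}(M(1))$, which does reach the endpoints $\lambda_2$ and $\lambda_r$; so your parenthetical claim that the rank-$2$ perturbation route ``would only control $\lambda_3,\dots,\lambda_{r-1}$'' applies to a naive rank bound but not to the paper's refined version. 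In short: both arguments are valid; Cauchy interlacing is a bit more elementary and requires no discussion of the perturbation's signature, whereas the paper's version works with $M(1)$ (still a path, but of size $r+1$ rather than $r$, so the constants come out the same in the limit). No gap either way.
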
 

The eigenvectors of $M(\alpha)$ can be analysed by a transfer matrix approach. Let $\eta$ be an eigenvalue of $M(\alpha)$ and $\f u = (u_i)_{i=0}^{r}$ a corresponding 
eigenvector. The components of the eigenvalue-eigenvector relation $M(\alpha) \f u = \eta \f u$ read
\begin{equation} \label{eq:eigenvector_M} 
 \sqrt{\alpha} u_1  = \eta u_0, \qquad 
\sqrt{\alpha} u_0 + u_2  = \eta u_1,  \qquad 
u_{i-1} + u_{i+1} = \eta u_i , \qquad u_{r-1} = \eta u_r  
\end{equation}
for $i=2, \ldots,r -1$. Hence, for $i=1, \ldots, r-1$, these relations are equivalent to 
\begin{equation}\label{eq:eigenvector_relation_transfer_matrix} 
\begin{pmatrix} u_{i+1} \\ u_i \end{pmatrix}  = T(\eta)^{i-1} \begin{pmatrix} u_2 \\ u_1 \end{pmatrix},  
\end{equation} 
where we introduced the $2 \times 2$ transfer matrix $T(\eta)$ defined through 
\begin{equation} \label{eq:def_transfer_matrix} 
 T(\eta) \deq \begin{pmatrix} \eta & - 1 \\ 1 & 0 \end{pmatrix}. 
\end{equation}

From now on we suppose that $\abs{\eta}>2$. In this case, we compute the spectrum and the eigenspaces of $T(\eta)$. 
The eigenvalues of $T(\eta)$ are $\gamma(\eta)$ and $\gamma(\eta)^{-1}$, where  we defined 
\begin{equation} \label{eq:def_gamma_eta}
 \gamma(\eta)  \defeq \frac{1}{2}\Big( \eta - \sign(\eta) \sqrt{ \eta^2 - 4} \Big). 
\end{equation}
Note that $\abs{\gamma(\eta)} < 1$.
Moreover, the eigenspaces of $T(\eta)$ associated to $\gamma(\eta)$ and $\gamma(\eta)^{-1}$ are given by 
\begin{equation} \label{eq:span_eigenspace}
 \ker( T(\eta) -\gamma(\eta) I_2) = \mathrm{span}\, \Bigg\{ \begin{pmatrix} \gamma(\eta) \\ 1 \end{pmatrix} \Bigg\}, 
\quad
\ker( T(\eta) -\gamma(\eta)^{-1} I_2) = \mathrm{span}\, \Bigg\{ \begin{pmatrix} 1 \\ \gamma(\eta) \end{pmatrix} \Bigg\}.
 \end{equation}

In the following, we denote the standard basis vectors of $\R^{r+1}$ by $\f e_0, \ldots, \f e_r$.

\begin{proof}[Proof of Lemma \ref{lem:transfermatrix}]
We first prove \ref{item:transfermatrix_bulk}. To that end, we consider $M(\alpha)$ as a rank-two perturbation of $M(1)$. It is well-known that 
\[ \spec ( M(1)) = \bigg\{ 2 \cos \bigg(\frac{\pi k}{r + 2}\bigg) \col k =1, \ldots, r + 1 \bigg\} \subset [-2,2]. \] 
This implies \ref{item:transfermatrix_bulk} by Weyl's interlacing inequalities, since the matrix $M(\alpha)-M(1)$ has rank two with one positive eigenvalue and one negative eigenvalue.

We now show \ref{item:transfert_1} and \ref{item:transfert_2} simultaneously. Let $\f u$ and $\f u_-$ be defined as in \ref{item:transfert_2}. 
We only focus on the largest eigenvalue of $M(\alpha)$ and $\f u$. The same arguments work for the smallest eigenvalue and~$\f u_-$. 

We set $\eta = \Lambda(\alpha)$ and obtain $\gamma(\eta) = (\alpha-1)^{-1/2}$. Thus, $(u_{i+1}, u_i)^* \in \ker(T(\eta) - \gamma(\eta)I_2)$ for all $i=1, \ldots, r-1$. 
Hence, the equivalence between \eqref{eq:eigenvector_M} for $i=2, \ldots, r-1$ and \eqref{eq:eigenvector_relation_transfer_matrix} implies that 
\[ (M(\alpha) - \eta I_{r+1})\f u = (u_{r-1} - \eta u_{r}) \f e_{r}.  \] 
Here, we also used $\eta=\Lambda(\alpha) = \alpha/\sqrt{\alpha-1}$ and the relation between $u_1$ and $u_0$. 
Therefore, since $\alpha > 2$, we find $\norm{(M(\alpha)-\Lambda(\alpha) I_{r+1})\f u}  \to 0$ as $r \to \infty$. This completes the proof of Lemma~\ref{lem:transfermatrix}. 
\end{proof}

The following proposition provides an eigenvector delocalization bound for tridiagonal matrices whose
structure is similar to the one of $M(\alpha)$ in the sense that, starting from the second row and column, the diagonal entries are small while the 
offdiagonal entries are close to one. 
For its formulation, we need some notation which we define now. 
For $\eta >2$, we recall the definition of $\gamma(\eta)$ from \eqref{eq:def_gamma_eta} and introduce 
\begin{equation} \label{eq:def_function_delta} 
 \delta(m_{00}, m_{01}, m_{11}, m_{12}, \eta) = \frac{\gamma(\eta) (\eta - m_{00})(\eta - m_{11}) - \gamma(\eta) m_{01}^2  - m_{12}(\eta - m_{00})}{m_{01}^2  + \gamma(\eta)(\eta - m_{00})m_{12} - (\eta - m_{00})(\eta - m_{11})} 
\end{equation}
whenever the denominator on the right-hand side is different from zero. 
For $\eta >2$ and $\eps >0$, we also define 
\begin{equation} \label{eq:gamma_larger}
\gamma_\geq(m_{00}, m_{01}, m_{11}, m_{12}, \eta,\eps) \defeq \gamma(\eta)^{-1} - \frac{8(3+\eta)\eps}{1- \gamma(\eta)^2} \Big( 1 + 1\vee ({\delta(m_{00}, m_{01}, m_{11}, m_{12}, \eta)})^2\Big)^{1/2}. 
\end{equation}

\begin{proposition}[Delocalization bound for tridiagonal matrices] \label{pro:bound_eigenvector_perturbation_M_alpha} 
Let $\wt{M}$ be a symmetric tridiagonal $(r+1) \times (r+1)$ matrix and $\f b = (b_i)_{i \in \qq{r}} \in \R^{r+1}$.   
Let $\eta >2$. 
We set 
$\eps \defeq \max_{i \in [r-1]} \big(\abs{\wt{M}_{ii}} \vee \abs{\wt{M}_{i\, i+1} - 1} \big)$,  
$\delta \defeq \delta(\wt{M}_{00},\wt{M}_{01},\wt{M}_{11},\wt{M}_{12}, \eta)$ and $\gamma_\geq = \gamma_\geq(\wt{M}_{00},\wt{M}_{01},\wt{M}_{11},\wt{M}_{12}, \eta,\eps)$. 
If $\eps \leq 1/2$ and the condition
\begin{equation} \label{eq:bound_eigenvector_condition} 
 \eta^2 \geq 4 + \frac{4^5(3+\eta)^2\eps^2}{(1 - \gamma(\eta)^2)^2} \big ( 1 + 1 \vee \delta^2\big)  
\end{equation}
is satisfied and $(\wt{M} - \eta I_{r+1})\f b \in \op{Span} \{\f e_r\}$ then 
\[ \frac{(b_0)^2}{\norm{\f b}^2} \leq \frac {8(\wt{M}_{01}\wt{M}_{12})^2} 
{((\wt{M}_{01})^2 - (\eta - \wt{M}_{11})(\eta - \wt{M}_{00}) + \gamma(\eta) \wt{M}_{12} ( \eta -\wt{M}_{00} ))^2 } \bigg[ (\gamma_\geq)^{-2r} \wedge \frac{1}{r-1} \bigg] 
\] and 
\begin{equation} \label{eq:lower_bound_gamma_geq} 
 \gamma_\geq \geq 1 + \frac{1}{2} \big(\gamma(\eta)^{-1} - 1 \big) \geq 1. 
\end{equation}
\end{proposition}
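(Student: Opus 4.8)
The statement is a quantitative version of the transfer-matrix analysis that already underlies Lemma~\ref{lem:transfermatrix}. Writing $(\wt M - \eta I_{r+1})\f b = c\,\f e_r$ for a scalar $c$ and reading off coordinates $0,1,\dots,r-1$: rows $0$ and $1$ give
\[ \wt M_{01}b_1 = (\eta - \wt M_{00})b_0, \qquad \wt M_{01}\wt M_{12}\,b_2 = \bigl((\eta-\wt M_{00})(\eta-\wt M_{11}) - \wt M_{01}^2\bigr)b_0, \]
while rows $2,\dots,r-1$ give the three–term recursion $\f y_i = T_i\,\f y_{i-1}$, $2\le i\le r-1$, for $\f y_i \deq (b_{i+1},b_i)^{*}$ and $T_i \deq \bigl(\begin{smallmatrix}(\eta - \wt M_{ii})/\wt M_{i\,i+1} & -\wt M_{i\,i-1}/\wt M_{i\,i+1}\\ 1 & 0\end{smallmatrix}\bigr)$. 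All the entries occurring in $T_i$ are indexed within $[r-1]$ (using $\wt M_{i\,i-1}=\wt M_{i-1\,i}$ with $i-1\in[r-1]$ for $i\ge 2$), so the hypotheses $\eps\le 1/2$ and $\abs{\wt M_{jj}}\vee\abs{\wt M_{j\,j+1}-1}\le\eps$ give $\norm{T_i - T(\eta)} = \ord\bigl((1+\eta)\eps\bigr)$, with $T(\eta)$ the ideal transfer matrix of \eqref{eq:def_transfer_matrix}.

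\textbf{Passing to the eigenbasis and identifying $\delta$ and the denominator.} I would diagonalise $T(\eta) = P\diag(\gamma(\eta)^{-1},\gamma(\eta))P^{-1}$, with $P=(v_+\mid v_-)$ the eigenvector matrix from \eqref{eq:span_eigenspace} and $\abs{\gamma(\eta)}<1$, and set $(p_i,q_i)^{*}\deq P^{-1}\f y_i$. The recursion becomes $(p_i,q_i)^{*} = R_i(p_{i-1},q_{i-1})^{*}$ with $R_i = \diag(\gamma(\eta)^{-1},\gamma(\eta)) + E_i$ and $\norm{E_i}\le\norm{P^{-1}}\norm{T_i-T(\eta)}\norm{P} = \ord\bigl((1+\eta)\eps/(1-\gamma(\eta)^2)\bigr)$, using $\norm{P}\le 1+\abs{\gamma(\eta)}$ and $\norm{P^{-1}}\le 1/(1-\abs{\gamma(\eta)})\le 2/(1-\gamma(\eta)^2)$. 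A short computation from the two displayed identities writes the initial vector $\f y_1=(b_2,b_1)^{*}$ as $\f y_1 = a_+v_+ + a_-v_-$ with $a_\pm$ the obvious expressions, shows that $\delta$ of \eqref{eq:def_function_delta} equals $a_-/a_+$, and shows that $(1-\gamma(\eta)^2)a_+ = b_2-\gamma(\eta)b_1 = -b_0\bigl(\wt M_{01}^2-(\eta-\wt M_{11})(\eta-\wt M_{00})+\gamma(\eta)\wt M_{12}(\eta-\wt M_{00})\bigr)/(\wt M_{01}\wt M_{12})$; the bracketed factor is exactly the denominator in the assertion, so $b_0^2 = (1-\gamma(\eta)^2)^2(\wt M_{01}\wt M_{12})^2a_+^2/(\text{denom})^2$ (and if the denominator vanishes the claimed bound is vacuous, so one may assume $a_+\neq 0$). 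Since $\norm{\f b}^2 \ge \tfrac12\sum_{i=1}^{r-1}\norm{\f y_i}^2$ and $\norm{\f y_i}=\norm{P(p_i,q_i)^{*}}\ge(1-\abs{\gamma(\eta)})\abs{p_i}\ge\tfrac{1-\gamma(\eta)^2}{2}\abs{p_i}$, matters reduce to the lower bound $\sum_{i=1}^{r-1}\abs{p_i}^2 \ge a_+^2\bigl((\gamma_\geq)^{2r}\vee(r-1)\bigr)$, the prefactor $(1-\gamma(\eta)^2)^2/8$ then accounting for the rest.

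\textbf{The expansion (cone) estimate.} The core is an induction on $i$ showing $\abs{q_i}\le(1\vee\delta^2)^{1/2}\abs{p_i}$ and $\abs{p_i}\ge(\gamma_\geq)^{i-1}\abs{a_+}$ for $1\le i\le r-1$. The base case holds since $\abs{q_1/p_1}=\abs{\delta}\le(1\vee\delta^2)^{1/2}$. For the step, $\abs{p_i}\ge\gamma(\eta)^{-1}\abs{p_{i-1}} - \norm{E_i}(\abs{p_{i-1}}+\abs{q_{i-1}})$ and $\abs{q_i}\le\gamma(\eta)\abs{q_{i-1}} + \norm{E_i}(\abs{p_{i-1}}+\abs{q_{i-1}})$; feeding in the hypothesis on $\abs{q_{i-1}}$, the first inequality gives $\abs{p_i}\ge\gamma_\geq\abs{p_{i-1}}$ with $\gamma_\geq$ as in \eqref{eq:gamma_larger}, and the second gives $\abs{q_i}\le(1\vee\delta^2)^{1/2}\abs{p_i}$ provided $\norm{E_i}(1\vee\delta^2)^{1/2}$ is dominated by $\gamma(\eta)^{-1}-\gamma(\eta)=\sqrt{\eta^2-4}$; this is precisely what \eqref{eq:bound_eigenvector_condition} — read as $\eta^2-4\ge 16\bigl(\tfrac{8(3+\eta)\eps}{1-\gamma(\eta)^2}\bigr)^2(1+1\vee\delta^2)$ — supplies, the ``$+1$'' furnishing the slack needed for the two steps at once. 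Summing $\abs{p_i}^2\ge(\gamma_\geq)^{2(i-1)}a_+^2$ over $i$ yields the required lower bound on $\norm{\f b}^2$, while the same hypothesis together with the arithmetic–geometric mean inequality $\gamma(\eta)+\gamma(\eta)^{-1}\ge 2$ gives $\gamma_\geq\ge\tfrac34\gamma(\eta)^{-1}+\tfrac14\gamma(\eta)\ge 1+\tfrac12(\gamma(\eta)^{-1}-1)\ge 1$, which is \eqref{eq:lower_bound_gamma_geq}. The main obstacle I foresee is not conceptual but bookkeeping: one must track the crude bounds on $\norm{T_i-T(\eta)}$ and $\norm{E_i}$, together with the $\delta$-dependent amplification in the cone step, tightly enough that they fit inside the explicit numerical constants $8(3+\eta)$ and $4^5(3+\eta)^2$ in \eqref{eq:gamma_larger}–\eqref{eq:bound_eigenvector_condition} (and one must keep straight that the ``good'' transfer steps run only over $i=2,\dots,r-1$, the first block $\wt M_{00},\wt M_{01}$ being handled separately through the exact identities above).
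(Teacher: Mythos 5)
Your proposal follows essentially the same path as the paper: write the tridiagonal eigenvalue relation as a $2\times 2$ transfer-matrix recursion, diagonalise the ideal transfer matrix $T(\eta)$, pass to eigencoordinates, identify $\delta$ as the ratio of the two components of the initial vector and the denominator of the asserted bound as (essentially) the growing-mode component $q_1$ (your $a_+$), and then run a cone/expansion induction: the ratio decaying/growing stays bounded by $1\vee\abs{\delta}$ while the growing coordinate expands by at least $\gamma_\geq$ per step, with \eqref{eq:bound_eigenvector_condition} supplying exactly the slack needed. The only differences from the paper's argument are notational (you swap the roles of $p_i$ and $q_i$, and parameterise the initial data by $a_\pm$ rather than $p_1,q_1$ directly) and cosmetic in the constants: you bound the perturbation with $\norm{E_i}(\abs{p}+\abs{q})$ where the paper uses the sharper $\norm{V^{-1}R_{i+1}V}\sqrt{p^2+q^2}$, which costs a $\sqrt 2$ but is absorbed by the explicit prefactors; and you derive \eqref{eq:lower_bound_gamma_geq} via $\gamma+\gamma^{-1}\ge 2$, whereas the paper uses $2(\gamma^{-1}-1)\ge\gamma^{-1}-\gamma$ directly — both are valid. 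So: correct, and essentially the paper's proof.
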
 

\begin{proof} 
As $(\wt{M} - \eta I_{r+1}) \f b \in \op{Span}\{\f e_r\}$ we have 
\begin{equation} \label{eq:relations_b_i_perturbed} 
 \wt{M}_{00}  b_0 + \wt{M}_{01}  b_1 = \eta  b_0, \quad \wt{M}_{i\, i-1}  b_{i-1} + \wt{M}_{ii}  b_i + \wt{M}_{i \, i + 1}  b_{i+1} = \eta  b_i
\end{equation}
for any $i=1,\ldots, r-1$.  
Hence, 
\begin{equation} \label{eq:b_i_transfer_matrix} 
\begin{pmatrix} b_{i+1} \\ b_{i} \end{pmatrix} = T_i \begin{pmatrix} b_{i} \\ b_{i-1} \end{pmatrix}, \qquad T_i \defeq \frac{1}{\wt{M}_{i \, i+1}} \begin{pmatrix} \eta - \wt{M}_{i\, i}& - \wt{M}_{i\, i-1} \\ \wt{M}_{i \, i+1} & 0 \end{pmatrix} 
\end{equation}
for $i=1, \ldots, r-1$. For $i \geq 2$, we define $R_i \defeq T_i - T$, where $T = T(\eta)$ is defined as in \eqref{eq:def_transfer_matrix}. As $\eps = \norm{\wt{M} - M(\alpha)} \leq 1/2$, we have $\norm{R_i} \leq 2(3+ \eta)\eps$ uniformly for $i \geq 2$. 

In the rest of the proof, we write $\gamma \equiv \gamma(\eta)$ for $\gamma(\eta)$ defined in \eqref{eq:def_gamma_eta}. 
For each $i\geq 1$, we denote by $p_i$ and $q_i$ the first and second component of $({b}_{i+1},{b}_i)^*$ in the eigenbasis of $T$, respectively. That is 
\begin{equation} \label{eq:def_p_i_q_i} 
 \begin{pmatrix} p_i \\ q_i \end{pmatrix} \defeq V^{-1} \begin{pmatrix} {b}_{i+1} \\ {b}_i \end{pmatrix}, \qquad V = \begin{pmatrix} \gamma & 1 \\ 1 & \gamma \end{pmatrix}. 
\end{equation}
The fact that $V^{-1} T V$ is diagonal can be easily read off from \eqref{eq:span_eigenspace}. 

We shall now show that 
\begin{equation} \label{eq:p_i_div_q_i_bounded} 
\frac{\abs{p_i}}{\abs{q_i}}\leq 1 \vee \bigg(\frac{\abs{p_1}}{\abs{q_1}}\bigg), \qquad \abs{q_i} \geq (\gamma_\geq)^{i-1} \abs{q_1} 
\end{equation} 
for all $i\geq 1$ by induction on $i$. The assertion is trivial for $i=1$. 
From \eqref{eq:def_p_i_q_i} and \eqref{eq:b_i_transfer_matrix}, we conclude 
\[ \begin{pmatrix} p_{i+1}\\ q_{i+1} \end{pmatrix} = V^{-1} \bigg( V \begin{pmatrix} \gamma & 0 \\ 0 & \gamma^{-1} \end{pmatrix} V^{-1} + R_{i+1} V V^{-1} \bigg) \begin{pmatrix} b_{i+1} \\ b_i \end{pmatrix} = 
 \bigg( \begin{pmatrix} \gamma & 0 \\ 0 & \gamma^{-1} \end{pmatrix} + V^{-1} R_{i+1} V \bigg) \begin{pmatrix} p_i \\ q_i \end{pmatrix}. \] 
 Estimating the first component of this relation implies
\[ \begin{aligned} 
\abs{p_{i+1}} & \leq \gamma \abs{p_i} + \normbb{V^{-1} R_{i+1} V \begin{pmatrix} p_i \\ q_i \end{pmatrix}} \\ 
 & \leq \bigg(\gamma \frac{\abs{p_i}}{\abs{q_i}} + \norm{V^{-1} R_{i+1} V} \bigg( 1 + \bigg(\frac{p_i}{q_i}\bigg)^2\bigg)^{1/2}\bigg) \bigg)\abs{q_i} \\
& \leq \bigg(1 \vee \frac{\abs{p_1}}{\abs{q_1}} \bigg) \Big( \gamma + \sqrt{2} \norm{V^{-1} R_{i+1} V} \Big) \abs{q_i}, 
\end{aligned} \] 
where we used that $1 + p_i^2/q_i^2 \leq 2 (1 \vee p_1^2/q_1^2)$ by the induction hypothesis in the last step. 
Similarly, we bound the second component from below and obtain 
\begin{equation} \label{eq:q_i_plus_1_lower_bound} 
\begin{aligned} 
 \abs{q_{i+1}} & \geq \bigg(\gamma^{-1} - \norm{V^{-1} R_{i+1} V} \bigg( \bigg(\frac{p_i}{q_i}\bigg)^{2} + 1\bigg)^{1/2} \bigg) \abs{q_i} \\ 
 & \geq \bigg( \gamma^{-1} - \norm{V^{-1} R_{i+1} V} \bigg(1 + 1 \vee \bigg(\frac{p_1}{q_1} \bigg)^2\bigg)^{1/2} \bigg)\abs{q_i}
\end{aligned} 
\end{equation}
due to the induction hypothesis. 

By dividing the upper bound on $\abs{p_{i+1}}$ by the lower bound on $\abs{q_{i+1}}$, we see that the induction step for the first bound in \eqref{eq:p_i_div_q_i_bounded} is shown if 
\begin{equation} \label{eq:bound_eigenvector_condition_induction} 
  \gamma + \sqrt{2} \norm{V^{-1} R_{i+1} V} \leq \gamma^{-1} - \norm{V^{-1} R_{i+1} V} \bigg(1 + 1 \vee \bigg(\frac{p_1}{q_1}\bigg)^{2} \bigg)^{1/2}. 
\end{equation}
We now deduce this bound from \eqref{eq:bound_eigenvector_condition}. 
 To that end, we first compute $p_1/q_1$. The definition of $p_1$ and $q_1$ in \eqref{eq:def_p_i_q_i} yields $p_1 = (-b_1 + \gamma b_2)/(\gamma^2 - 1)$ and $q_1= (- b_2 + \gamma b_1)/(\gamma^2 - 1)$. 
We use the first relation in \eqref{eq:relations_b_i_perturbed} and the second relation in \eqref{eq:relations_b_i_perturbed} with $i=1$ to express $b_1$ and $b_2$ in terms of $b_0$. Then an easy computation shows that 
\begin{subequations} 
\begin{align} 
p_1 & = \frac{\gamma b_2 - b_1}{\gamma^2 -1} = \frac{b_0 \big( \gamma ( \eta - \wt{M}_{11})(\eta - \wt{M}_{00}) - \gamma ( \wt{M}_{01} )^2 - \wt{M}_{12}( \eta - \wt{M}_{00} ) \big)}{(\gamma^2-1)\wt{M}_{01}\wt{M}_{12}} 
, \\ 
 q_1 & = \frac{- b_2 + \gamma b_1}{\gamma^2 -1}  = \frac{b_0\big( (\wt{M}_{01})^2 - (\eta - \wt{M}_{11})(\eta - \wt{M}_{00}) + \gamma \wt{M}_{12} ( \eta -\wt{M}_{00} ) \big) }{(\gamma^2-1)\wt{M}_{01}\wt{M}_{12}} 
.  \label{eq:representation_q_1} 
\end{align} 
\end{subequations}
Therefore,  we obtain 
\[ \frac{p_1}{q_1} = \frac{-b_1 + \gamma b_2}{-b_2 + \gamma b_1} = \delta \defeq \delta (\wt{M}_{00},\wt{M}_{01},\wt{M}_{11},\wt{M}_{12}, \eta),  \]  
where we used the function $\delta$ defined in \eqref{eq:def_function_delta}.
Thus, as $\norm{V^{-1} R_{i+1} V} \leq 4 \norm{R_{i+1}}/(1-\gamma^2) \leq 8(3+\eta)\eps /(1-\gamma^2)$ and  $\gamma^{-1}  - \gamma = \sqrt{\eta^2 - 4}$, 
the definition of $\gamma(\eta)$ in \eqref{eq:def_gamma_eta} shows that \eqref{eq:bound_eigenvector_condition_induction} is a consequence of \eqref{eq:bound_eigenvector_condition}. 
This completes the induction step for the first estimate in \eqref{eq:p_i_div_q_i_bounded}. 

From \eqref{eq:q_i_plus_1_lower_bound}, $p_1/q_1 = \delta$ and $\norm{V^{-1} R_{i+1} V} \leq 8(3 +\eta)\eps/(1-\gamma^2)$, 
we deduce $\abs{q_{i+1}} \geq \gamma_\geq \abs{q_i}$. Thus, we have completed the proof of \eqref{eq:p_i_div_q_i_bounded}.

We now prove that \eqref{eq:bound_eigenvector_condition} also implies the lower bound on $\gamma_\geq$ in \eqref{eq:lower_bound_gamma_geq}. 
The definition of $\gamma(\eta)$ in \eqref{eq:def_gamma_eta} yields $2(\gamma^{-1}-1) \geq (\gamma^{-1}-\gamma) = \sqrt{\eta^2 - 4} $. 
Thus, we obtain from \eqref{eq:bound_eigenvector_condition} that 
\[\begin{aligned} 
 \gamma_\geq - 1 - \frac{1}{2} \big( \gamma^{-1} - 1 \big) & = \frac{1}{2} \bigg[ \gamma^{-1} - 1 - \frac{16(3 +\eta)\eps}{1-\gamma^2} \bigg(1 + 1 \vee \bigg(\frac{p_1}{q_1}\bigg)^{2} \bigg)^{1/2} \bigg]  \\
& \geq \frac{1}{4} \bigg[ \sqrt{\eta^2 - 4} -\frac{32(3 +\eta)\eps}{1-\gamma^2} \bigg(1 + 1 \vee \bigg(\frac{p_1}{q_1}\bigg)^{2} \bigg)^{1/2} \bigg]. 
\end{aligned} \] 
Owing to \eqref{eq:bound_eigenvector_condition}, the right-hand side is positive. This immediately implies \eqref{eq:lower_bound_gamma_geq}.

Owing to the second bound in \eqref{eq:p_i_div_q_i_bounded}, we have 
\begin{equation}\label{eq:bound_norm_b_aux1} 
\begin{aligned} 
 2 \sum_{i=0}^r (b_i)^2 & \geq \sum_{i=0}^{r-1} \normbb{\begin{pmatrix} b_{i+1} \\ b_i \end{pmatrix}}^2 \\ 
 & = \sum_{i=0}^{r-1} \normbb{ VV^{-1} \begin{pmatrix} b_{i+1} \\ b_i \end{pmatrix}}^2 \\ 
 & \geq \frac{1}{\norm{V^{-1}}^{2}} \sum_{i=1}^{r-1} \normbb{\begin{pmatrix} p_i \\ q_i \end{pmatrix}}^2 \\ 
 & \geq \frac{(\gamma^2-1)^2}{4} \abs{q_1}^2 \Big[ (\gamma_\geq)^{2r} \vee (r-1) \Big]. 
\end{aligned} 
\end{equation}  
Here, we pulled $V$ out of the norm in the third step and used \eqref{eq:def_p_i_q_i}. The fourth step is a consequence of $\norm{V^{-1}} \leq 2 (1 - \gamma^2)^{-1}$, estimating the norm by its second component and using 
the second bound in \eqref{eq:p_i_div_q_i_bounded} as well as $\gamma_\geq \geq 1$ due to \eqref{eq:lower_bound_gamma_geq}.  

Finally, applying \eqref{eq:representation_q_1} to \eqref{eq:bound_norm_b_aux1} completes the proof of Proposition~\ref{pro:bound_eigenvector_perturbation_M_alpha}.
\end{proof}

\section{Degree distribution of the Erd{\H{o}}s-R\'enyi graph} \label{sec:degrees}
The content of this section is standard, and we include it for completeness and the reader's convenience. It is essentially contained in \cite[Chapter 3]{Bol01}. We do not aim for sharp estimates of the error probabilities; instead, our goal here is to collect basic qualitative facts about the behaviour of the largest degrees of an Erd{\H o}s-R\'enyi graph, which, using Theorem \ref{thm:correspondence_eigenvalue_large_degree}, can be used to understand the key properties of the extremal eigenvalues. We recall the normalized degree \eqref{eq:def_alpha_x} and the random permutation \eqref{eq:def_sigma_permutation}.

To formulate qualitative statements conveniently, we use the symbol $o(1)$ to denote any function of $N$ that converges to zero, and say that an $N$-dependent event $\Xi \equiv \Xi_N$ holds with high probability if $\P(\Xi) = 1 - o(1)$.

The distribution of the largest degrees is best analysed using the function
\begin{equation} \label{eq:def_f_d} 
 f_d(\alpha) \defeq d( \alpha \log \alpha - \alpha + 1) + \frac{1}{2} \log (2 \pi \alpha d) 
\end{equation}
for $\alpha \geq 1$. For its interpretation, we note that if $Y \eqdist \op{Poisson}(d)$ then by Stirling's formula we have for any $k \in \N$
\begin{equation*}
\P(Y = k) = \exp\pbb{-f_d(k/d) + O \pbb{\frac{1}{k}}}.
\end{equation*}

It is easy to see that the function $f_d : [1,\infty) \to \big[\frac{1}{2} \log (2 \pi d), \infty\big)$ is bijective and increasing. Therefore there is a universal constant $C > 0$ such that for $1 \leq l \leq \frac{N}{C \sqrt{d}}$ the equation
\begin{equation*}
f_d(\beta) = \log (N/l)
\end{equation*}
has a unique solution $\beta \equiv \beta_l(d)$.
The interpretation of $\beta$ is the typical location of $\alpha_{\sigma(l)}$. By the implicit function theorem, we find that $\beta_l$  on the interval $\bigl(0, \frac{N^2}{C l^2}\bigr]$ is a decreasing bijective function.

We are interested in normalized degrees greater than or equal to $2$. This motivates the definition
\begin{equation*}
\cal L_0(d) \deq \max \{l \geq 1 \col \beta_l(d) \geq 2\},
\end{equation*}
whose interpretation is the typical number of normalized degrees greater than or equal to $2$. By definition, $\beta_l(d) \geq 2$ for all $l \leq \cal L_0(d)$. Note that $\cal L_0(d)$ is nonzero if and only if $d \leq d_*$, where $d_*$ is defined as the unique solution of $\beta_1(d_*) = 2$. More explicitly, $d_*$ satisfies $f_{d_*}(2) = \log N$.

\begin{proposition} \label{lem:degree_distr}
Let $\xi \equiv \xi_N$ be a positive sequence tending to $\infty$.
If $1 \leq d \leq d_*$ and $1 \leq l \leq \cal L_0(d)$ then with high probability we have
\begin{equation} \label{deg_est_1}
\abs{\alpha_{\sigma(l)} - \beta_l(d)} \leq \frac{1 \vee (\xi / \log \beta_l(d))}{d}.
\end{equation}
If $d > d_*$ then with high probability we have
\begin{equation} \label{deg_est_2}
\alpha_{\sigma(1)} \leq 2 + \frac{\xi}{d}.
\end{equation}
\end{proposition}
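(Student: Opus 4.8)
The statement to prove is Proposition \ref{lem:degree_distr}, which describes the typical locations of the largest normalized degrees of the Erd\H{o}s-R\'enyi graph. The plan is to exploit the near-independence of the degrees together with the classical Poisson approximation for binomial random variables in the sparse regime.

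First I would record the basic one-vertex estimate: since $D_x \eqdist \op{Binom}(N-1, d/N)$, a Poisson approximation (via Stirling, as indicated after \eqref{eq:def_f_d}) gives $\P(D_x \geq \beta d) = \exp(-f_d(\beta) + o(f_d(\beta)) + O(1/(\beta d)))$ uniformly for $\beta$ in the relevant range, where $f_d$ is as in \eqref{eq:def_f_d}. Consequently, for a threshold $\beta$ with $f_d(\beta) = \log(N/m)$ one has $\E\,\abs{\{x \col D_x \geq \beta d\}} = N\,\P(D_x \geq \beta d) \approx m$. Combining a first-moment (Markov) bound with a second-moment bound — using that $D_x$ and $D_y$ are only weakly dependent, so $\op{Cov}(\ind{D_x \geq \beta d}, \ind{D_y \geq \beta d})$ is negligible compared to the variance — yields concentration of this counting function around its mean, hence $\abs{\{x \col D_x \geq \beta d\}} = m(1 + o(1))$ with high probability whenever $m \to \infty$, and $\leq $ (a constant) when $m = O(1)$.

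Next I would convert this into the statement about order statistics. Fix $l \leq \cal L_0(d)$ and set $\beta = \beta_l(d)$; write $\beta_\pm = \beta \pm \frac{1 \vee (\xi/\log\beta)}{d}$. Using the Lipschitz behaviour of $f_d$ — namely $f_d'(\beta) = d\log\beta + O(1)$, so $f_d(\beta_\pm) - f_d(\beta) = \pm(1 \vee (\xi/\log\beta))(\log\beta + O(1/d)) $, which is $\geq \xi/2$ in magnitude once $\xi \to \infty$ — one sees $f_d(\beta_+) \geq \log(N/l) + \xi/2$ and $f_d(\beta_-) \leq \log(N/l) - \xi/2$. Hence the expected number of vertices with $D_x \geq \beta_+ d$ is at most $l\,e^{-\xi/2} \to 0$ (so by Markov there are none), while the expected number with $D_x \geq \beta_- d$ is at least $l\,e^{\xi/2} \to \infty$ (so by the second-moment bound there are at least $l$ of them). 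Together these sandwich $\alpha_{\sigma(l)}$ between $\beta_-$ and $\beta_+$ with high probability, which is \eqref{deg_est_1}. A union bound over $l \in [\cal L_0(d)]$ (whose cardinality is at most a power of $N$) preserves ``with high probability''. The statement \eqref{deg_est_2} for $d > d_*$ is the special case $l = 1$ with threshold $\beta = 2 + \xi/d$: here $f_d(2 + \xi/d) \geq f_{d}(2) + \xi(\log 2 + O(1/d)) \geq \log N + \xi/2$ (using $f_{d}(2) \geq f_{d_*}(2) = \log N$ since $d \geq d_*$ and $f_d(2)$ is increasing in $d$), so the first-moment bound gives no such vertex.

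The main obstacle I anticipate is making the near-independence quantitative enough to run the second-moment argument cleanly: one must control $\P(D_x \geq \beta d, D_y \geq \beta d)$ for $x \neq y$, which requires decoupling the single shared edge between $x$ and $y$ and verifying that the resulting correction is lower-order compared to $\P(D_x \geq \beta d)$ and to the gap produced by the $\xi/2$ margin. Since $\xi \to \infty$ arbitrarily slowly, one has only a slowly growing margin to absorb these errors, so the Poisson-approximation error term $O(1/(\beta d))$ and the covariance correction must both be shown to be $o(\xi)$ in the exponent; this is where the bound $1 \leq d$ and the restriction $l \leq \cal L_0(d)$ (equivalently $\beta_l(d) \geq 2$, keeping $\beta d$ bounded below by $2$) are used. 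Everything else is a routine application of Stirling's formula, the implicit function theorem for $\beta_l(d)$, and Chebyshev's inequality; as the paper notes, sharp error probabilities are not sought, so crude bounds suffice throughout.
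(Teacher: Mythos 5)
Your proposal follows the paper's proof essentially step for step: both use the counting function $\cal N_{\alpha d}$ and its Poisson approximation, Markov's inequality for the upper tail, a Chebyshev/second-moment argument with $\var(\cal N_{\alpha d}) \leq C\,\E\cal N_{\alpha d}$ for the lower tail, and the derivative bound $f_d'(\alpha) \geq d\log\alpha$ to translate the $\pm\xi$ margin in the exponent into the stated window $\pm\frac{1 \vee (\xi/\log\beta_l(d))}{d}$. The one inaccuracy is your closing remark that a union bound over $l \in [\cal L_0(d)]$ ``preserves with high probability'': summing $N^{O(1)}$ error probabilities each of size $o(1)$ need not yield $o(1)$, and the paper instead reads the proposition as holding for each fixed $l$, noting separately that uniformity would require quantitatively sharper tail estimates.
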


\begin{proof}
Throughout the proof we suppose that $2 \leq \alpha \leq \frac{\sqrt{N}}{C d}$ for some large enough universal constant $C$. From the definition of $\beta_l(d)$, it is easy to check that this condition is satisfied for $\alpha = \beta_l(d)$ for $1 \leq d \leq d_*$ and $1 \leq l \leq \cal L_0(d)$. The proof of \eqref{deg_est_1} consists of an upper and a lower bound. The former is proved using a first moment method and the latter using a second moment method. We make use of the counting function $\cal N_t \deq  \sum_{x \in [N]} \ind{D_x \geq t}$. Note that by Poisson approximation of the binomial random variable $D_x = d \alpha_x$, see \cite[Lemma 3.3]{BBK2}, there is a universal constant $C$ such that
\begin{equation} \label{Poisson_approx}
C^{-1} N \ee^{-f_d(\alpha)} \leq \E \cal N_{\alpha d} \leq C N \ee^{-f_d(\alpha)}.
\end{equation}

Let $1 \leq d \leq d_*$ and $1 \leq l \leq \cal L_0(d)$.
We begin by proving an upper bound on $\alpha_{\sigma(1)} = D_{\sigma(1)}/d$. Using \eqref{Poisson_approx} we get
\begin{equation} \label{upper_bound_deg}
\P (\alpha_{\sigma(l)} \geq \alpha)  = \P \pb{\cal N_{\alpha d} \geq l} \leq \frac{\E \cal N_{\alpha d}}{l} \leq \frac{C N}{l} \ee^{-f_d(\alpha)}.
\end{equation}

Next, we prove a lower bound on $\alpha_{\sigma(l)}$. Suppose that $\E \cal N_{\alpha d} \geq 2l$. Then using a second moment method, we find
\begin{equation*}
\P (\alpha_{\sigma(l)} \geq \alpha)  = \P \pb{\cal N_{\alpha d} \geq l} \geq \P \pb{\abs{\cal N_{\alpha d} - \E \cal N_{\alpha d}} < \E \cal N_{\alpha d} / 2} \geq 1 - \frac{4 \var (\cal N_{\alpha d})}{(\E \cal N_{\alpha d})^2}.
\end{equation*}
By \cite[Lemma 3.11]{Bol01} we have $\var (\cal N_{\alpha d}) \leq C \E \cal N_{\alpha d}$ for some universal constant $C$, which yields
\begin{equation} \label{lower_bound_deg}
\P (\alpha_{\sigma(l)} \geq \alpha) \geq 1 - \frac{C}{\E \cal N_{\alpha d}} \geq 1 - \frac{C}{N \ee^{-f_d(\alpha)}},
\end{equation}
where we used \eqref{Poisson_approx}.
From \eqref{upper_bound_deg} we conclude that $\alpha_{\sigma(l)} \leq \alpha$ with high probability if
\begin{equation}
f_d(\alpha) - \log (N/l) \to \infty \quad \text{as} \quad N \to \infty.
\end{equation}
From \eqref{lower_bound_deg} we conclude that $\alpha_{\sigma(l)} \geq \alpha$ with high probability if $\E \cal N_{\alpha d} \geq 2l$ and $f_d(\alpha) - \log N \to -\infty$ as $N \to \infty$. By \eqref{Poisson_approx}, both of these conditions are satisfied if
\begin{equation}
f_d(\alpha) - \log (N/l) \to -\infty \quad \text{as} \quad N \to \infty.
\end{equation}
Now \eqref{deg_est_1} follows easily by choosing $\alpha = \beta_l(d) \pm \frac{1 \vee (\xi / \log \beta_l(d))}{d}$, using that $f_d'(\alpha) \geq d \log \alpha$.

The proof of \eqref{deg_est_2} is analogous to the the proof of the upper bound in \eqref{deg_est_1}, and we omit  the details.
\end{proof}

\bibliography{bibliography} 

\providecommand{\bysame}{\leavevmode\hbox to3em{\hrulefill}\thinspace}
\providecommand{\MR}{\relax\ifhmode\unskip\space\fi MR }
\providecommand{\MRhref}[2]{%
  \href{http://www.ams.org/mathscinet-getitem?mr=#1}{#2}
}
\providecommand{\href}[2]{#2}
\begin{thebibliography}{10}

\bibitem{Alo98}
N.~Alon, \emph{Spectral techniques in graph algorithms}, LATIN'98: Theoretical
  Informatics, Springer, 1998, pp.~206--215.

\bibitem{BBK1}
F.~Benaych-Georges, C.~Bordenave, and A.~Knowles, \emph{Spectral radii of
  sparse random matrices}, Preprint arXiv:1704.02945 (2017).

\bibitem{BBK2}
\bysame, \emph{Largest eigenvalues of sparse inhomogeneous {E}rd{\H
  o}s-{R}\'{e}nyi graphs}, Ann. Probab. \textbf{47} (2019), no.~3, 1653--1676.

\bibitem{Bol01}
B.~Bollob\'as, \emph{Random graphs}, Cambridge University Press, 2001.

\bibitem{Chu}
F.R.K. Chung, \emph{Spectral graph theory}, no.~92, American Mathematical
  Society, 1997.

\bibitem{EKYY2}
L.~Erd{\H{o}}s, A.~Knowles, H.-T. Yau, and J.~Yin, \emph{Spectral statistics of
  {E}rd{\H{o}}s-{R}\'enyi graphs {II}: Eigenvalue spacing and the extreme
  eigenvalues}, Comm.\ Math.\ Phys.\ \textbf{314} (2012), 587--640.

\bibitem{EKYY1}
\bysame, \emph{Spectral statistics of {E}rd{\H{o}}s-{R}\'enyi graphs {I}: Local
  semicircle law}, Ann. Prob. \textbf{41} (2013), 2279--2375.

\bibitem{FO05}
U.~Feige and E.~Ofek, \emph{Spectral techniques applied to sparse random
  graphs}, Random Structures Algorithms \textbf{27} (2005), 251--275.

\bibitem{FKoml}
Z.~F{\"u}redi and J.~Koml{\'o}s, \emph{The eigenvalues of random symmetric
  matrices}, Combinatorica \textbf{1} (1981), 233--241.

\bibitem{HLW06}
S.~Hoory, N.~Linial, and A.~Wigderson, \emph{Expander graphs and their
  applications}, Bull. Amer. Math. Soc. \textbf{43} (2006), no.~4, 439--561.

\bibitem{HLY17}
J.~Huang, B.~Landon, and H.-T. Yau, \emph{Transition from {T}racy--{W}idom to
  {G}aussian fluctuations of extremal eigenvalues of sparse {E}rd{\H
  o}s--{R}\'{e}nyi graphs}, Ann. Probab. \textbf{48} (2020), no.~2, 916--962.

\bibitem{KS03}
M.~Krivelevich and B.~Sudakov, \emph{The largest eigenvalue of sparse random
  graphs}, Combin. Probab. Comput. \textbf{12} (2003), 61--72.

\bibitem{LHY17}
R.~Latala, R.~van Handel, and P.~Youssef, \emph{The dimension-free structure of
  nonhomogeneous random matrices}, Invent. Math. \textbf{214} (2018),
  1031--1080.

\bibitem{LS1}
J.O. Lee and K.~Schnelli, \emph{Local law and {T}racy-{W}idom limit for sparse
  random matrices}, Prob. Theor. Relat. Fields \textbf{171} (2018), 543--616.

\bibitem{Lov93}
L.~Lov\'{a}sz, \emph{Combinatorial problems and exercises}, North-Holland
  Publishing Co., Amsterdam, 1993.

\bibitem{TY1}
K.~Tikhomirov and P.~Youssef, \emph{Outliers in spectrum of sparse {W}igner
  matrices}, Preprint arXiv:1904.07985 (2019).

\bibitem{TVW}
L.V. Tran, V.H. Vu, and K.~Wang, \emph{Sparse random graphs: Eigenvalues and
  eigenvectors}, Random Structures \& Algorithms (2010).

\bibitem{Tro84}
H.F. Trotter, \emph{Eigenvalue distributions of large {H}ermitian matrices;
  {W}igner's semicircle law and a theorem of {K}ac, {M}urdock, and {S}zeg{\H
  o}}, Adv. Math. \textbf{54} (1984), no.~1, 67--82.

\bibitem{Vu07}
V.H. Vu, \emph{Spectral norm of random matrices}, Combinatorica \textbf{27}
  (2007), 721--736.

\bibitem{Wig2}
E.~P. Wigner, \emph{On the distribution of the roots of certain symmetric
  matrices}, Ann. Math. \textbf{67} (1958), no.~2, 325--327.

\end{thebibliography}
\bibliographystyle{amsplain}

\paragraph{Acknowledgements}
We gratefully acknowledge funding from the European Research Council (ERC) under the European Union’s Horizon 2020 research and innovation programme (grant agreement No.\ 715539\_RandMat) and from the Swiss National Science Foundation through the SwissMAP grant.

\bigskip

\noindent
Johannes Alt (\href{mailto:johannes.alt@unige.ch}{johannes.alt@unige.ch})
\\
Rapha\"el Ducatez (\href{mailto:raphael.ducatez@unige.ch}{raphael.ducatez@unige.ch})
\\
Antti Knowles (\href{mailto:antti.knowles@unige.ch}{antti.knowles@unige.ch})
\\
University of Geneva, Section of Mathematics, 2-4 Rue du Li\`evre, 1211 Gen\`eve 4, Switzerland.

\end{document}